\newcolumntype{M}[1]{>{\centering\arraybackslash}m{#1}}
\newtheorem{theorem}{Theorem}[section]
\newtheorem{prop}[theorem]{Proposition}
\newtheorem{coro}[theorem]{Corollary}
\newtheorem{claim}[theorem]{Conjecture}
\newtheorem{lemma}[theorem]{Lemma}
\newtheorem{defi}[theorem]{Definition}
\newcommand{\RR}{\mathbb{R}}
\newcommand{\NN}{\mathbb{N}}
\newcommand{\CC}{\mathbb{C}}
\newcommand{\QQ}{\mathbb{Q}}
\newcommand{\ZZ}{\mathbb{Z}}
\newcommand{\Cc}{\mathcal{C}}
\newcommand{\Rr}{\Gc}
\newcommand{\Ec}{\mathcal{E}}
\newcommand{\Gc}{\mathcal{G}}
\newcommand{\Nc}{\mathcal{N}}
\newcommand{\Rc}{\Gc}
\newcommand{\Vc}{\mathcal{V}}
\newcommand{\grad}{\nabla}
\newcommand{\dd}{\,{\text{\rm d}}}
\newcommand{\pc}{ \usefont{T1}{cmtl}{m}{n} \selectfont}
\def\beq{\begin{eqnarray}}
\def\eeq{\end{eqnarray}}
\def\beqs{\begin{eqnarray*}}
\def\eeqs{\end{eqnarray*}}
\newdimen\texpscorrection
\newdimen\figcenter
\def\figurewithtex #1 #2 #3 #4 #5\cr{\null
  {\goodbreak\figcenter=\hsize\relax
  \advance\figcenter by -#4truecm
  \divide\figcenter by 2
  \begin{figure}[hbt]
  \vskip #3truecm\noindent\hskip\figcenter
  \includegraphics{#1}{\hskip\texpscorrection\input #2 }
  \vskip 0.8truecm{\baselineskip=0.8\baselineskip
  \noindent \vbox{\noindent {\footnotesize #5}}\par}
  \end{figure}}}
\def\point#1 #2 #3 {\rlap{\kern #1 truecm
\raise #2 truecm \hbox{#3}}}
\newcounter{compte}
\begin{document}
\selectlanguage{english}

\title{The Graph Geometric Control Condition}

\author{Ka\"is \textsc{Ammari}\footnote{LR Analysis and Control of PDEs, LR 22ES03, Department of Mathematics, Faculty of Sciences of Monastir, University of Monastir, Tunisia;  email: {\pc kais.ammari@fsm.rnu.tn}} ,  
Alessandro \textsc{Duca}\footnote{Universit\'e de Lorraine, CNRS, INRIA, IECL, F-54000 Nancy, France; email: {\pc alessandro.duca@inria.fr}} ,
Romain \textsc{Joly}\footnote{Universit\'e Grenoble Alpes, CNRS, 
Institut Fourier, F-38000 Grenoble, France; email: {\pc 
romain.joly@univ-grenoble-alpes.fr}} {~\&~}
Kévin \textsc{Le Balc'h}
\footnote{INRIA,
Sorbonne Universit\'e, F-75005 Paris, France; email: {\pc kevin.le-balc-h@inria.fr}}}
\date{}

\maketitle

\begin{abstract}
In this paper, we introduce a novel concept called the Graph Geometric Control Condition (GGCC).
It turns out to be a simple, geometric rewriting of many of the frameworks in which the controllability of PDEs on graphs has been studied. We prove that (GGCC) is a necessary and sufficient condition for the exact controllability of the wave equation on metric graphs with internal controls and Dirichlet boundary conditions. 
We then investigate the internal exact controllability of the wave equation with mixed boundary conditions and the one of the Schrödinger equation, as well as the internal null-controllability of the heat equation. We show that (GGCC) provides a sufficient condition for the controllability of these equations and we provide explicit examples proving that (GGCC) is not necessary in these cases.
\end{abstract}

\noindent
{\bf Keywords and phrases:} Graphs, Geometric control condition, Controllability, Observability, Wave equation, Schrödinger equation, Heat equation. \\
\\
\noindent
{\bf 2020 Mathematics Subject Classification.} 35Q40, 93D15.

\bigskip

\selectlanguage{french}

\begin{abstract}
Dans cet article, nous introduisons un nouveau concept appel\'e la Condition G\'eom\'etrique de Contr\^ole sur les Graphes (GGCC). Il s'av\`ere que cette condition constitue une reformulation g\'eom\'etrique simple de nombreux cadres dans lesquels la contr\^olabilit\'e des \'equations aux d\'eriv\'ees partielles sur les graphes a \'et\'e étudiée. Nous démontrons que la condition (GGCC) est à la fois nécessaire et suffisante pour la contrôlabilité exacte de l'équation des ondes sur les graphes métriques avec des contrôles internes et des conditions aux limites de Dirichlet. Nous examinons ensuite la contrôlabilité exacte interne de l'équation des ondes avec des conditions aux limites mixtes, ainsi que celle de l'équation de Schrödinger, et nous étudions également la contrôlabilité à zéro interne de l'équation de la chaleur. Nous montrons que (GGCC) fournit une condition suffisante pour la contrôlabilité de ces équations et nous présentons des exemples explicites prouvant que (GGCC) n'est pas une condition nécessaire dans ces cas.
\end{abstract}
\selectlanguage{english}

\newpage

\setcounter{tocdepth}{2}
\tableofcontents
\setcounter{tocdepth}{2}

\newpage

\section{Introduction and main results}

Network models have been extensively studied in the literature over the last few decades for modeling phenomena in science, engineering and social sciences. Notable applications include the dynamics of free electrons in organic molecules (see the seminal work \cite{Pau36}), superconductivity in granular and artificial materials (see \cite{Ale83}) and acoustic and electromagnetic wave-guide networks (see \cite{FJK87, ML71}). Control problems on metric graphs, particularly those involving tree-like structures, have been a focal point of extensive research due to their wide-ranging applications in science and engineering. These applications span various fields, including network theory, vibration analysis and structural dynamics (see \cite{AS,AZ21, DZ06, LLS94, Nicaise, Schmidt} and the references therein).

\medskip

Although network-type structures have garnered significant interest in control theory, there is currently no explicit characterization in the literature analogous to the well-known Geometric Control Condition (GCC) of Bardos, Lebeau, Rauch and Taylor 
\cite{BLR_JEDP,BLR,RT} for Euclidean domains. The condition (GCC) provides necessary and sufficient condition for the controllability of the wave equation: it encodes the geometrical properties of the domain $\Omega$, and the controlled subdomain $\omega \subset \Omega$ to ensure controllability. We recall that $\Omega$ and $\omega$ satisfy the Geometric Control Condition (GCC) if there exists $L > 0$ such that any generalized geodesic of $\Omega$ of length greater than $L$ meets the set $\omega$. The Geometric Control Condition for metric graphs has only been hinted at in a few existing works, see for instance \cite{AEL23, AEZ23}, but no complete study has been provided yet. The absence of such a characterization has left a significant gap in the understanding of controllability for wave equations in these non-Euclidean settings.

\medskip

The aim of this work is to address this issue by introducing a novel Graph Geometric Control Condition (GGCC) and comparing it with the hypotheses commonly used in the literature to control PDEs on metric graphs. The condition (GGCC) provides a rigorous and precise framework for characterizing the exact controllability of the wave equation on networks with Dirichlet boundary conditions in the presence of internal controls. Moreover, it serves as a sufficient—though not necessary—condition for establishing the internal exact controllability of the wave equation with mixed boundary conditions, the Schrödinger equation and the internal null-controllability of the heat equation.

\subsection{The Graph Geometric Control Condition}

Throughout the rest of this article, a graph $\Gc$ is defined as a structure composed by some edges (segments) connecting some vertices (points); see Figure \hyperref[path]{1}. 
In this paper, we focus on studying partial differential equations (PDEs) defined on the edges of a graph. In this context, the primary structure of interest lies in the edges, while the vertices play a secondary role, being significant only through the connections they establish rather than as standalone elements.
We refer to vertices that are attached to the structure by only one edge as exterior vertices. The remaining ones are referred to as interior vertices. We consider very general graphs: loops (an edge connecting a vertex to itself) and parallel edges (multiple edges with the same endpoints) are allowed. We impose only the finiteness of the graph: there are a finite number of edges, all having a finite length. 
 Notice that relaxing the finiteness assumption would, of course, alter the analysis of the problem. However, most of the ideas presented in this paper could still be adapted to such cases. 
 
\begin{figure}[ht]
\begin{center}
\vspace{3mm}

\resizebox{11cm}{!}{\input{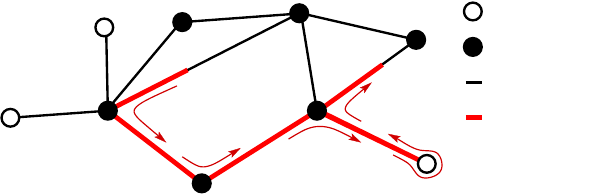_t}}
\end{center}
\caption{\it A graph with $11$ edges, $3$ exterior vertices and $6$ interior vertices. A path is represented on the graph, travelling along the edges at constant speed. Notice the reversal of the path at the exterior vertex, a move that is not allowed at the interior vertices.}\label{path}
\end{figure}

\medskip

We aim to define the Graph Geometric Control Condition adopted in this work and present some equivalent formulations. To this purpose, we need to provide few abstract definitions (their mathematical description is presented in Section \ref{sec_GGCC}). A {\bf path} on the graph $\Gc$ is defined as follows (see Figure \hyperref[path]{1}).   

\begin{itemize}
    \item We start at any point of any edge of the graph and move in one of the two possible directions until we reach a vertex. 
    \item If we reach an exterior vertex, we turn back and continue in the other direction. Otherwise, we move along any other edge starting from the vertex.  
    
    \item We continue the route through the graph by repeating the previous point and stop at some point. 
    
    \end{itemize}
The total {\bf length} of a path is the distance covered in $\Gc$. In all the article, $\omega$ is an open subset of the graph $\Gc$, that is an  open subset of the disjoint union of the edges. A path is said to {\bf meet} $\omega$ if it passes through a point belonging to $\omega$.  

\medskip

We are finally ready to state our Graph Geometric Control Condition.

\begin{defi}[GGCC]\label{defi_GGCC}
Consider a graph $\Gc$ and an open set $\omega\subset\Gc$. We say that $\Gc$ and $\omega$ satisfy the Graph Geometric Control Condition (GGCC) when there exists a length $L>0$ such that any path longer than $L$ meets $\omega$. 
\end{defi}

The setting of this condition is written as an echo of the famous Geometric Control Condition of Bardos, Lebeau, Rauch and Taylor, recalled above.
The main purpose of this article is to show that (GGCC) is, like its inspiration (GCC), an important property for the control and observation of PDE on graphs. 

\medskip

The condition (GGCC) is somehow a natural condition but, to the best of our knowledge, this is the first time that it is introduced in this geometrical way. 
For example, from the perspective of boundary control, it is not difficult to see that (GGCC) means that $\Gc$ is a tree and that the control is efficient at all the exterior vertices except maybe one. This geometric condition appear in many works. For example, Schmidt proves in \cite{Schmidt} that such boundary control is sufficient for the exact controllability of the wave equation on tree graphs, see \cite[Theorem 2.7 in Section 2.4]{DZ06}. The necessity of such a configuration for the exact controllability of the wave equation on tree graphs was latter obtained in \cite[Theorem 2.8 in Section 2.4]{DZ06}, see also \cite[Chapter VII]{AI95}. These results were revisited by Avdonin and Zhao in \cite{AZ22}.  Actually, to study the optimal time of control, these last authors introduce a geometric description that provides a criterion very close, and actually equivalent, to (GGCC). In the present paper, we choose to call it {\bf watershed condition}, see Definition \ref{defi_watershed} in the next section. 

\medskip

In the case of internal control, one has more freedom concerning the geometry. In \cite{AB}, Apraiz and B\'arcena-Petisco prove the internal null-controllability of a parabolic equation on a graph, assuming some technical hypotheses on $\Gc$ and $\omega$ which we denote here {\bf ABP Condition}, see the precise definition in Definition \ref{defi_AB} in Section \ref{sec_GGCC}. As we will show here, the ABP Condition is equivalent to (GGCC), which provides a much more geometric description. 

\medskip

As a first contribution, we show in this article that all the previous conditions introduced to study the controllability of PDE on graphs are actually equivalent to the geometric criterion (GGCC).

\begin{theorem}\label{prop_GGCC}
Let $\Gc$ be a connected graph and let $\omega$ be an open subset of $\Gc$. The following properties are equivalent:
\begin{enumerate}
\item $\Gc$ and $\omega$ satisfy (GGCC).
\item Every cycle of the graph and every path connecting two exterior vertices contain a part of $\omega$.
\item The set of uncontrolled edges is a forest and each one of its trees contains at most one of the exterior vertex of the original graph $\Gc$. 
\item Any periodic path of $\Gc$ meets the set $\omega$.
\item $\Gc$ and $\omega$ satisfy the ABP Condition of Definition \ref{defi_AB}.
\item $\Gc$ and $\omega$ satisfy the watershed Condition of Definition \ref{defi_watershed}.
\end{enumerate}
\end{theorem}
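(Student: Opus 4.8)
The plan is to prove the chain of equivalences by establishing a few carefully chosen implications that close the cycle, rather than proving all $\binom{6}{2}$ pairwise equivalences directly. The backbone of the argument is the equivalence of the three ``intrinsic'' graph-theoretic statements (1), (2), (3), and then hook the remaining three characterizations (4), (5), (6) onto this backbone. I would organize it as: (1) $\Leftrightarrow$ (4) first, because a periodic path is essentially ``a path of infinite length'' and the finiteness of the graph forces any sufficiently long path to contain a periodic sub-path (a pigeonhole/compactness argument on the finite set of oriented edges together with the deterministic branching-free rule at interior vertices once a direction is chosen); conversely a periodic path avoiding $\omega$ can be repeated to produce arbitrarily long paths avoiding $\omega$, contradicting (GGCC).

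Next I would prove (2) $\Leftrightarrow$ (4). The key observation is a structural dichotomy for periodic paths: because an interior vertex offers no forced turn-back while an exterior vertex does, every periodic path either (a) never visits an exterior vertex, in which case it traces out a closed walk and hence contains a cycle of $\Gc$, or (b) visits exterior vertices, and then by periodicity it must bounce between (at least) two exterior vertices, so it traverses a path connecting two exterior vertices. Conversely, any cycle, traversed repeatedly, is a periodic path, and any path $P$ joining two exterior vertices $v_1,v_2$ yields the periodic path ``$P$ then $P$ reversed'' (the reversal being legal precisely because we turn back at the exterior vertices $v_1,v_2$). Matching these up gives the equivalence, modulo the routine check that ``contains a cycle / an exterior-to-exterior path that avoids $\omega$'' is the correct negation.

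Then (2) $\Leftrightarrow$ (3) is pure finite graph theory applied to $\Gc_{\mathrm{unc}}$, the subgraph of edges not meeting $\omega$ (one should be slightly careful: $\omega$ is open, so ``uncontrolled edge'' should mean an edge whose interior is disjoint from $\omega$, and one must track what happens at vertices). Statement (2) says $\Gc_{\mathrm{unc}}$ contains no cycle of $\Gc$ and no exterior-to-exterior path of $\Gc$; ``no cycle'' is exactly ``$\Gc_{\mathrm{unc}}$ is a forest'', and ``no path between two exterior vertices of $\Gc$ stays in $\Gc_{\mathrm{unc}}$'' translates, once we know it is a forest, into ``each connected component (tree) contains at most one exterior vertex of $\Gc$''. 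This is the most mechanical step. Finally, (1) $\Leftrightarrow$ (5) and (1) $\Leftrightarrow$ (6) are obtained by unfolding Definitions \ref{defi_AB} and \ref{defi_watershed} and checking they say the same thing as, most conveniently, statement (3) (for ABP) and statement (2) or (4) (for the watershed condition); since both are defined later in the paper, the real content here is just a definition-chase once the combinatorial core is in place.

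\textbf{Main obstacle.} The delicate point is the equivalence (1) $\Leftrightarrow$ (4), specifically the implication ``not (GGCC) $\Rightarrow$ there is a periodic path avoiding $\omega$''. One has arbitrarily long paths avoiding $\omega$ and must extract a genuinely periodic one. Because the dynamics on the graph are not quite a finite-state system in the naive sense --- a path is a continuous object --- I would pass to the ``billiard'' description: a state is an oriented edge together with the current direction of travel; at an interior vertex the successor is non-deterministic (choose an outgoing edge), at an exterior vertex it is forced. Avoiding $\omega$ restricts the allowed states and transitions to a finite directed graph; an arbitrarily long $\omega$-avoiding path projects to an arbitrarily long walk in this finite digraph, which must therefore contain a directed cycle, and lifting that cycle back produces a periodic path of $\Gc$ avoiding $\omega$. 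The subtlety to get right is the bookkeeping at vertices (an edge might meet $\omega$ only near a vertex, so a path may legally touch that vertex without entering $\omega$) and making sure the ``turn back at exterior vertices'' convention is consistently encoded; also one should confirm that the extracted periodic path has positive length, i.e. is not a degenerate loop of length zero, which is automatic since edges have positive length.
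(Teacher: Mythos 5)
Your overall architecture is sound and close in spirit to the paper's, but the routing differs: the paper proves $2\Rightarrow 1$ directly by a length-doubling argument (a path longer than the total edge length $\ell$ must revisit a point, hence either closes a cycle or has U-turned at an exterior vertex; waiting another $\ell$ forces a cycle or a second exterior vertex), and uses periodic paths only in the side chain $1\Rightarrow 4\Rightarrow 2$. You instead make statement~4 the hub and extract a periodic path from arbitrarily long $\omega$-avoiding paths via the finite digraph of oriented uncontrolled edges; that pigeonhole argument is correct and arguably cleaner than the paper's informal ``passes through the same point twice'' step, and it correctly identifies the one genuinely delicate implication. The price is that you then need the converse dichotomy ($2\Leftrightarrow 4$), which the paper gets for free from $2\Rightarrow 1\Rightarrow 4$.

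Two points need repair. First, your case (b) of the dichotomy is false as stated: a periodic path that visits exterior vertices need not ``bounce between at least two'' of them. A periodic path can repeatedly return to a \emph{single} exterior vertex (e.g.\ an exterior vertex attached by one edge to a triangle, with the path going out, around the triangle, and back); in that situation the correct conclusion is that the excursion between consecutive visits to that vertex is a closed non-backtracking walk at an interior vertex and therefore contains a cycle. So the dichotomy ``cycle or exterior-to-exterior path'' survives, but your case analysis must be refined to cover the one-exterior-vertex situation. Second, dismissing $1\Leftrightarrow 5$ and $1\Leftrightarrow 6$ as ``a definition-chase'' undersells them: the paper's $3\Rightarrow 5$ requires actually constructing the injective map of Definition \ref{defi_AB} by rooting each uncontrolled tree (at its unique exterior vertex if it has one) and sending each edge to the incident vertex away from the root; $5\Rightarrow 2$ is a pigeonhole ($N$ edges of an exterior-to-exterior path have only $N-1$ interior vertices available); and $3\Rightarrow 6\Rightarrow 5$ requires building the rivers from the leaves down and, conversely, showing that travelling upstream in a watershed must terminate at a source. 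These are elementary but they are constructions, not unfoldings of definitions, and a complete write-up would have to supply them.
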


 Theorem \ref{prop_GGCC} is proved in Section \ref{sec_GGCC} and it provides some equivalent definitions of our Graph Geometric Control Condition.

\begin{figure}[p]
\begin{center}
\resizebox{0.9\textwidth}{!}{\input{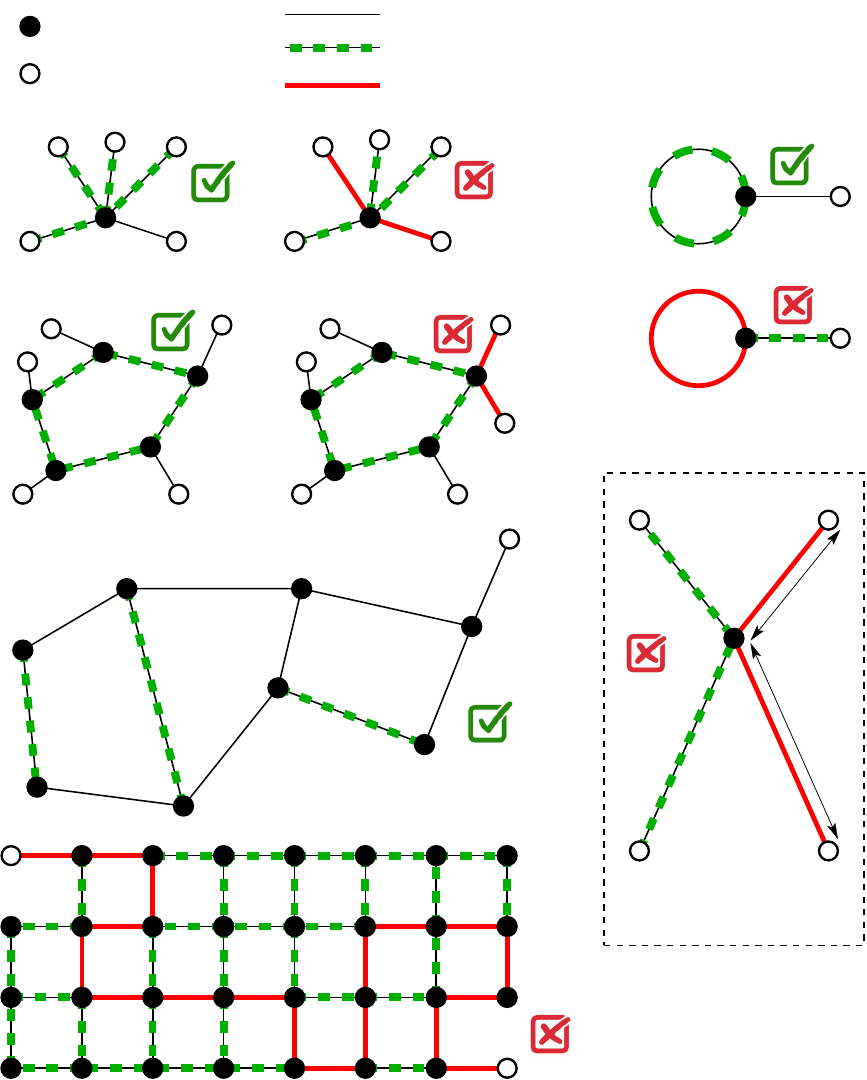_t}}
\end{center}
\caption{\it Some examples of graphs satisfying or not the property (GGCC). In this article, we study in details the case of the Schr\"odinger equation on the X graph, at the bottom right. This graph is symmetric, with top edges of  length $\ell_{\text{t}}$ and bottom edges of length $\ell_{\text{b}}$, but only the left part of it is controlled. In this case, (GGCC) is clearly not satisfied.}\label{fig_ex}
\end{figure}

\subsection{Exact controllability of the wave equation}

We now present one of the main results of this work: we show that (GGCC) precisely encodes the geometrical assumptions required for controllability of the wave equation. In detail, we prove that (GGCC) is both a necessary and sufficient condition for the internal exact controllability of the wave equation with Dirichlet boundary conditions. In what follows, we present our theory but omit to define some standard elements to keep the presentation as simple as possible. We refer to Section \ref{preliminaries} for the formal definitions of all considered mathematical objects.

\medskip

We denote by $\Delta_{\Gc}$ the Laplacian defined on the graph $\Gc$ equipped with either Dirichlet or Neumann boundary conditions on the exterior vertices and with the standard Kirchhoff's transmission conditions on the interior ones. We set $H^1_\Delta(\Gc)$ the set of functions in $H^1(\Gc)$ satisfying the Dirichlet boundary conditions at the suitable exterior vertices. We consider the following controlled wave equation:
\begin{equation}\label{intro_wave}
\left\{
\begin{array}{ll} 
\partial_{t}^2 u = \Delta_\Gc u+ q(x) u + h 1_{\omega} ,\ \ \ \ & t>0,\\
(u,\partial_t u)(t=0)=(u_0,u_1) \in H^1_\Delta(\Gc)\times L^2(\Gc), & 
\end{array}\right. 
\end{equation}
with $q\in L^\infty (\Gc)$, and where, at time $t \geq 0$, $(u, \partial_t u)(t,\cdot) : \Gc \to \mathbb{R}^2$ denotes the state and $h(t,\cdot) : \Gc\to \mathbb{R}$ is the control. 
The definition of the exact controllability of the wave equation \eqref{intro_wave} is as follows.

\begin{defi}
\label{defi_exactwave}
Let $T>0$. The wave equation \eqref{intro_wave} is exactly controllable at time $T>0$ when, for every $(u_0, u_1) \in H^1_\Delta(\Gc)\times L^2(\Gc)$ and for every $(v_0,v_1) \in H^1_\Delta(\Gc)\times L^2(\Gc)$, there exists $h \in L^2((0,T)\times \omega)$ such that the solution $u  \in \Cc^1([0,T],L^2(\Gc))\cap \Cc^0([0,T],H^1_\Delta(\Gc)) $ of \eqref{intro_wave} with initial state $(u,\partial_t u)(t=0)=(u_0,u_1)$ satisfies $$(u,\partial_t u)(t=T)=(v_0,v_1).$$
\end{defi}
Our second main contribution of this article is as follows.

\begin{theorem}[Control of the wave equation with Dirichlet b.c.]
\label{th_exact_wave_intro_dir}
Assume that $\Delta_\Gc$ is the Laplacian operator with Dirichlet boundary conditions at all the exterior vertices and assume that $0$ is not in the spectrum of $\Delta_\Gc +q$. There is a time $T>0$ such that the wave equation \eqref{intro_wave} is exactly controllable at time $T$ if and only if (GGCC) is satisfied. Moreover, when (GGCC) holds, the exact controllability is ensured for any time $T>L$ where $L$ is the length appearing in (GGCC).
\end{theorem}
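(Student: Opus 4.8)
The plan is to prove the equivalence by the standard duality route: exact controllability of the wave equation \eqref{intro_wave} at time $T$ is equivalent, via the Hilbert Uniqueness Method, to an observability inequality for the adjoint (here, self-adjoint) wave equation
\[
\partial_t^2 v = \Delta_\Gc v + q(x)v, \qquad (v,\partial_t v)(0) = (v_0,v_1)\in H^1_\Delta(\Gc)\times L^2(\Gc),
\]
namely the existence of $C>0$ with
\[
\|(v_0,v_1)\|_{H^1_\Delta\times L^2}^2 \;\leq\; C\int_0^T\!\!\int_\omega \big(|\partial_t v|^2 + |\partial_x v|^2\big)\dd x\dd t.
\]
(The hypothesis that $0\notin\operatorname{spec}(\Delta_\Gc+q)$ guarantees that $\|(v_0,v_1)\|_{H^1_\Delta\times L^2}$ is an equivalent norm on the energy space, so this is the genuine energy.) So it suffices to show: (GGCC) $\Longrightarrow$ observability for every $T>L$, and $\neg$(GGCC) $\Longrightarrow$ failure of observability for every $T$.

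For the sufficiency direction I would first treat the free case $q\equiv 0$ using the explicit structure of the wave equation on a metric graph: solutions propagate along characteristics at unit speed, reflecting with a sign flip at Dirichlet exterior vertices and scattering according to Kirchhoff's conditions at interior vertices, exactly matching the notion of \textbf{path} in Definition \ref{defi_GGCC}. Concretely, use a d'Alembert/propagation-of-energy argument: the quantity $|\partial_t v \pm \partial_x v|^2$ is transported along paths, and the Kirchhoff conditions at interior vertices yield an exact conservation of the total incoming/outgoing energy fluxes (a "no loss" identity at each vertex). Summing these flux identities along all paths and integrating in time over $[0,T]$ with $T>L$, every bit of initial energy is carried along some path which, by (GGCC), must cross $\omega$ before time $L<T$; this produces the observability inequality with an explicit constant. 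This is essentially the sideways-energy / multiplier argument familiar from the 1-D wave equation, adapted to the branching geometry; the equivalent reformulations in Theorem \ref{prop_GGCC} (in particular item 3, that uncontrolled edges form a forest with $\leq 1$ exterior vertex per tree) can be used to organize the bookkeeping tree by tree. To pass from $q\equiv 0$ to general $q\in L^\infty$, I would invoke the classical compactness–uniqueness argument: the lower-order term $qv$ gives a compact perturbation, so observability for $q\ne 0$ follows from observability for $q=0$ together with a unique continuation property for $\partial_t^2 v = \Delta_\Gc v + qv$ vanishing on $(0,T)\times\omega$ with $T$ large — and on each edge this is just 1-D unique continuation (Holmgren/energy estimates along characteristics), propagated through the graph.

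For the necessity direction, suppose (GGCC) fails. By Theorem \ref{prop_GGCC}(4) there is a periodic path of $\Gc$ avoiding $\omega$; equivalently, by item 3, there is a nontrivial subtree (or subgraph) $\Gc_0$ of uncontrolled edges with no exterior vertex of $\Gc$ reachable without crossing $\omega$, so that the wave dynamics restricted to $\Gc_0$ with Dirichlet/Neumann-type conditions on $\partial\Gc_0$ sees $\omega$ only through its boundary in a way that still admits solutions not influenced by $\omega$. The cleanest construction: build an eigenfunction (or a finite-energy solution) of $\Delta_\Gc + q$ supported on the uncontrolled part and vanishing on $\omega$, or — when such an eigenfunction need not exist for general $q$ — use a wave-packet / Gaussian-beam concentrated on the periodic uncontrolled path, whose energy stays uniformly bounded away from $\omega$ over $[0,T]$ for any fixed $T$. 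Either object violates the observability inequality, hence controllability fails for every $T$. The main obstacle I anticipate is the sufficiency proof with a general potential $q$: the compactness–uniqueness scheme needs a robust unique continuation statement on the graph (including through interior vertices with Kirchhoff conditions), and one must be careful that the lower-order perturbation does not spoil the sharp time $T>L$ — handling the sharp constant likely requires doing the energy-flux argument directly for the perturbed equation rather than purely by perturbation, keeping track of a Grönwall-type correction that is absorbed since $T-L>0$.
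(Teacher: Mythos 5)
Your overall architecture (duality, then sufficiency by energy propagation plus compactness--uniqueness, necessity by a construction localized on an uncontrolled periodic path) matches the paper's, but there are two genuine gaps.

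First, the observability inequality you dualize to is not the right one. For internal control by $h\,1_\omega\in L^2((0,T)\times\omega)$ acting in the state space $H^1_\Delta(\Gc)\times L^2(\Gc)$, the dual inequality is
$\|(\varphi_0,\varphi_1)\|_{H^1_\Delta\times L^2}^2\leq C\int_0^T\int_\omega|\partial_t\varphi|^2$, with the \emph{velocity only} on the right-hand side (this is the paper's Theorem \ref{th:obsexactwave}). Your version, with $|\partial_t v|^2+|\partial_x v|^2$ in the observation term, is strictly weaker (larger right-hand side), so proving it does not yield controllability. This is not a cosmetic point: converting an observation of $\partial_t\varphi$ alone on $\omega$ into control of the full energy density there is exactly the nontrivial part of the sufficiency proof (the paper's Step 3 uses a second multiplier identity, $h(t)\varphi$ against the equation, plus trace interpolation to absorb the boundary terms), and your sideways-energy argument as described silently assumes you already observe $|\partial_x v|^2$ on $\omega$.

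Second, and more seriously, your necessity argument does not work as stated. An exact eigenfunction supported on the uncontrolled part exists only when the lengths of the edges along the uncontrolled path have rational ratios. Your fallback — a Gaussian beam or wave packet concentrated on the uncontrolled periodic path whose energy "stays bounded away from $\omega$" — fails on a graph: a localized wave packet arriving at an interior vertex of degree $\geq 3$ scatters according to the Kirchhoff conditions into \emph{all} adjacent edges (see the transmission laws in Appendix \ref{section_waves_io}), so on each passage through a vertex where controlled edges attach, a fixed fraction of the energy leaks into $\omega$. Geometric-optics transport along a single ray is precisely what is not available here. What replaces it is a \emph{resonance} construction: a high-frequency quasimode $u_n$ built from $\sin(\mu_{j,n}x)$ on each edge of the uncontrolled path, vanishing at every vertex of the path (so continuity is trivial and nothing couples to the controlled edges) and with exterior derivatives cancelling pairwise so the Kirchhoff condition holds. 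Making the frequencies $\mu_{j,n}=2\pi p_{j,n}/\ell_j$ on the different edges agree up to an error $o(1)$ requires the \emph{simultaneous} Dirichlet approximation theorem applied to the ratios $\ell_j/L$ (Proposition \ref{coroprop_Dirichlet}); one then shows $\|(\Delta_\Gc+\mu_n^2)u_n\|_{L^2}\to 0$ and feeds these quasimodes into a Gr\"onwall estimate (or the resolvent/Hautus test) to contradict observability. This Diophantine quasimode construction is the key idea of the necessity direction and is missing from your proposal.
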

\begin{theorem}[Control of the wave equation with general b.c.]
\label{th_exact_wave_intro_neu}
Assume that $\Delta_\Gc$ is the Laplacian operator with either Neumann or Dirichlet boundary conditions at the exterior vertices and assume that $0$ is not in the spectrum of $\Delta_\Gc +q$.
If (GGCC) holds for the length $L$, then, for any time $T>L$, the wave equation \eqref{intro_wave} is exactly controllable at time $T$.
\end{theorem}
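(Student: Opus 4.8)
\textbf{Proof proposal for Theorem \ref{th_exact_wave_intro_neu}.}
The plan is to reduce the exact controllability of \eqref{intro_wave} to an observability estimate for the adjoint (free) wave equation
\[
\partial_t^2 \phi = \Delta_\Gc \phi + q(x)\phi, \qquad (\phi,\partial_t\phi)(0)=(\phi_0,\phi_1)\in L^2(\Gc)\times H^{-1}(\Gc),
\]
namely the inequality $\|(\phi_0,\phi_1)\|_{L^2\times H^{-1}}^2 \le C\int_0^T\int_\omega |\phi|^2\,\dd x\,\dd t$ for $T>L$; by the classical Hilbert Uniqueness Method this is equivalent to the exact controllability statement, and the hypothesis that $0\notin\operatorname{spec}(\Delta_\Gc+q)$ is exactly what makes the energy norm equivalent to the $H^1\times L^2$ norm (respectively $L^2\times H^{-1}$ on the dual side), so the functional-analytic setup is the same as in the Dirichlet case already treated in Theorem \ref{th_exact_wave_intro_dir}. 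The novelty to handle here is purely the presence of Neumann exterior vertices.

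First I would establish the observability inequality when $q\equiv 0$ by a propagation-of-singularities / d'Alembert argument tailored to metric graphs. On each edge the solution of the 1D wave equation decomposes into a left-moving and a right-moving wave travelling at unit speed; at an interior (Kirchhoff) vertex the incoming Riemann invariants scatter into the outgoing ones via a fixed unitary-like scattering matrix, and at an exterior vertex the wave simply reflects (with sign $+1$ for Neumann, $-1$ for Dirichlet). Thus the "energy density carried by characteristics" moves along the graph exactly as a \emph{path} in the sense of Definition \ref{defi_GGCC} — in particular the reflection rule at exterior vertices matches the turn-back rule in the definition of a path. If the initial energy does not entirely dissipate through $\omega$ in time $T>L$, one can follow a characteristic that avoids $\omega$ for all time, i.e. a path of arbitrarily large length missing $\omega$, contradicting (GGCC) together with Theorem \ref{prop_GGCC} (any sufficiently long path meeting $\omega$). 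The care needed is to track the amount of energy, not merely support: one shows that a positive fraction of the total energy can be transported along a single characteristic tube staying in the uncontrolled part for time $>L$, which is impossible. Making this quantitative — e.g. via a careful bookkeeping of the energy flux $\partial_t\phi\,\partial_x\phi$ across vertices, or via the sidewise energy estimate on each edge combined with the transmission conditions — is the technical heart. An alternative, perhaps cleaner, route is to invoke the microlocal GCC machinery of Bardos–Lebeau–Rauch: a metric graph is a (singular) one-dimensional manifold and the relevant result (as in \cite{AEL23, AEZ23}) is that observability holds iff every generalized geodesic meets $\omega$ in time $T$; one then only has to check that generalized geodesics on $\Gc$ with Neumann/Dirichlet conditions at exterior vertices are precisely the paths of Definition \ref{defi_GGCC}, and that the optimal time is $L$.

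Second, I would remove the potential $q\in L^\infty(\Gc)$ by a compactness–uniqueness argument: the zero-order term is a compact perturbation, so the observability inequality with $q$ follows from the one without $q$ provided a unique continuation property holds, namely that $\phi$ solving $\partial_t^2\phi=\Delta_\Gc\phi+q\phi$ with $\phi=0$ on $(0,T)\times\omega$ and $T>L$ must vanish identically. On a compact edge this unique continuation is the classical 1D result for the wave operator with bounded potential (Holmgren-type / Carleman on an interval), and it propagates across Kirchhoff and exterior vertices by the transmission conditions, again because $T>L$ guarantees that the "cone of influence" of $\omega$ covers the whole graph. This step also uses that $0\notin\operatorname{spec}(\Delta_\Gc+q)$ to rule out a nontrivial stationary solution from the compactness argument.

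The main obstacle I anticipate is the bookkeeping at the vertices in the quantitative energy-propagation argument: unlike the Dirichlet case, a Neumann exterior vertex reflects energy without loss, and interior vertices redistribute it among several edges, so one must verify that no pathological superposition of reflected/transmitted characteristics can conspire to keep energy trapped away from $\omega$ for time longer than $L$ — equivalently, that the length $L$ from (GGCC), which by Theorem \ref{prop_GGCC} controls all paths, is indeed the sharp control time. Resolving this cleanly is precisely where the equivalence with the watershed condition (item 6 of Theorem \ref{prop_GGCC}) and the path description become indispensable, since they turn the dynamical trapping question into the purely combinatorial statement that every path of length $>L$ meets $\omega$.
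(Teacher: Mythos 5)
Your overall architecture — duality to an observability inequality, a quantitative propagation argument for the principal part, then compactness--uniqueness to absorb the potential $q$ — is exactly the paper's (Theorem \ref{th:obsexactwave} followed by Proposition \ref{prop_obs_wave}, Steps 1--6). The paper's actual quantitative engine is a moving-front multiplier $\rho(x,t)=\min(0,h(t)-x)$ giving a sidewise estimate \eqref{eq_multiplier_2} on each uncontrolled edge, a second multiplier estimate \eqref{eq_multiplier_6} on controlled edges, and then item 3 of Theorem \ref{prop_GGCC} (the uncontrolled edges form a forest with at most one exterior vertex per tree) to propagate the observation vertex by vertex via the Kirchhoff conditions; the Neumann versus Dirichlet issue is immaterial there because the sidewise estimate never uses the boundary condition at the far end of an edge. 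Your ``sidewise energy estimate combined with the transmission conditions'' sentence is this argument, so the route is compatible; but the step you defer is the entire technical content of the theorem, and the heuristic you offer in its place does not work as stated: ``if the initial energy does not entirely dissipate through $\omega$, follow a characteristic avoiding $\omega$'' is a qualitative unique-continuation statement, not an observability inequality ($\omega$ observes, it does not absorb, so no energy ``dissipates'' through it), and the alternative of invoking Bardos--Lebeau--Rauch on a metric graph is not an off-the-shelf tool — propagation of microlocal defect measures through Kirchhoff vertices is precisely what is not known in general and is why the paper proves everything by hand.

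The second concrete problem is in your compactness--uniqueness step. You attribute the unique continuation of invisible solutions to the fact that ``the cone of influence of $\omega$ covers the whole graph'' for $T>L$. That is the wrong mechanism: on a connected graph the cone of influence of $\omega$ always covers the graph for $T$ large, yet unique continuation genuinely fails when (GGCC) fails (localized eigenmodes on an uncontrolled cycle vanish on $\omega$ for all time). What makes the propagation work is combinatorial: at a Kirchhoff vertex you can only transfer the vanishing to a \emph{single} remaining edge once all the other incident edges are already known to vanish, and it is the forest structure guaranteed by (GGCC) that ensures this bookkeeping terminates. The paper moreover sidesteps any hyperbolic Holmgren/Carleman theorem entirely: in Lemma \ref{lemma_invisible} it shows the space of invisible solutions is finite-dimensional and $\partial_t$-invariant, reduces to an eigenfunction $(\Delta_\Gc+q)\varphi=\lambda^2\varphi$ vanishing on $\omega$, and propagates the vanishing by second-order ODE uniqueness through the forest, with the hypothesis $0\notin\sigma(\Delta_\Gc+q)$ used exactly to exclude the residual stationary case $\lambda=0$ (not, as you suggest, to normalize the energy norm). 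If you repair these two points — carry out the sidewise estimates quantitatively and replace the cone-of-influence heuristic by the forest argument — your proof becomes essentially the paper's.
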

Theorems \ref{th_exact_wave_intro_dir} and \ref{th_exact_wave_intro_neu} show that (GGCC) characterizes the main geometrical properties required for the exact controllability of \eqref{intro_wave}. 
These results are proved in Section \ref{sec_control_wave} by rewriting the control problem in terms of the observability for the dual system, see Theorem \ref{th:obsexactwave}. If (GGCC) is not satisfied, we can construct quasimodes that are mainly concentrated in the unobserved part of the graph and vanish at the exterior vertices. This strategy is common and very similar proofs can be found in \cite{DZ06} for example. It shows the necessity of (GGCC) when Dirichlet boundary conditions are assumed at the exterior vertices of the uncontrolled edges, whatever are the boundary conditions at the ends of the controlled edges, even Neumann ones. Notice however that (GGCC) may not be necessary when Neumann boundary conditions are imposed at uncontrolled edges, see Theorem  \ref{th_neumann} below.

\medskip

To prove that (GGCC) is sufficient in any cases, it is possible to use explicit computations based on d'Alembert's formula, see \cite{AZ22,DZ06}. However, we choose here another strategy:  
\begin{itemize}
    \item Fix a classical method to extend the observation on intervals. Here, we use the method of multipliers, which is more general than d'Alembert's formula.  
    \item Use the previous tool to propagate the observation on edges up to the interior vertices.  
    \item When the PDE is controlled on all but one edge connected to an interior vertex, use Kirchhoff's transmission condition to propagate the known observations through the vertex to the remaining edge.  
    \item Use Description 3 of Theorem \ref{prop_GGCC} to show that (GGCC) ensures that the above steps are sufficient to extend the observation everywhere.  
\end{itemize}
This global scheme of proof is robust in the sense that it can be applied to other PDEs, see \cite{AB} for a similar strategy applied to the heat equation. 

\medskip 

Let us recall that the observation is also connected to the exponential stability of the damped wave equation
\begin{equation}\label{intro_wave_damp}
\left\{
\begin{array}{ll} 
\partial_{t}^2 u  +a(x) \partial_t u = \Delta_\Gc u+ q(x) u,\ \ \ \ & t>0,\\
(u,\partial_t u)(t=0)=(u_0,u_1) \in H^1_0(\Gc)\times L^2(\Gc), & \\
\end{array}\right. 
\end{equation}  
where $a \geq 0$ belongs to $L^\infty (\Gc)$ and $\omega \subset \mathrm{supp}(a)$. 
By leveraging the characterization of exponential stability in terms of the exact observability of the associated undamped problem (where $a=0$), we gain valuable insights into the dynamics of the system. This result, initially established by Haraux in \cite[Proposition 1]{Haraux} and generalized by \cite{AT}, provides a direct link between the ability to observe the full state of the system over time and the exponential decay of its energy. Thus, the following corollary is a direct consequence of Theorem \ref{th_exact_wave_intro_dir} and represents a graph version of a result established by Lebeau in \cite{Lebeau} where the damped wave equation is addressed within the context of an open subset of $\mathbb{R}^n$.
\begin{coro} \label{stab-exp_intro} 
Assume that $\Delta_\Gc$ is the Laplacian operator with Dirichlet boundary conditions at all the exterior vertices and assume that $\Delta_\Gc +q$ is a negative operator. The exponential stability of \eqref{intro_wave_damp} holds if and only if $\Gc$ and $\omega$ satisfy the (GGCC). 
\end{coro}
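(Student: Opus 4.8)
The plan is to deduce the Corollary from Theorem \ref{th_exact_wave_intro_dir} by passing through exact observability of the conservative equation, exactly as the relation between damping and observation suggests. First I would fix the functional framework. Since $\Gc$ is a finite metric graph and $\Delta_\Gc$ carries Dirichlet conditions at every exterior vertex, $\Delta_\Gc+q$ has compact resolvent and hence discrete spectrum; therefore the hypothesis that $\Delta_\Gc+q$ is a negative operator provides a spectral gap, $\la-(\Delta_\Gc+q)w,w\ra\ge c_0\|w\|_{L^2(\Gc)}^2$ for all $w\in H^1_0(\Gc)$ and some $c_0>0$. In particular $0$ is not in the spectrum of $\Delta_\Gc+q$, so Theorem \ref{th_exact_wave_intro_dir} is available, and the energy
\[
E(t)=\tfrac12\int_\Gc\big(|\partial_t u|^2+|\grad u|^2-q\,|u|^2\big)\dd x
\]
is equivalent to the squared norm of $(u,\partial_t u)$ in $H^1_0(\Gc)\times L^2(\Gc)$. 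Writing $A:=-(\Delta_\Gc+q)$ and $B:=a(x)\cdot$, equation \eqref{intro_wave_damp} reads $\partial_t^2 u+B\,\partial_t u+Au=0$ with $A$ self-adjoint positive definite and $B$ bounded nonnegative, so the abstract setting of \cite{Haraux,AT} is met.

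Next I would invoke Haraux's characterization \cite[Proposition 1]{Haraux} (generalized in \cite{AT}): the damped equation \eqref{intro_wave_damp} is exponentially stable in the energy space if and only if there exist $T>0$ and $C>0$ such that every solution $v$ of the conservative equation $\partial_t^2 v=\Delta_\Gc v+q\,v$, with data in $H^1_0(\Gc)\times L^2(\Gc)$, satisfies the weighted observability inequality
\[
\|(v_0,v_1)\|_{H^1_0(\Gc)\times L^2(\Gc)}^2\le C\int_0^T\!\!\int_\Gc a(x)\,|\partial_t v(t,x)|^2\dd x\dd t .
\]
Since $a\in L^\infty(\Gc)$ is supported in $\mathrm{supp}(a)$ and is bounded below by a positive constant on $\omega$, this weighted estimate is equivalent, up to changing the constants and the time, to the observability inequality from $\omega$,
\[
\|(v_0,v_1)\|_{H^1_0(\Gc)\times L^2(\Gc)}^2\le C'\int_0^T\!\!\int_\omega|\partial_t v(t,x)|^2\dd x\dd t .
\]

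Finally I would apply Theorem \ref{th:obsexactwave}, the observability reformulation of Theorem \ref{th_exact_wave_intro_dir}: this last inequality holds for some $T>0$ if and only if $\Gc$ and $\omega$ satisfy (GGCC). Chaining the three equivalences (stability $\Leftrightarrow$ weighted observability $\Leftrightarrow$ observability from $\omega$ $\Leftrightarrow$ (GGCC)) proves the Corollary; one even obtains that the damping stabilizes exponentially as soon as the observability estimate is valid on $(0,T)$, i.e. for any $T>L$ with $L$ as in (GGCC).

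The only genuinely delicate point is the passage between the weight $a$ and the sharp control set $\omega$ in the second step: the lower bound requires $a$ to be uniformly positive on $\omega$, and, more importantly for the implication \textbf{exponential stability $\Rightarrow$ (GGCC)}, one must ensure that the effective observation region produced by Haraux's inequality is not strictly larger than $\omega$ (since (GGCC) for a larger set is strictly weaker, applying Theorem \ref{th:obsexactwave} to the wrong set would only give (GGCC) for that larger set). This is exactly where the standing assumption tying $a$ to $\omega$ must be used carefully, and it may require a mild strengthening such as $c_1\Un_\omega\le a\le c_2\Un_\omega$. Everything else is a routine combination of the spectral theory of the Dirichlet Laplacian on a finite graph, Haraux's theorem, and Theorem \ref{th:obsexactwave}.
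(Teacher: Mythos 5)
Your proposal is correct and follows essentially the same route as the paper, which offers no separate argument for Corollary \ref{stab-exp_intro} and simply declares it a direct consequence of Theorem \ref{th_exact_wave_intro_dir} via the Haraux/\cite{AT} equivalence between exponential stability of \eqref{intro_wave_damp} and observability of the undamped system. Your closing caveat is well taken: the paper's hypothesis $\omega\subset\mathrm{supp}(a)$ does not by itself give $a\geq c_1>0$ on $\omega$ (needed for the ``if'' direction) nor prevent the effective observation region $\mathrm{supp}(a)$ from being strictly larger than $\omega$ (which would weaken the ``only if'' direction), so the implicit reading $c_1\Un_\omega\le a\le c_2\Un_\omega$, or an equivalent normalization, is indeed required to make the chain of equivalences close.
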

The assumption that $\Delta_\Gc +q$ is a negative operator is important to ensure that the energy related to \eqref{intro_wave_damp} is positive. In the same way, we need $\Delta_\Gc +q$ to be invertible to obtain the observation inequality and thus the control. For example, if we set $q=0$ and if there is no exterior vertex, then the constant solutions $u\equiv c\in\RR$ are neither observed nor damped and Corollary \ref{stab-exp_intro} obviously fails. In such cases, we should be able to generalize our results by restricting our problems in a suitable subspace. For instance, if there is no exterior vertex and $q=0$, Corollary \ref{stab-exp_intro} holds in the space orthogonal to the constant functions, that is the space of zero-average functions.

\medskip

We would also like to emphasize that the time stated in Theorems \ref{th_exact_wave_intro_dir} and \ref{th_exact_wave_intro_neu} is not optimal for the exact controllability of \eqref{intro_wave}. Actually, we provide in appendix an example where the optimal time is explicit, see Proposition \ref{prop_optimal}. The interested reader can find in Section \ref{optimal_time} a discussion concerning the optimal time for observation and control of the wave equation.

\medskip

We finally emphasize that the presence of Neumann boundary conditions strongly affects the result, which might be surprising given the classical results on open domains of $\RR^d$. The condition (GGCC) is still sufficient but is not necessary as shown by the following example. 
We consider the X graph of Figure \hyperref[fig_ex]{2}: a star-graph with four edges, here assumed to have the same length $\ell_{\text{b}}=\ell_{\text{t}}=1$, two (say $e_1$ and $e_2$) have external vertices endowed with Neumann boundary condition and two ($e_3$ and $e_4$) have external vertices endowed with Dirichlet boundary condition. The set $\omega$ is the union of the left edges $e_1\cup e_3$. Obviously, this geometry does not satisfy (GGCC), since there exists a periodic path living in $e_2\cup e_4$, see Figure \hyperref[fig_ex]{2}. 
\begin{theorem}\label{th_neumann}
Consider the above geometry for the X graph and assume that $q=0$. For any time $T>4$, the wave equation \eqref{intro_wave} is exactly controllable at time $T$.
\end{theorem}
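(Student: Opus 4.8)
The plan is to pass to the equivalent observability estimate for the adjoint equation (Theorem \ref{th:obsexactwave}): it suffices to prove that for every $T>4$ there is $C>0$ with
\[
\|(\phi_0,\phi_1)\|_{H^1_\Delta(\Gc)\times L^2(\Gc)}^2\ \le\ C\int_0^T\!\!\int_{e_1\cup e_3}|\partial_t\phi|^2
\]
for all solutions $\phi$ of $\partial_t^2\phi=\Delta_\Gc\phi$ on the X graph. Parametrise each edge $e_i$ by $[0,1]$, the exterior vertex at $0$ and the centre $v$ at $1$. Since $q=0$, on the interior of each edge $\phi^{(i)}:=\phi|_{e_i}$ solves the free $1$D wave equation, so d'Alembert's formula together with the exterior boundary condition gives $\phi^{(i)}(t,x)=a_i(t+x)+a_i(t-x)$ on the Neumann edges $e_1,e_2$ and $\phi^{(i)}(t,x)=a_i(t+x)-a_i(t-x)$ on the Dirichlet edges $e_3,e_4$, for suitable scalar functions $a_i$. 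Because $\omega$ is the whole of the two controlled edges, the steps of the general scheme that propagate the observation along an edge are trivial here: the observation determines $\phi^{(1)}$ and $\phi^{(3)}$ (hence $a_1,a_3$ up to an inessential constant) on a time interval of length $\simeq T+1$, and in particular, by the hidden regularity of the $1$D wave equation, the vertex traces $g:=\phi(v,\cdot)$ and $f:=-\partial_\nu\phi^{(1)}(v,\cdot)-\partial_\nu\phi^{(3)}(v,\cdot)$ on $\simeq(0,T)$.

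The heart of the matter is to propagate this information through $v$, the step at which the general scheme breaks down because two of the four edges meeting at $v$ are uncontrolled while Kirchhoff's law is a single relation. The point is that continuity at $v$ furnishes two more scalar identities, so that the unknown functions $a_2,a_4$ obey the three functional equations $a_2(t+1)+a_2(t-1)=g(t)$, $a_4(t+1)-a_4(t-1)=g(t)$, and, from Kirchhoff, $a_2'(t+1)-a_2'(t-1)+a_4'(t+1)+a_4'(t-1)=f(t)$. Differentiating the first two and eliminating shows $(a_4-a_2)'$ is an explicit combination of $g'$ and $f$; feeding this back in yields $a_2$ and $a_4$ explicitly (with $K$ a primitive of $(a_4-a_2)'$ one finds $a_2(\sigma)=\tfrac12\bigl(K(\sigma+2)-K(\sigma)\bigr)$, and $a_4$ similarly up to an immaterial constant that does not affect $\phi^{(4)}$). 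Tracking the time intervals is what produces the threshold: $g,f$ are known on $\simeq(0,T)$, hence $(a_4-a_2)'$ on $\simeq(-1,T-1)$, hence $a_2,a_4$ on $\simeq(-1,T-3)$, and this interval contains the range $(-1,1)$ that encodes the Cauchy data of $\phi$ on $e_2$ and $e_4$ exactly when $T>4$. Equivalently, $4=2(\ell_{\text{b}}+\ell_{\text{t}})$ is the length of the uncontrolled periodic path lying in $e_2\cup e_4$ --- the time needed for the vertex data to travel back and forth along the uncontrolled part of $\Gc$ --- and, on the spectral side, it is the Ingham threshold $2\pi/\gamma$ for the interleaved Neumann--Dirichlet and Dirichlet--Dirichlet spectra carried by $e_2$ and $e_4$, whose common gap is $\gamma=\pi/2$.

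Having recovered $(\phi_0,\phi_1)$ from the observation for $T>4$, one promotes this uniqueness statement to the quantitative inequality either by following the norms through the (bounded, explicit) sequence of operations above --- restriction, $1$D hidden regularity, and solving the functional equations on bounded intervals are all bounded in the relevant $L^2$/$H^1$ norms --- or by the usual compactness--uniqueness argument based on it together with a relaxed observability estimate with a compact remainder. The step I expect to be genuinely delicate is precisely the two-uncontrolled-edges vertex analysis: checking that the three functional relations are compatible and do determine $a_2,a_4$, and squeezing out the sharp time $T>4$ rather than the $T>8$ that a crude Ingham bound on the full interleaved spectrum (of gap $\pi/4$) would give. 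As a sanity check --- and an explanation of why (GGCC) is dispensable here --- note that no eigenfunction of $\Delta_\Gc$ on the X graph vanishes identically on $e_1\cup e_3$: such an eigenfunction would be supported in $e_2\cup e_4$, vanish at $v$ by continuity, and hence restrict to a Neumann--Dirichlet eigenfunction on $e_2$ and a Dirichlet--Dirichlet eigenfunction on $e_4$ for the same eigenvalue, which is impossible since those two one-dimensional spectra are disjoint and, anyway, the corresponding normal derivatives at $v$ cannot cancel in Kirchhoff's law.
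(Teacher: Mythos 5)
Your proposal is correct in outline and reaches the right threshold $T>4$, but it takes a genuinely different route from the paper at the decisive step. The paper exploits the symmetry of the X graph: it sets $\psi=\varphi_1+\varphi_2$ on $(-1,0)$ glued with $\varphi_3+\varphi_4$ on $(0,1)$, observes that continuity and Kirchhoff at the centre make $\psi$ a solution of the free wave equation on the single interval $(-1,1)$ with mixed Dirichlet/Neumann ends, and then invokes the pointwise observability result of Ammari--Henrot--Tucsnak (Proposition \ref{prop_AHT}) at the interior point $x=0$ for $T>4$; the pointwise quantity $|\partial_t\varphi(0,t)|^2$ is in turn controlled by the multiplier estimate \eqref{eq_multiplier_6} on the controlled edges, and the parasite terms are removed by the compactness--uniqueness argument of Steps 5--6. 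You instead work directly with the four d'Alembert profiles and solve the system of functional equations at the vertex coming from the two continuity relations and Kirchhoff's law; setting $u=(a_2+a_4)'$ and $w=(a_4-a_2)'$ the system decouples into $u(t+1)=\tfrac12(f(t)+2g'(t))$, $w(t-1)=\tfrac12(f(t)-2g'(t))$ together with the consistency relation $w(t+1)=u(t-1)$, which determines $a_2',a_4'$ on an interval of length $T-2$ and hence the Cauchy data on $e_2\cup e_4$ exactly when $T>4$ --- so your bookkeeping and the resulting threshold agree with the paper's. What your approach buys is a self-contained and transparent derivation of the critical time (it is visibly $2(\ell_{\text{b}}+\ell_{\text{t}})$, the length of the uncontrolled periodic path); what the paper's approach buys is brevity, since the sharp one-dimensional analysis is outsourced to \cite{AHT}. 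Two points you gloss over deserve care if you write this up: the vertex traces $g$ and $f$ are controlled by the localized observation only up to lower-order remainders (this requires the multiplier identity of Step 3 of the proof of Proposition \ref{prop_obs_wave}, not merely hidden regularity, which bounds traces by the full energy), and the resulting weak inequality must be upgraded by compactness--uniqueness; your closing "sanity check" that no eigenfunction vanishes on $e_1\cup e_3$ is precisely the uniqueness statement that step requires, so the endgame is the same as the paper's.
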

The proof of this result in mainly based on the result of pointwise observation of the wave equation on an interval with Dirichlet boundary condition on the left side and Neumann condition on the right side from \cite{AHT} and is provided in Section \ref{sec:neumann}.

\subsection{Exact controllability of the Schr\"odinger equation}

Keeping in mind the results obtained for the wave equation, we now discuss the exact controllability issue for the Schr\"odinger equation. As we will see, (GGCC) is a sufficient but not necessary condition for the control of the Schrödinger equation.
More precisely, using the previous notations (see also Section \ref{preliminaries}), we consider the Schr\"odinger equation in a graph $\Gc$
\begin{equation}\label{eq_schro_control_intro}
\left\{
\begin{array}{ll} i \partial_{t} u = - \Delta_\Gc u-q(x) u + h 1_{\omega} ,& \ \ \ \ t>0,\\
u(t=0)=u_0\in L^2(\Gc), & 
\end{array}\right. 
\end{equation}
with $q\in L^\infty (\Gc)$ and $\Delta_{\Gc}$ the Laplacian defined on the graph $\Gc$, with either Dirichlet or Neumann conditions at the exterior vertices. In \eqref{eq_schro_control_intro}, at time $t \geq 0$, $u(t,\cdot) : \Gc \to \mathbb{C}$ denotes the state and $h(t,\cdot) : \Gc\to \mathbb{C}$ is the control.
The definition of the exact controllability of the Schr\"odinger equation \eqref{eq_schro_control_intro} is as follows.

\begin{defi} \label{defcontsch}
Let $T>0$. The Schrödinger equation \eqref{eq_schro_control_intro} is exactly controllable at time $T>0$ if for every $u_0 \in L^2(\Gc;\mathbb C)$ and for every $u_1 \in L^2(\Gc;\mathbb C)$, there exists $h \in L^2((0,T)\times \omega;\mathbb C)$ such that the solution $u \in \Cc([0,T];  L^2(\Gc))$ of \eqref{eq_schro_control_intro} with initial state $u(t=0)=u_0$  satisfies $$u(t=T) = u_1.$$
When the Schrödinger equation \eqref{eq_schro_control_intro} is exactly controllable for every time $T>0$, we say that it is small-time exactly controllable.
\end{defi}

It is well-known that the exact controllability of the wave equation implies the controllability in small-time of the Schr\"odinger equation, see for instance \cite[Theorem 3.1]{Mil05}. Then, from Theorem \ref{th_exact_wave_intro_neu}, we obtain the following exact controllability result. 

\begin{theorem}
\label{prop:exactcontrolschro_intro}
Assume that the graph $\Gc$ and the control set $\omega$ satisfy (GGCC). Then, the Schrödinger equation \eqref{eq_schro_control_intro} is small-time exactly controllable.
\end{theorem}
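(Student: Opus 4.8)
The plan is to derive Theorem \ref{prop:exactcontrolschro_intro} as a direct consequence of the exact controllability of the wave equation established in Theorem \ref{th_exact_wave_intro_neu}, via the classical transmutation principle relating the wave and Schrödinger equations. The key observation is that under (GGCC), Theorem \ref{th_exact_wave_intro_neu} provides exact controllability of the wave equation \eqref{intro_wave} in the energy space $H^1_\Delta(\Gc)\times L^2(\Gc)$ for some finite time $T_0>L$. The abstract result of Miller \cite[Theorem 3.1]{Mil05} (see also the earlier work of Lebeau and of Phung) states that if the conservative wave-type equation $\partial_t^2 u = Au$ (with $A$ self-adjoint, here $A = \Delta_\Gc + q$) is exactly controllable in some time with controls supported in $\omega$, then the associated Schrödinger-type equation $i\partial_t u = -Au$ is exactly controllable in arbitrarily small time with controls supported in $\omega$. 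This is precisely the structure we have: the operator $-\Delta_\Gc - q$ plays the role of $-A$, the control acts through multiplication by $1_\omega$, and the functional setting matches (controllability of the Schrödinger equation in $L^2(\Gc)$ corresponds to controllability of the wave equation in the energy space).

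The key steps, in order, would be: first, invoke Theorem \ref{th_exact_wave_intro_neu} to obtain exact controllability of \eqref{intro_wave} at some time $T_0 > L$ in $H^1_\Delta(\Gc)\times L^2(\Gc)$ — note that the hypothesis that $0$ is not in the spectrum of $\Delta_\Gc + q$ is needed here and should be inherited or assumed; second, verify that the abstract framework of \cite{Mil05} applies, i.e. check that $A = \Delta_\Gc + q$ with the given boundary/transmission conditions is a self-adjoint operator on $L^2(\Gc)$ bounded above, with the control operator $B: L^2(\omega) \to L^2(\Gc)$ given by extension by zero being bounded (which it trivially is, since it is just restriction/extension), so the admissibility hypotheses are automatic in this bounded-control setting; third, apply Miller's transmutation theorem to conclude that \eqref{eq_schro_control_intro} is exactly controllable in every time $T>0$; fourth, unwind the conclusion into the language of Definition \ref{defcontsch} to state small-time exact controllability. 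One should also briefly address the sign conventions: Miller's result is typically stated for $i\partial_t u = A u$ with $A\geq 0$; here we have $i\partial_t u = -\Delta_\Gc u - q u$, so one works with $A = -\Delta_\Gc - q$, possibly shifted by a constant $c$ large enough that $A + c \geq 0$, which only multiplies the solution by a harmless phase factor $e^{-ict}$ and does not affect controllability.

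The main obstacle, or at least the only point requiring genuine care, is checking that the abstract hypotheses of the transmutation result are met in a form that covers our metric-graph setting: specifically that the wave controllability we get is in the right spaces and with $L^2$-in-time controls, and that no additional regularity or admissibility condition (beyond boundedness of the control operator) is needed. Since the control operator is bounded here — multiplication by the characteristic function of an open set — admissibility is trivial, and the wave controllability from Theorem \ref{th_exact_wave_intro_neu} is already stated with $h \in L^2((0,T)\times\omega)$ in the energy space, so the match is clean. A secondary, purely cosmetic point is handling the spectral assumption on $\Delta_\Gc + q$: for the wave equation this nondegeneracy is needed, but for the Schrödinger equation it is harmless because $i\partial_t u = -\Delta_\Gc u - q u$ is well-posed and its controllability is insensitive to shifts of the operator by constants; so if one prefers, one can add a constant to $q$ to clear the spectral assumption before applying Theorem \ref{th_exact_wave_intro_neu}, then absorb it back. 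In short, the proof is essentially a one-line invocation of Miller's theorem, and the write-up mainly consists of pointing to the right reference and confirming the hypotheses.
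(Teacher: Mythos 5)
Your proposal is correct and follows essentially the same route as the paper: shift the potential by a constant (realized as a harmless phase factor $e^{-ict}$ on the Schrödinger side) so that $0$ is not in the spectrum of the shifted operator, invoke Theorem \ref{th_exact_wave_intro_neu} for the wave equation, and conclude by Miller's transmutation theorem \cite[Theorem 3.1]{Mil05}. The paper's proof is exactly this argument, with the explicit choice $c=\|q\|_{\infty}+1$ and the substitution $w=e^{-it(\|q\|_{\infty}+1)}u$.
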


Theorem \ref{prop:exactcontrolschro_intro} is proved in Section \ref{sec_schr} and yields that (GGCC) is a sufficient condition for the exact controllability of the Schr\"odinger equation.  It is then natural to wonder if (GGCC) is a necessary condition. The graph-type domains being an intermediate geometry between the one-dimensional segments and the higher-dimensional domains, guessing the answer is not immediate. Indeed, in the standard one-dimensional cases of intervals, the controllability of the Schrödinger equation is equivalent to the one of the wave equation, $\omega$ could be any arbitrary open subset, see for instance \cite{Laurent}.
On the other hand, in higher dimensional domains, the Schrödinger equation may be controllable even if the classical geometric control condition of Bardos, Lebeau, Rauch and Taylor is not satisfied, see for instance \cite{AM14,BZ12,Jaffard} for the control of Schrödinger equation in rectangles where $\omega$ could be any arbitrary open subset, and \cite{ALM16} for the control of Schrödinger equation in the unit disk where $\omega$ has to be an open subset touching the boundary of the disk.

\medskip

In this article, we show that the graphs are closer to the higher dimensional domains. Indeed, we exhibit a example of graph where the Schr\"odinger equation is controllable even if (GGCC) is not satisfied.
It is also interesting to note how much our example depends on the diophantine properties of the lengths of its edges. Let us recall the following definition.

\begin{defi}\label{badly}
A real number $\alpha\in\RR$ is said to be {\bf badly approximable} if there exists $C>0$ such that, for all $(p,q)\in\ZZ\times\NN^*$, 
$$\left|\alpha-\frac pq\right|\geq \frac C{q^2}.$$
\end{defi}

Examples of badly approximable numbers are $\sqrt{2}$, $\sqrt{3}$, or, in general, all algebraic irrational numbers of order $2$, see Liouville's Theorem \ref{th_Liouville} recalled in Appendix \ref{diophantine}. Also notice that almost all numbers are not badly approximable and that there are explicit examples of non-badly approximable numbers as $e=\exp(1)$. We refer to Appendix \ref{diophantine} for further discussions on the subject.

\smallskip
 
Now, we consider the X graph of Figure \hyperref[fig_ex]{2}: a star-graph with four edges, two (say $e_1$ and $e_2$) of length $\ell_{\text{b}}>0$ and two ($e_3$ and $e_4$), of length $\ell_{\text{t}}>0$. The set $\omega$ has exactly two non-empty parts, one in $e_1$ and one in $e_3$. Obviously, this geometry does not satisfy (GGCC), since there exists a periodic path living in $e_2\cup e_4$, see Figure \hyperref[fig_ex]{2}. To simplify, we set here $\Delta_\Gc$ to be the operator with Dirichlet boundary conditions and $q$ to be $0$.

\begin{theorem}\label{non_ggcc_sch_intro}
Consider the above geometric setting of the X graph $\Gc$. The Schr\"odinger equation 
\begin{equation*}
\left\{
\begin{array}{ll} i \partial_{t} u = - \Delta_\Gc u + h 1_{\omega} ,& \ \ \ \ t>0,\\
u(t=0)=u_0\in L^2(\Gc), & 
\end{array}\right. 
\end{equation*}
is exactly controllable in $L^2(\Gc)$ for some time $T>0$ if and only if the ratio $\ell_{\text{t}}/\ell_{\text{b}}$ is a badly approximable number.
\end{theorem}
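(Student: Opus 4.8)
The plan is to reduce the exact controllability of the Schrödinger equation on the X graph to a moment problem on the eigenvalues of $-\Delta_\Gc$, and to show that the solvability of this moment problem (equivalently, an Ingham-type observability inequality) is governed precisely by the diophantine nature of $\ell_{\text{t}}/\ell_{\text{b}}$. First I would compute the spectrum of $-\Delta_\Gc$ explicitly. Because the X graph is symmetric with respect to the interchange of the two top edges and the two bottom edges, the eigenfunctions split into symmetric and antisymmetric classes. The antisymmetric eigenfunctions (those vanishing at the central vertex) are supported on a single pair of edges and produce eigenvalues $(k\pi/\ell_{\text{t}})^2$ and $(k\pi/\ell_{\text{b}})^2$, $k\in\NN^*$; among these, the ones antisymmetric with respect to swapping $e_3$ and $e_1$ (resp. $e_4$ and $e_2$) live in $e_2\cup e_4$ and are \emph{invisible} from $\omega\subset e_1\cup e_3$, while the ones antisymmetric in the other pairing put equal mass on both sides and are visible. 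The symmetric eigenfunctions satisfy a transcendental dispersion relation coupling $\ell_{\text{t}}$ and $\ell_{\text{b}}$ via Kirchhoff's condition at the center.

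Next I would set up the observability inequality dual to exact controllability: \eqref{eq_schro_control_intro} is exactly controllable at some time $T$ if and only if there exist $T>0$ and $c>0$ such that $\|u_0\|_{L^2(\Gc)}^2 \le c\int_0^T\int_\omega |e^{it\Delta_\Gc}u_0|^2$. Decomposing $u_0=\sum_n a_n\phi_n$ on the eigenbasis, the left side is $\sum|a_n|^2$ and the right side, after using that $\omega$ has a nonempty part in $e_1$ and one in $e_3$, becomes $\int_0^T\big|\sum_n a_n e^{-i\lambda_n t}\phi_n\big|^2$ integrated over these two intervals. The first point to settle is that no eigenfunction is invisible: an eigenfunction vanishing on both $\omega\cap e_1$ and $\omega\cap e_3$ would, by unique continuation on an interval, vanish on all of $e_1$ and all of $e_3$, hence (using Kirchhoff at the center together with the fact that it then has zero value and zero derivative from the $e_1,e_3$ side) vanish identically; so the dangerous antisymmetric-in-$e_1,e_3$ modes are in fact absent here and every eigenfunction is "seen". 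This is exactly where the hypothesis that $\omega$ meets \emph{both} $e_1$ and $e_3$ (rather than only, say, $e_2\cup e_4$) is used.

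The heart of the proof is then an Ingham-type criterion: the observability inequality holds (for $T$ large enough, by classical Schrödinger Ingham theory since eigenvalues grow quadratically) if and only if the eigenvalues $\{\lambda_n\}$ satisfy a suitable separation / gap condition \emph{weighted by} the mass of $\phi_n$ on $\omega\cap(e_1\cup e_3)$. The mass of the symmetric eigenfunctions on the controlled side is bounded below by a constant; the only eigenvalues that can cluster are the pure-$\ell_{\text{t}}$ and pure-$\ell_{\text{b}}$ families $(k\pi/\ell_{\text{t}})^2$ and $(m\pi/\ell_{\text{b}})^2$, together with the symmetric branch which interlaces them. A pair of eigenvalues $(k\pi/\ell_{\text{t}})^2$ and $(m\pi/\ell_{\text{b}})^2$ is close when $k\ell_{\text{b}}\approx m\ell_{\text{t}}$, i.e. when $\ell_{\text{t}}/\ell_{\text{b}}\approx k/m$; a computation gives $|(k\pi/\ell_{\text{t}})^2-(m\pi/\ell_{\text{b}})^2|\sim c\,k\,|\ell_{\text{t}}/\ell_{\text{b}}-k/m|$. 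Hence a uniform spectral gap (after removing finitely many exceptional clusters, which one handles by a finite-dimensional argument / generalized eigenvectors à la Ingham with multiplicities) holds \emph{if and only if} $|\ell_{\text{t}}/\ell_{\text{b}}-k/m|\gtrsim 1/m^2$ for all $k,m$, which is exactly the badly-approximable condition of Definition \ref{badly}. When $\ell_{\text{t}}/\ell_{\text{b}}$ is badly approximable one feeds the weighted gap condition into a vectorial Ingham inequality (grouping each near-resonant pair into a $2$-block, on which the $2\times2$ Gram matrix is invertible because, by the unique-continuation step above, the two eigenfunctions are not simultaneously small on $\omega$) to get observability, hence control. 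When $\ell_{\text{t}}/\ell_{\text{b}}$ is not badly approximable, there are infinitely many pairs $(k_j,m_j)$ with $|\lambda_{k_j}-\mu_{m_j}|\to 0$ faster than any gap allows, and one constructs explicit quasimodes $a_{k_j}\phi_{k_j}+b_{m_j}\phi_{m_j}$ (adjusting the two coefficients so the combination is nearly invisible on $\omega$, using that the two eigenfunctions restricted to $\omega\cap e_1$ and $\omega\cap e_3$ span at most a $2$-dimensional space whose Gram determinant degenerates as the eigenvalues merge) violating observability for every $T$; this is the standard scheme, as for the necessity direction in Theorem \ref{th_exact_wave_intro_dir} and in \cite{DZ06}.

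The main obstacle I anticipate is the bookkeeping around the symmetric branch of the spectrum: one must show that the transcendental eigenvalues coming from the Kirchhoff dispersion relation do not themselves create uncontrollable resonances independent of the arithmetic of $\ell_{\text{t}}/\ell_{\text{b}}$, and one must track how close the symmetric eigenvalue sitting between $(k\pi/\ell_{\text{t}})^2$ and $(m\pi/\ell_{\text{b}})^2$ comes to them — this is where the precise form of the weight (the $L^2$ mass of $\phi_n$ on $\omega$) has to be estimated from below uniformly, and where the reduction "three nearby eigenvalues" rather than "two" may force a $3$-block Ingham argument. The diophantine input enters only through this mass-weighted gap, so once the spectral picture is pinned down the equivalence with badly approximability is essentially a translation exercise.
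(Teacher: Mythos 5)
Your route (explicit spectrum plus a mass-weighted Ingham/gap criterion) is genuinely different from the paper's, which works with Miller's resolvent estimate, a symmetry reduction to three scalar problems $f,g,h$ on the intervals $(-\ell_{\text{b}},0)$, $(0,\ell_{\text{t}})$, $(-\ell_{\text{b}},\ell_{\text{t}})$, and a compactness/contradiction argument. Two remarks on your spectral picture before the main point: the ``symmetric branch'' is not transcendental --- by the symmetry reduction it is exactly the Dirichlet spectrum $\bigl(j\pi/(\ell_{\text{b}}+\ell_{\text{t}})\bigr)^2$ of the concatenated interval, which both simplifies your bookkeeping and, as noted below, matters for the structure of resonances; and your asymptotic $|(k\pi/\ell_{\text{t}})^2-(m\pi/\ell_{\text{b}})^2|\sim c\,k\,|\ell_{\text{t}}/\ell_{\text{b}}-k/m|$ is off by a factor of $k$ (it should be $\sim c\,k^2\,|\ell_{\text{t}}/\ell_{\text{b}}-k/m|$), although the conclusion you draw from it --- uniform gap iff $|\ell_{\text{t}}/\ell_{\text{b}}-k/m|\gtrsim 1/m^2$ --- is the one the correct computation gives. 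With these repairs your positive (``if'') direction is workable.

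The ``only if'' direction, however, has a genuine gap. Your invisible quasimode is $a\phi_k+b\psi_m$, where $\phi_k$ is the mode antisymmetric in $(e_1,e_2)$, supported on $e_1\cup e_2$, and $\psi_m$ is the mode antisymmetric in $(e_3,e_4)$, supported on $e_3\cup e_4$. Restricted to $\omega=\omega_1\sqcup\omega_3$ with $\omega_1\subset e_1$, $\omega_3\subset e_3$, these two functions have \emph{disjoint supports}: $\phi_k|_{\omega}=(\sin(k\pi x)|_{\omega_1},0)$ and $\psi_m|_{\omega}=(0,\sin(m\pi x/\ell_{\text{t}})|_{\omega_3})$. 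Hence $\|a\phi_k+b\psi_m\|_{L^2(\omega)}^2=|a|^2\|\phi_k\|_{L^2(\omega_1)}^2+|b|^2\|\psi_m\|_{L^2(\omega_3)}^2\gtrsim |a|^2+|b|^2$; the Gram matrix is diagonal with entries tending to $|\omega_1|/2$ and $|\omega_3|/2$ and does \emph{not} degenerate as the eigenvalues merge. (This also contradicts your own positive-direction argument, where you correctly use that this very Gram matrix is uniformly invertible.) So no two-mode combination of these families is ever nearly invisible, regardless of the arithmetic of $\ell_{\text{t}}/\ell_{\text{b}}$. The correct invisible object must involve the symmetric sector: whenever $(k\pi)^2\approx(m\pi/\ell_{\text{t}})^2$ there is automatically a \emph{triple} resonance with $j=k+m$ in the $(j\pi/(\ell_{\text{b}}+\ell_{\text{t}}))^2$ family, and at such a triple resonance the $h$-mode becomes parallel to $\phi_k$ on $\omega_1$ and to $\psi_m$ on $\omega_3$, so a three-mode combination can be made small on $\omega$. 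Equivalently --- and this is what the paper does --- one bypasses the eigenbasis entirely and writes down the function equal to $\sin(q_n\pi x)$ on $e_2$, $\frac{\ell q_n}{p_n}\sin(p_n\pi x/\ell)$ on $e_4$, and $0$ on $e_1\cup e_3$: this lies exactly in $D(\Delta_\Gc)$, vanishes identically on $\omega$, and is an $o(1)$-quasimode precisely when $q_n|q_n\ell-p_n|\to 0$, i.e.\ when $\ell_{\text{t}}/\ell_{\text{b}}$ is not badly approximable. As written, your counterexample construction does not produce any violation of observability.
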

As noticed above, Theorem \ref{non_ggcc_sch_intro} provides both examples and counter-examples: Schr\"o\-dinger equation on the X graph is controllable for the lengths $(\ell_{\text{t}},\ell_{\text{b}})=(\sqrt{2},1)$ but not controllable for the lengths $(\ell_{\text{t}},\ell_{\text{b}})=(e,1)$. In particular, the case $(\ell_{\text{t}},\ell_{\text{b}})=(\sqrt{2},1)$ provides an example where \eqref{eq_schro_control_intro} is controllable despite (GGCC) not being satisfied and Dirichlet boundary conditions being imposed. It is worth mentioning that when $\ell_{\text{t}}/\ell_{\text{b}}$ is a badly approximable number, the Schrödinger equation on the X graph holds is controllable for some time $T>0$ but we actually do not know if the small-time controllability is satisfied or not, see Section \ref{sec:discussionschromin} for a discussion on this topic. Last but not least, recalling that the set of badly approximable numbers if of measure zero, it appears that the counterexample of Theorem \ref{non_ggcc_sch_intro} is somehow exceptional. Actually in Section \ref{sec:discussionGGCCSchroAlmost}, we show that (GGCC) is a necessary and sufficient condition for almost all star-graphs.

\medskip

Theorem \ref{non_ggcc_sch_intro} is proved in Section \ref{sec_schr}. It is based on the well-known equivalence between the exact controllability of the Schrödinger equation and a so-called resolvent estimate, see \cite{BZ,Mil05}. We then strongly exploit the presence of a symmetry in the structure of the graph. Such symmetry allows us to rewrite \eqref{eq_schro_control_intro} as three different internal control problems on three different intervals. This approach based on resolvent estimates only furnishes the controllability for some time $T>0$, the small-time exact controllability is an open problem that we discuss in Section \ref{sec:discussionschromin}.

\subsection{Null-controllability of the heat equation}

Let us now consider the case of the heat equation. We study the relation of the condition (GGCC) and the controllability of the following heat equation in $L^2(\Gc)$,
\begin{equation}\label{eq_free_heat_intro}
\left\{
\begin{array}{ll} \partial_{t} u = \Delta_\Gc u +q(x) u +  h 1_{\omega} ,& \ \ \ \ \ t>0,\\
u(t=0)=u_0\in L^2(\Gc), & 
\end{array}\right. 
\end{equation}
with $q\in L^\infty (\Gc)$ and $\Delta_{\Gc}$ the Laplacian defined on the graph $\Gc$, with either Dirichlet or Neumann conditions at the exterior vertices. In \eqref{eq_free_heat_intro}, at time $t \geq 0$, the function $u(t,\cdot) : \Gc \to \mathbb{R}$ is the state and $h(t,\cdot) : \omega \to \mathbb{R}$ is the control.

\medskip

Due to the well-known regularizing effects of the heat equation, the standard notion of the controllability of \eqref{eq_free_heat_intro} is the so-called null-controllability.
\begin{defi}
Let $T>0$. The heat equation \eqref{eq_free_heat_intro} is null-controllable at time $T$ if for every $u_0 \in L^2(\Gc)$, there exists $h \in L^2((0,T)\times \omega)$ such that the solution $u \in \Cc([0,T],L^2(\Gc))$ of \eqref{eq_free_heat_intro} with initial state $u(t=0)=u_0$ satisfies $$u(t=T) = 0.$$
When the heat equation \eqref{eq_free_heat_intro} is null-controllable for every time $T>0$, we say that it is small-time null-controllable.
\end{defi}

As for the Schr\"odinger equation, it turns out that (GGCC) is a sufficient but not necessary condition for the null-controllability of the equation \eqref{eq_free_heat_intro}.
Actually, the sufficiency can be seen as a direct corollary of the work of Apraiz and B\'arcena-Petisco \cite{AB}. Indeed, from Proposition \ref{prop_GGCC}, we know that (GGCC) is equivalent to Apraiz, Barcena-Petisco's condition recalled in Definition \ref{defi_AB}. Thus, \cite{AB} implies that \eqref{eq_free_heat_intro} is small-time null-controllable whenever as (GGCC) holds.
\begin{theorem}[{\cite[Theorem 1.4]{AB}}]\label{prop:nullcontrolheat_intro}
Let $\Gc$ and $\omega$ satisfy the (GGCC). Then the heat equation \eqref{eq_free_heat_intro} is small-time null-controllable.
\end{theorem}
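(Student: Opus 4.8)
\textbf{Proof proposal for Theorem \ref{prop:nullcontrolheat_intro}.}

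The plan is to deduce this statement as an essentially immediate corollary of \cite[Theorem 1.4]{AB}, so the work reduces to checking that the framework of \cite{AB} applies under our hypotheses. First I would invoke the equivalence already established in Theorem \ref{prop_GGCC}: items (1) and (5) there state that $\Gc$ and $\omega$ satisfy (GGCC) if and only if they satisfy the ABP Condition of Definition \ref{defi_AB}. Thus, assuming (GGCC), we are precisely in the situation covered by Apraiz and Bárcena-Petisco. Second, I would recall their conclusion: under the ABP Condition, the heat equation \eqref{eq_free_heat_intro} with potential $q\in L^\infty(\Gc)$ and with the prescribed boundary conditions (Dirichlet or Neumann at the exterior vertices, Kirchhoff at the interior ones) is null-controllable in arbitrarily small time $T>0$. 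This yields small-time null-controllability as claimed.

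The one point that genuinely requires attention is matching the functional and geometric setting of \cite{AB} with ours. In particular I would verify: (i) that the class of graphs considered in \cite{AB} is at least as general as ours — finite number of edges of finite length, with loops and parallel edges allowed — or, if \cite{AB} is stated only for simple graphs, that a routine subdivision of loops and parallel edges (inserting a degree-two vertex on the interior of each such edge, which does not affect the PDE since Kirchhoff conditions at a degree-two vertex reduce to $C^1$-matching) brings us into their framework while preserving both the equation and the ABP/(GGCC) property of $(\Gc,\omega)$; (ii) that the presence of the zeroth-order term $q(x)u$ is allowed, which is standard for parabolic null-controllability via Carleman estimates and is covered in \cite{AB}; and (iii) that the boundary conditions and the notion of solution in $\Cc([0,T],L^2(\Gc))$ coincide. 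Once these identifications are made, the statement follows verbatim.

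If one prefers a self-contained argument rather than a black-box citation, an alternative is to mimic the global scheme described earlier in the introduction for the wave equation: use the interior control on each component of $\omega$ together with a one-dimensional parabolic observability estimate to propagate observation along uncontrolled edges up to the interior vertices, then use the Kirchhoff transmission condition to pass the observation through any vertex incident to exactly one uncontrolled edge, and finally invoke Description 3 of Theorem \ref{prop_GGCC} — the uncontrolled edges form a forest, each tree meeting at most one exterior vertex — to conclude by induction that the full observability inequality for the adjoint heat equation holds, which is equivalent to null-controllability. The main obstacle in this route, and the reason the citation is cleaner, is assembling the local parabolic Carleman estimates into a global one across the vertices with good control of the constants; since \cite{AB} has already carried this out, I would simply cite it.
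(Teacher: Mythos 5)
Your proposal is correct, and it coincides with the justification the paper itself gives in the introduction: the theorem is explicitly attributed to \cite[Theorem 1.4]{AB}, and the equivalence (GGCC) $\Leftrightarrow$ ABP Condition from Theorem \ref{prop_GGCC} is exactly the bridge used there. Your compatibility checks are also on point; in particular the paper confirms your item (ii) by noting that \cite[Theorem 1.4]{AB} even allows lower-order perturbations $\partial_x(b(t,x)u)+a(t,x)u$, so a bounded potential $q$ is covered. However, the proof the paper actually writes out in Section \ref{sec_parabolic} takes a different route, closer in spirit to (but simpler than) your ``self-contained alternative'': it first conjugates by $e^{-t(\|q\|_\infty+1)}$ so that $0$ is not in the spectrum of the shifted operator $\Delta_\Gc+q-\|q\|_\infty-1$, then invokes the exact controllability of the associated wave equation under (GGCC) (Theorem \ref{th_exact_wave_intro_neu}), and finally applies Miller's control transmutation method \cite[Theorem 3.4]{Mil06} to convert wave exact controllability at some finite time into small-time null-controllability of the heat equation. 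The citation route you chose is the most economical and is endorsed by the authors; the transmutation route has the advantages of being internal to the paper's own wave results (no need to match frameworks with \cite{AB}) and of yielding the explicit control-cost bound $\|h\|_{L^2((0,T)\times\omega)}\leq Ce^{C/T}\|u_0\|_{L^2(\Gc)}$ recorded in \eqref{eq:controlcostheat}, whereas your sketched Carleman-based propagation argument is precisely the strategy of \cite{AB} and would duplicate their work.
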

 For sake of completeness, we provide another proof of Theorem \ref{prop:nullcontrolheat_intro} in Section \ref{sec_parabolic} based on the well-known ingredient that the exact controllability of the wave equation for some time $T>0$ implies the small-time null-controllability of the heat equation, see for instance \cite{Mil06}. The previous result shows that the Graph Geometric Control Condition is a sufficient condition for the small-time null-controllability for the heat equation \eqref{eq_free_heat_intro}. We next show that, as in the Schr\"odinger case, the condition is actually not necessary. Let us state the following definition.
 \begin{defi}\label{defi_sigma_approx_intro}
Let $\sigma>0$. We say that a real number $\alpha\in\RR$ is {\bf at most $\sigma-$appro\-xi\-mable} if there exists $C=C(\sigma)>0$ such that, for all $(p,q)\in\ZZ\times\NN^*$, 
$$\left|\alpha-\frac pq\right|\geq \frac C{q^{\sigma}}.$$
If $\alpha$ is at most $\sigma-$approximable for some $\sigma > 0$, we say that it is {\bf at most polynomially approximable}.
\end{defi}
Examples of at most polynomially approximable numbers are all algebraic numbers of order $d \geq 2$, see Theorem \ref{th_Liouville} recalled in Appendix \ref{diophantine}. Note also that Khinchin proves in 1926 that this set of numbers is of full Lebesgue measure. We refer to Appendix \ref{diophantine} for further discussions on the subject.

\medskip

 We consider a star-graph $\Gc$ composed by $N$ edges $e_1$, \dots, $e_N$, of respective lengths $\ell_1$, \dots, $\ell_N$, that meet at a unique interior vertex $v$. We assume that  
 $\Delta_\Gc$ is the Laplacian operator with Dirichlet boundary conditions at the $N$ exterior vertices and we take $q=0$. Assume that $\omega$ is an open set contained in the first edge $e_1$. 
\begin{theorem}
\label{prop:ggccnotnecessaryheat_intro}
Consider the above setting with a star-graph $\Gc$ controlled in only one edge. Assume that all ratio  $\ell_j/\ell_k$ with $j\neq k \in \{2, \dots, n\}$ are at most polynomially approximable numbers. Then, the heat equation
\begin{equation*}
\left\{
\begin{array}{ll} \partial_{t} u = \Delta_\Gc u +  h 1_{\omega} ,& \ \ \ \ \ t>0,\\
u(t=0)=u_0\in L^2(\Gc), & 
\end{array}\right. 
\end{equation*}
is small-time null-controllable.
\end{theorem}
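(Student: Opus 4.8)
The plan is to pass to the dual observability estimate for the heat semigroup and reduce it, via the Lebeau--Robbiano strategy, to a \emph{spectral inequality}, which is then proved in two stages: a one-dimensional Remez (Turán--Nazarov) inequality propagating smallness from $\omega$ to the whole controlled edge $e_1$, followed by a quantitative ``propagation through the interior vertex'' in which the diophantine hypothesis is the decisive ingredient. Since $q=0$ and Dirichlet conditions are imposed, $-\Delta_\Gc$ is a nonnegative self-adjoint operator with compact resolvent; let $(\phi_n)_{n\ge1}$ be an $L^2(\Gc)$-orthonormal eigenbasis with eigenvalues $0<\lambda_1^2\le\lambda_2^2\le\cdots$. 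By duality, small-time null-controllability is equivalent to the observability of the (time-reversed, hence forward) heat semigroup from $\omega$ for one arbitrarily small $T>0$, and by the Lebeau--Robbiano method (cf.\ \cite{AB} for this scheme on graphs) it suffices to prove
\[
\|u\|_{L^2(\Gc)}\le C\,e^{c\sqrt{\mu}\,\ln(2+\mu)}\,\|u\|_{L^2(\omega)}\qquad \text{for all } \mu>0 \text{ and all } u\in E_\mu:=\operatorname{span}\{\phi_n:\lambda_n^2\le\mu\},
\]
the point being only that the exponent is $o(\mu)$.

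\emph{From $\omega$ to $e_1$.} Put $\Lambda:=\sqrt\mu$. On $e_1\simeq(0,\ell_1)$ one has $\phi_n|_{e_1}(x)=a_n\sin(\lambda_n x)$ (Dirichlet at the exterior vertex), so $u|_{e_1}$ is an exponential sum with frequencies in $[-\Lambda,\Lambda]$ and at most $N_\mu:=\#\{n:\lambda_n\le\Lambda\}$ distinct ones; by the Weyl law on a finite graph $N_\mu\le C\Lambda$. A Remez / Turán--Nazarov inequality for exponential sums then yields, on a fixed component of $\omega$, $\|u\|_{L^2(e_1)}\le(A\ell_1/|\omega|)^{CN_\mu}\|u\|_{L^2(\omega)}\le C e^{c\Lambda}\|u\|_{L^2(\omega)}$. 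It remains to establish the ``propagation'' estimate $\|u\|_{L^2(\Gc)}\le C e^{c\Lambda\ln(2+\Lambda)}\|u\|_{L^2(e_1)}$.

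\emph{From $e_1$ to $\Gc$: diophantine analysis of the spectrum.} Writing $\delta_j(\lambda):=\operatorname{dist}(\lambda\ell_j,\pi\ZZ)$, the secular equation $\sum_{j}\cot(\lambda\ell_j)=0$ (valid at the generic eigenvalues, with the evident modifications at exact resonances) forces the two smallest among $\delta_1(\lambda_n),\dots,\delta_N(\lambda_n)$ to be comparable. If such a near-resonance involves two \emph{uncontrolled} edges $e_j,e_{j'}$ ($j\ne j'$ in $\{2,\dots,N\}$), the hypothesis that $\ell_j/\ell_{j'}$ is at most $\sigma$-approximable gives $\delta_j(\lambda_n)\gtrsim\lambda_n^{-(\sigma-1)}$; otherwise $e_1$ is itself one of the two near-resonant edges, so the amplitude of $\phi_n$ on $e_1$ is comparable to its dominant amplitude. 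In all cases, after normalization, $\|\phi_n\|_{L^2(e_1)}\ge c\,\lambda_n^{-\kappa}$ for some $\kappa=\kappa(\sigma,N)$, and in particular no eigenfunction vanishes on $e_1$. The same analysis controls the spectral gap: near a near-coincidence of the frequency families of two edges $e_a,e_b$, the secular function factors, up to a small remainder, as $\big(\prod_{q\notin\{a,b\}}\sin(\lambda\ell_q)\big)\sin(\lambda(\ell_a+\ell_b))$, whose zeros are $\pi/(\ell_a+\ell_b)$-separated; the only mechanism producing closer zeros is a triple near-coincidence, and the hypothesis prevents two uncontrolled edges from being simultaneously super-near-resonant, reducing the worst case to a quadratic with polynomially separated roots. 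Hence $\lambda_{n+1}-\lambda_n\ge c\,\lambda_n^{-m}$ for some $m=m(\sigma,N)$. Combining this polynomial separation with a quantitative lower bound for the smallest eigenvalue of the Gram matrix of a $\gamma$-separated system of exponentials on the fixed interval $(0,\ell_1)$ (of Turán / prolate-spheroidal type) shows that $\{\sin(\lambda_n\cdot)\}_{\lambda_n\le\Lambda}$ is linearly independent in $L^2(0,\ell_1)$ with Gram matrix bounded below by $c\,e^{-c\Lambda\ln(2+\Lambda)}$. Writing $u=\sum_{\lambda_n\le\Lambda}\alpha_n\phi_n$, orthonormality gives $\|u\|_{L^2(\Gc)}^2=\sum_n\alpha_n^2$, while
\begin{align*}
\|u\|_{L^2(e_1)}^2&\ge c\,e^{-c\Lambda\ln(2+\Lambda)}\sum_n(\alpha_n a_n)^2\\
&\ge c\,e^{-c\Lambda\ln(2+\Lambda)}\Big(\min_{\lambda_n\le\Lambda}a_n^2\Big)\sum_n\alpha_n^2\ \ge\ c\,e^{-c\Lambda\ln(2+\Lambda)}\,\Lambda^{-2\kappa}\,\|u\|_{L^2(\Gc)}^2,
\end{align*}
using the visibility bound. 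Chaining this with the previous step gives the spectral inequality, hence the theorem.

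\emph{Main obstacle.} All the difficulty is concentrated in the third step: converting ``at most polynomially approximable ratios of the \emph{uncontrolled} edge lengths'' into (a) the quantitative visibility bound and, above all, (b) the polynomial lower bound on consecutive spectral gaps — one must control simultaneous near-coincidences of several edge families and pin down exactly where, and only where, the hypothesis is used (in particular why ratios involving $\ell_1$ are irrelevant) — together with the somewhat non-standard quantitative lower bound for the Gram matrix of an overcomplete, merely polynomially separated system of exponentials on a fixed interval. A minor technical point, easily handled, is to localize in time on $(T/2,T)$ so that the one-dimensional estimates and the Lebeau--Robbiano assembly have clean constants.
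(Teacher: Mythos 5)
Your route is genuinely different from the paper's. The paper disposes of this theorem in two short steps: it quotes the boundary-controllability result of D\'ager--Zuazua (\cite[Corollary 8.6]{DZ06}, restated as Theorem \ref{lem:boundarycontrolheat}), which is where all the diophantine content lives, and then transfers boundary control to internal control by a cutoff argument --- remove $\hat{\omega}\subset\subset\omega$ from $e_1$, solve two boundary-control problems on the two resulting components, and glue with cutoffs $\theta(x)$, $\eta(t)$, the admissibility of the resulting internal control being secured by a local parabolic regularity lemma (Lemma \ref{lem:regheat}). You instead run Lebeau--Robbiano on a spectral inequality and re-derive the diophantine content from scratch, via a quantitative visibility bound $\|\phi_n\|_{L^2(e_1)}\gtrsim\lambda_n^{-(\sigma-1)}$ and a polynomial spectral gap. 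Your analysis of the secular equation $\sum_j\cot(\lambda\ell_j)=0$ is sound where you carry it out: the two smallest $\delta_j$ must indeed be comparable, the hypothesis forces both to be $\gtrsim\lambda^{1-\sigma}$ when they sit on uncontrolled edges, and your observation that ratios involving $\ell_1$ are irrelevant (because a near-resonance involving $e_1$ makes the dominant amplitude live on $e_1$) is exactly the right point. What your approach buys is a self-contained proof with an explicit spectral inequality, hence a control-cost estimate; what the paper's buys is brevity and modularity.

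The two steps you flag as the main obstacle are indeed where all the work is, and as written they are only sketches. For the gap bound, the clean way to make your ``factorization'' heuristic rigorous is the strict interlacing of the zeros of $f(\lambda)=\sum_j\cot(\lambda\ell_j)$ with its poles $\{\pi p/\ell_j\}$ (since $f$ is strictly decreasing between consecutive poles, there is exactly one eigenvalue per inter-pole interval): two consecutive eigenvalues can cluster only around a cluster of at least two poles; a pair of close poles with the remaining cotangents bounded by $H$ yields two zeros separated by $\gtrsim 1/H$ (your quadratic); and the hypothesis caps $H$ by $C\lambda^{\sigma-1}$ because at most one \emph{uncontrolled} edge can be super-near-resonant at a given $\lambda$. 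The unconstrained pair $(e_1,e_j)$ is harmless precisely because it contributes the two close poles and not to $H$. For the Gram bound, the estimate $c\,e^{-C\Lambda\ln\Lambda}$ for an overcomplete family of $O(\Lambda)$ sines with pairwise frequency gaps $\gtrsim\Lambda^{-m}$ on a fixed interval is true but not off-the-shelf; it is a condensation-index/biorthogonal-family estimate of Fattorini--Russell type and must be cited precisely or proved. Neither point is a fatal flaw, but completing them amounts, in effect, to re-proving \cite[Corollary 8.6]{DZ06}, which the paper simply cites.
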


Theorem \ref{prop:ggccnotnecessaryheat_intro} is proved in Section \ref{sec_parabolic}. It shows that (GGCC) is not a necessary condition for the null-controllability in $L^2(\Gc)$, even with Dirichlet boundary conditions. It presents an example where the choice of the lengths of the edges of the graph guarantees controllability, even if we control only one edge.  
Indeed, the choice of the lengths of the edges allows us to “propagate” the control from the edge $e_1$ to the rest of the structure. For instance, we can consider the three star-graph $\Gc$, composed with edges $\{e_1, e_2, e_3\}$ of respective length $(1,1,\sqrt{2})$ or $(1,1,e)$, all connected at a central vertex $v$. Then, (GGCC) is not satisfied because $\mathcal{E} \setminus \mathcal{E}_{\omega} = \{e_2, e_3\}$ is a tree and contains two exterior vertices of the original graph $\Gc $, see for instance the third point of Theorem \ref{prop_GGCC}. Moreover, $\sqrt{2}$ and $e=\exp(1)$ are both polynomially approximable numbers, see Appendix \ref{diophantine}.

\medskip

Note that there is a strong difference between the exact controllability result for the Schrödinger equation stated in Theorem \ref{non_ggcc_sch_intro} and the null-controllability result for the heat equation stated in Theorem \ref{prop:ggccnotnecessaryheat_intro}. Indeed the first one indicates that the Schrödinger equation is exactly controllable only for an exceptional set of X graphs, while the second one proves that the heat equation set on a star-graph, controlled in only one edge, is small-time null-controllable for almost all lengths of the edges, see Section \ref{sec:discussionGGCCSchroAlmost}.

\medskip

The proof of Theorem \ref{prop:ggccnotnecessaryheat_intro} is strongly based on \cite[Corollary 8.6]{DZ06}, considering the possibility of controlling the heat equation on a star-graph with only one boundary control. We then employ a general strategy to pass from the boundary control result to an interior control result. Such a procedure has been widely used, see for instance \cite[Theorem 2.2]{AK11}.\\

\medskip

To conclude this introduction, we sum up in Table 1 the internal controllability properties of the wave equation, Schrödinger equation and heat equation according whether (GGCC) holds or not.

\begin{table}[ht] \label{tableau}
\begin{center}
  \begin{tabular}{|M{5cm}|M{2.5cm}|M{6.2cm}|}
\hline
{\bf Internal control on graph } &  {\bf  (GGCC)} & {\bf  no (GGCC)}\\
\hline
Wave eq. with Dirichlet b.c. & Controllable:\linebreak Theorem \ref{th_exact_wave_intro_dir}  & Never controllable:\linebreak Theorem \ref{th_exact_wave_intro_dir} \\
\hline
Wave eq. with mixed b.c. & Controllable:\linebreak Theorem \ref{th_exact_wave_intro_neu}  & Sometimes controllable:\linebreak example provided by Theorem \ref{th_neumann}\linebreak{}
({\it Conjecture \ref{conj_neu}:  almost never true})\\
\hline
Schrödinger equation & Controllable:\linebreak Theorem \ref{prop:exactcontrolschro_intro} &  Sometimes controllable:\linebreak{}example provided by Theorem \ref{non_ggcc_sch_intro}\linebreak{}
({\it Conjecture \ref{conj_sch}:  almost never true})\\
\hline
Heat equation & Controllable:\linebreak  Theorem \ref{prop:nullcontrolheat_intro} &  Sometimes controllable:\linebreak{}example provided by Theorem \ref{prop:ggccnotnecessaryheat_intro}\linebreak{}({\it Conjecture \ref{conj_heat}: almost always true})\\
\hline
\end{tabular}  
\end{center}
\caption{\it A summary of the results of the present article. The condition (GGCC) is always sufficient for the internal control of the PDEs considered. Its necessity is more complex and depends on the considered PDE and the geometry of the graph. Notice that, for particular graphs with specific ratio of lengths of edges and not satisfying (GGCC), the existence of exact uncontrolled modes precludes the controllability. So the existence of non-controllable examples in the right column is already well known.}
\end{table}

\subsection*{Acknowledgments} 
The authors thank Tanguy Rivoal for interesting discussions and for his help with the appendix.
The second author acknowledges the support of the Agence nationale de la recherche of the French government through the grant {\it QuBiCCS} (ANR-24-CE40-3008). The last author is partially supported by the Project TRECOS ANR-20-CE40-0009 funded by the ANR (2021-2025).


\section{Preliminaries and main notations}\label{preliminaries}

In this section, we introduced the main notations adopted in the work.

\subsection{Graph notions and notation}

Throughout the rest of the article, $\Gc$ is a graph endowed with the following structure:

\begin{itemize}
\item We denote $\Ec=\{e_j,\ j\in J\}$ the set of edges, where $J$ is a finite set of $\NN$, and $ \Vc=\{v_k,\ k\in K\}$ the set of vertices, where $K$ is a finite set of $\NN$.
\item For all $j\in J$, the edge $e_j$ links two vertices $v_k$ and $v_{k'}$ and we set $\Vc_j:=(k,k')\in K^2$. The edge $e_j$ is associated to a finite length $\ell_j>0$ and is assimilated to the segment $(0,\ell_j)\subset\RR$, where $0$ is the end attached to $v_k$ and $\ell_j$ the one attached to $v_{k'}$. 
\item For all $k\in K$, we define $\Ec_k\subset J$ as the set of edges having $v_k$ as vertex, that is 
\[\Ec_k:=\big\{j\in J,\ \Vc_j\in\{k\}\times\NN \text{ or }\Vc_j\in \NN\times\{k\}\big\}.\] 
\item The set of exterior vertices $\Vc_{\rm ext}$ is the set of vertices $v_k$ with $\# \Ec_k=1$ and $K_{\rm ext}$ are the corresponding indices. We denote $\Vc_{\rm int}$ the set of interior vertices of $\Gc$, for which $\# \Ec_k\geq 2$, and $K_{\rm int}$ the corresponding indices. 
\end{itemize}

We recall that we allow very general graphs, including loops (an edge $e_j$ may link $v_k$ to itself, i.e., $\Vc_j = (k,k)$ is possible) and parallel edges (two edges $e_j$ and $e_{j'}$ may have $j \neq j'$ but $\Vc_j = \Vc_{j'}$). As mentioned in the introduction, we only require finiteness, meaning that $\Gc$ is composed of a finite number of edges of finite length. Finally, note that our setting induces an orientation of the edges, but this is only a matter of notation to assimilate the edge $e_j$ to the segment $(0, \ell_j)$, where $\ell_j$ is the length of the edge. This orientation has no consequence on the analysis.

\medskip

We assimilate the graph $\Gc$ to the disjoint union $\sqcup_{j\in J} (0,\ell_j)$ of the edges, where each segment belongs to a different copy of $\RR$. We recall the notion of {\bf path} on the graph: 
\begin{itemize}
\item we start at some point $x_0$ in one of the edges $e_{j_0}$: we choose a direction and we follow the edge until we reach an end;
\item if this end corresponds to an exterior vertex, we come back in $e_{j_0}$ in the other direction; if this end corresponds to an interior vertex, we choose any other edge $e_{j_1}$ attached to this vertex, with $j_1\neq j_0$, and start following this edge $e_{j_1}$;
\item we continue the route and stop at some point. 
\end{itemize}
The total {\bf length} of a path is the sum of all the covered part of the graph. 
We also recall that $\omega$ is an open subset of the graph $\Gc$, that is an open subset of the disjoint union of the edges. The set 
\[\Ec_\omega:=\{e_j \in \mathcal E,\ e_j\cap\omega \neq \emptyset\}, \]
is the set of {\it controlled} edges. A path is said to {\it meet} $\omega$ if the route passes through a point belonging to $\omega$.

\subsection{Sobolev spaces on graphs}
In this paper, we are considering PDEs defined on the edges of a graph $\Gc$. It is natural to introduce the following space of functions. First, we define
$$L^2 (\Gc) := \prod_{j\in J} L^2(e_j):=L^2\big(\sqcup_{j\in J} e_j\big),$$
to be the Hilbert space endowed with the inner product
$$\Big((u_j),(w_j)\Big)_{L^2(\Gc)} =  \sum_{j\in J} \int_0^{\ell_j} u_j(x) \, \overline{w_j(x)} \dd x.$$
The above sum is a sum of the $L^2$-products on all the edges. In this paper, we will write this kind of terms more shortly as the integral on $\Gc$. In other words, for any integrable function $f$ of $L^1\big(\sqcup_{j\in J} e_j\big)$, we set
$$ \int_{\Gc} f(x) \dd x ~:=~ \sum_{j\in J}~ \int_0^{\ell_j} f(x) \dd x.$$
We also use here and everywhere the abuse of notation of assimilating $e_j$ to $(0,\ell_j)$ and we denote by 
$u_j(v)$ the value $u_j(0^+)$ if $v$ is the incident vertex of $e_j$ assimilated to $0$ (resp. the value $u_j(\ell_j^-)$ if $v$ is the incident vertex of $e_j$ assimilated to $\ell_j$) and we set $\frac{\dd u_j}{\dd n_j}(v)=-\partial_x u_j(0^+)$ (resp. $=\partial_x u_j(\ell_j^-)$) the exterior derivative of $u$ at the edge $v$. 
We introduce the following Sobolev spaces. Let
$$H^1(\Gc):=\Big\{u=(u_j)_{j\in J} \in \prod_{j\in J} H^1(e_j)~:~ u_j(v_k)=u_{j'}(v_k),\  k\in K_{\rm int},\  j,j'\in \Ec_k\Big\},$$
be the Hilbert space endowed with the inner product
$$\Big((u_j),(w_j)\Big)_{H^1(\Gc)} =  \int_\Gc u_j(x) \, \overline{w_j(x)} \dd x + \int_\Gc \partial_{x} u_j(x) \, \overline{\partial_x w_j(x)} \dd x .$$
Notice that the condition in the definition of $H^1(\Gc)$ means that the function $u$ is continuous at the interior vertices. We also define the subspace
\[ H_0^1(\Gc) := \big\{u=(u_j)_{j\in J} \in H^1(\Gc)~:~ u_j(v_k)=0,\  k\in K_{\rm ext},\  j\in \Ec_k\big\}\]
of $H^1-$functions satisfying the Dirichlet boundary conditions at the exterior vertices. 
We finally introduce the Hilbert space 
\begin{align*}
H^2(\Gc):=\Big\{u=(u_j)_{j\in J} \in \prod_{j\in J} H^2(e_j)~:&\ u_j(v_k)=u_{j'}(v_k),\ k\in K_{\rm int},\ j,j'\in \Ec_k;\\
&\ \sum_{j\in\Ec_k} \frac{\dd u_j}{\dd n_j}(v_k)=0,\ k\in K_{\rm int}\Big\},
\end{align*}
endowed with the inner product
$$ \Big((u_j),(w_j)\Big)_{H^2(\Gc)} \\
    =  \int_\Gc u_j(x) \,\overline{w_j(x)} \dd x + \int_\Gc \partial_{x} u_j(x) \, \overline{\partial_x w_j(x)} \dd x+   \int_\Gc\partial_{x}^2 u_j(x) \, \overline{\partial_{x}^2 w_j(x)} \dd x.
$$
In the definition of $H^2(\Gc)$, notice the presence of the classical Kirchhoff boundary condition saying that the flux through any interior vertex is conserved.

\subsection{The Laplace operator and well-posedness of PDEs on graphs}

We now define the Laplacian operator $ \Delta_\Gc$ equipped with Dirichlet and Neumann boundary conditions on the exterior vertices. We denote by $K_{\rm ext}^D\subseteq K_{\rm ext}$ the indices of the exterior vertices with Dirichlet boundary conditions and $K_{\rm ext}^N\subseteq K_{\rm ext}$ the indices of the exterior vertices with Neumann boundary conditions.
We can then define the Laplacian $\Delta_\Gc$ as the operator with domain 
\begin{align*}D( \Delta_\Gc)=\Big\{u=(u_j)_{j\in J} \in H^2(\Gc)~:&\ \partial_x u_j(v_k)=0,\  k\in K_{\rm ext}^N,\  j\in \Ec_k;\\
&\ u_j(v_k)=0,\  k\in K_{\rm ext}^D,\  j\in \Ec_k\Big\}.\end{align*}
and such that, for every $u\in D( \Delta_\Gc)$, its action is $\big( \Delta_\Gc u\big)_j = \partial_{x}^2 u_j$ for every $j\in J$.
The Laplacian operator $ \Delta_\Gc$ is a non-positive self-adjoint operator with a compact resolvent, hence its spectrum $\sigma( \Delta_{\Gc})$ is a discrete set, see \cite[Theorem 3.1.1]{BK13}. We finally notice that
\[ D(|\Delta_{\Gc}|^{1/2}) = \big\{u=(u_j)_{j\in J} \in H^1(\Gc)~:~ u_j(v_k)=0,\  k\in K_{\rm ext}^D,\  j\in \Ec_k\big\},\]
and we set in short
$$H^1_\Delta(\Gc) := D(|\Delta_{\Gc}|^{1/2}).$$
In the specific case of the Dirichlet Laplacian operator, we have of course $H^1_\Delta(\Gc)=H^1_0(\Gc)$. Its dual space, with respect to the pivot space $L^2(\Gc)$, will be denoted by
$$ H^{-1}(\Gc) := H^1_0(\Gc)'.$$

\medskip

By using the properties of the operator \( \Delta_\Gc \), in particular the fact that it is sectorial,  and standard semigroup theory (see for instance \cite[Appendix A and Chapter 2]{Cor07}), we obtain the following classical well-posedness results for PDEs on graphs. Let $T>0$, $q \in L^{\infty}(\Gc)$ and $f \in L^2(0,T;L^2(\Gc))$.
\begin{itemize}
    \item Let $a \in L^{\infty}(\Gc)$, $a \geq 0$. For all $(u_0, u_1) \in H^1_\Delta(\Gc)\times L^2(\Gc)$, the wave equation
\begin{equation}\label{wave_wp}
\left\{
\begin{array}{ll} 
\partial_{t}^2 u+a(x) \partial_t u = \Delta_\Gc u+ q(x) u + f,\ \ \ \ & t\in(0,T),\\
(u,\partial_t u)(t=0)=(u_0,u_1) \in H^1_\Delta(\Gc)\times L^2(\Gc), & 
\end{array}\right. 
\end{equation}  
admits a unique solution $u  \in  \Cc^1\big([0,T],L^2(\Gc)\big)\cap \Cc^0\big([0,T],H^1_\Delta(\Gc)\big)$.
\item For all $u_0 \in L^2(\Gc)$ and for all $f \in L^2(0,T;L^2(\Gc))$, the Schrödinger equation
\begin{equation}\label{schro_wp}
\left\{
\begin{array}{ll} 
i \partial_{t} u = - \Delta_\Gc u - q(x) u + f,\ \ \ \ & t\in(0,T),\\
u(t=0)=u_0 \in L^2(\Gc), & 
\end{array}\right. 
\end{equation}
admits a unique solution $u  \in   \Cc^0\big([0,T],L^2(\Gc)\big)$.
\item For all $u_0 \in L^2(\Gc)$ and for all $f \in L^2(0,T;L^2(\Gc))$, the heat equation
\begin{equation}\label{heat_wp}
\left\{
\begin{array}{ll} 
\partial_{t} u =  \Delta_\Gc u + q(x) u + f,\ \ \ \ & t\in(0,T),\\
u(t=0)=u_0 \in L^2(\Gc), & 
\end{array}\right. 
\end{equation}
admits a unique solution $u  \in   \Cc^0\big([0,T],L^2(\Gc)\big)$.
\end{itemize}

In particular, these well-posedness results ensure that the wave equation \eqref{intro_wave}, the Schrödinger equation \eqref{eq_schro_control_intro} and the heat equation \eqref{eq_free_heat_intro} are well-posed linear controlled systems in the sense of \cite[Section 2.3]{Cor07}. Finally, the Cauchy problem associated to the damped wave equation \eqref{intro_wave_damp} is also well-posed.

\section{Study of the Graph Geometrical Control Condition}\label{sec_GGCC}

The aim of this section is to study the Graph Geometric Control Condition and provide some equivalent formulations. 

\subsection{Definitions of the APB Condition and the watershed condition}
Recall that a \textit{cycle} is a sequence of edges $e_0$, $e_1$, $\dots$ , $e_n$ in $\Ec$ such that $\Ec_k\cap \Ec_{k+1}\neq \emptyset$ for every $k \in \{0, \dots, n-1\}$ (meaning that the edges are adjacent) and $e_0 = e_n$. We are ready to formally introduce the {\it Apraiz-B\'arcena-Petisco Condition} (ABP condition in short) mentioned in the introduction and adopted in the work \cite{AB}.

\begin{defi}\label{defi_AB}
We say that a graph $\Gc$ and an open set $\omega\subset\Gc$ satisfy the {\bf ABP Condition} if:
\begin{enumerate}
\item[(i)] the set $\Ec\setminus\Ec_\omega$ of uncontrolled edges does not contain any cycle of edges,
\item[(ii)] there exists an injective function $u$ from $\Ec\setminus\Ec_\omega$ into the set $\Vc_{\rm int}$ of interior vertices such that, for all $e_j\in \Ec\setminus\Ec_\omega$, $u(e_j)$ is one of the incident vertices $\Vc_j$ of $e_j$.
\end{enumerate}
\end{defi}

We also introduce a geometric description following the work of Avdonin and Zhao. They show in \cite{AZ22} that this description is related to the optimal time of control.
\begin{defi}\label{defi_watershed}
We say that a graph $\Gc$ and an open set $\omega\subset\Gc$ satisfy the {\bf watershed Condition} if there exists a finite number of open paths $(p_i)_{i=1\dots N}$ on the graph such that:
\begin{enumerate}
\item each path $p_i$ has at least one end in $\omega$,
\item the union $\cup_i p_i$ of the paths covers $\Gc\setminus \omega$,
\item all the paths are disjoints, 
\item if a vertex belongs to the closure of several paths, only two situations are allowed: each incoming edge belongs to a different path or there are exactly two edges belonging to the same path and all the others edges belong to different paths. 
\end{enumerate}
\end{defi}
The above condition can be seen as the fact that the uncontrolled part $\Gc\setminus\omega$ of the graph can be covered by watersheds consisting of rivers starting from $\omega$ and flowing down, up to reach an exterior vertex or $\omega$ again. The last condition states that no river can cross another one and if several rivers meet at some vertex, only one can continue after this point. 

\subsection{Equivalent formulations of (GGCC)}

As explained in the introduction, the above criteria are equivalent to the Graph Geometric Control Condition. The purpose of the present section is to provide a proof of this fact, i.e. prove Proposition \ref{prop_GGCC}.

\begin{proof}[Proof of Proposition \ref{prop_GGCC}] 
We prove 1 $\Leftrightarrow$ 2, 2 $\Leftrightarrow$ 3, 1 $\Rightarrow$ 4  $\Rightarrow$ 2, 3 $\Rightarrow$ 5 $\Rightarrow$ 2 and 3 $\Rightarrow$ 6 $\Rightarrow$ 5.

\medskip

\noindent
1 $\Rightarrow$ 2.  If the graph contains a cycle or a path connecting two exterior vertices that does not intersect $\omega$, then this cycle or path induces a periodic path of infinite length that never intersects $\omega$.

\medskip

\noindent
2 $\Rightarrow$ 1. Suppose that every cycle of the graph and every path connecting two exterior vertices contains a part of $\omega$. Let $\ell$ be the sum of the lengths of all edges in the graph. Consider a path of length at least $\ell$; it must necessarily pass through the same point twice. This can occur in only two ways: either the path contains a cycle and thus part of $\omega$, or the path has made a U-turn at an exterior vertex. In the latter case, we wait another time interval of $\ell$.  Once again, the path must pass through the same point twice. At this point, either it contains a cycle (and thus part of $\omega$), or it connects to another exterior vertex. In this case, the path has followed a path connecting two exterior vertices and therefore intersects $\omega$.

\medskip

\noindent
2 $\Leftrightarrow$ 3. This equivalence is immediate: (GGCC) means that each connected uncontrolled subgraph of $\Gc$ cannot contain a cycle or two exterior vertices of $\Gc$, which is exactly saying that each such subgraph is indeed a tree containing at most one exterior vertices of the original graph. 

\medskip

\noindent
1 $\Rightarrow$ 4. Following repeatedly any periodic path leads to a path of infinite length, which must necessarily intersect $\omega$.

\medskip

\noindent
4 $\Rightarrow$ 2. Assume, for the sake of contradiction, that the point 4 is satisfied but the point 2 is not. If $\omega$ does not intersect a cycle or a path connecting two exterior vertices, then it is possible to construct a periodic path that does not intersect $\omega$, which leads to a contradiction. Therefore, Property 2 holds.

\medskip

\noindent
3 $\Rightarrow$ 5. Because we have already proved that 2 $\Leftrightarrow$ 3, the condition on the cycles is automatically satisfied. We construct as follows the injective function described in the point (ii) of ABP Condition (see Definition \ref{defi_AB}). We remove all controlled edges from the graph and we obtain a forest where each tree contains at most one exterior vertex from the original graph. If such a vertex exists, then we designate it as the root of the tree; otherwise, we choose any vertex of the tree as the root. For each of the trees, we place the root at the bottom and construct the injective function by associating each edge with the vertex “above”. Since the only potentially restricted vertex is at the root, this procedure successfully constructs the injective function required by ABP Condition.

\medskip 

\noindent
5 $\Rightarrow$ 2. First, notice that the condition on cycles is the same in both items. 
Assume now, for the sake of contradiction, that there exists a path connecting two exterior vertices without intersecting $\omega$. This path consists of $N$ edges and $N-1$ interior vertices. In such a case, it would be impossible to construct an injective function as described in the point (ii) of  ABP Condition. Therefore, Property 2 holds. 

\medskip 

\noindent
3 $\Rightarrow$ 6. For each uncontrolled tree, we can construct a watershed as follows. If the tree contains an (unique) exterior vertex of $\Gc$, put it as the root, else choose any vertex as root. Start all the paths for the upper leaves of the tree: all these leaves are at the boundary of the controlled part $\omega$ and we can start the “rivers” from there. Then simply extend the “river” paths downwards. When several rivers meet at a vertex, stop all of them except one. Obviously, we can cover all the tree up from its leaves to its root.

\medskip 

\noindent
6 $\Rightarrow$ 5. Assume that a “watershed” covers the uncontrolled part of $\Gc$ that is that we have a finite number of disjoint “river” paths $(p_i)$ covering $\Gc\setminus\omega$. Each $(p_i)$ has at least one end in $\omega$ and we use it to define a sense: the “upstream” of $p_i$ must arrive at such a point, called the “source”, and the “downstream” sense is define the other way. If we start from an uncontrolled edge, it must belong to one of the rivers and we can start to move upstream. If we arrive at a “confluence” vertex, we can choose still follow the same river or choose any tributary river. In any case, we must still travel upstream because this vertex cannot be the source of the tributary rivers. By finiteness of the paths (they are disjoints in a finite graph), we must arrive at a source. This excludes the possibility of having a cycle in  $\Gc\setminus\omega$ and shows (i) of Definition \ref{defi_AB}. To obtain (ii), we simply associate to any uncontrolled edge the first vertex reached when moving upstream. This mapping is injective because it is impossible to have two rivers flowing downstream from the same vertex (one must be stopping at it and, by assumption, it cannot be the source). Also notice that the image vertex cannot be an exterior vertex of $\Gc$ because the followed river cannot start from here and so there must exist another edge to travel up. 
\end{proof}


\section{Control and observation of the wave equation}\label{sec_control_wave}

The aim of this section is to prove Theorems \ref{th_exact_wave_intro_dir} and \ref{th_exact_wave_intro_neu}, i.e. that (GGCC) is equivalent to the exact controllability of the wave equation with Dirichlet boundary conditions and at least a sufficient condition for more general boundary conditions.
We introduce the adjoint problem of \eqref{intro_wave}, which is the free wave equation
\begin{equation}\label{eq_dual}
\left\{\begin{array}{ll} \partial_{t}^2 \varphi = \Delta_\Gc \varphi+q(x) \varphi, & t>0,\\
(\varphi,\partial_t \varphi)(t=0)=(\varphi_0,\varphi_1)\in H^1_\Delta(\Gc)\times L^2(\Gc). 
\end{array}
\right. 
\end{equation}
It is well-known that the exact controllability of the wave equation \eqref{intro_wave} is equivalent to a so-called observability inequality for \eqref{eq_dual}.
\begin{theorem}[{\cite[Theorem 2.42]{Cor07}}]
\label{th:obsexactwave}
Let $T>0$. The wave equation \eqref{intro_wave} is exactly controllable at time $T>0$ in the sense of Definition \ref{defi_exactwave} if and only if there exists $C > 0$ such that for every $(\varphi_0,\varphi_1) \in H^1_\Delta(\Gc)\times L^2(\Gc),$ the solution $\varphi\in\Cc^1([0,T],L^2(\Gc))\cap \Cc^0([0,T],H^1_\Delta(\Gc))$ satisfies
\begin{equation}\label{eq_obs}
\|(\varphi_0,\varphi_1)\|_{H^1_\Delta(\Gc)\times L^2(\Gc)}^2 \leq C \int_0^T \int_{\omega} |\partial_t \varphi(x,t)|^2 \dd x \dd t.
\end{equation}
\end{theorem}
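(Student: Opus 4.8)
This statement is the classical Hilbert Uniqueness Method duality between exact controllability and observability, and no graph-specific input is needed here: the plan is to cast \eqref{intro_wave} into the abstract framework of well-posed linear control systems with bounded control operator and then invoke the general equivalence \cite[Theorem 2.42]{Cor07}. First I would rewrite \eqref{intro_wave} as a first-order system. Setting $U=(u,\partial_t u)$ and $\Hc=H^1_\Delta(\Gc)\times L^2(\Gc)$, it reads $\dot U=\Ac U+\Bc h$ with
$$\Ac=\begin{pmatrix}0&\id\\ \Delta_\Gc+q&0\end{pmatrix},\qquad \Bc h=\begin{pmatrix}0\\ h\,1_\omega\end{pmatrix}.$$
Since $q\in L^\infty(\Gc)$, the operator $\Ac$ (with domain $D(\Delta_\Gc)\times H^1_\Delta(\Gc)$) is a bounded perturbation of the free wave generator, which is skew-adjoint for a suitable equivalent energy inner product on $\Hc$; hence $\Ac$ generates a $C_0$-group $(e^{t\Ac})_{t\in\RR}$ on $\Hc$ --- this is precisely the well-posedness of \eqref{wave_wp} recalled in Section \ref{preliminaries}. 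Moreover $\Bc$ is a bounded operator from $L^2(\omega)$ to $\Hc$, so \eqref{intro_wave} is a well-posed linear control system with bounded control operator.

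Next, because $(e^{t\Ac})_{t\in\RR}$ is a group, exact controllability at time $T$ in the sense of Definition \ref{defi_exactwave} (steering every initial state to every target) is equivalent, after a time translation, to the surjectivity of the reachability operator $\Lc_T\colon L^2((0,T);L^2(\omega))\to\Hc$ given by $\Lc_T h=\int_0^T e^{(T-s)\Ac}\Bc h(s)\dd s$. The abstract duality theorem for such systems --- which ultimately rests on the fact that a bounded operator between Hilbert spaces is onto if and only if its adjoint is bounded below --- then states that this holds if and only if there is $C>0$ with
$$\|z\|_\Hc^2\le C\int_0^T\bigl\|\Bc^*e^{(T-s)\Ac^*}z\bigr\|_{L^2(\omega)}^2\dd s\qquad\text{for all }z\in\Hc.$$
It remains to recognize the right-hand side as the observation of \eqref{eq_dual}. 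Computing adjoints with respect to the energy inner product gives $\Bc^*(v_0,v_1)=v_1|_\omega$, while $\Ac^*$ is again a free wave generator, so that $s\mapsto e^{(T-s)\Ac^*}z$ is, after the substitution $t=T-s$ under which the free wave equation \eqref{eq_dual} is invariant, of the form $(\varphi(t),\partial_t\varphi(t))$ for the solution $\varphi\in\Cc^1([0,T],L^2(\Gc))\cap\Cc^0([0,T],H^1_\Delta(\Gc))$ of \eqref{eq_dual} whose Cauchy data $(\varphi_0,\varphi_1)$ is the image of $z$ under the bounded, boundedly invertible isomorphism $e^{T\Ac^*}$. Hence $\bigl\|\Bc^*e^{(T-s)\Ac^*}z\bigr\|_{L^2(\omega)}^2=\int_\omega|\partial_t\varphi(x,T-s)|^2\dd x$ and, the group being uniformly bounded with uniformly bounded inverse on $[0,T]$, $\|z\|_\Hc\simeq\|(\varphi_0,\varphi_1)\|_{H^1_\Delta(\Gc)\times L^2(\Gc)}$; substituting and changing variables back to $t=T-s$ turns the abstract inequality into exactly \eqref{eq_obs}.

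The only genuinely delicate point is the bookkeeping in this last step: one must fix once and for all an equivalent inner product on $\Hc$ rendering the free wave generator skew-adjoint, compute $\Ac^*$ and $\Bc^*$ accordingly, and track carefully how the terminal-time isomorphism converts the dual datum $z$ into the Cauchy datum $(\varphi_0,\varphi_1)$ of \eqref{eq_dual} --- the time-reversibility of the free wave equation being exactly what makes the a priori backward flow $e^{(T-s)\Ac^*}$ land on an honest forward solution $\varphi$. Everything else (surjectivity $\Leftrightarrow$ adjoint bounded below, and the reduction of controllability between arbitrary states to surjectivity of $\Lc_T$) is standard and can be quoted from \cite[Section 2.3]{Cor07}. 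Alternatively, one may bypass the abstract surjectivity lemma and argue directly: observability $\Rightarrow$ controllability by minimizing the strictly convex HUM functional built from $\frac12\int_0^T\int_\omega|\partial_t\varphi|^2\dd x\dd t$ and a linear term encoding the initial and target states, coercivity being precisely \eqref{eq_obs}, and reading the control off its Euler--Lagrange equation; while controllability $\Rightarrow$ observability follows from the closed-range theorem applied to $\Lc_T$. Either way the essential content is the HUM duality, with no use of the graph structure.
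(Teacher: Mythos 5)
Your proposal is correct and follows essentially the same route as the paper, which offers no proof of its own: it notes in Section \ref{preliminaries} that \eqref{intro_wave} is a well-posed linear control system in the sense of \cite[Section 2.3]{Cor07} and then simply invokes \cite[Theorem 2.42]{Cor07}. The additional detail you supply (surjectivity of the reachability map, computation of $\Bc^*$ and of the adjoint group in the energy norm) is exactly the content of that cited theorem and is consistent with it.
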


\subsection{Unobserved quasimodes with Dirichlet boundary conditions}\label{sec_quasimode}

In this subsection, we construct quasimodes, whose supports remain disjoint from the observation set $\omega$, of arbitrarily high frequency for the Laplacian operator with Dirichlet boundary conditions, on any metric graph whenever (GGCC) is not satisfied. 

\begin{prop}\label{prop_quasimode}
Let $\Gc$ be a connected graph and $\omega$ be an open set of $\Gc$. Assume that (GGCC) is not satisfied and that $\Delta_\Gc$ is the Laplacian operator with Dirichlet boundary conditions. Then, there exists a real sequence $(\mu_n)$ converging to $+\infty$ and a sequence $(u_n)$ of functions in $D(\Delta_\Gc)$ such that:
\begin{itemize}
\item each function $u_n$ vanishes everywhere in the set $\omega$,
\item $\|u_n\|_{L^2(\Gc)}\rightarrow 0$ when $n\rightarrow +\infty$,
\item $\|\grad u_n\|_{L^2(\Gc)}=1$,
\item and $\|(\Delta_\Gc+\mu_n^2)u_n\|_{L^2(\Gc)} \rightarrow 0$ when $n\rightarrow +\infty$.
\end{itemize}
\end{prop}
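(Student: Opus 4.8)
The plan is to exploit the equivalent formulation of (GGCC) given in point~3 of Proposition~\ref{prop_GGCC}: since (GGCC) fails, the set $\Ec\setminus\Ec_\omega$ of uncontrolled edges either contains a cycle or it contains a tree (a connected component) having at least two exterior vertices of $\Gc$ (both with Dirichlet conditions here). In either case, inside the uncontrolled part we can find a \emph{closed} path in the following sense: either a genuine cycle $\gamma$ of edges $f_1,\dots,f_m$ in $\Ec\setminus\Ec_\omega$, or a path $f_1,\dots,f_m$ in $\Ec\setminus\Ec_\omega$ joining two exterior Dirichlet vertices. Write $P=\sum_i \ell_{f_i}$ for its total length (with the convention that for the exterior-to-exterior path we ``double'' it, i.e.\ travel back and forth, obtaining a closed loop of length $2P$). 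The idea is to build a quasimode concentrated on this loop, by gluing together, edge by edge, pieces of the one–dimensional plane wave $\sin(\mu x)$ or $\cos(\mu x)$ with a frequency $\mu$ chosen so that the phases match at every interior vertex and the Dirichlet conditions hold at the two exterior endpoints (in the second case).

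The key steps, in order, are as follows. First, fix the uncontrolled loop and parametrize it by arclength $s\in[0,P]$ (or $[0,2P]$), assigning to each edge $f_i$ its natural coordinate. Second, choose the frequencies: take $\mu_n = \frac{2\pi n}{P}$ in the cycle case, so that a function of the form $e^{i\mu_n s}$ (or its real part) is $P$-periodic and hence descends to a smooth function on the loop; in the exterior-to-exterior case, take $\mu_n=\frac{\pi n}{P}$ and use $\sin(\mu_n s)$, which vanishes at $s=0$ and $s=P$, matching the Dirichlet conditions at the two exterior vertices. On each uncontrolled edge $f_i$, define $u_n$ to be this explicit trigonometric function; on \emph{every other edge} of $\Gc$ (in particular all of $\Ec_\omega$, so in particular on $\omega$), set $u_n\equiv 0$. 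Third, multiply by a fixed cutoff: actually no cutoff is needed along the loop itself, but we must guarantee continuity and the Kirchhoff condition at interior vertices where the loop meets controlled edges — there $u_n$ jumps from a nonzero value to $0$. To fix this, introduce a smooth cutoff $\chi$ supported away from those ``bad'' vertices: replace $u_n$ by $\chi_n \cdot (\text{trig function})$ where $\chi_n$ equals $1$ on the bulk of the loop and decays to $0$ near any vertex adjacent to a controlled edge, chosen with $\|\chi_n'\|_\infty$ and $\|\chi_n''\|_\infty$ bounded \emph{uniformly in $n$} (a fixed profile scaled only by the fixed edge lengths, not by $n$). This $u_n$ then lies in $D(\Delta_\Gc)$: it is $H^2$ on each edge, continuous and satisfying Kirchhoff at every interior vertex (trivially at the bad ones since it vanishes there together with its derivative, and at the good ones because the loop just passes through), and it vanishes at the exterior vertices.

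Fourth, estimate the four quantities. Since $u_n$ is supported on a fixed loop of length $P$ and is essentially $\sin(\mu_n s)$ there, $\|\grad u_n\|_{L^2(\Gc)}^2 \sim \mu_n^2 \cdot (P/2) \to \infty$, so after \emph{normalizing} $u_n$ by dividing by $\|\grad u_n\|_{L^2}$ we get $\|\grad u_n\|_{L^2(\Gc)}=1$ exactly; then automatically $\|u_n\|_{L^2(\Gc)} = O(1/\mu_n)\to 0$ since the $L^2$ norm of the trig function is $O(1)$ while the gradient norm is $O(\mu_n)$. It remains to check $\|(\Delta_\Gc+\mu_n^2)u_n\|_{L^2(\Gc)}\to 0$ \emph{after normalization}. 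Here $\Delta_\Gc u_n + \mu_n^2 u_n$ is exactly zero wherever $\chi_n\equiv 1$ (since $-\partial_s^2 \sin(\mu_n s) = \mu_n^2 \sin(\mu_n s)$), so the only contribution comes from the cutoff region, where $(\Delta+\mu_n^2)(\chi_n f) = \chi_n'' f + 2\chi_n' f'$; this is bounded in $L^2$ by $C(\|\chi_n''\|_\infty \|f\|_{L^2} + \|\chi_n'\|_\infty \|f'\|_{L^\infty})= C(O(1)\cdot O(1) + O(1)\cdot O(\mu_n)) = O(\mu_n)$, and dividing by the normalizing factor $\sim \mu_n$ leaves an $O(1)$ quantity — which is \emph{not} enough. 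The main obstacle is therefore precisely this last estimate: a naive cutoff gives an error that does not decay. The fix is standard: one must avoid cutting off at all where possible, and where a cutoff near a bad vertex is truly needed, use a cutoff on a region of fixed length but note that the error must be measured against $\|\grad u_n\|_{L^2}=1$, so one needs the error $\|(\Delta+\mu_n^2)u_n\|_{L^2}$ to be $o(\mu_n)$ before normalization. This is achieved by choosing the loop so that it passes through interior vertices only (possible exactly because point~3 of Proposition~\ref{prop_GGCC} tells us the uncontrolled component containing the loop attaches to $\omega$ only through its edges, but the loop's own vertices inside that component are interior to $\Gc$) — so in fact \emph{no cutoff near a vertex of the loop is ever needed}: at every vertex of the loop, either it is an exterior Dirichlet vertex (handled by the $\sin$), or it is interior and $u_n$ simply continues, with continuity automatic and Kirchhoff automatic because the two loop-edges contribute $\partial_s u_n$ with opposite outward signs while any other controlled edge at that vertex carries $u_n\equiv 0$ — wait, that last point requires $\partial_x u_n = 0$ on the controlled side, which fails. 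So a cutoff \emph{is} needed on the controlled edges adjacent to the loop; but those edges are not in the support of the loop function, so we are free to choose how $u_n$ transitions there. The clean resolution: set $u_n\equiv 0$ on all controlled edges, and on each uncontrolled loop edge $f_i$ use a function that genuinely vanishes (to second order) at any vertex shared with a controlled edge, by working with a slightly shortened loop and Dirichlet–type pieces $\sin(\mu_n(s-s_0))$ arranged to vanish there; this reintroduces a mismatch of $O(1/\mu_n)$ in amplitude which, after the $\mu_n$-normalization, contributes $O(1)\cdot\mu_n^{-1}\to 0$ to the quasimode error in the sharper norm. I would carry this out by first treating the cleanest case (a cycle all of whose vertices are interior to $\Gc$ with no controlled edges attached to them — the ``honest'' uncontrolled cycle), where $u_n=\mu_n^{-1}\sin(\mu_n s)$ literally works with zero error, and then reducing the general case to it by the cutoff argument above, carefully tracking powers of $\mu_n$.
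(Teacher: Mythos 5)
Your high-level strategy (concentrate a high-frequency trigonometric quasimode on an uncontrolled cycle or exterior-to-exterior path) is the same as the paper's, but there is a genuine gap in the execution, and you have in fact located it yourself without resolving it. The obstruction is not primarily the Kirchhoff condition but \emph{continuity}: at any vertex of the loop that carries an additional incident edge (controlled or not), $u_n$ is identically $0$ on that extra edge, so continuity at an interior vertex forces $u_n$ to \emph{vanish} at that vertex as seen from the loop. With a single global frequency $\mu_n$ and an arclength ansatz $\sin(\mu_n s)$, this requires $\mu_n(\ell_1+\dots+\ell_i)\in\pi\ZZ$ for every partial length, i.e.\ all ratios $\ell_i/\ell_j$ rational — false in general. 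Your cutoff fix correctly produces an error $\|(\Delta_\Gc+\mu_n^2)u_n\|_{L^2}=O(\mu_n)$ before normalization, hence $O(1)$ after, which you rightly note is not enough; but your proposed repair (a ``slightly shortened loop'' with an alleged $O(1/\mu_n)$ amplitude mismatch) is unjustified: the endpoint mismatch $|\sin(\mu_n\ell_i \bmod \pi)|$ is generically of order $1$, not $1/\mu_n$, for a single frequency. Your ``cleanest case'' (a cycle with no attached edges) is essentially vacuous for a connected graph, so the whole proof rests on the general case, which remains open in your argument.

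The missing ingredient is \emph{simultaneous Diophantine approximation}. The paper applies Dirichlet's simultaneous approximation theorem (Proposition \ref{coroprop_Dirichlet}) to the ratios $\ell_j/L$, $L=\sum_j\ell_j$, producing integers $p_{j,n}$, $q_n$ with $\big|\ell_j/L-p_{j,n}/q_n\big|\le (q_n\sqrt n)^{-1}$, and then uses a slightly different frequency on each edge, $\mu_{j,n}=2\pi p_{j,n}/\ell_j$, all within $O(n^{-1/2})$ of the common $\mu_n=2\pi q_n/L$. The edge function $u_{j,n}=\sqrt{2/L}\,\mu_{j,n}^{-1}\sin(\mu_{j,n}x)$ then vanishes \emph{exactly} at both endpoints of $e_j$ (so continuity and the Dirichlet conditions hold at every vertex with no cutoff) and has equal derivatives $\sqrt{2/L}$ at both ends (so Kirchhoff holds, the two loop edges cancelling and all other edges contributing $0$). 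The quasimode error is then exactly the frequency discrepancy, $\|(\Delta_\Gc+\mu_n^2)u_n\|_{L^2}^2=O(\max_j|\varepsilon_{j,n}|^2)=O(1/n)$, which tends to $0$. Without this step — or some substitute that makes each edge function vanish at both endpoints while keeping all frequencies within $o(1)$ of each other — the construction cannot be closed.
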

The above quasimodes provide approximate solutions of the wave equation, that are not controlled or observed since they vanish in $\omega$. We will use these quasimodes to disprove the observability inequality \eqref{eq_obs}, see Corollary \ref{coro_obs_free_wave} below.
This construction is the main argument to show that (GGCC) is necessary to observe or control the wave equation with Dirichlet boundary conditions. When the lengths of the edges belonging to an uncontrolled path of the graph have rational ratios, it is usual to construct exact modes, see for instance \cite[Remarks 1.1 and 1.2]{AB}. 

\medskip

In the general case, we need to approximate real numbers by rational numbers, i.e. we need the tools of Diophantine approximation, more precisely the simultaneous version of Dirichlet's approximation theorem, recalled in Theorem \ref{th_Dirichlet} in Appendix \ref{diophantine}. This type of approximation has already been used in similar problems in \cite[Section 6.3]{DZ06} and \cite[Chapter VII, Section 1]{AI95}. For our applications, we would like to consider high frequencies and we need to ensure that the rational approximations $p/q$ have denominators $q$ as large as needed. Thus, we will use this obvious consequence of Dirichlet's approximation theorem.  
\begin{prop}\label{coroprop_Dirichlet}
For any real numbers $\alpha_1$,\ldots, $\alpha_d$ and any $N\in\NN^*$, there exist integers $p_j$ and $q$ with $\lfloor \sqrt{N}\rfloor \leq q\leq N^{d}$, such that 
$$\forall j=1,\ldots, d~,~~\left|\alpha_j - \frac {p_j}q\right| \leq \frac 1{q\sqrt{N}}.$$
\end{prop}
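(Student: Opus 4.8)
The plan is to derive Proposition \ref{coroprop_Dirichlet} from the simultaneous Dirichlet approximation theorem (Theorem \ref{th_Dirichlet}), which asserts that for real numbers $\alpha_1,\ldots,\alpha_d$ and any integer $Q\geq 1$ there exist integers $p_1,\ldots,p_d$ and $q$ with $1\leq q\leq Q^d$ such that $|\alpha_j-p_j/q|\leq 1/(qQ)$ for all $j$. The only thing missing from this classical statement is a lower bound on the denominator $q$, and the idea is to obtain it by a pigeonhole-style iteration: if the approximation produced has a denominator that is too small, we rerun the theorem with a larger parameter and argue that the denominators cannot stay small forever.

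First I would set $Q=\lfloor\sqrt{N}\rfloor$ and apply Theorem \ref{th_Dirichlet} with this $Q$, obtaining integers $p_j$ and $q$ with $1\leq q\leq Q^d\leq N^d$ and $|\alpha_j-p_j/q|\leq 1/(qQ)\leq 1/(q\sqrt{N})$ (using $Q\leq\sqrt{N}$, so $1/(qQ)\geq 1/(q\sqrt N)$ — wait, this goes the wrong way). Let me instead take $Q=\lceil\sqrt N\,\rceil\geq\sqrt N$, so that $1/(qQ)\leq 1/(q\sqrt N)$ holds directly, and $q\leq Q^d$ is still at most $(\sqrt N+1)^d$; absorbing this into a bound of the form $N^d$ is harmless for $N$ large, and for small $N$ the statement is checked directly or the constant adjusted. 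This immediately gives the upper bound $q\leq N^d$ and the desired inequality. The remaining point is the lower bound $q\geq\lfloor\sqrt N\rfloor$.

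To get the lower bound, suppose the $q$ returned satisfies $q<\lfloor\sqrt N\rfloor\leq Q$. The plan here is a standard trick: among all $(p_1,\ldots,p_d,q)$ satisfying the conclusion of the theorem, pick one with $q$ maximal (the set is finite since $q\leq Q^d$ and the $p_j$ are then determined up to boundedly many choices). If this maximal $q$ is still $<Q$, I claim we reach a contradiction, because one can always ``refine'': apply Dirichlet's theorem again to the numbers $q\alpha_j-p_j$ (which are all of modulus $\leq 1/Q<1$) with parameter $Q$, producing integers $p_j'$ and $q'$ with $1\leq q'\leq Q^d$ and $|q\alpha_j-p_j-p_j' /q'|\leq 1/(q'Q)$; then the pair $(q'p_j+p_j', q'q)$ approximates $\alpha_j$ with denominator $q'q$. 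Some bookkeeping is needed to check the new denominator $q'q$ genuinely exceeds $q$ while the error stays below the required threshold and the denominator stays below $N^d$ — this is exactly the kind of estimate that works out but must be done carefully, and it is the main obstacle. An alternative, cleaner route avoiding maximality arguments: observe that if $|\alpha_j-p_j/q|\leq 1/(qQ)$ with $q$ small, then the same $p_j,q$ automatically satisfy $|\alpha_j-p_j/q|\leq 1/(q\sqrt N)$, and if additionally $q<\lfloor\sqrt N\rfloor$ one can simply \emph{scale up}: replace $(p_j,q)$ by $(kp_j,kq)$ for the integer $k=\lfloor\sqrt N/q\rfloor+1$; then $kq\geq\lfloor\sqrt N\rfloor$, while $kq\leq 2\sqrt N\cdot q\leq 2\sqrt N\cdot N^{d}$ (crude but fine after adjusting the exponent or constant), and $|\alpha_j-kp_j/(kq)|=|\alpha_j-p_j/q|\leq 1/(q\sqrt N)$; since $kq\geq q$ this is $\geq 1/(kq\cdot\sqrt N/q)$... again the inequality direction needs care. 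The honest statement is: scaling preserves the value of the error but multiplies $q$, so $1/(q\sqrt N)$ is \emph{larger} than $1/(kq\sqrt N)$ and the bound $|\alpha_j-kp_j/(kq)|\leq 1/(kq\cdot\sqrt N)$ need \emph{not} follow from scaling alone.

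Given these direction-of-inequality subtleties, the robust approach I would actually commit to is the following two-stage one. Stage one: apply Theorem \ref{th_Dirichlet} with $Q=\lfloor\sqrt N\rfloor$ to get $1\leq q\leq Q^d\leq N^{d/2}\leq N^d$ and $|\alpha_j-p_j/q|\leq 1/(qQ)$. Stage two: if $q\geq Q=\lfloor\sqrt N\rfloor$ we are done since $1/(qQ)\leq 1/(q\sqrt N)$ up to the integer-part discrepancy, which only costs an absolute constant that can be absorbed (or handled by taking $\sqrt N$ rounded up in the hypothesis of Theorem \ref{th_Dirichlet}, whichever the paper's convention permits). If instead $q<\lfloor\sqrt N\rfloor$, then reapply Theorem \ref{th_Dirichlet} with the much larger parameter $Q'=N^d$, obtaining $1\leq q''\leq (N^d)^d=N^{d^2}$ and $|\alpha_j-p_j''/q''|\leq 1/(q''N^d)$; now a counting argument shows $q''$ cannot be too small: if it were $<\lfloor\sqrt N\rfloor$, then the ``good'' rationals $p_j''/q''$ with bounded denominator would force the $\alpha_j$ to be simultaneously too close to a fixed rational point, contradicting that $1/(q''N^d)$ is tiny while $q''$ is bounded — more precisely, distinct such approximants with the same small $q''$ differ by at least $1/q''$, so there is essentially one, and iterating Dirichlet with growing $Q'$ must eventually produce a genuinely new (hence larger) denominator. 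I expect the cleanest write-up to replace this vague ``eventually'' with the explicit observation that if $q''<\lfloor\sqrt N\rfloor$ one can then invoke the theorem yet again with an even larger parameter and a Liouville-type gap estimate to conclude; but since the paper only needs \emph{some} lower bound of order $\sqrt N$ and the exponent $N^d$ on the upper side has slack, I would present the argument so that all these rescalings and reapplications are absorbed into redefining $N$ (say proving the claim for $N$ replaced by a fixed power of $N$, which is equivalent), making the bookkeeping trivial. The genuine mathematical content is entirely Theorem \ref{th_Dirichlet}; the work in the proof is purely the arithmetic of turning its denominator range into the asymmetric range $[\lfloor\sqrt N\rfloor, N^d]$, and the main obstacle is nothing deep but rather getting every inequality to point the right way while keeping both bounds on $q$ simultaneously.
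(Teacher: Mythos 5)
There is a genuine gap: you correctly isolate the only difficulty (forcing $q\geq\lfloor\sqrt N\rfloor$ while keeping every inequality pointing the right way), but none of the three routes you sketch actually closes it, and the one idea that does work is the one you explicitly discard. The paper applies Theorem \ref{th_Dirichlet} with the \emph{full} parameter $N$, not with $\sqrt N$. This yields $1\leq q\leq N^d$ and the error bound $|\alpha_j-p_j/q|\leq 1/(qN)$, which is a factor $\sqrt N$ \emph{stronger} than what the proposition asks for. That slack is precisely what makes the naive scaling trick legitimate: if $q<\lfloor\sqrt N\rfloor$, set $k=\lfloor\sqrt N\rfloor$, $q'=kq$, $p_j'=kp_j$; the value of the fraction is unchanged, $q'\geq\lfloor\sqrt N\rfloor$, $q'=\lfloor\sqrt N\rfloor q\leq\lfloor\sqrt N\rfloor^2\leq N\leq N^d$, and
$$\left|\alpha_j-\frac{p_j'}{q'}\right|\leq\frac1{qN}=\frac{k}{q'N}\leq\frac{\sqrt N}{q'N}=\frac1{q'\sqrt N}.$$
You observe, correctly, that ``scaling alone'' does not give the bound $1/(kq\sqrt N)$ --- but that is only because you started from the parameter $Q\approx\sqrt N$, where the Dirichlet bound $1/(qQ)$ has no room to spare. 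With parameter $N$ the extra factor $\sqrt N$ exactly absorbs the multiplication of the denominator by $k\leq\sqrt N$.

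Your fallback (stage two) does not repair this. Reapplying Theorem \ref{th_Dirichlet} with $Q'=N^d$ gives $q''\leq N^{d^2}$, which already violates the required upper bound $q\leq N^d$, and the counting argument meant to show that $q''$ cannot stay small is false: if every $\alpha_j$ is an integer, then $q=1$, $p_j=\alpha_j$ satisfies the conclusion of Dirichlet's theorem for every parameter, so no amount of re-running it with larger $Q'$ is guaranteed to produce a larger denominator. (This is also why some artificial inflation of $q$, such as the paper's multiplication by $k$, is unavoidable rather than a cosmetic convenience.) Likewise, ``redefining $N$ as a fixed power of $N$'' changes the statement being proved and is not justified as equivalent. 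The mathematical content you attribute to the proof is right --- everything reduces to Theorem \ref{th_Dirichlet} plus bookkeeping --- but the bookkeeping hinges on the single choice of applying that theorem at scale $N$ rather than $\sqrt N$, and that choice is missing from your write-up.
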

In order to enlighten the present part, we postpone the proof of the above result in Appendix \ref{diophantine}.

\begin{proof}[Proof of Proposition \ref{prop_quasimode}]
Assume that (GGCC) fails: there exists a path, linking two exterior vertices or being a cycle, that does not meet $\omega$. To simplify the notations, up to re-indexing the edges, assume that this path consists in $d$ edges $e_1$,\ldots ,$e_d$, whose lengths are $\ell_1$,\ldots ,$\ell_d$. Up to re-indexing the vertices, we also assume that $e_j$ and $e_{j+1}$ are connected at the vertex $v_j$ and, if the path is a cycle, that $e_d$ and $e_1$ are connected at the vertex $v_d$. The other edges and vertices of the graph are attached to this structure, but we do not need any further description of this part of the graph. 

\medskip

Let $L= \sum_{j=1}^d \ell_j$ be the total length of the uncontrolled path. Let $n\in\NN^*$, to be fixed later. We apply Proposition \ref{coroprop_Dirichlet} to $\alpha_j=\frac{\ell_j}L$ and $N=n$. We obtain approximations $\frac{p_{j,n}}{q_n}$ of the ratios $\frac{\ell_j}L$ such that $\lfloor \sqrt{n}\rfloor \leq q_n\leq n^d$ and there exist real numbers $\varepsilon_{j,n}$ such that, for all $j=1\ldots d$, we have 
$$\left|\frac{\ell_j}L - \frac{p_{j,n}}{q_n}\right|\leq \frac 1{q_n\sqrt{n}}~~\text{ i.e. }~~
\frac{p_{j,n}}{\ell_j} = \frac {q_n}L + \frac{\varepsilon_{j,n}}{\ell_j}~~~\text{ with }~~~|\varepsilon_{j,n}|\leq \frac 1 {\sqrt{n}}.$$
Note that $p_{j,n}=q_n \frac{\ell_j}L +\varepsilon_{j,n}$ with $q_n\rightarrow +\infty$. Thus, if $n$ is large enough, we must have $p_{j,n}\geq 1$ for all $j$, which will be assumed from now on. We set 
$$ \mu_n:=\frac {2\pi q_n}L~~~\text{ and }~~~\mu_{j,n}:=\frac {2\pi p_{j,n}}{\ell_j} = \mu_n + 2\pi \frac{\varepsilon_{j,n}}{\ell_j}.$$
We define the function $u_n\in D(\Delta_\Gc)$ as follows:
\begin{itemize}
\item on the edge $e_j$, with $1\leq j\leq d$, we set 
$$\forall x\in(0,\ell_j)~,~~u_{j,n}(x):= \sqrt{\frac 2{L}} \frac{1}{\mu_{j,n}}\sin(\mu_{j,n} x),$$
\item outside the uncontrolled path, on the other edges $e_j$ with $j>d$, we set $u_{j,n}\equiv 0$. 
\end{itemize}
We can check that, on all the edges, $u_{j,n}$ vanishes on both ends so that $u_n$ is continuous at the interior vertices and satisfies the Dirichlet boundary condition at the exterior vertices. On the edges where $u_n$ is not zero, we have $\partial_x u_{j,n}(0)=\partial_x u_{j,n}(\ell_j)=\sqrt{\frac 2{L}}$. At the interior vertices belonging to the uncontrolled path, exactly two such edges are connected, with opposite exterior derivatives, and the Kirchhoff  condition is satisfied. At the other vertices, all the derivatives are zero so the Kirchhoff condition is also satisfied. We have just checked that $u_n$ belongs to $D(\Delta_\Gc)$. 

\medskip

The computation of the $L^2$-norm of $u_n$ is explicit. Since 
$$\frac{\mu_n} {\mu_{j,n}}=1-2\pi \frac{\varepsilon_{j,n}}{\ell_j\mu_{j,n}}=1-\frac{\varepsilon_{j,n}}{p_{j,n}}\longrightarrow 1,$$ 
we obtain
$$\|u_n\|_{L^2(\Gc)}^2=\sum_{j=1}^d \frac{\ell_j}{L} \frac{1}{\mu_{j,n}^2}=\frac 1{\mu_n^2} \sum_{j=1}^d \frac{\ell_j}{L}\left(1-\frac{\varepsilon_{j,n}}{p_{j,n}}\right)^2\longrightarrow 0$$
when $n$ goes to $+\infty$. The computation of the norm of the gradient of $u$ is even easier
$$\|\grad u_n\|_{L^2(\Gc)}^2=\sum_{j=1}^d \frac{\ell_j}{L}=1.$$
The computation of $(\Delta_\Gc+\mu_n^2)u_n$ is also explicit and we obtain 
\begin{align*}
\|(\Delta_\Gc+\mu_n^2)u_n\|_{L^2(\Gc)}^2&=\sum_{j=1}^d \frac{\ell_j}{L}\left(-\mu_{j,n}+\frac{\mu_n^2}{\mu_{j,n}}\right)^2\\
&=\sum_{j=1}^d \frac{\ell_j}{L}\left(-\mu_n-2\pi\frac{\varepsilon_{j,n}}{\ell_j}+\mu_n\big(1-\frac{\varepsilon_{j,n}}{p_{j,n}}\big)\right)^2\\
&=\sum_{j=1}^d \frac{\ell_j}{L} |\varepsilon_{j,n}|^2 \left(\frac{2\pi}{\ell_j}+ \frac{\mu_n}{p_{j,n}} \right)^2.
\end{align*}
Finally, notice that $\frac{\mu_n}{p_{j,n}}=\frac{2\pi}L\cdot \frac{q_n}{p_{j,n}}$ and that $\frac {p_{j,n}}{q_n}$ has be constructed to be an approximation of $\frac{\ell_j}{L}$. Thus, $\frac{\mu_n}{p_{j,n}}$ is equivalent to $\frac{2\pi}{\ell_j}$ and both are bounded when $n$ goes to $+\infty$. So, the above estimate yields 
$$\|(\Delta_\Gc+\mu^2)u_n\|_{L^2(\Gc)}^2 \leq \frac{C}{n}$$ 
where $C$ only depends on the geometry of the graph and of the uncontrolled path. 
\end{proof}

As a consequence of Proposition \ref{prop_quasimode}, we prove in the next result that (GGCC) is necessary for the observation of the wave equation with Dirichlet boundary conditions.
\begin{coro}\label{coro_obs_free_wave}
Let $\Gc$ be a connected graph and $\omega$ be an open set of $\Gc$. Assume that (GGCC) is not satisfied and that $\Delta_\Gc$ is the Laplacian operator with Dirichlet boundary conditions. Then, for any $q\in L^\infty(\Gc)$, $T>0$ and $C>0$, there exists a solution $\varphi\in\Cc^1([0,T],L^2(\Gc))\cap \Cc^0([0,T],H^1_0(\Gc))$ of \eqref{eq_dual}
such that
\begin{equation}\label{eq_prop_obs_wave_bis}
\|(\varphi_0,\varphi_1)\|_{H^1_0(\Gc)\times L^2(\Gc)}^2 \geq C \int_0^T \int_{\omega} |\partial_t \varphi(x,t)|^2 \dd x \dd t.
\end{equation}
\end{coro}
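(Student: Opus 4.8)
The plan is to use the quasimodes constructed in Proposition \ref{prop_quasimode} to build, for any prescribed $T>0$ and $C>0$, an explicit counterexample to the observability inequality \eqref{eq_obs}. The natural ansatz is to look for a solution of \eqref{eq_dual} of the form $\varphi(x,t)=\cos(\mu_n t)\,u_n(x)$ where $(u_n,\mu_n)$ are the quasimodes of Proposition \ref{prop_quasimode}. If $u_n$ were a genuine eigenfunction of $\Delta_\Gc+q$ with eigenvalue $-\mu_n^2$, this would be an exact solution; in general it is only an approximate solution, so I would either (a) let $\varphi_n$ be the exact solution of \eqref{eq_dual} with initial data $(\varphi_0,\varphi_1)=(u_n,0)$ and control the error by energy estimates, or (b) add a correction term. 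Approach (a) is cleaner: write $\varphi_n=\cos(\mu_n t)u_n + r_n$, where $r_n$ solves the wave equation with source $f_n:=-\cos(\mu_n t)\big((\Delta_\Gc+q+\mu_n^2)u_n\big)$ and zero initial data. Since $\|u_n\|_{H^1_0}\sim \|\grad u_n\|_{L^2}=1$ is bounded and $\|(\Delta_\Gc+\mu_n^2)u_n\|_{L^2}\to 0$ while $\|q u_n\|_{L^2}\leq \|q\|_\infty\|u_n\|_{L^2}\to 0$, we get $\|f_n\|_{L^2((0,T)\times\Gc)}\to 0$, and by the standard energy estimate for the wave equation (Duhamel), $\|(r_n,\partial_t r_n)\|_{\Cc^0([0,T];H^1_0\times L^2)} \leq C_T\|f_n\|_{L^1(0,T;L^2)}\to 0$.

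The key point is then that the main term $\cos(\mu_n t)u_n$ contributes nothing to the right-hand side of \eqref{eq_obs} because $u_n$ vanishes identically on $\omega$. Indeed $\partial_t\varphi_n = -\mu_n\sin(\mu_n t)u_n + \partial_t r_n$, and restricting to $\omega$ kills the first term, leaving $\partial_t\varphi_n|_\omega = \partial_t r_n|_\omega$. Hence
\[
\int_0^T\int_\omega |\partial_t\varphi_n|^2\,\dd x\,\dd t \;=\; \int_0^T\int_\omega |\partial_t r_n|^2\,\dd x\,\dd t \;\leq\; T\,\|\partial_t r_n\|_{\Cc^0([0,T];L^2(\Gc))}^2 \;\xrightarrow[n\to\infty]{}\; 0 .
\]
On the other hand, the left-hand side is bounded below: $\|(\varphi_0,\varphi_1)\|_{H^1_0\times L^2}^2 = \|u_n\|_{H^1_0(\Gc)}^2 \geq \|\grad u_n\|_{L^2(\Gc)}^2 = 1$ (using $\|u_n\|_{H^1_0}^2=\|u_n\|_{L^2}^2+\|\grad u_n\|_{L^2}^2$ when Dirichlet conditions are imposed, so in fact the norm tends to $1$). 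Therefore the ratio $\|(\varphi_0,\varphi_1)\|^2_{H^1_0\times L^2}\big/\int_0^T\int_\omega|\partial_t\varphi_n|^2$ tends to $+\infty$, so for $n$ large enough $\varphi:=\varphi_n$ satisfies \eqref{eq_prop_obs_wave_bis} for the given $C$, which is what we wanted.

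The only mild subtlety — and the step that needs a line of care rather than real difficulty — is making sure the error estimate on $r_n$ is uniform on $[0,T]$ and that all the constants depend only on $T$, $\|q\|_\infty$ and the graph, not on $n$; this is exactly the content of the well-posedness statement \eqref{wave_wp} together with the linear dependence of the solution on the source, so it is a direct application of standard semigroup theory as recalled in the Preliminaries. A secondary bookkeeping point is that $(u_n,0)\in H^1_0(\Gc)\times L^2(\Gc)$ with $u_n\in D(\Delta_\Gc)$, so the solution $\varphi_n$ has the claimed regularity $\Cc^1([0,T];L^2(\Gc))\cap\Cc^0([0,T];H^1_0(\Gc))$. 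I expect no genuine obstacle here: the whole weight of the argument sits in Proposition \ref{prop_quasimode}, and this corollary is its routine consequence, turning a high-frequency quasimode concentrated away from $\omega$ into a wave-equation trajectory that is essentially invisible to the observation.
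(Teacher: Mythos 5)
Your proposal is correct and follows essentially the same route as the paper: the paper also takes $(\varphi_0,\varphi_1)=(u_n,0)$, compares the true solution to the ansatz $\cos(\mu_n t)u_n$, bounds the remainder by an energy (Gr\"onwall) estimate with a source of size $\|(\Delta_\Gc+q+\mu_n^2)u_n\|_{L^2}\to 0$, and uses that $u_n$ vanishes on $\omega$ so only the remainder is observed. The only cosmetic difference is that you invoke the Duhamel/semigroup estimate where the paper writes out the Gr\"onwall computation explicitly.
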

\begin{proof}
We apply Proposition \ref{prop_quasimode}. We obtain a sequence of quasimodes $(u_n,\mu_n)$ as described in the proposition. We set 
$$\varphi_0=u_n~,~~\varphi_1=0~\text{ and }~\psi(x,t):=\cos(\mu_n t)u_n(x).$$
When $n$ goes to $+\infty$, we have 
$$\|(\varphi_0,\varphi_1)\|_{H^1_0(\Gc)\times L^2(\Gc)}^2=\|\grad u_n\|_{L^2(\Gc)}^2 + \|u_n\|_{L^2(\Gc)}^2 \longrightarrow 1.$$
Let $\varphi$ the solution of \eqref{eq_dual} with initial data $(\varphi_0,\varphi_1)$. Then, $w:=\varphi-\psi$ solves
\begin{equation}\label{eq_preuve_quasimode}
\left\{\begin{array}{ll} \partial_{t}^2 w - (\Delta_\Gc + q(x))w= (\Delta_\Gc+q(x) +\mu_n^2)\psi, & t>0,\\
w(t=0)=\partial_t w(t=0)=0. & 
\end{array}\right. 
\end{equation}
For all $t\geq 0$, we set $h(t):=(\Delta_\Gc+q(x) +\mu_n^2)\psi$ and we notice that
$$\|h(t)\|_{L^2(\Gc)}\leq 
\|(\Delta_\Gc+\mu_n^2)u_n \|_{L^2(\Gc)} + \|q\|_{L^\infty(\Gc)}\|u_n \|_{L^2(\Gc)}=:\varepsilon_n \longrightarrow 0.$$ 
Finally, we use a classical energy estimate: set 
$$\Nc(t):=\frac 12  \int_\Gc (|\partial_t w|^2 + |\grad w|^2 + |w|^2)(t),$$
we have
\begin{align*}
\partial_t \Nc(t)&~=~ \int_\Gc \partial_t w(t) (\partial_{t}^2 w - \Delta_\Gc  w + w)(t)~= ~\int_\Gc \partial_t w(t) \big(h(t)+ (q(x)+1) w(t)\big)\\
&~\leq ~ \big(\|q\|_{L^\infty(\Gc)}+2\big) \Nc(t) + \frac 12 \| h(t)\|_{L^2(\Gc)}^2~\leq ~ \big(\|q\|_{L^\infty(\Gc)}+2\big) \Nc(t) + \frac {\varepsilon_n^2}2. 
\end{align*}
Since $\Nc(0)=0$, Gr\"onwall Lemma concludes that, for all $t\geq 0$,
$$\Nc(t)\leq t\frac{\varepsilon_n^2}2  e^{(\|q\|_{L^\infty(\Gc)}+2)t}.$$
In particular, in the observation set $\omega$, $\psi$ vanishes and we have $\partial_t w=\partial_t \varphi$. This yields
$$\forall t\geq 0~,~~\int_{\omega} |\partial_t \varphi(x,t)|^2 \dd x \leq t\frac{\varepsilon_n^2}2  e^{(\|q\|_{L^\infty(\Gc)}+2)t}.$$
Thus, for any $C>0$ and $T>0$, we can choose $n$ large enough to obtain a solution of the free wave equation \eqref{eq_dual} satisfying \eqref{eq_prop_obs_wave_bis}.
\end{proof}

To disprove \eqref{eq_obs} when (GGCC) is not satisfied, one can also rely on the well-known equivalence between the observability of the wave equation and a resolvent estimate—also known as the Hautus test, see for instance \cite[Proposition 4.5]{RTTM05}. In our context, this equivalence can be reformulated as follows: there exists a constant $C>0$ such that for every $\varphi \in D(\Delta_{\Gc})$ and every $\lambda > 0$,
\begin{equation}
\label{eq:resolventwave}
\left\|\lambda \varphi\right\|^2_{L^2(\Gc)} \leq C \left( \|(\Delta_\Gc + q + \lambda^2) \varphi\|^2_{L^2(\Gc)} + \|\lambda 1_\omega \varphi\|^2_{L^2(\Gc)}\right).
\end{equation}

We now proceed to show that \eqref{eq:resolventwave} does not hold. The proof follows by contradiction: suppose that \eqref{eq:resolventwave} is indeed satisfied.
Consider the sequence $(\varphi_n) = (u_n),\ (\lambda_n) = (\mu_n)$ where $(u_n)$ and $(\mu_n)$ are given by Proposition \ref{prop_quasimode}. Evaluating the right-hand side of \eqref{eq:resolventwave}, we obtain
\begin{equation*}
     \|(\Delta_\Gc + q + \mu_n^2) \, u_n\|^2_{L^2(\Gc)} + \|\mu_n 1_\omega u_n\|^2_{L^2(\Gc)} = 
		\|(\Delta_\Gc + q + \mu_n^2) \, u_n\|^2_{L^2(\Gc)} \xrightarrow[n \longrightarrow + \infty]{}0.
\end{equation*}
 This implies that $\left\|\mu_n u_n\right\|_{L^2(\Gc)}$ tends to zero $0$ as $n \longrightarrow + \infty$. Next, using the identity
\begin{equation*}
 - \left\|\nabla u_n\right\|^2_{L^2(\Gc)} + \left\langle q u_n,u_n \right\rangle_{L^2(\Gc)} + \left\|\mu_n u_n\right\|^2_{L^2(\Gc)} = \left\langle (\Delta_\Gc + q + \mu_n^2) \, u_n,u_n\right\rangle_{L^2(\Gc)} \xrightarrow[n \longrightarrow + \infty]{}0,
\end{equation*}
we deduce that $\left\|\nabla u_n\right\|^2_{L^2(\Gc)} \longrightarrow 0$ as $n$ goes to infinity. However, this contradicts the assumption
$\left\|\nabla u_n\right\|_{L^2(\Gc)} = 1$. Hence \eqref{eq:resolventwave} cannot hold.

\subsection{The observability inequality under (GGCC)}\label{suff}

In this part, we prove that (GGCC) is a sufficient condition for the observability inequality \eqref{eq_obs} of the wave equation \eqref{eq_dual}. Note that, in this subsection, unlike the results above, the choice of boundary conditions is free.

\begin{prop}\label{prop_obs_wave}
Let $\Gc$ be a graph and $\omega$ an open set of $\Gc$. We assume that:
\begin{itemize}
\item the Graph Geometric Control Condition holds for $\omega$,
\item the kernel of $(\Delta_\Gc +q)$ is reduced to $\{0\}$, that is that the unique stationary solution of \eqref{eq_free_wave} is the trivial solution $\varphi\equiv 0$.
\end{itemize}
Let $T$ be a time larger than the length $L$ appearing in Definition \ref{defi_GGCC} of (GGCC). Then, there exists a positive constant $C$ such that, for all solution $\varphi$ of class $ \Cc^1([0,T],L^2(\Gc))$\linebreak $\cap \Cc^0([0,T],H^1_\Delta(\Gc))$ of \begin{equation}\label{eq_free_wave}
\left\{\begin{array}{ll} \partial_{t}^2 \varphi = \Delta_\Gc \varphi +q(x)\varphi, & t>0,\\
(\varphi,\partial_t\varphi)(t=0)=(\varphi_0,\varphi_1)\in H^1_\Delta(\Gc)\times L^2(\Gc), & 
\end{array}\right. 
\end{equation}
we have
\begin{equation}\label{eq_prop_obs_wave}
\|(\varphi_0,\varphi_1)\|_{H^1_\Delta(\Gc)\times L^2(\Gc)}^2 \leq C \int_0^T \int_{\omega} |\partial_t \varphi(x,t)|^2 \dd x \dd t.
\end{equation}
\end{prop}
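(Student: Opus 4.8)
The plan is to prove the observability inequality \eqref{eq_prop_obs_wave} by propagating the observation from $\omega$ outward along the edges and through the interior vertices, following the four-step scheme announced in the introduction. The starting tool is the classical multiplier identity for the wave equation on a single interval $e_j \simeq (0,\ell_j)$: if $\varphi$ solves \eqref{eq_free_wave}, then for a cut-off function in time and space, multiplying the equation by $x\,\partial_x\varphi$ (and by $\varphi$ itself) and integrating by parts yields, for a time horizon $T_j > \ell_j$, an estimate controlling $\int_{T_j'}^{T-T_j'}\!\!\int_{e_j}(|\partial_t\varphi|^2+|\partial_x\varphi|^2)$ together with the boundary traces $\partial_x\varphi$ at the endpoints, by the $L^2((0,T)\times e_j')$ norm of $\partial_t\varphi$ on a subinterval $e_j' \subset e_j$ where observation is already known, up to lower-order terms $\|\varphi\|_{L^2}$. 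The key bookkeeping point is to track how much time is consumed: extending the observation along an edge of length $\ell_j$ costs time $\ell_j$ on each side, so that after propagating along an uncontrolled path of total length $\le L$ we still have a positive time window left, which is exactly why the hypothesis $T > L$ appears.

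The second ingredient is the transmission through an interior vertex $v_k$. Suppose by the propagation step we already have the observation (i.e. control of $\int\!\!\int |\partial_t\varphi|^2 + |\partial_x\varphi|^2$, plus the trace $\partial_x\varphi_j(v_k)$ for the incident edges) on all edges of $\Ec_k$ except one, say $e_{j_0}$. Continuity of $\varphi$ at $v_k$ transfers the value $\varphi_{j_0}(v_k) = \varphi_{j_1}(v_k)$ from a neighbouring controlled edge, and Kirchhoff's condition $\sum_{j\in\Ec_k}\frac{\dd\varphi_j}{\dd n_j}(v_k)=0$ expresses the missing normal derivative $\frac{\dd\varphi_{j_0}}{\dd n_{j_0}}(v_k)$ in terms of the already-controlled ones. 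Feeding these Cauchy data at the endpoint $v_k$ of $e_{j_0}$ into a one-sided observability/energy estimate for the wave equation on $(0,\ell_{j_0})$ from a single endpoint propagates the observation along $e_{j_0}$, again at the cost of a time increment $\ell_{j_0}$. Using Description 3 of Theorem \ref{prop_GGCC} — the uncontrolled edges form a forest each of whose trees contains at most one exterior vertex of $\Gc$ — one orders the uncontrolled edges by increasing distance from $\omega$ within each tree (rooting at the exterior vertex when present); at each step the edge to be conquered is attached to a vertex all of whose other incident edges are already controlled (the tree structure guarantees no cycles obstruct this), and the exterior vertices of $\Gc$ that appear as leaves of these trees contribute a boundary condition (Dirichlet or Neumann) that closes the one-sided estimate there. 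After finitely many steps one obtains
\begin{equation*}
\int_{T_0}^{T-T_0}\!\!\int_{\Gc} \big(|\partial_t\varphi|^2 + |\partial_x\varphi|^2\big)\dd x\dd t \;\le\; C\int_0^T\!\!\int_\omega |\partial_t\varphi|^2\dd x\dd t + C\sup_{t}\|\varphi(t)\|_{L^2(\Gc)}^2,
\end{equation*}
for some $T_0 < (T-L)/2$, whence, integrating the conserved energy in $t$ and using $T - 2T_0 > L > 0$, the full energy $\|(\varphi_0,\varphi_1)\|_{H^1_\Delta\times L^2}^2$ is bounded by the right-hand side.

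It remains to absorb the lower-order term $\sup_t\|\varphi(t)\|_{L^2(\Gc)}^2$, and this is where the hypothesis $\ker(\Delta_\Gc+q)=\{0\}$ enters. The standard uniqueness–compactness (Bardos–Lebeau–Rauch / Lebeau) argument applies: if the inequality without the lower-order term failed, there would be a sequence of solutions with unit energy whose observation on $(0,T)\times\omega$ tends to zero; by the energy estimate just obtained this sequence is bounded in the energy space, and the $L^2(\Gc)$ lower-order term being relatively compact, one extracts a subsequence converging to a limit solution $\varphi_\infty$ with $\partial_t\varphi_\infty = 0$ on $(0,T)\times\omega$; differentiating in $t$ and using the estimate again, $\partial_t\varphi_\infty$ itself would have to vanish, so $\varphi_\infty$ is a stationary solution, i.e. $(\Delta_\Gc+q)\varphi_\infty = 0$, hence $\varphi_\infty \equiv 0$ by hypothesis — contradicting that the limit has unit energy. (Alternatively, one may bypass compactness entirely by running the same propagation scheme at the level of the stationary resolvent estimate \eqref{eq:resolventwave}, which avoids the lower-order term but requires redoing the multiplier computations spectrally.)

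The main obstacle I expect is the careful orchestration of the time bookkeeping across the whole forest of uncontrolled edges: one must choose the cut-offs and the order of conquered edges so that the cumulative time cost never exceeds $T$, which is exactly what Description 3 of Theorem \ref{prop_GGCC} and the definition of $L$ in (GGCC) are designed to guarantee, but making this uniform in all the one-sided multiplier estimates — and in particular handling vertices of high degree and the two types of exterior boundary conditions uniformly — is the technical heart of the argument. The one-sided observability estimate at a Neumann exterior vertex of an uncontrolled edge is impossible (this is precisely the obstruction behind the necessity of (GGCC) in the Dirichlet case and its possible failure of necessity in the mixed case), so the use of Description 3 — each uncontrolled tree has at most one exterior vertex of $\Gc$, and when it has one we root there — is essential to ensure we never need to propagate "into" such a dead end.
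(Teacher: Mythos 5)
Your overall strategy coincides with the paper's proof of Proposition \ref{prop_obs_wave}: multiplier estimates on individual edges, propagation through interior vertices via continuity and Kirchhoff's condition, organisation of the propagation by the forest structure of Description 3 of Theorem \ref{prop_GGCC}, conservation of energy, and a final compactness--uniqueness step to remove the lower-order $L^2$ term. Two points, however, deserve attention.

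First, a minor one: to achieve the claimed time cost of exactly $\ell_j$ per traversed edge (hence the sharp threshold $T>L$), the static multiplier $x\,\partial_x\varphi$ is not sufficient; the paper uses a moving-front multiplier $\rho(x,t)=\min\big(0,h(t)-x\big)$ with $|h'|<1$, whose support expands at speed just below one, see \eqref{eq_multiplier_2}. Your time bookkeeping is the right target, but the multiplier you name would not deliver it without this modification.

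Second, and more seriously, your uniqueness step has a genuine gap. You assert that for the limit invisible solution $\varphi_\infty$, ``differentiating in $t$ and using the estimate again, $\partial_t\varphi_\infty$ itself would have to vanish.'' This does not follow: applying the weak estimate \eqref{eq_lemma_obs_wave_2} to $\psi=\partial_t\varphi_\infty$ (which is indeed again invisible) still leaves the lower-order term $\int_0^T\|\psi(t)\|_{L^2(\Gc)}^2\,\dd t$ on the right-hand side, so it does not force $\psi\equiv 0$. The correct argument (Lemma \ref{lemma_invisible} in the paper) runs as follows: the space $\Nc$ of invisible solutions is finite-dimensional by the same compactness trick; after a regularity check, $\partial_t$ maps $\Nc$ into itself, so if $\Nc\neq\{0\}$ it contains an eigenfunction $e^{\lambda t}\phi(x)$ of $\partial_t$. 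For $\lambda\neq 0$ one obtains $(\Delta_\Gc+q)\phi=\lambda^2\phi$ with $\phi_{|\omega}\equiv 0$ as in \eqref{eq_invisible}, and one must invoke a unique continuation argument across the forest of uncontrolled edges (a second-order ODE with vanishing Cauchy data at a vertex, propagated edge by edge exactly as in your Step 4) to conclude $\phi\equiv 0$. Only the residual case $\lambda=0$ is handled by the hypothesis $\ker(\Delta_\Gc+q)=\{0\}$. Your proposal uses that hypothesis as if every invisible solution were stationary, which skips the nonzero-eigenvalue branch entirely; that branch is where the geometric condition is used a second time.
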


Before proceeding with the proof of Proposition \ref{prop_obs_wave}, we can finally prove Theorems \ref{th_exact_wave_intro_dir} and \ref{th_exact_wave_intro_neu}.
\begin{proof}[Proof of Theorems \ref{th_exact_wave_intro_dir} and \ref{th_exact_wave_intro_neu}]  
Proposition \ref{prop_obs_wave} ensures that (GGCC) is a sufficient condition for the observability of the wave equation \eqref{eq_dual}. We also know that, thanks to Corollary \ref{coro_obs_free_wave}, (GGCC) is a necessary condition for the observation in the case of Dirichlet boundary conditions. As stated in Theorem \ref{th:obsexactwave}, observation and control are equivalent properties: (GGCC) is then a necessary and sufficient condition for the exact controllability of the wave equation with Dirichlet boundary condition, concluding the proof of Theorem \ref{th_exact_wave_intro_dir}, and  a sufficient condition for the exact controllability of the wave equation in general, concluding the proof of Theorem \ref{th_exact_wave_intro_neu}.  
\end{proof}

The remaining part of this section is the proof of Proposition \ref{prop_obs_wave}, which is split in several steps. The main arguments are the following.

\begin{itemize}
\item The use of multiplier techniques developed to control the wave equation on a segment. This kind of methods has been introduced by Morawetz in \cite{Morawetz} and popularized by Lions \cite{Lions}. The multiplier techniques have been widely used, in particular in one-dimensional problems where they provide explicit and optimal results. Notice that, in our setting, the solution defined on the graph does not satisfy an independent boundary condition at the interior vertices. This makes the computations more involved than in a segment where we can use that the solution vanishes at the ends of the segment (or another boundary condition).
We underline that \cite{Schmidt} also uses multipliers to control a network of vibrating strings. These are time-independent multipliers, which are simpler, but it is difficult to use them for estimating the optimal time of control. Also notice that it should be possible to obtain similar results by using d'Alembert's formula as in \cite{AZ21,DZ06}. However, the multiplier techniques provide a different strategy, which somehow more general. 
For example, we should be able to consider a time-dependent potential $q(x,t)$ in \eqref{eq_free_wave} without too much complications. It should also be possible to consider a Laplacian operator with non-constant coefficients, but we will not do this here for sake of simplicity.  
\item A strategy to propagate the observation in the graph to unobserved edges. The idea of the strategy is mostly the same as the one of \cite{AB} and uses the tree structure of the unobserved parts of the graph. The same strategy is also used via d'Alembert's formula in \cite{DZ06}. There, the case of boundary control in star-shaped networks is considered but it is noticed that it can be extended to trees as in the present paper.
\item Common tricks to get rid of compact terms in the observation estimate. In this step, we need to show that the set of ``invisible solutions", the ones being $0$ in $\omega$, is reduced to $\{0\}$. The proofs differ from one article to another, but the compactness arguments behind are always the same and go back to the first papers on the subject, see \cite{BLR_JEDP,BLR} for example. It is worth noting that this part of the proof should be adapted if instead of restricting to spatial-dependent potential $q(x)$, we would consider time-dependent potential  $q(x,t)$.
\end{itemize}

\vspace{3mm}

{\noindent\bf Step 1: simplification of the setting.} Since $T$ is larger than $L$, $\Gc$ is compact, and $\omega$ is open, we may assume the following setting preserving the validity of the proposition. First, we replace $\omega$ by a union of a finite number of intervals. This may slightly increase the length $L$, but we can assume that $T>L$ still holds. Then, we can add an artificial vertex at each end of the intervals composing $\omega$. Replacing an edge $(0,\ell)$ by two edges $(0,a)\cup (a,\ell)$ does not change the geometry of the graph, neither the equation. Indeed, the conditions of continuity of value and flux at the vertex imply that $H^k((0,\ell))$ and $H^k((0,a)\cup (a,\ell))$ are equivalent for $k=0$, $1$ and $2$ and thus that the Laplacian operator is unchanged by this transformation. To summarize, we consider now a graph in which any edge is either an edge that does not contain any part of $\omega$, or an edge entirely covered by $\omega$.

\medskip

To be able to give a strong sense to all the following computations, it is convenient to first consider regular solutions with $(\varphi_0,\varphi_1)$ in $ D(\Delta_\Gc)\times H^1_\Delta(\Gc)$. This will be implicitly assumed below. Once the estimate \eqref{eq_prop_obs_wave} will be proved for such regular solutions, it directly extends to all $(\varphi_0,\varphi_1)\in H^1_\Delta(\Gc)\times L^2(\Gc)$ by density.

\vspace{3mm}

{\noindent\bf Step 2: weak observation on a uncontrolled edge.}
In this step, we focus on the observation of a single edge. To simplify the notation, we call this edge $e$ and assimilate it to $(0,\ell)$. We consider a solution of the free wave equation
\begin{equation}\label{eq_free_wave_2}
\forall t\in\RR~,~~\forall x\in(0,\ell)~,~~\partial_{t}^2 \varphi(x,t)=\partial_{x}^2\varphi(x,t) + q(x) \varphi(x,t).
\end{equation}
Notice that we do not specify the boundary conditions since we cannot precise them without knowing what is happening elsewhere on the graph. The central argument of this part is the use of multipliers. Let $\rho$ be a function which is $\Cc^0$ and piecewise $\Cc^1$, from $[0,\ell]\times [t_1,t_2]$ into $\RR$. Assume to simplify that $\rho(\cdot,t_1)\equiv \rho(\cdot,t_2)\equiv 0$. Multiplying the equation \eqref{eq_free_wave_2} by $\rho\partial_x\varphi$, we obtain
\begin{align}
0&=\int_{t_1}^{t_2}\int_0^\ell (\partial_{t}^2 \varphi - \partial_{x}^2\varphi -q \varphi)\rho \partial_x \varphi \,\dd x\dd t =\int_{t_1}^{t_2}\int_0^\ell \rho\big(\partial_{t}^2 \varphi\partial_x \varphi - \partial_{x}^2\varphi \partial_x \varphi -q \varphi\partial_x \varphi \big) \,\dd x\dd t \nonumber \\
&=  \int_{t_1}^{t_2}\int_0^\ell \rho \big(-\partial_t\varphi \, \partial_{t} \partial_{x} \varphi - \partial_{x}^2\varphi\partial_x \varphi \big) \,\dd x\dd t - \int_{t_1}^{t_2}\int_0^\ell \partial_t \rho \partial_t\varphi \partial_{x}\varphi + \rho q \varphi \partial_x \varphi \,\dd x\dd t\nonumber\\
&=\frac 12 \int_{t_1}^{t_2}\int_0^\ell (\partial_x \rho) \big(|\partial_t \varphi|^2 + |\partial_x \varphi|^2 \big)\,\dd x\dd t - \int_{t_1}^{t_2}\int_0^\ell \partial_t \rho \partial_t\varphi \partial_{x}\varphi + \rho  q \varphi \partial_x \varphi\,\dd x\dd t\nonumber\\
&~~~~    - \frac 12\int_{t_1}^{t_2} \Big[ \rho \big(|\partial_t \varphi|^2 + |\partial_x \varphi|^2 \big) \Big]_0^\ell \,\dd t.\label{eq_multiplier}
\end{align}
Assume that we control the quantity $|\partial_t \varphi|^2 + |\partial_x \varphi|^2$ at one of the ends of the edge, say at $x=0$ to fix the notation. We choose $\rho$ as 
\begin{equation}\label{eq_def_rho}
\rho(x,t):= \min\big(0,h(t)-x\big)
\end{equation}
with $h$ a smooth non-negative function satisfying $h(t_1)=h(t_2)=0$, so that $\rho$ is continuous and piecewise $\Cc^1$ and $\rho(\cdot,t_1)\equiv\rho(\cdot,t_2)\equiv 0$.  Applying \eqref{eq_multiplier} to a such choice, we obtain
\begin{align}
\int_{t_1}^{t_2} \rho(0,t) \big(|\partial_t \varphi|^2 + |\partial_x \varphi|^2\big)(0,t) \,\dd t ~= ~ & \int_{t_1}^{t_2}\int_0^{\min(h(t),\ell)} \big(|\partial_t \varphi|^2 + |\partial_x \varphi|^2\big) \,\dd x\dd t\nonumber \\ 
&+ \int_{t_1}^{t_2} \rho(\ell,t) \big(|\partial_t \varphi|^2 + |\partial_x \varphi|^2\big)(\ell,t) \dd t \label{eq_multiplier_20}\\
& + 2 \int_{t_1}^{t_2}\int_0^{\min(h(t),\ell)} h'(t) \partial_t\varphi \partial_{x}\varphi + \rho q \varphi \partial_x \varphi  \,\dd x\dd t.\nonumber
\end{align}
For any $\nu>0$, to be chosen small enough later, we have the bound
$$ \left|\int_{t_1}^{t_2}\int_0^{\min(h(t),\ell)} \rho q \varphi \partial_x \varphi  \,\dd x\dd t \right| \leq \int_{t_1}^{t_2}\left(\nu\|\partial_x \varphi\|^2_{L^2(0,\ell)} + \frac 1\nu \|q\|_{L^\infty}^2\|\rho\|_{L^\infty}^2 \|\varphi\|^2_{L^2(0,\ell)}\right)\,\dd t.$$
We also have 
$$2\left|\int_{t_1}^{t_2}\int_0^{\min(h(t),\ell)}  h'(t) \partial_t\varphi \partial_{x}\varphi  \,\dd x\dd t \right| \leq {\max |h'(t)|} \int_{t_1}^{t_2}\int_0^{\min(h(t),\ell)} \big(|\partial_t \varphi|^2 + |\partial_x \varphi|^2  \big) \,\dd x\dd t.$$
Thus, assuming $\max |h'(t)|<1$ enables to absorb the previous term in the first term of the right-hand side of \eqref{eq_multiplier_20}. Now, we assume that $t_2-t_1>2\ell$ and choose $\tau>0$ smaller that $\frac 12(t_2-t_1)-\ell$. We can choose $h(t)$ with $|h'(t)|<1$ but close enough to $1$ such that there exists $\varepsilon>0$, depending on $\tau$, such that $h(t)>\ell+\varepsilon$ on $(t_1+\ell+\tau,t_2-\ell-\tau)$. The associated weight $\rho$ is described in Figure \hyperref[fig_rho]{3}. 
\begin{figure}[ht]
\begin{center}
\resizebox{4cm}{!}{\input{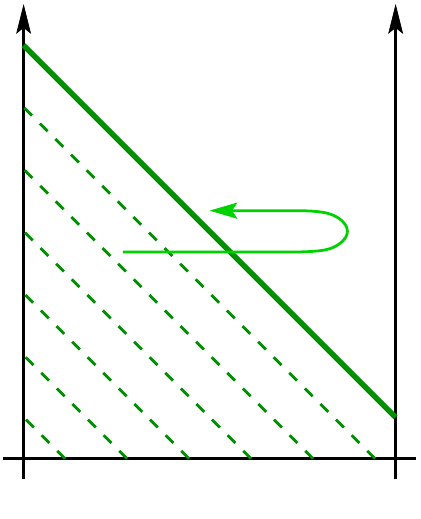_t}}
\end{center}
\caption{\it The weight $\rho$ is a front of slope $1$ and moving at speed slightly less than $1$. It moves back and forth from $0$ to a weight being positive in all the edge.}\label{fig_rho}
\end{figure}
Using this weight, \eqref{eq_multiplier_20} provides a positive constant $C>0$, independent of $\varphi$ and $\nu$, but depending on the geometry of the graph, on $q$ and on the times $t_i$, such that
\begin{align}
 &\int_{t_1+\ell+\tau}^{t_2-\ell-\tau}\int_0^\ell \big(|\partial_t \varphi|^2 + |\partial_x \varphi|^2 \big)(x,t)\,\dd x\dd t + \int_{t_1+\ell+\tau}^{t_2-\ell-\tau} \big(|\partial_t \varphi|^2 + |\partial_x \varphi|^2 \big)(\ell,t)\,\dd t\nonumber \\
& \leq C \int_{t_1}^{t_2}\Big( \big(|\partial_t \varphi|^2 + |\partial_x \varphi|^2 \big)(0,t)  + \frac 1\nu \|\varphi(t)\|^2_{L^2(0,\ell)}  + \nu\|\partial_x \varphi(t)\|_{L^2(0,\ell)}^2\Big) \dd t \label{eq_multiplier_2}.
\end{align}
The last two terms in the upper bound can be seen as parasite terms. This is the reason why we speak for the moment of a ``weak'' observation. We will see how to get rid of them below. To summarize, up to these parasite terms, we can control the quantity $\big(|\partial_t \varphi|^2 + |\partial_x \varphi|^2 \big)$ in the whole edge and at one of its end in some time interval $[t'_1,t'_2]$, here equal to $[t_1+\ell+\tau,t_2-\ell-\tau]$, by knowing this quantity on the other end and during an interval of time $[t_1,t_2]=[t_1'-\ell-\tau, t_2'+\ell+\tau]$ with margins slightly larger than $\ell$ before and after $[t'_1,t'_2]$. 

\vspace{3mm}

{\noindent\bf Step 3: weak observation on a controlled edge.}
In this step, we assume that we are in an ``controlled'' edge, meaning that we aim at controlling the quantities inside the edge by the observation $|\partial_t\varphi|^2$, up to some parasite terms.

\medskip

We consider a weight $\rho=h(t)\big(x-\frac \ell 2\big)$ where $h(t)$ is a smooth non-negative function with $h(t_1)=h(t_2)=0$. Applying \eqref{eq_multiplier} to this choice, we obtain
\begin{align*}
\frac 12 \int_{t_1}^{t_2} \int_0^\ell & h(t) \big(|\partial_t \varphi|^2 + |\partial_x \varphi|^2 \big) \,\dd x\dd t ~=~\int_{t_1}^{t_2} \int_0^\ell h'(t)\Big(x-\frac \ell 2\Big)  \partial_t\varphi\partial_x\varphi \,\dd x\dd t \\ & + \int_{t_1}^{t_2} \int_0^\ell h(t)\Big(x-\frac \ell 2\Big) q(x) \varphi\partial_x\varphi \,\dd x\dd t \\ & + \frac 14 \int_{t_1}^{t_2}  h(t)\ell \Big(\big(|\partial_t \varphi|^2 + |\partial_x \varphi|^2 \big)(0,t)+\big(|\partial_t \varphi|^2 + |\partial_x \varphi|^2 \big)(\ell,t)\Big) \dd t . 
\end{align*}
Thus, there exists a constant $C>0$ such that, for any $\nu>0$ to be fixed later, 
\begin{align}
\int_{t_1}^{t_2}  h(t)\Big(\big(|\partial_t \varphi|^2 +& |\partial_x \varphi|^2 \big)(0,t)+\big(|\partial_t \varphi|^2 + |\partial_x \varphi|^2 \big)(\ell,t)\Big) \dd t  \nonumber \\ 
&\leq C \Big( \int_{t_1}^{t_2} \int_0^\ell  h(t) \big(|\partial_t \varphi|^2 + |\partial_x \varphi|^2) \,\dd x\dd t\nonumber\\ &~~~~~~~ + \frac 1\nu \int_{t_1}^{t_2}\int_0^\ell \big(|\partial_t \varphi|^2 + |\varphi|^2\big)\,\dd x\dd t + \nu  \int_{t_1}^{t_2}\int_0^\ell |\partial_x \varphi|^2 \,\dd x\dd t \Big) \label{eq_multiplier_3}.
\end{align}
Next, we use a new calculus: we multiply the free wave equation \eqref{eq_free_wave_2} by $h(t)\varphi$ and integrate to obtain 
\begin{align*}
0=\int_{t_1}^{t_2}\int_0^\ell (\partial_{t}^2 \varphi - \partial_{x}^2\varphi - q \varphi)h(t)& \varphi \,\dd x\dd t =\int_{t_1}^{t_2}\int_0^\ell h(t)\big( \varphi\partial_{t}^2 \varphi - \varphi \partial_{x}^2\varphi   - q |\varphi|^2 \big) \,\dd x\dd t \\
&=  \int_{t_1}^{t_2}\int_0^\ell h(t) \big(-|\partial_t\varphi|^2 + | \partial_{x}\varphi|^2 -q |\varphi|^2 \big) \,\dd x\dd t \\ & ~~~~~ - \int_{t_1}^{t_2}\int_0^\ell h'(t) \varphi \partial_t\varphi \,\dd x\dd t  - \int_{t_1}^{t_2} \Big[h(t) \varphi \partial_x \varphi \Big]_0^\ell \, \dd t.
\end{align*}
Thus, we have 
\begin{align}
\int_{t_1}^{t_2}\int_0^\ell h(t)| \partial_{x}\varphi|^2 \,\dd x\dd t \leq C \Big( & \int_{t_1}^{t_2}\int_0^\ell |\partial_t\varphi|^2 \,\dd x\dd t + \|\varphi\|_{L^2(0,\ell)}^2  \nonumber \\ & +  \int_{t_1}^{t_2}h(t)\big(|\varphi \partial_x\varphi|(0,t) + |\varphi \partial_x\varphi|^2(\ell,t) \big) \, \dd t\Big).\label{eq_multiplier_encore}
\end{align}
We bound the last terms of the previous estimate as follows. Consider the end $x=0$, the bound at the end $x=\ell$ being similar. From fractional Sobolev embeddings and Gagliardo-Nirenberg interpolation inequalities, we have, for any positive $\nu'$ and $\nu''$,
\begin{align*}
|\varphi \partial_x\varphi|(0,t)&\leq \frac {1}{2\nu'} |\varphi|^2(0,t) + \frac {\nu'}2 |\partial_x\varphi|^2(0,t)\\
&\leq \frac {C}{\nu'} \|\varphi(t)\|^2_{H^{3/4}(0,\ell)} + \frac {\nu'}2 |\partial_x\varphi|^2(0,t)\\
&\leq \frac {C}{{\nu'}^2\nu''} \|\varphi(t)\|^2_{L^2(0,\ell)} + \nu'' \int_0^\ell |\partial_x \varphi(x,t)|^2 \,\dd x + \frac {\nu'}2 |\partial_x\varphi|^2(0,t).
\end{align*}
We can go back to \eqref{eq_multiplier_encore} and choose $\nu''>0$ small enough to absorb the above term $\int_0^\ell |\partial_x \varphi|^2$ in the left-hand side of \eqref{eq_multiplier_encore}. We obtain the estimate
\begin{align}
\int_{t_1}^{t_2}\int_0^\ell h(t)| \partial_{x}\varphi|^2 \,\dd x\dd t \leq C \Big( & \int_{t_1}^{t_2}\int_0^\ell |\partial_t\varphi|^2 \,\dd x\dd t + \left(1+\frac 1{\nu'^2}\right)\|\varphi\|_{L^2(0,\ell)}^2  \nonumber \\ & + \nu' \int_{t_1}^{t_2}h(t)\big(|\partial_x\varphi|^2(0,t) + |\partial_x\varphi|^2(\ell,t) \big) \, \dd t\Big) \label{eq_multiplier_4}.
\end{align}
We use \eqref{eq_multiplier_4} to bound the term $\int \int h(t)| \partial_{x}\varphi|^2$ in the right-hand side of \eqref{eq_multiplier_3} and we choose $\nu'$ small enough to absorb the boundary terms of \eqref{eq_multiplier_4}  in the left side of \eqref{eq_multiplier_3}. We obtain that there exists a constant $C>0$ such that, for all $\nu>0$, we have 
\begin{align*}
\int_{t_1}^{t_2}  h(t)\Big(&\big(|\partial_t \varphi|^2 + |\partial_x \varphi|^2 \big)(0,t)+\big(|\partial_t \varphi|^2 + |\partial_x \varphi|^2 \big)(\ell,t)\Big) \, \dd t \\ 
&\leq C \int_{t_1}^{t_2}\left(\left(1+\frac 1\nu \right) \left(\|\varphi(t)\|^2_{L^2(0,\ell)}  +  \int_0^\ell |\partial_t \varphi (t)|^2 \dd x \right) + \nu  \int_0^\ell |\partial_x \varphi(t)|^2 \dd x \right) \dd t. 
\end{align*}

Using this estimate back to \eqref{eq_multiplier_4} with $\nu'=1$, we also obtain a control of the term $\int \int h(t)| \partial_{x}\varphi|^2$. Thus, for any small $\tau>0$, if we take $h$ uniformly positive in $[t_1+\tau,t_2-\tau]$, we obtain the weak observation 
\begin{align}
\int_{t_1+\tau}^{t_2-\tau}  \Big(&\big(|\partial_t \varphi|^2 + |\partial_x \varphi|^2 \big)(0,t)+\big(|\partial_t \varphi|^2 + |\partial_x \varphi|^2 \big)(\ell,t)\Big)\, \dd t + \int_{t_1+\tau}^{t_2-\tau}\int_0^\ell | \partial_{x}\varphi|^2 \,\dd x\dd t \notag\\  
&\leq C \left(1+\frac 1\nu \right) \int_{t_1}^{t_2} \|\varphi(t)\|^2_{L^2(0,\ell)}\dd t  + C \nu \int_{t_1}^{t_2} \int_0^\ell |\partial_x \varphi(t)|^2 \dd x \dd t \notag\\
&  \qquad + C \left(1+\frac 1\nu \right) \int_{t_1}^{t_2}  \int_0^\ell |\partial_t \varphi (t)|^2 \dd x  \Big) \dd t. \label{eq_multiplier_6}
\end{align}
The first two terms in the right hand side of \eqref{eq_multiplier_6} can be seen as parasite terms. So the last estimate \eqref{eq_multiplier_6} states that, in a controlled edge, one can control $\big(|\partial_t \varphi|^2 + |\partial_x \varphi|^2 \big)$ at both ends and in the whole edge in an interval of time $[t_1',t_2'] = [t_1+\tau, t_2-\tau]$ by knowing the quantity $|\partial_t \varphi|^2$ in the whole edge in a slightly larger interval of time $[t_1,t_2]$.

\medskip

{\noindent\bf Step 4: using (GGCC).}
Here, we intend to use the geometry of the graph and the set $\omega$ to prove the following estimate. 
\begin{lemma}[Weak observation]\label{lemma_obs_wave_1}
Let $\tau_1<\tau_2$ and let $T$ be as in Proposition \ref{prop_obs_wave}. Then, there exists a constant $C>0$ such that, for all $\nu>0$, we have 
\begin{align}
\int_{\tau_1}^{\tau_2}\|(\varphi,\partial_t \varphi)(t)\|_{H^1_\Delta(\Gc)\times L^2 (\Gc)}^2 \,\dd t ~\leq~ & C \int_{\tau_1-T/2}^{\tau_2+T/2}   \left(\left(1+\frac 1\nu\right)\|\varphi(t)\|_{L^2(\Gc)}^2 + \nu\|\grad \varphi(t)\|_{L^2(\Gc)}^2\right.  \nonumber\\ 
&~~+\left. \left(1+\frac 1\nu\right) \int_{\omega} |\partial_t \varphi(x,t)|^2 \dd x \right) \dd t.\label{eq_lemma_obs_wave_1}
\end{align}
\end{lemma}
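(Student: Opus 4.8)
The plan is to propagate the observation from $\omega$ through the whole graph, combining the one-edge estimates already obtained in Step~2 and Step~3 with the forest structure of the uncontrolled part supplied by Theorem~\ref{prop_GGCC}. First I would apply Step~3 to every controlled edge: after the reduction of Step~1 each such edge lies entirely in $\omega$, so the datum $\int_\omega|\partial_t\varphi|^2$ produces a bound for $\big(|\partial_t\varphi|^2+|\partial_x\varphi|^2\big)$ on that whole edge and at both its endpoints, over a time interval shrunk by $\tau$, up to parasite terms of the form $(1+\tfrac1\nu)\|\varphi\|_{L^2}^2$ and $\nu\|\grad\varphi\|_{L^2}^2$. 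By point~3 of Theorem~\ref{prop_GGCC} the uncontrolled edges form a forest whose trees each contain at most one exterior vertex of $\Gc$; I would root each tree at that exterior vertex when it exists and otherwise at the metric center of the tree (subdividing an edge if the center is not a vertex, which is harmless exactly as in Step~1).

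Then I would treat the uncontrolled edges tree by tree, in order of increasing distance to the root. For an edge $e$ with endpoints $v_{\mathrm{sh}}$ (on the leaf side) and $v_{\mathrm{de}}$ (on the root side), by induction the quantity $\big(|\partial_t\varphi|^2+|\partial_x\varphi|^2\big)$ is already under control at $v_{\mathrm{sh}}$ on every incident edge other than $e$: those are controlled edges (handled above) or uncontrolled edges on the leaf side, already treated. For the regular solutions of Step~1 one has $\varphi(t)\in D(\Delta_\Gc)\subset H^2(\Gc)$ and $\varphi(t)\in H^1(\Gc)$, so $\partial_t\varphi$ is continuous at $v_{\mathrm{sh}}$ and Kirchhoff's relation $\sum_j\frac{\dd\varphi_j}{\dd n_j}(v_{\mathrm{sh}})=0$ expresses the exterior derivative of $e$ at $v_{\mathrm{sh}}$ as a combination of the others; a Cauchy--Schwarz inequality then controls $\big(|\partial_t\varphi|^2+|\partial_x\varphi|^2\big)$ of $e$ at $v_{\mathrm{sh}}$. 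Feeding this into Step~2 transports the bound to the whole edge $e$ and to $v_{\mathrm{de}}$, at the price of shrinking the time interval by $\ell_e+\tau$ on each side and of adding parasite terms of the same form. Iterating exhausts each tree; note that the boundary condition at an exterior vertex is never used in this scheme, which is why the conclusion holds for arbitrary (Dirichlet or Neumann) boundary conditions.

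The hard part will be the time bookkeeping. Along a chain of edges running from a leaf $w$ to an edge $e$, the total loss of time is $2\big(\delta+\text{sum of the }\tau\text{'s}\big)$, where $\delta$ is the distance from $w$ to the far endpoint of $e$; and $\delta$ is at most the distance from $w$ to the root, which is $\le L/2$. Indeed, for a tree rooted at its center the radius of a metric tree equals half its diameter, while a longest leaf-to-leaf path stays in the (uncontrolled) tree, hence has length $\le L$ by (GGCC); for a tree rooted at an exterior vertex $v$ the path that starts just inside $\omega$ near a leaf $w$, runs to $v$, makes a U-turn at $v$ and returns avoids $\omega$ and has length $\approx 2\,\mathrm{dist}(w,v)$, so $\mathrm{dist}(w,v)\le L/2$. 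Since the graph has finitely many edges, I may fix all the $\tau$'s small enough that the accumulated loss stays strictly below $T/2$ on each side (using $T>L$) and that every time window fed into Step~2 is longer than twice the corresponding edge (using $\ell_e\le L$ for an uncontrolled edge, again a consequence of (GGCC) since simply traversing $e$ is an $\omega$-avoiding path). Consequently, after all iterations $\big(|\partial_t\varphi|^2+|\partial_x\varphi|^2\big)$ is controlled on every edge over the full interval $[\tau_1,\tau_2]$; integrating in space, summing over the finitely many edges, bounding $\int_{\tau_1}^{\tau_2}\|\varphi(t)\|_{L^2(\Gc)}^2\,\dd t$ by the parasite term and using $\|\varphi(t)\|_{H^1_\Delta(\Gc)}^2=\|\varphi(t)\|_{L^2(\Gc)}^2+\|\grad\varphi(t)\|_{L^2(\Gc)}^2$ yields \eqref{eq_lemma_obs_wave_1}. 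The genuine difficulty is precisely making the depth bound $L/2$ sharp — this is where (GGCC) enters, through the factor $\tfrac12$ coming from U-turns at exterior vertices and from the center of a tree — together with keeping the accumulation of parasite terms and of window losses under control across the chained applications of Steps~2 and~3.
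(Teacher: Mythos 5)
Your proof is correct and follows essentially the same route as the paper: Step~3 on the (fully controlled) edges covering $\omega$, then propagation through each uncontrolled tree from its leaves toward a root (the exterior vertex when it exists, otherwise the metric center) via the moving-front estimate of Step~2 and the continuity/Kirchhoff conditions, with the depth bound $L/2$ supplied by (GGCC) and the accumulated time losses kept below $T/2$. The only differences are presentational — you spell out the general induction and the time bookkeeping where the paper argues on a representative example — and your phrase ``in order of increasing distance to the root'' is just a wording slip, since the induction you actually describe correctly runs from the leaves toward the root.
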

\begin{proof}
Since the right side of \eqref{eq_lemma_obs_wave_1} contains the $L^2-$norm of $\varphi$, our goal is to control the quantity $|\partial_t\varphi|^2+|\partial_x\varphi|^2$ in all the edges during the time interval $[\tau_1,\tau_2]$. Also, the right side of \eqref{eq_lemma_obs_wave_1} contains the $L^2-$norm of $\partial_t\varphi$ on the controlled edges and it is sufficient to control $|\partial_x\varphi|^2$ there. Due to \eqref{eq_multiplier_6}, this control is possible in the controlled edges composing $\omega$, up to margin of time as small as wanted. 

\medskip

To control the other edges, we use the equivalence between (GGCC) and the fact that the family of uncontrolled edges is a forest, in which each tree contains at most one exterior edge of $\Gc$, see Proposition \ref{prop_GGCC}. To avoid heavy general notations, let us explain the proof in an example. We consider the graph $\Gc$ of Figure \hyperref[free_wave]{4}.

\medskip

\begin{figure}[ht]
\begin{center}
\resizebox{\textwidth}{!}{\input{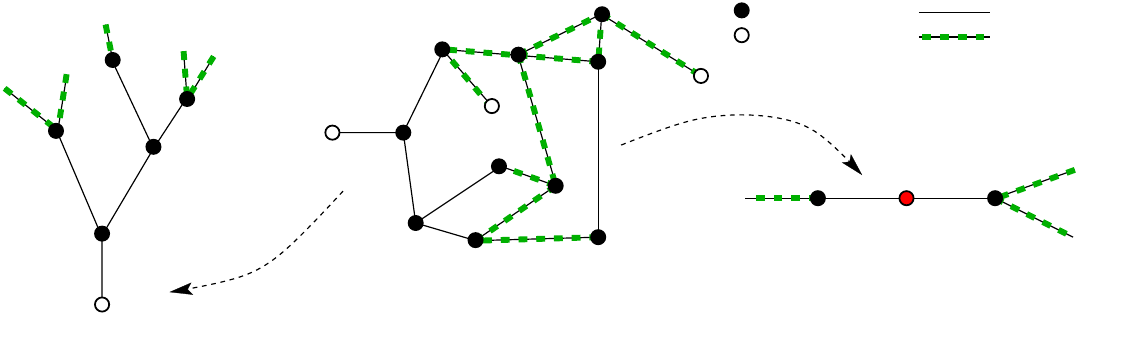_t}}
\end{center}
\caption{\it A graph satisfying (GGCC). The uncontrolled edges form two trees. The left one has one exterior edge, which is considered as the root of the tree. The right one is a single edge, in the middle of which we add an artificial vertex $v'$. The control of the quantity $|\partial_t\varphi|^2+|\partial_x\varphi|^2$ propagates from the controlled edge inside the uncontrolled trees at speed almost $1$. In the left tree, once the control is done in all the edges above a vertex, it propagates into the edge below via the transmission conditions.}
\label{free_wave}
\end{figure}

First consider the left tree of Figure \hyperref[free_wave]{4}. It contains an exterior edge $v_0$ and we consider it as the root of the tree. Consider the edge $e_0$, assimilated to $(0,\ell_0)$, attached to this root, assimilated to the end $\ell_0$. Using the control \eqref{eq_multiplier_2}, we can control the quantity $|\partial_t\varphi|^2+|\partial_x\varphi|^2$ inside this edge if we control this quantity at $x=0$ in an interval slightly larger than $[\tau_1-\ell_0,\tau_2+\ell_0]$. 
Note here that the boundary condition imposed at $\ell_0$ has no importance. 
Also notice that the norm $\|\varphi\|_{L^2}^2$ also appears at the right side of \eqref{eq_lemma_obs_wave_1}, so this term can be used freely in our estimates. From \eqref{eq_multiplier_2}, to control $|\partial_t\varphi|^2+|\partial_x\varphi|^2$ at $x=0$ means to control it at the vertex $v_1$, for the edge $e_0$. Due to the transmission conditions at the vertex $v_1$, it is sufficient to control the value $\varphi$ and all the incoming fluxes in the other edges. Thus, it is sufficient to control $|\partial_t\varphi|^2+|\partial_x\varphi|^2$ at $v_1$ for the edges $e_1$ and $e_2$ during an interval of times slightly larger than $[\tau_1-\ell_0,\tau_2+\ell_0]$. Again, we can use the control \eqref{eq_multiplier_2} to control the quantity $|\partial_t\varphi|^2+|\partial_x\varphi|^2$ inside the edge $e_1$ of length $\ell_1$, if we control it at the vertex $v_2$ during an interval of times slightly larger than $[\tau_1-\ell_0-\ell_1,\tau_2+\ell_0+\ell_1]$; and inside the edge $e_2$ of length $\ell_2$, if we control it at the vertex $v_3$ during an interval of times slightly larger than $[\tau_1-\ell_0-\ell_2,\tau_2+\ell_0+\ell_2]$. We iterate this method until reaching vertices for which the above edges are all controlled edges in which we can control the $|\partial_t\varphi|^2+|\partial_x\varphi|^2$ up to a margin of time as small as needed. Notice that this process always ends at such vertices due to the structure of tree of the connected set of uncontrolled edges. Finally notice that the final control has to be done during an interval of times slightly larger than $[\tau_1-L',\tau_2+L']$ where $L'$ is the largest distance between the root $v_0$ and the top vertices (say $v_5$ here). This length $L'$ is exactly equal to $L/2$ where $L$ is the length of the longest path inside this uncontrolled part, here the one starting at $v_5$, going down to $v_0$ and back up to $v_5$.

\medskip

Now, we consider a tree consisting of uncontrolled edges and without any exterior edges, as the right one of Figure \hyperref[free_wave]{4}. We first add an artificial vertex $v'$ exactly at the middle of the tree: the longest path starting at $v'$ and going to the right has the same length as the one of the longest path starting at $v'$ and going to the left. Next, we argue exactly as above in each half tree, choosing $v'$ as the root. We obtain the control of the quantity $|\partial_t\varphi|^2+|\partial_x\varphi|^2$ inside the whole tree during a time $[\tau_1,\tau_2]$, up to control it in $\omega$ during an interval of times slightly larger than $[\tau_1-L',\tau_2+L']$ where $L'$ is the largest distance between the middle root $v'$ and the leaves. It remains to notice that $L'$ is exactly equal to $L/2$ where $L$ is the length of the longest path inside this uncontrolled part, one going from the furthest vertex at the left of $v'$, to the furthest vertex at its right.

\medskip

Performing this method in all the uncontrolled trees and adding all the estimates leads to the proof of Lemma \ref{lemma_obs_wave_1}, for any time $T$ strictly larger than the length of the longest uncontrolled path.  
\end{proof}

\medskip

{\noindent\bf Step 5: using the conservation of the energy.}
We recall the following well-known computation. On a edge $e$, we multiply the free wave equation \eqref{eq_free_wave_2} by $\partial_t \varphi$ and integrate:
\begin{align*}
 0&=\int_{t_1}^{t_2}\int_0^\ell (\partial_{t}^2 \varphi - \partial_{x}^2\varphi -q(x) \varphi)\partial_t \varphi \,\dd x\dd t\\ & = \int_{t_1}^{t_2}\int_0^\ell \left(\partial_{t}^2 \varphi \partial_t \varphi + \partial_{x}\varphi \partial_{x}\partial_{t} \varphi -q(x) \varphi\partial_t \varphi \right) \,\dd x\dd t  - \int_{t_1}^{t_2} \Big[\partial_x\varphi\partial_t\varphi\Big]_0^\ell \,\dd t\\
 &=- \frac 12 \Big[ \int_0^\ell \left(|\partial_t\varphi|^2+|\partial_x\varphi|^2 -q(x) |\varphi|^2\right) \,\dd x\Big]_{t_1}^{t_2}  - \int_{t_1}^{t_2} \Big[\partial_x\varphi\partial_t\varphi\Big]_0^\ell \,\dd t.
\end{align*}
Next, we add all these equalities on all the edges. Due to the conditions at the vertices, we obtain that 
$$E(t_1)=E(t_2)~~\text{ where }~~E(t):=\frac 12 \int_{\Gc} \left(|\partial_t\varphi|^2+|\partial_x\varphi|^2 -q(x) |\varphi|^2\right) \,\dd x.$$
\begin{lemma}[Weak observation bis]\label{lemma_obs_wave_2}
Let $T$ be as in Proposition \ref{prop_obs_wave}. Then, there exists a constant $C>0$ such that \begin{equation}\label{eq_lemma_obs_wave_2}
\|(\varphi_0,\varphi_1)\|_{H^1_\Delta(\Gc)\times L^2(\Gc)}^2 ~\leq~ C \left(\int_0^T \int_{\omega} |\partial_t \varphi(x,t)|^2 \dd x \dd t + \int_0^T \|\varphi(t)\|_{L^2(\Gc)}^2\dd t\right).
\end{equation}
\end{lemma}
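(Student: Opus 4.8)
The plan is to obtain \eqref{eq_lemma_obs_wave_2} from the weak observation of Lemma~\ref{lemma_obs_wave_1}, whose sole defect is the parasite term $\nu\,\|\grad\varphi\|_{L^2(\Gc)}^2$ integrated over a time window \emph{strictly larger} than the one controlled on its left-hand side; the conservation of the energy $E(t)$ just established is precisely what allows one to close the estimate, after which the free parameter $\nu$ is taken small. As in Step~1, it suffices to argue for regular solutions, the general case following by density. Note that the non-degeneracy hypothesis $\ker(\Delta_\Gc+q)=\{0\}$ plays no role here: the parasite term $\int_0^T\|\varphi(t)\|_{L^2(\Gc)}^2\dd t$ is kept in \eqref{eq_lemma_obs_wave_2} and will be removed afterwards by a separate compactness argument.

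First, using that $T$ exceeds the length of the longest uncontrolled path of $\Gc$, I would fix $\tau_1<\tau_2$ such that the (slightly enlarged) observation window required by Lemma~\ref{lemma_obs_wave_1} stays inside $[0,T]$, and apply that lemma: for every $\nu>0$,
\begin{align*}
\int_{\tau_1}^{\tau_2}\|(\varphi,\partial_t\varphi)(t)\|_{H^1_\Delta(\Gc)\times L^2(\Gc)}^2\dd t
&\leq C\Big(1+\tfrac1\nu\Big)\int_0^T\|\varphi(t)\|_{L^2(\Gc)}^2\dd t\\
&\quad + C\,\nu\int_0^T\|\grad\varphi(t)\|_{L^2(\Gc)}^2\dd t\\
&\quad + C\Big(1+\tfrac1\nu\Big)\int_0^T\!\!\int_\omega|\partial_t\varphi(x,t)|^2\dd x\dd t.
\end{align*}

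The next step is to recycle the gradient term. From the definition of $E$ one has, for every $t$, both $\|\grad\varphi(t)\|_{L^2(\Gc)}^2\leq 2E(t)+\|q\|_{L^\infty(\Gc)}\|\varphi(t)\|_{L^2(\Gc)}^2$ and $E(t)\leq C\big(\|(\varphi,\partial_t\varphi)(t)\|_{H^1_\Delta(\Gc)\times L^2(\Gc)}^2+\|\varphi(t)\|_{L^2(\Gc)}^2\big)$. Since $E$ is constant in time, averaging the second inequality over $[\tau_1,\tau_2]$ gives $E(0)\leq \frac{C}{\tau_2-\tau_1}\int_{\tau_1}^{\tau_2}\|(\varphi,\partial_t\varphi)(t)\|_{H^1_\Delta(\Gc)\times L^2(\Gc)}^2\dd t+\frac{C}{\tau_2-\tau_1}\int_0^T\|\varphi(t)\|_{L^2(\Gc)}^2\dd t$, whence, integrating the first inequality over $[0,T]$,
$$\int_0^T\|\grad\varphi(t)\|_{L^2(\Gc)}^2\dd t\leq C\int_{\tau_1}^{\tau_2}\|(\varphi,\partial_t\varphi)(t)\|_{H^1_\Delta(\Gc)\times L^2(\Gc)}^2\dd t+C\int_0^T\|\varphi(t)\|_{L^2(\Gc)}^2\dd t.$$
Inserting this into the previous display, the gradient contribution becomes $C\nu\int_{\tau_1}^{\tau_2}\|(\varphi,\partial_t\varphi)(t)\|^2\dd t$ plus a multiple of $\int_0^T\|\varphi(t)\|_{L^2(\Gc)}^2\dd t$, so that choosing $\nu$ small enough — depending only on the geometry, on $q$ and on $T$ — lets me absorb the former into the left-hand side and obtain
$$\int_{\tau_1}^{\tau_2}\|(\varphi,\partial_t\varphi)(t)\|_{H^1_\Delta(\Gc)\times L^2(\Gc)}^2\dd t\leq C\Big(\int_0^T\|\varphi(t)\|_{L^2(\Gc)}^2\dd t+\int_0^T\!\!\int_\omega|\partial_t\varphi(x,t)|^2\dd x\dd t\Big).$$
Finally, the standard Gr\"onwall energy estimate for \eqref{eq_free_wave} (run backwards from an arbitrary time $t\in[0,T]$) yields $\|(\varphi_0,\varphi_1)\|_{H^1_\Delta(\Gc)\times L^2(\Gc)}^2\leq C\,\|(\varphi,\partial_t\varphi)(t)\|_{H^1_\Delta(\Gc)\times L^2(\Gc)}^2$ for every such $t$; averaging in $t$ over $[\tau_1,\tau_2]$ and combining with the last display gives \eqref{eq_lemma_obs_wave_2}.

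I expect the only real difficulty to be the recycling step: Lemma~\ref{lemma_obs_wave_1} feeds back a time integral of $\|\grad\varphi\|_{L^2(\Gc)}^2$ over the full interval $[0,T]$ while controlling the state norm only on the shorter window $[\tau_1,\tau_2]$, so a direct absorption is impossible. The conservation of the energy is exactly what bridges this gap — it turns $\int_0^T\|\grad\varphi(t)\|_{L^2(\Gc)}^2\dd t$ into a constant times $T\,E(0)$ up to a harmless $L^2(\Gc)$ remainder — after which everything reduces to the routine choice of a small $\nu$.
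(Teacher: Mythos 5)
Your proposal is correct and follows essentially the same route as the paper: apply Lemma \ref{lemma_obs_wave_1} on a window whose enlargement is $[0,T]$, use the conservation of the energy $E$ (up to $L^2(\Gc)$ remainders) both to convert the parasite gradient term and the left-hand side into the initial-data norm, and then absorb by taking $\nu$ small. The only cosmetic difference is that you spell out the energy bookkeeping and finish with a Grönwall/time-reversibility step, whereas the paper invokes conservation of energy directly; the content is the same.
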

\begin{proof}
Let $T$ be strictly larger than the length of the longest uncontrolled path. Choose $T'$ still satisfying this property but with $T'<T$. Set $\tau=T-T'$, $\tau_1=(T-\tau)/2$ and $\tau_2=(T+\tau)/2$. We apply Lemma \ref{lemma_obs_wave_1} to the interval $[\tau_1-T'/2,\tau_2+T'/2]$, which is exactly $[0,T]$. We obtain the estimate \eqref{eq_lemma_obs_wave_1}.

\medskip

Due to the conservation of energy, the left quantity of \eqref{eq_lemma_obs_wave_1} in Lemma \ref{lemma_obs_wave_1} can be replaced by $(\tau_2-\tau_1)\|(\varphi_0,\varphi_1)\|_{H^1_\Delta(\Gc)\times L^2}^2$ up to increasing the coefficient of $\|\varphi(t)\|_{L^2(\Gc)}$ in the upper bound. Also notice that the quantity $\nu\|\grad \varphi(t)\|_{L^2}^2$ in the right side of \eqref{eq_lemma_obs_wave_1} can be bounded by $\nu \|(\varphi_0,\varphi_1)\|_{H^1_\Delta(\Gc)\times L^2}$. Thus, taking $\nu$ sufficiently small, we deduce the estimate \eqref{eq_lemma_obs_wave_2}.
\end{proof}

\vspace{3mm}

{\noindent\bf Step 6: from weak to strong observation.}
To prove \eqref{eq_prop_obs_wave}, it remains to get rid of the $L^2$-norm in the right side of \eqref{eq_lemma_obs_wave_2}. The usual trick is a compactness argument using the fact that this norm is of lower order than the other terms of the inequality and that the set of ``invisible solutions" is reduced to $\{0\}$. These arguments can be found  for example in \cite{BLR_JEDP,BLR,BZ}.

\medskip

To finish the proof of Proposition \ref{prop_obs_wave}, we argue by contradiction. Assume that \eqref{eq_prop_obs_wave} does not hold: there exists a sequence $(\varphi_n)$ of solutions of the free wave equation with 
\begin{equation}\label{preuve_step6_1}
\|(\varphi_n,\partial_t\varphi_n)(t=0)\|_{H^1_\Delta(\Gc)\times L^2(\Gc)}^2=1~~\text{ and }~~\int_0^T\int_\omega |\partial_t\varphi_n|^2 \,\dd x\dd t\longrightarrow 0. 
\end{equation}
Due to the conservation of energy, the normalisation implies that the sequence $(\varphi_n)$ is bounded in $\Cc^1([0,T],L^2(\Gc))\cap \Cc^0([0,T],H^1_\Delta(\Gc))$. By Ascoli's theorem and using in particular the compact embedding $H^1_\Delta(\Gc) \hookrightarrow L^2(\Gc)$, we can assume that $(\varphi_n)$ converges in $\Cc^0([0,T],L^2(\Gc))$ to some function $\varphi_\infty$. In particular, $(\varphi_n)$ is a Cauchy sequence in $\Cc^0([0,T],L^2(\Gc))$. We apply Lemma \ref{lemma_obs_wave_2} to $\varphi_p-\varphi_q$: 
\begin{align*}
\|(\varphi_p,\partial_t\varphi_p)(0)&-(\varphi_q,\partial_t\varphi_q)(0)\|^2_{H^1_\Delta(\Gc)\times L^2(\Gc)}  \\
&\leq~ C \left(\int_0^T \int_{\omega} |\partial_t (\varphi_p-\varphi_q)(x,t)|^2 \dd x \dd t + \int_0^T \|(\varphi_p-\varphi_q)(t)\|_{L^2(\Gc)}^2\dd t\right).
\end{align*}
For large $p$ and $q$, the above upper bound is as small as needed due to \eqref{preuve_step6_1} and the fact that $(\varphi_n)$ is a Cauchy sequence in $\Cc^0([0,T],L^2(\Gc))$. This shows that $(\varphi_n,\partial_t\varphi_n)(t=0)$ is a Cauchy sequence in $H^1_\Delta(\Gc)\times L^2(\Gc)$ and, by continuity of the solution of the wave equation with respect to the initial data, $(\varphi_n)$ actually converges to $\varphi_\infty$ in  $\Cc^1([0,T],L^2(\Gc))\cap \Cc^0([0,T],H^1_\Delta(\Gc))$. From \eqref{preuve_step6_1}, we obtain that $\varphi_\infty$ is an ``invisible solution" in the sense that $\partial_t \varphi_\infty\equiv 0$ in $\omega$. It remains to show that the only invisible solution is $\varphi\equiv 0$ everywhere. This provides a contradiction with the convergence and the normalisation in \eqref{preuve_step6_1}, concluding the proof.

\begin{lemma}\label{lemma_invisible}
Let $T$ be an observation time as in Proposition \ref{prop_obs_wave}. If $\varphi$ is a solution of \eqref{eq_free_wave} with $\partial_t \varphi \equiv 0$ in $\omega\times [0,T]$, then $\varphi\equiv 0$ everywhere.
\end{lemma}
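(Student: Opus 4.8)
The plan is the classical Bardos--Lebeau--Rauch argument: reduce ``no nonzero invisible solution'' to a unique continuation statement for eigenfunctions of $\Delta_\Gc+q$, the only graph-specific ingredient being the forest structure of the uncontrolled edges given by Description 3 of Theorem \ref{prop_GGCC}.

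First I would introduce the set $\mathcal N\subset H^1_\Delta(\Gc)\times L^2(\Gc)$ of initial data whose free wave solution satisfies $\partial_t\varphi\equiv 0$ on $\omega\times(0,T)$. Applying Lemma \ref{lemma_obs_wave_2} to an element of $\mathcal N$ (where the $\omega$-term drops) gives $\|(\varphi_0,\varphi_1)\|_{H^1_\Delta\times L^2}^2\leq C\int_0^T\|\varphi(t)\|_{L^2(\Gc)}^2\,\dd t$, so applying the same to differences and using the compact embedding $H^1_\Delta(\Gc)\hookrightarrow L^2(\Gc)$ together with Ascoli's theorem — exactly the mechanism already used in Step 6 — the closed unit ball of $\mathcal N$ is compact, hence $\mathcal N$ is finite dimensional. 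As is standard in this type of argument (see \cite{BLR,BZ}), noting that $\partial_t\varphi$ is again invisible whenever $\varphi$ is, one gets that $\mathcal N$ is stable under the wave group; its generator therefore restricts to a bounded operator on $\mathcal N$ and, after complexification, has an eigenvalue $\alpha$ with eigenvector $(w_0,w_1)\neq 0$. The eigenvector relation reads $w_1=\alpha w_0$ and $(\Delta_\Gc+q)w_0=\alpha^2w_0$; since $w_0=0$ would force $(w_0,w_1)=0$, the function $w_0$ is a nonzero eigenfunction of $\Delta_\Gc+q$ and self-adjointness gives $\alpha^2\in\RR$. The associated invisible solution is $\varphi(t)=e^{\alpha t}w_0$, so $\alpha e^{\alpha t}w_0\equiv 0$ on $\omega\times(0,T)$: either $\alpha=0$, in which case $(\Delta_\Gc+q)w_0=0$ contradicts the hypothesis $\ker(\Delta_\Gc+q)=\{0\}$, or $w_0\equiv 0$ on $\omega$.

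It then remains to prove the unique continuation statement: an eigenfunction $w_0\in D(\Delta_\Gc)$ of $\Delta_\Gc+q$ vanishing on $\omega$ must vanish everywhere. On each edge $e\simeq(0,\ell)$, $w_0$ solves the linear second-order ODE $w_0''=(\alpha^2-q)w_0$ with $q\in L^\infty$, so by Gronwall's inequality $w_0$ vanishes on all of $e$ as soon as $w_0$ and $w_0'$ vanish at one point of $e$; in particular $w_0\equiv 0$ on every controlled edge, since $w_0$ is zero on an open subinterval there. I would then propagate this through the uncontrolled edges, which by Description 3 of Theorem \ref{prop_GGCC} form a forest each tree of which contains at most one exterior vertex of $\Gc$. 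Root each such tree at that exterior vertex if it exists, otherwise at an arbitrary vertex, and clear edges in decreasing order of distance to the root: when an edge $e$ with far endpoint $v$ is treated, every other edge incident to $v$ — the controlled ones and the already-cleared children of $v$ — carries $w_0\equiv 0$, so continuity of $w_0$ at $v$ gives $w_0(v)=0$ and Kirchhoff's condition at $v$ gives that the outer normal derivative of $w_0$ along $e$ at $v$ vanishes; the ODE uniqueness above then forces $w_0\equiv 0$ on $e$. The only vertex never used in this way is the root, so no boundary condition at the exterior vertices is invoked, which is why the argument is insensitive to Dirichlet, Neumann or mixed conditions. Iterating over all trees yields $w_0\equiv 0$, the desired contradiction, and Lemma \ref{lemma_invisible} follows.

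The two delicate points are, on one hand, the passage from ``invisible on $\omega\times(0,T)$'' to a flow-stable finite dimensional space: this is the step I would simply invoke as standard, the compactness mechanism being identical to the one in Step 6. On the other hand, one must check that the peeling of each uncontrolled tree actually reaches every edge, and this is exactly where Description 3 is essential: a leaf $v$ of an uncontrolled tree other than the chosen root is necessarily an interior vertex of $\Gc$ carrying at least one controlled edge — otherwise it would be a second exterior vertex of $\Gc$ in that tree, contradicting (GGCC) — so it always supplies the missing Cauchy data, and the procedure terminates precisely because the uncontrolled edges contain no cycle (no uncontrolled loops or parallel edges either, for the same reason). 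The potential $q$ enters only through the ODE step, harmlessly, since $L^\infty$ coefficients are enough for uniqueness.
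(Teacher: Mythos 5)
Your proposal is correct and follows essentially the same route as the paper's proof: the weak observability estimate of Lemma \ref{lemma_obs_wave_2} plus compactness to show the space of invisible solutions is finite dimensional, stability under time differentiation to extract an eigenvalue, the kernel hypothesis to exclude $\alpha=0$, and unique continuation for the resulting eigenfunction by peeling the uncontrolled forest from its leaves using continuity, Kirchhoff's condition and second-order ODE uniqueness. The only cosmetic difference is that you phrase the spectral step via the generator of the wave group on the invariant subspace (with an explicit complexification), whereas the paper works directly with $\partial_t$ acting on the space of invisible solutions and justifies its boundedness by a difference-quotient argument.
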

\begin{proof}
Consider the space $\Nc$ of all the invisible solutions, that is the space of solutions, with the regularity $\Cc^1([0,T],L^2(\Gc))\cap \Cc^0([0,T],H^1_\Delta(\Gc))$, satisfying  $\partial_t \varphi_\infty\equiv 0$ in $\omega\times [0,T]$. It is obviously a linear subspace. Moreover, applying Lemma \ref{lemma_obs_wave_2} to these functions, we have that 
$$\forall \varphi\in \Nc~,~~\|(\varphi,\partial_t\varphi)(t=0)\|_{H^1_\Delta(\Gc)\times L^2(\Gc)}^2 ~\leq~ C  \int_0^T \|\varphi(t)\|_{L^2(\Gc)}^2\dd t.$$
Using exactly the same trick as above, we can show that we can extract a convergent subsequence from any bounded sequence of $\Nc$, showing that $\Nc$ is finite-dimensional.

\medskip

Let $\varphi\in\Nc$ and $\tau>0$. We notice that $q$ is independent of the time, so, taking the time-derivative of the free wave equation, we obtain that $\partial_t\varphi$ also solves a free wave equation but a priori in a space with a lower regularity than $\varphi$. Let us show that, actually, $\partial_t\varphi$ has the suitable regularity to belong to $\Nc$.
We set $\psi_\tau=\frac 1\tau(\varphi(\tau+\cdot)-\varphi(\cdot))$, which is also solution of the free wave equation, but not necessarily in $\Nc$ due to the slight shift in time. 
We apply Lemma \ref{lemma_obs_wave_2} to $\psi_\tau$ to obtain 
$$\|(\psi_\tau,\partial_t \psi_\tau)(t=0)\|_{H^1_\Delta(\Gc)\times L^2}^2 ~\leq~ C \left(\frac 1\tau \int_T^{T+\tau} \int_{\omega} |\partial_t \varphi(x,t)|^2 \dd x \dd t + \int_0^T \|\psi_\tau(t)\|_{L^2(\Gc)}^2\dd t\right).
$$
Since $\varphi$ is differentiable with respect to time in $L^2(\Gc)$, when $\tau\rightarrow 0$, we have that $\psi_\tau$ converges to $\partial_t\varphi$ in $\Cc^0([0,T],L^2(\Gc))$. Thus, the last term of the above estimate is bounded. It is also clear that the first term of the upper bound is bounded when $\tau\rightarrow 0$. This shows that $(\psi_\tau,\partial_t \psi_\tau)(t=0)$ is bounded in ${H^1_\Delta(\Gc)\times L^2(\Gc)}$ when $\tau\rightarrow 0$. Up to extracting a subsequence, it must have a weak limit in this space. To identify this weak limit, we notice that, in any compact subinterval of any edge,  $(\psi_\tau,\partial_t \psi_\tau)(t=0)$ converges to $(\partial_t \varphi,\partial^2_{t}\varphi)(t=0)$ at least in the sense of distributions. Thus, the previous weak limit must be $(\partial_t \varphi,\partial^2_{t}\varphi)(t=0)$, which actually belongs to $H^1_\Delta(\Gc)\times L^2(\Gc)$. In other words, $\partial_t\varphi$ is a solution of the free wave equation with the regularity $\Cc^1([0,T],L^2(\Gc))\cap \Cc^0([0,T],H^1_\Delta(\Gc))$. Since the ``invisibility" is obviously a conserved through the time derivative, we have that $\partial_t \varphi$ also belongs to $\Nc$.

\medskip

We just prove that $\partial_t$ is a linear operator on $\Nc$. If $\Nc\neq \{0\}$, since $\Nc$ is a finite-dimensional space, this operator must have at least one eigenvalue $\lambda$ with a non-zero eigenfunction $\varphi$. It satisfies $\partial_t\varphi(x,t)=\lambda \varphi(x,t)$, implying $\varphi(x,t)=e^{\lambda t}\varphi(x)$ with $\varphi(x):=\varphi(x,0)$. Assume first that $\lambda\neq 0$. Coming back to the wave equation and the invisibility, it comes that 
\begin{equation}\label{eq_invisible}
(\Delta_\Gc+q(x)) \varphi(x)=\lambda^2\varphi(x)~~~\text{ and }~~~\varphi_{|\omega}\equiv 0.
\end{equation} 
We can argue as in Step 4: due to the structure of forest of $\Gc\setminus\omega$, the information that $\varphi\equiv 0$ propagates everywhere. Indeed, if it holds on all the edges attached to a vertex, except one, then it implies $\varphi(0)=\partial_x\varphi(0)=0$ at the end of this remaining edge. Then, the first equation of \eqref{eq_invisible} propagates the information on all the edge since it is a second order equation. Then, we can pass the information to the next vertex and so on. This would yield that $\varphi\equiv 0$, which has been excluded by construction.

\medskip

The only remaining case is $\lambda=0$ that is that $\varphi$ is actually a stationary solution of the free wave equation and the observation of $\partial_t\varphi$ is useless. It remains to remember that the suitable property has been assumed in the statement:  $(\Delta_\Gc+q)\varphi=0$ implies $\varphi\equiv 0$. In other words, there is no such non-zero stationary solution and we meet a final contradiction. This shows that $\Nc=\{0\}$.
\end{proof}

\subsection{Study of a graph including Neumann boundary conditions}\label{sec:neumann}
This whole section is devoted to the proof of Theorem \ref{th_neumann}. Our main ingredients are the geometry of the X graph, in particular its symmetries, and the stabilisation of the wave equation with pointwise damping studied in \cite{AHT}. From this last article, we can extract the following specific case of pointwise observation. 
\begin{prop}[{\cite[Proposition 4.1]{AHT}}]\label{prop_AHT}
Let $T>4$ be fixed. There exists a constant $K>0$ such that any solution $\psi$ 
of 
$$\left\{\begin{array}{ll} \partial_t^2 \psi(x,t)=\partial_x^2\psi(x,t)~~~&x\in (-1,1), t>0\\
\psi(-1,t)=\partial_x \psi (1,t)=0& t>0
\end{array}\right.$$
satisfies
$$\|\psi(t=0)\|_{H^1(-1,1)}^2 + \|\partial_t\psi(t=0)\|_{L^2(-1,1)}^2~\leq ~ K\int_0^T \left|\partial_t \psi(t,0)\right|^2 \dd t .$$ 
\end{prop}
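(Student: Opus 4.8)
The plan is to prove this one-dimensional pointwise observability estimate by an explicit spectral decomposition combined with Ingham's inequality, the sharp time $T>4$ emerging directly from the spectral gap of the operator. First I would diagonalize the spatial operator $-\partial_x^2$ on $(-1,1)$ with the mixed boundary conditions $\psi(-1)=0$, $\psi'(1)=0$. Solving $-\phi''=\lambda^2\phi$ with $\phi(-1)=0$ forces $\phi(x)=\sin(\lambda(x+1))$, and the Neumann condition $\phi'(1)=0$ gives $\cos(2\lambda)=0$, hence the eigenvalues $\lambda_k=(2k+1)\pi/4$ for $k\geq 0$ with eigenfunctions $\phi_k(x)=\sin(\lambda_k(x+1))$. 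A direct computation gives $\|\phi_k\|_{L^2(-1,1)}^2=1$ (because $\sin(4\lambda_k)=\sin((2k+1)\pi)=0$), so $(\phi_k)_k$ is already an orthonormal basis of $L^2(-1,1)$, and integration by parts, using that the boundary terms vanish thanks to the two boundary conditions, yields $\int_{-1}^1\phi_j'\phi_k'\,\dd x=\lambda_k^2\delta_{jk}$.

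The two crucial structural facts are as follows. The spectral gap is \emph{uniform}: $\lambda_{k+1}-\lambda_k=\pi/2$ for every $k$. And the observation point $x=0$ is \emph{never a node}: $\phi_k(0)=\sin((2k+1)\pi/4)=\pm\sqrt2/2$, so $|\phi_k(0)|^2=1/2$ for all $k$. This second point is the heart of the matter — it is precisely what makes pointwise observation at the interior point $x=0$ possible, since no eigenmode is invisible there; were $x=0$ a node of some $\phi_k$, the estimate would simply be false.

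Next I would expand the solution as $\psi(x,t)=\sum_k\bigl(a_k\cos(\lambda_k t)+\tfrac{b_k}{\lambda_k}\sin(\lambda_k t)\bigr)\phi_k(x)$, with $a_k=\langle\psi(\cdot,0),\phi_k\rangle$ and $b_k=\langle\partial_t\psi(\cdot,0),\phi_k\rangle$, so that $\partial_t\psi(0,t)=\tfrac12\sum_k\phi_k(0)\bigl(d_k e^{i\lambda_k t}+\bar d_k e^{-i\lambda_k t}\bigr)$ with $d_k=b_k+ia_k\lambda_k$. The frequencies $\{\pm\lambda_k\}$ are pairwise separated by at least $\pi/2$, so Ingham's inequality applies as soon as $T>2\pi/(\pi/2)=4$, which is exactly the stated threshold, and its lower bound gives $\int_0^T|\partial_t\psi(0,t)|^2\,\dd t\geq C\sum_k|\phi_k(0)|^2|d_k|^2$. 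Substituting $|\phi_k(0)|^2=1/2$ and $|d_k|^2=b_k^2+a_k^2\lambda_k^2$, and comparing with the energy $\|\psi(0)\|_{H^1(-1,1)}^2+\|\partial_t\psi(0)\|_{L^2(-1,1)}^2=\sum_k a_k^2(1+\lambda_k^2)+\sum_k b_k^2$, which is equivalent to $\sum_k(a_k^2\lambda_k^2+b_k^2)$ because $\lambda_k\geq\pi/4$ is bounded away from $0$, yields the claimed inequality with a suitable constant $K$.

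Most of these steps are routine computations; the genuinely essential points are the uniform non-vanishing of $\phi_k(0)$ and the exact matching of the Ingham threshold $2\pi/\gamma$ with the gap $\gamma=\pi/2$, which together force the sharp time $T>4$. The only technical care needed is to justify the $L^2$- and $H^1$-convergence of the Fourier series and to invoke Ingham's lower bound for the doubly-indexed, two-sided frequency family $\{\pm\lambda_k\}$; both are standard, the latter following from the uniform gap. I expect no serious obstacle beyond bookkeeping, the whole difficulty being concentrated in the node-free property of the eigenbasis at $x=0$.
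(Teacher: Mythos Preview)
Your argument is correct: the eigenfunctions $\phi_k(x)=\sin(\lambda_k(x+1))$ with $\lambda_k=(2k+1)\pi/4$ form an orthonormal basis, they satisfy $|\phi_k(0)|^2=1/2$ uniformly, the two-sided frequency family $\{\pm\lambda_k\}$ has uniform gap $\pi/2$ (including across zero, since $2\lambda_0=\pi/2$), and Ingham's inequality then delivers the estimate for $T>4$. The bookkeeping you flag at the end is indeed routine.

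As for comparison: the paper does not prove this proposition at all; it is quoted verbatim as a black box from \cite[Proposition 4.1]{AHT} and used as an ingredient in the proof of Theorem~\ref{th_neumann}. Your spectral-plus-Ingham argument is in fact the natural route to such a pointwise observability estimate and is essentially what one expects to find in \cite{AHT}, so there is nothing to contrast methodologically.
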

We consider the X graph of Figure \hyperref[fig_ex]{2}, assuming that every edge has length $1$, that $e_1$ and $e_2$ have external vertices endowed with Neumann boundary condition and $e_3$ and $e_4$ have external vertices endowed with Dirichlet boundary condition. Considering the proof of the previous section, we see that it is sufficient to obtain the following weak observation. 
\begin{lemma}\label{lemma-X-wave}
Assume the above geometric setting and take $q=0$. For any $\tau_1<\tau_2$ and any $T>4$, there exists a constant $C>0$ such that, for all $\nu>0$, every solution $\varphi$ of the free wave equation \eqref{eq_free_wave_2} satisfies  
\begin{align}
\int_{\tau_1}^{\tau_2}\|(\varphi,\partial_t \varphi)(t)\|_{H^1_\Delta(\Gc)\times L^2 (\Gc)}^2 \,\dd t ~\leq~ & C \int_{\tau_1-T/2}^{\tau_2+T/2}   \left(\left(1+\frac 1\nu\right)\|\varphi(t)\|_{L^2(\Gc)}^2 + \nu\|\grad \varphi(t)\|_{L^2(\Gc)}^2\right.  \nonumber\\ 
&~~+\left. \left(1+\frac 1\nu\right) \int_{e_1\cup e_3} |\partial_t \varphi(x,t)|^2 \dd x \right) \dd t.\label{eq_lemma_obs_wave_neu}
\end{align}
\end{lemma}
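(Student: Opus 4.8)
The plan is to exploit the left--right reflection symmetry of the $X$ graph to reduce Lemma~\ref{lemma-X-wave} to one-dimensional observation problems, the only non-elementary of which is handled by Proposition~\ref{prop_AHT}. Let $\sigma:\Gc\to\Gc$ be the isometric involution exchanging $e_1\leftrightarrow e_2$ and $e_3\leftrightarrow e_4$. It preserves the boundary conditions (Neumann on $e_1,e_2$, Dirichlet on $e_3,e_4$) and Kirchhoff's condition at the central vertex $v$, hence sends solutions of \eqref{eq_free_wave_2} to solutions. Any solution $\varphi$ thus splits as $\varphi=\varphi^{\mathrm{s}}+\varphi^{\mathrm{a}}$, with $\varphi^{\mathrm{s}}=\tfrac12(\varphi+\varphi\circ\sigma)$ and $\varphi^{\mathrm{a}}=\tfrac12(\varphi-\varphi\circ\sigma)$, both solutions; the symmetric and antisymmetric subspaces are orthogonal in $L^2(\Gc)$ and $H^1_\Delta(\Gc)$ and $\sigma$-invariant, so the energies add, and since $\sigma$ maps $e_1\cup e_3$ isometrically onto $e_2\cup e_4$, the energy of $\varphi^{\mathrm{s}}$ (resp. $\varphi^{\mathrm{a}}$) on $e_1\cup e_3$ equals its energy on $e_2\cup e_4$. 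As in Step~1 we may assume $e_1$ and $e_3$ are entirely contained in $\omega$.

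\textbf{Decoupling.} First I would analyse the two components. Continuity at $v$ forces the four edge-traces of $\varphi^{\mathrm{a}}$ at $v$ to coincide, while antisymmetry forces $\varphi^{\mathrm{a}}_1(v,\cdot)=-\varphi^{\mathrm{a}}_2(v,\cdot)$; hence $\varphi^{\mathrm{a}}(v,\cdot)\equiv 0$, so $\partial_t\varphi^{\mathrm{a}}(v,\cdot)\equiv 0$, and on each of $e_1$, $e_3$ the function $\varphi^{\mathrm{a}}$ solves a one-dimensional free wave equation on an interval of length $1$ (with Neumann--Dirichlet, resp. Dirichlet--Dirichlet, boundary conditions). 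On the other hand, the symmetry condition together with Kirchhoff's relation at $v$ shows that $\Phi^{\mathrm{s}}:=\varphi^{\mathrm{s}}|_{e_1\cup e_3}$, read on $(-1,1)$ with $v\mapsto 0$, the exterior end of $e_3$ sent to $-1$ and that of $e_1$ to $+1$, is $C^1$ across $0$, hence a genuine ($H^2$) solution of the free wave equation on $(-1,1)$ with $\Phi^{\mathrm{s}}(-1,\cdot)=\partial_x\Phi^{\mathrm{s}}(1,\cdot)=0$ --- exactly the boundary conditions of Proposition~\ref{prop_AHT}. Moreover, since $\partial_t\varphi^{\mathrm{a}}(v,\cdot)\equiv 0$, one has $\partial_t\varphi(v,\cdot)=\partial_t\varphi^{\mathrm{s}}(v,\cdot)=\partial_t\Phi^{\mathrm{s}}(0,\cdot)$.

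\textbf{The observation.} Then I would run the estimate. Fix $T'\in(4,T)$ and a small $\delta>0$ with $T'+2\delta<T$, and set $I:=[\tau_1-T/2,\ \tau_1-T/2+T'+2\delta]\subset[\tau_1-T/2,\tau_2+T/2]$. Applying the controlled-edge estimate \eqref{eq_multiplier_6} of Step~3 on $e_1$ (and on $e_3$) over $I$, with margin $\tau=\delta$, bounds $\int_{J}(|\partial_t\varphi|^2+|\partial_x\varphi|^2)(v,t)\,\mathrm dt$ over the length-$T'$ subinterval $J=[\tau_1-T/2+\delta,\ \tau_1-T/2+\delta+T']$, together with the interior energy density on $e_1\cup e_3$, by $C(1+\tfrac1\nu)\int_I\int_{e_1\cup e_3}|\partial_t\varphi|^2+C(1+\tfrac1\nu)\int_I\|\varphi\|_{L^2(\Gc)}^2+C\nu\int_I\|\nabla\varphi\|_{L^2(\Gc)}^2$, i.e. by the right-hand side of \eqref{eq_lemma_obs_wave_neu}. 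By the previous paragraph the first integral equals $\int_J|\partial_t\Phi^{\mathrm{s}}(0,t)|^2\,\mathrm dt$, so Proposition~\ref{prop_AHT} (used with its parameter equal to $T'>4$, on the window $J$) controls the energy of $\Phi^{\mathrm{s}}$ at one instant, hence, by conservation of energy (Step~5, $q=0$), the full conserved energy of $\varphi^{\mathrm{s}}$ on $e_1\cup e_3$, and therefore, by the symmetry remark, the total energy of $\varphi^{\mathrm{s}}$. For $\varphi^{\mathrm{a}}$ I would write, over $I$, $\|\partial_t\varphi^{\mathrm{a}}\|_{L^2(e_1\cup e_3)}\le\|\partial_t\varphi\|_{L^2(e_1\cup e_3)}+\|\partial_t\varphi^{\mathrm{s}}\|_{L^2(e_1\cup e_3)}$, the last term being bounded pointwise in time by the (now controlled, conserved) energy of $\varphi^{\mathrm{s}}$; since $\varphi^{\mathrm{a}}$ is a one-dimensional free wave observed on the whole of $e_1$ and of $e_3$, the classical one-dimensional observability inequality (valid for $|I|>4$, far above the required observability time for a length-$1$ segment) bounds the energy of $\varphi^{\mathrm{a}}$ on $e_1\cup e_3$, hence again its total energy. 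Summing the two contributions gives the total energy of $\varphi$, which by conservation is comparable both to $\|(\varphi_0,\varphi_1)\|_{H^1_\Delta(\Gc)\times L^2(\Gc)}^2$ and to $(\tau_2-\tau_1)^{-1}\int_{\tau_1}^{\tau_2}\|(\varphi,\partial_t\varphi)(t)\|^2\,\mathrm dt$; this is \eqref{eq_lemma_obs_wave_neu}, first for smooth solutions as in Step~1 and then for all data by density.

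\textbf{Main obstacle.} The real content is not in this scheme but in the imported ingredient: Proposition~\ref{prop_AHT}, i.e. the pointwise observability from \cite{AHT} of the wave equation on a segment with a \emph{Neumann} end on the non-observed side, which is what forces the threshold $T>4$ and would fail (or demand a different time) for other boundary conditions. The remaining delicate points are technical: justifying the decoupling of $\Phi^{\mathrm{s}}$ and $\varphi^{\mathrm{a}}$ (legitimate once one argues first with smooth solutions, as in Step~1, where the traces at $v$ make sense), and the bookkeeping ensuring that a time window of length strictly larger than $4$ fits inside $[\tau_1-T/2,\tau_2+T/2]$ --- possible precisely because $T>4$ and the energy is conserved, so the window may be placed freely.
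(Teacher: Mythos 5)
Your proposal is correct and follows essentially the same route as the paper: the paper also symmetrizes across the central vertex (it works with $\psi=\varphi_1+\varphi_2$ on one side and $\varphi_3+\varphi_4$ on the other, i.e. twice your $\varphi^{\mathrm{s}}$), checks via continuity and Kirchhoff's condition that this is a genuine solution on $(-1,1)$ with the mixed Dirichlet--Neumann conditions of Proposition \ref{prop_AHT}, and feeds it the pointwise observation of $\partial_t\varphi$ at the central vertex supplied by the controlled-edge multiplier estimate \eqref{eq_multiplier_6}. The only (immaterial) difference is the endgame: instead of a separate observability argument for the antisymmetric part, the paper simply writes $\varphi_2=\psi-\varphi_1$ (and similarly on $e_4$) and bounds the uncontrolled edges by $\psi$ plus the already-controlled quantities on $e_1\cup e_3$.
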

\begin{proof}
We parameterize the four edges of the X graph as $e_1 =e_2= (-1,0)$ and $e_3=e_4 = (0,1)$ and we set $\varphi_j$ the restriction of the solution $\varphi$ to the edge $e_j$. Fix $\tau>0$ such that $T-2\tau>4$. Due to Step 3 of the proof of Proposition \ref{prop_obs_wave} in the previous section, we already know that \eqref{eq_multiplier_6} holds in the controlled edges, which can be translated here as the following control in $e_1\cup e_3$ and on the central vertex $x=0$:
\begin{align}
\int_{\tau_1-T/2+\tau}^{\tau_2+T/2-\tau}  &|\partial_t \varphi|^2 (0,t) \dd t + \int_{\tau_1-T/2+\tau}^{\tau_2+T/2-\tau} \int_{e_1\cup e_3} \big(| \partial_{t}\varphi|^2+| \partial_{x}\varphi|^2+|\varphi|^2\big)(x,t) \,\dd x\dd t \notag\\  
&\leq C \left(1+\frac 1\nu \right) \int_{\tau_1-T/2}^{\tau_2+T/2} \|\varphi(t)\|^2_{L^2(\Gc)} \dd t + \nu  \int_{\tau_1-T/2}^{\tau_2+T/2}\int_{e_1\cup e_3} |\partial_x \varphi(t)|^2 \dd x \dd t \notag\\
&  \qquad + C \left(1+\frac 1\nu \right) \int_{\tau_1-T/2}^{\tau_2+T/2}  \int_{e_1\cup e_3} |\partial_t \varphi (t)|^2 \dd x   \dd t. \label{eq_multiplier_neu}
\end{align}
To control the solution in $e_2\cup e_4$, we use the symmetry of the graph and we merge the left and right parts of the graph by setting  
$$\psi(x,t):=\left\{\begin{array}{ll}\varphi_1(x)+\varphi_2(x) & \text{ if }x\in (-1,0),\\ \varphi_3(x)+\varphi_4(x)& \text{ if }x\in (0,1).\end{array}\right.$$
We can check that $\psi$ belongs to $H^1(-1,1)$ (resp. $H^2(-1,1)$) if $\varphi$ belongs to $H^1(\Gc)$ (resp. $H^2(\Gc)$) due to the continuity and the Kirchhoff's transmission condition at the central vertex $x=0$. Moreover, $\psi$ satisfies Dirichlet boundary condition at $x=-1$, Neumann ones at $x=1$ and $\psi(0,t)=2\varphi(0,t)$. Finally, it is also immediate that $\psi$ is a solution of the free wave equation. Thus, we can apply Proposition  \ref{prop_AHT} and we obtain that, using the conservation of the energy of $\psi$,
\begin{equation}\label{eq_finale}
\int_{\tau_1}^{\tau_2} \|\psi(t)\|_{H^1(-1,1)}^2 + \|\partial_t\psi(t)\|_{L^2(-1,1)}^2 \dd t~\leq ~ K\int_{\tau_1-T/2+\tau}^{\tau_2+T/2-\tau} \left|\partial_t \psi(0,t)\right|^2 \dd t .
\end{equation}
We come back to $\varphi$ by writing
$$ \int_{e_2}|\varphi|^2 = \int_{-1}^0 |\varphi_2|^2 \leq \int_{-1}^0 \big( |\psi| + |\varphi_1|\big)^2\leq 2 \int_{-1}^0 \big(|\psi|^2 + |\varphi_1|^2\big) = 2 \int_{-1}^0 |\psi|^2 + 2 \int_{e_1} |\varphi|^2
$$
and the other estimates are similar. Then, using \eqref{eq_finale} and $\psi(0,t)=2\varphi(0,t)$, we obtain
\begin{align*}
\int_{\tau_1}^{\tau_2}\int_{e_2\cup e_4} \big(| \partial_{t}\varphi|^2+| \partial_{x}\varphi|^2+|\varphi|^2\big)(x,t) &\,\dd x\dd t  ~\leq ~ 8K\int_{\tau_1-T/2+\tau}^{\tau_2+T/2-\tau} \left|\partial_t \varphi(0,t)\right|^2 \dd t\\ & + 2 \int_{\tau_1}^{\tau_2}\int_{e_1\cup e_3} \big(| \partial_{t}\varphi|^2+| \partial_{x}\varphi|^2+|\varphi|^2\big)(x,t) \dd x   \dd t .
\end{align*}
Finally, we combine the above estimate with \eqref{eq_multiplier_neu} to obtain \eqref{eq_lemma_obs_wave_neu}.
\end{proof}
We can finally proceed to the proof of Theorem \ref{th_neumann}.
\begin{proof}[Proof of Theorem \ref{th_neumann}]To conclude the proof, it remains to use Lemma \ref{lemma-X-wave} and argue again as in Steps 5 and 6 of the proof of Proposition \ref{prop_obs_wave} in the previous section.
\end{proof}


\section{Control of the Schrödinger equation}
\label{sec_schr}

In this section, we discuss the exact controllability of the Schrödinger equation in a graph $\Gc$, that is the equation 
\begin{equation}\label{eq_schro_control}
\left\{\begin{array}{ll} i \partial_{t} u = - \Delta_\Gc u-q u + h 1_{\omega} ,& t>0,\\
u(t=0)=u_0\in L^2(\Gc), & 
\end{array}\right. 
\end{equation}
with $q\in L^\infty (\Gc)$ and $\Delta_{\Gc}$ the Laplacian defined on the graph $\Gc$, with either Dirichlet or Neumann conditions at the exterior vertices. In \eqref{eq_schro_control}, at time $t \geq 0$, $u(t,\cdot) : \Gc \to \mathbb{C}$ is the state and $h(t,\cdot) : \Gc\to \mathbb{C}$ is the control.

\subsection{The exact controllability under (GGCC)}

Note that Theorem \ref{prop:exactcontrolschro_intro} in the introduction states that (GGCC) is sufficient to ensure the exact controllability for the Schr\"odinger equation. This fact is a direct consequence of the general principle that the controllability of the wave equation implies the controllability of the Schr\"odinger equation.

\begin{proof}[Proof Theorem \ref{prop:exactcontrolschro_intro}]
Assume that $\Gc$ and $\omega$ satisfy (GGCC). We set $w = e^{-it(\|q\|_{\infty} + 1)} u$ where $u$ is solution of \eqref{eq_schro_control}. The function $w$ solves the new control problem
\begin{equation}\label{eq_free_schro_translat}
\left\{
\begin{array}{ll} i \partial_{t} w = - \Delta_\Gc w - (q(x) -\|q\|_{\infty} - 1) w +  e^{-it(\|q\|_{\infty} + 1)} h 1_{\omega} ,& \ \ \ \ \ t>0,\\
w(t=0)=u_0\in L^2(\Gc). & 
\end{array}\right. 
\end{equation}
Observe that $0$ does not belong to the spectrum of $\Delta_\Gc + q(x) -\|q\|_{\infty} - 1$. So one can apply Theorem \ref{th_exact_wave_intro_neu} to deduce that the associated controlled wave equation is exactly controllable for $T \geq L$, where $L$ is the length appearing in Definition \ref{defi_GGCC}. From \cite[Theorem 3.1]{Mil05}, we then deduce that \eqref{eq_free_schro_translat} is small-time exactly controllable in $L^2(\Gc)$. Finally, the equation \eqref{eq_schro_control} is small-time exactly controllable with $h\in L^2((0,T)\times \omega, \CC) $.
\end{proof}

\subsection{Study of a graph with badly approximable lengths} \label{subsect5.2}

The previous part establishes that (GGCC) is a sufficient condition for the exact controllability of \eqref{eq_schro_control}.  
To obtain examples showing that (GGCC) is not a necessary condition, we prove Theorem \ref{non_ggcc_sch_intro}, which studies the case of the X graph, illustrated in Figure \hyperref[fig_ex]{2}. The proof of this result relies on the symmetries of the X graph and on the following criterion from \cite{Mil05} (see also \cite{BZ}).

\begin{theorem}[{\cite[Theorem 5.1]{Mil05}}]
\label{lem:resolvent}
The equation \eqref{eq_schro_control} is exactly controllable in $L^2(\Gc)$ at some $T>0$ if and only if there exists $C>0$ such that
\begin{equation}
\label{eq:resolventschro}
    \|u\|_{L^2(\Gc)} \leq C (\|\Delta u + \lambda u\|_{L^2(\Gc)} + \|u\|_{L^2(\omega)}) \qquad \forall \lambda \in \mathbb{R},\ \forall u \in D(\Delta_{\Gc}).
\end{equation}
\end{theorem}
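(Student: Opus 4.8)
The plan is to establish the stated equivalence by combining the classical duality between controllability and observability with the spectral (Hautus-type) characterization of observability for conservative systems. Throughout, I regard the free Schrödinger flow as the unitary group $e^{it\Delta_\Gc}$ generated by the skew-adjoint operator $A:=i\Delta_\Gc$, which has compact resolvent since $\Delta_\Gc$ does.

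First I would reduce the exact controllability of \eqref{eq_schro_control} to an observability inequality for the adjoint free equation. The control operator $B\colon L^2(\omega)\to L^2(\Gc)$, extension by zero, is bounded, and the group is unitary, so the system is well posed and conservative. By the HUM duality for well-posed linear control systems, \eqref{eq_schro_control} is exactly controllable at some time $T>0$ if and only if there exists $C_T>0$ such that every solution $\varphi$ of $i\partial_t\varphi=-\Delta_\Gc\varphi$ satisfies
$$\|\varphi(0)\|_{L^2(\Gc)}^2 \leq C_T\int_0^T \|\varphi(t)\|_{L^2(\omega)}^2\,\dd t.$$
It then remains to prove that this observability inequality, for some $T>0$, is equivalent to the resolvent estimate \eqref{eq:resolventschro}.

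For the easy implication, observability $\Rightarrow$ \eqref{eq:resolventschro}, I would argue directly. Fix $u\in D(\Delta_\Gc)$ and $\lambda\in\RR$, and set $g:=(\Delta_\Gc+\lambda)u$. The function $t\mapsto e^{-i\lambda t}u$ solves the inhomogeneous equation $i\partial_t\phi=-\Delta_\Gc\phi + e^{-i\lambda t}g$, so by Duhamel's formula it differs from the free solution $\varphi$ with $\varphi(0)=u$ by a term bounded by $T\|g\|_{L^2(\Gc)}$ uniformly on $[0,T]$. Since $|e^{-i\lambda t}|=1$, we have $\|e^{-i\lambda t}u\|_{L^2(\omega)}=\|u\|_{L^2(\omega)}$ for every $t$; inserting these two facts into the observability inequality for $\varphi$ and using the triangle inequality yields
$$\|u\|_{L^2(\Gc)}^2 \leq 2C_T T\big(\|u\|_{L^2(\omega)}^2 + T^2\|(\Delta_\Gc+\lambda)u\|_{L^2(\Gc)}^2\big),$$
which is precisely \eqref{eq:resolventschro}. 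The only structural ingredients here are unitarity and the boundedness of the observation, so this direction is elementary.

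The hard implication is \eqref{eq:resolventschro} $\Rightarrow$ observability, and this is where the genuine difficulty lies: one must convert a static, frequency-by-frequency estimate into a time-integrated one. Here I would follow the spectral framework of Ramdani--Takahashi--Tenenbaum--Tucsnak and Burq--Zworski for skew-adjoint generators with compact resolvent. Expanding $\varphi(t)=\sum_k a_k e^{i\mu_k t}\xi_k$ in an orthonormal eigenbasis $(\xi_k)$ of $\Delta_\Gc$ (with $\Delta_\Gc\xi_k=\mu_k\xi_k$), the choice $\lambda=-\mu_j$ in \eqref{eq:resolventschro} forces the components of a spectral cluster around $\mu_j$ to be controlled by the observation term $\|\cdot\|_{L^2(\omega)}$, while the term $\|(\Delta_\Gc+\lambda)\cdot\|$ damps the remote frequencies; one then reassembles the full observability inequality through a frequency-band decomposition combined with an almost-orthogonality argument in time. \textbf{The main obstacle is exactly this last reassembly:} on a general metric graph $\Delta_\Gc$ has no uniform spectral gap, so neighbouring eigenvalues may be arbitrarily close and the cross-terms between distinct clusters in $\int_0^T\|\varphi\|_{L^2(\omega)}^2$ cannot be separated by a naive Ingham inequality. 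Overcoming it requires the quantitative cluster estimates of the abstract theory, which guarantee that the resolvent bound prevents any sequence of unit-norm solutions with vanishing observation from existing, since such a sequence would have to concentrate its spectral mass near a single value of $\lambda$, contradicting \eqref{eq:resolventschro}. Combining the two implications with the duality of the first step establishes the theorem.
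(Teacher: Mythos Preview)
The paper does not provide its own proof of this statement: Theorem~\ref{lem:resolvent} is quoted verbatim from \cite[Theorem~5.1]{Mil05} (see also \cite{BZ}) and used as a black box in Section~\ref{subsect5.2}. There is therefore no proof in the paper to compare your attempt against.

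That said, your outline is structurally sound. The reduction to observability via HUM and the easy implication (observability $\Rightarrow$ \eqref{eq:resolventschro}) by the Duhamel comparison of $\varphi(t)$ with $e^{-i\lambda t}u$ are both correct and standard. For the hard implication you rightly identify that this is where the real content lies and that it cannot be done by a naive Ingham argument in the absence of a spectral gap. However, your sketch of the reassembly is too vague to stand as a proof: the claim that a hypothetical sequence violating observability ``would have to concentrate its spectral mass near a single value of $\lambda$'' is precisely the delicate step, and it does not follow from the frequency-band heuristics you describe. The actual arguments in \cite{Mil05} and \cite{BZ} proceed differently (Miller via a quantitative wave-packet/transmutation technique, Burq--Zworski via a contradiction argument with semiclassical measures or an abstract $TT^*$ estimate), and neither reduces to the cluster-plus-almost-orthogonality picture you sketch. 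If you intend to include a self-contained proof rather than cite the result, you should follow one of those routes explicitly; otherwise, citing \cite[Theorem~5.1]{Mil05} as the paper does is appropriate.
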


We are now ready to provide a sufficient and necessary condition for the controllability of the Schr\"odinger equation on the X graph.

\begin{proof}[Proof of Theorem \ref{non_ggcc_sch_intro}]
Without loss of generality, we can assume that $\ell_{\text{b}}=1$ and set $\ell:=\ell_{\text{t}}$. We parameterize the four edges of the X graph as $e_1 = (-1,0)$, $e_2=(-1,0)$, $e_3=(0,\ell)$ and $e_4 = (0,\ell)$. Due to Theorem \ref{lem:resolvent}, we only have to show that \eqref{eq:resolventschro} holds if and only if $\ell$ is a badly approximable number. First notice that \eqref{eq:resolventschro} trivially holds for $\lambda \leq 0$ since the operator $\Delta_{\Gc}$ is non-positive.

\medskip 

{\noindent \bf The “if” implication.} We argue by contradiction.  Assume that $\ell$ is a badly approximable number and that there exist $(u_n) \in D(\Delta_{\Gc})^{\mathbb{N}}$ and $(\lambda_n) \in [0,+\infty)^{\mathbb{N}}$ such that
\begin{equation}\label{abs}
    \|u_n\|_{L^2(\Gc)} = 1~,~~ \|(\Delta_{\Gc} + \lambda_n) u_n\|_{L^2(\Gc)} \to 0~\text{ and }~ \|u_n\|_{L^2(\omega)} \to 0.
\end{equation}
We first observe that $\lambda_n\rightarrow 0$ is not possible due to the strict negativity of $\Delta_{\Gc}$ and $\|(\Delta_{\Gc} + \lambda_n) u_n\|_{L^2(\Gc)} \to 0$.  

\medskip

{\noindent \bf Step 1: Symmetry decomposition.} For any $u\in D(\Delta_{\Gc})$, we set
$(u^1,u^2,u^3,u^4)$ its restrictions to each edge $e_1$, $e_2$, $e_3$ and $e_4$ and we use the symmetry of the X graph to set
\begin{equation}
\label{eq:deffgschro}
  f(x)  = \frac{u^1(x) - u^2(x)}{2},\ x \in (-1,0), \qquad g(x) = \frac{u^3(x) - u^4(x)}{2},\ x \in (0,\ell),
  \end{equation}
  \begin{equation}
  \label{eq:defhschro}
h(x) = \begin{dcases*} \frac{u^1(x) + u^2(x)}{2},\ x \in (-1,0),\\ \frac{u^3(x) + u^4(x)}{2},\ x \in (0,\ell).\end{dcases*}
  \end{equation}
Notice that the functions $f$, $g$ and $h$ are functions defined on classical intervals, the graph structure being hidden in this setting. Actually, one can check that
\begin{equation*}
    u \in D(\Delta_{\Gc}) \Leftrightarrow \left\{\begin{array}{l} f \in H^2(-1,0) \cap H_0^1(-1,0),\\ g\in H^2(0,\ell) \cap H_0^1(0,\ell),\\ h \in H^2(-1,\ell) \cap H_0^1(-1,\ell),\end{array}\right.
\end{equation*}
 and a direct computation gives
\begin{align*}
    \|u\|_{L^2}^2 &= \sum_{j=1}^{4} \|u^i\|_{L^2(e_j)}^2 = 2\left(\|f\|_{L^2(-1,0)}^2 + \|g\|_{L^2(0,\ell)}^2 + \|h\|_{L^2(-1,\ell)}^2\right),\\
    \|(\Delta_{\Gc} + \lambda )u\|_{L^2}^2 &= \sum_{j=1}^{4} \|(\Delta + \lambda )u^i\|_{L^2(e_j)}^2  \\
    & = 2\left(\|(\Delta + \lambda )f\|_{L^2(-1,0)}^2+ \|(\Delta + \lambda )g\|_{L^2(0,\ell)}^2 + \|(\Delta + \lambda )h\|_{L^2(-1,\ell)}^2 \right).
\end{align*}
We define now $(f_n)$, $(g_n)$ and $(h_n)$ from the sequence $(u_n)$ as above. We get
\begin{equation}\label{eq_norm}  \|f_n\|_{L^2(-1,0)}^2 +  \|g_n\|_{L^2(0,\ell)}^2 +  \|h_n\|_{L^2(-1,\ell)}^2 = 1/2,\end{equation}
\begin{equation}\label{eq_conv}   \Big\|\big(\Delta + \lambda_n \big)f_n\Big\|_{L^2(-1,0)}+\Big\|\big(\Delta + \lambda_n \big)g_n\Big\|_{L^2(0,\ell)}+\Big\|\big(\Delta + \lambda_n \big)h_n\Big\|_{L^2(-1,\ell)} \to 0\ \text{as}\ n \to +\infty,\end{equation}
\begin{equation}\label{eq_lim}    \int_{\omega \cap (-1,0)} |f_n + h_n|^2  + \int_{\omega \cap (0,\ell)} |g_n + h_n|^2 \to 0\ \text{as}\ n \to +\infty.\end{equation}
We use \eqref{eq_conv} to provide estimations for $f_n$ and $g_n$ 
at the central node. For instance, the first convergence infers
$$f_n''=-\lambda_n f_n+ r_n,\ \ \ \ \text{in}\ (-1,0), \ \ \text{ where}\ \ \ \ \|r_n\|_{L^2(-1,0)}\rightarrow 0\ \text{as}\ n \to +\infty.$$
We impose the boundary condition $f_n(0)=0$ and, solving the ODE by the variation of the constant formula, we get that
$$f_n(x)= {\alpha_n}  \sin (\sqrt{\lambda_n} x) + \frac{1}{\sqrt{\lambda_n}}\int_0^x\sin(\sqrt{\lambda_n}(x-y))r_n(y)\dd y, $$
where the integral term goes to zero when $n\rightarrow +\infty$. We repeat this explicit resolution to the other identities of \eqref{eq_conv} and we deduce that, when $n\rightarrow +\infty$,
\begin{equation}\begin{split}\label{eq_exp}
f_n(x) &= \alpha_n \sin(\sqrt{\lambda_n} x) + o \Big(\frac{1}{\sqrt{\lambda_n}}\Big),\qquad x\in[-1,0],\\
g_n(x) &= \beta_n \sin(\sqrt{\lambda_n} x) +  o\Big(\frac{1}{\sqrt{\lambda_n}}\Big),\qquad x\in[0,\ell].
\end{split}\end{equation}

\medskip

{\noindent \bf Step 2: Reduction to the resolvent estimate on an interval.}
Now, we claim that there exists $c >0$ such that 
\begin{equation}\label{low_bound}
|\alpha_n | \geq c~~\text{ and }~~  |\beta_n | \geq c.
\end{equation}
Indeed, if \eqref{low_bound} is not true, then either $(\alpha_n)$ or $(\beta_n)$  
goes to $0$, up to the extraction of a subsequence. Consider, for instance, $\alpha_n \to 0$ (the other case is similar). Then $\|f_n\|_{L^2(-1,0)} \to 0$ and, from the first limit of \eqref{eq_lim} and from  \eqref{eq_conv}, we would have
$$ \Big\|\big(\Delta + \lambda_n \big)h_n\Big\|_{L^2(-1,\ell)}~~\text{ and }~~\ \|h_n\|_{L^2(\omega \cap (-1,\ell))} \to 0.$$
Recall that Schrödinger equation is controllable in any interval with a control localised in any open subset, see for instance \cite[Theorem 4.2]{Laurent}. Due to Theorem \ref{lem:resolvent}, this provides a resolvent estimate for the Schrödinger equation in the interval $(-1,\ell)$ with the observation $\omega \cap (-1,0)$ and we necessarily have $\|h_n\|_{L^2( -1,\ell)} \to 0$. 
By using the second limit of \eqref{eq_lim}, we then have $\|g_n\|_{L^2(\omega \cap (0,\ell))} \to 0$. Then by conjugating with the second limit of \eqref{eq_conv} we have
$$ \Big\|\big(\Delta + \lambda_n \big)g_n\Big\|_{L^2(0,\ell)}~~\text{ and }~~\ \|g_n\|_{L^2(\omega \cap (0,\ell))} \to 0.$$
So from the resolvent estimate of the Schrödinger equation in the interval $(0,\ell)$ with the observation $\omega \cap (0,\ell)$, we necessarily have $\|g_n\|_{L^2(0,\ell)} \to 0$.  The combination of $\|f_n\|_{L^2(-1,0)} \to 0$, $\|h_n\|_{L^2( -1,\ell)} \to 0$ and $\|g_n\|_{L^2( 0,\ell)} \to 0$ is then a contradiction with respect to \eqref{eq_norm}. This shows \eqref{low_bound}.

\medskip

{\noindent \bf Step 3: Comparison of the two quasimodes.} We apply the estimate \eqref{eq_exp} to the Dirichlet boundary conditions. Since $f_n(-1) = 0$, we must have $\sin(\sqrt{\lambda_n}) = o\big(\frac{1}{\sqrt{\lambda_n}}\big)$, which implies the existence of integers $(p_n)$ such that 
\begin{equation}\label{eq_approx_lambda_1}
\sqrt{\lambda_n} = \pi p_n +  o\big(\frac{1}{\sqrt{\lambda_n}}\big). 
\end{equation}
Similarly, from $g_n(\ell)=0$, we obtain the existence of integers $(q_n)$ such that 
\begin{equation}\label{eq_approx_lambda_2}
\sqrt{\lambda_n} = \frac{\pi q_n}{\ell} +  o\big(\frac{1}{\sqrt{\lambda_n}}\big).
\end{equation}
Gathering \eqref{eq_approx_lambda_1} and \eqref{eq_approx_lambda_2}, we get 
\begin{equation}\label{eq_approx_lambda_3}
\ell p_n - q_n =  o\Big(\frac{1}{\sqrt{\lambda_n}}\Big)~\xrightarrow[~n\longrightarrow +\infty~]{}~0.
\end{equation}
Since we assumed that $\ell$ is badly approximable, it is not possible to extract converging sequences from $(p_n)$ or $(q_n)$ because, at the limit, \eqref{eq_approx_lambda_3} would imply that $\ell$ is rational. Thus, 
we should have $|p_n| \to +\infty$ and $|q_n| \to +\infty$. Using \eqref{eq_approx_lambda_1} or \eqref{eq_approx_lambda_2}, we deduce that $\lambda_n\to +\infty$ and that both $|p_n|$ and $|q_n|$ are of order $\mathcal{O}(\sqrt{\lambda_n})$. Finally, 
\begin{equation*}
    \Big|\ell - \frac{ q_n}{p_n} \Big| = o\Big(\frac{1}{p_n\,\sqrt{\lambda_n}}\Big)= o\Big(\frac{1}{p_n^2}\Big),
\end{equation*} 
that provides the contradiction with the fact that $\ell$ is a badly approximable number.

\medskip

\noindent
{\bf The “only if” part.} We just proved that, when $\ell$ is a badly approximable number,  \eqref{abs} holds and thus the controllability is ensured. To prove the other implication, we first observe that, when $\ell\in\QQ$, there exists eigenmodes localized in $e_3$ and $e_4$. Such modes obviously contradict \eqref{eq:resolventschro}. 
More generally, assume that $\ell$ is not a badly approximable number:
there exist two sequences $(p_n)$ and $(q_n)$ of integers such that  
\begin{equation}\label{eq_badly}
q_n |q_n\ell-p_n|\longrightarrow 0.
\end{equation}
Notice that, in particular,  $p_n\sim \ell q_n$.
Obviously, if $\ell$ is irrational, then \eqref{eq_badly} implies that $q_n\rightarrow +\infty$. If $\ell=p/q$ is rational, we can choose $p_n=np$ and $q_n=nq$ to satisfy both \eqref{eq_badly} and $q_n\rightarrow +\infty$. We define $u_n\in L^2(\Gc)$ by: 
\begin{itemize}
\item $u_n\equiv 0$ on $e_1$ and $e_3$, the observed edges,
\item $u_n(x)=\sin(q_n \pi x)$ for $x\in (-1,0)$ in the edge $e_2$,
\item $u_n(x)=\frac {\ell q_n}{p_n}\sin (p_n \pi x/\ell)$ for $x\in (0,\ell)$ in the edge $e_4$.
\end{itemize}
As in Section \ref{sec_quasimode}, we can easily check that $u_n\in D(\Delta_\Gc)$ and that
\begin{equation}\label{eq_badly2}
\|u_n\|_{L^2}^2=\frac 1{2}+\frac {\ell^3 q_n^2}{2 p_n^2}\geq \frac 1{2}.
\end{equation}
We also have that $(\Delta + q_n^2\pi^2)u_n$ vanishes everywhere except in $e_4$ and thus
\begin{align*}
\|(\Delta + q_n^2\pi^2)u_n\|_{L^2(\Gc)}^2&=\left\|\pi^2  \left(-\frac{p_n^2}{\ell^2} + q_n^2 \right) \frac {\ell q_n}{p_n}\sin(p_n\pi\cdot/\ell)\right\|^2_{L^2(0,\ell)}\\
&= \frac{\ell^3\pi^4 q_n^2}{2 p_n^2}\left|-\frac{p_n^2}{\ell^2} + q_n^2\right|^2.
\end{align*}
Due to \eqref{eq_badly}, we have $p_n = \ell q_n + o\big(\frac 1 {q_n}\big)$ and thus $p_n^2=\ell^2 q_n^2+o(1)$ and 
$$ \|(\Delta + q_n^2\pi^2)u_n\|_{L^2(\Gc)}\longrightarrow 0.$$
Together with \eqref{eq_badly2}, this estimate precludes the existence of a constant $C>0$ such that 
$$\|u_n\|_{L^2} \leq C \|(\Delta + q_n^2\pi^2)u_n\|_{L^2(\Gc)} + C\| u_n\|_{L^2(\omega)},$$
and then the exact controllability is not guaranteed in the whole space $L^2(\Gc)$.
\end{proof}


\section{Control of the heat equation}\label{sec_parabolic}

In this section, we study the null-controllability of the heat equation in a graph $\Gc$. 
Let us consider the controlled heat equation in $L^2(\Gc)$
\begin{equation}\label{eq_free_heat}
\left\{
\begin{array}{ll} \partial_{t} u = \Delta_\Gc u +q(x) u +  h 1_{\omega} ,& \ \ \ \ \ t>0,\\
u(t=0)=u_0\in L^2(\Gc), & 
\end{array}\right. 
\end{equation}
with $q\in L^\infty (\Gc)$ and $\Delta_{\Gc}$ the Laplacian defined on the graph $\Gc$, with either Dirichlet or Neumann conditions at the exterior vertices. In \eqref{eq_free_heat}, at time $t \geq 0$, $u(t,\cdot) : \Gc \to \mathbb{R}$ is the state and $h(t,\cdot) : \omega \to \mathbb{R}$ is the control.

\subsection{The null-controllability under (GGCC)}

We recall that Theorem \ref{prop:nullcontrolheat_intro} presented in the introduction is a direct consequence of \cite[Theorem 1.4]{AB} because the Apraiz, Barcena-Petisco's condition is a less geometric way to write (GGCC), see Theorem \ref{prop_GGCC} and Section \ref{sec_GGCC}. However, we underline that Theorem \ref{prop:nullcontrolheat_intro} is also a direct consequence of our study of the wave equation. Indeed, we have the following alternative proof.

\begin{proof}[Proof of Theorem \ref{prop:nullcontrolheat_intro}]
Assume that $\Gc$ and $\omega$ satisfy (GGCC). 
First, let us set $w = e^{-t(\|q\|_{\infty} + 1)} u$ that solves from \eqref{eq_free_heat} the new controlled problem
\begin{equation}\label{eq_free_heat_translat}
\left\{
\begin{array}{ll} \partial_{t} w = \Delta_\Gc w + (q(x) -\|q\|_{\infty} - 1) w +  e^{-t(\|q\|_{\infty} + 1)} h 1_{\omega} ,& \ \ \ \ \ t>0,\\
w(t=0)=u_0\in L^2(\Gc). & 
\end{array}\right. 
\end{equation}
Observe that $0$ does not belong to the spectrum of $\Delta_\Gc + q(x) -\|q\|_{\infty} - 1$. So one can apply Theorem \ref{th_exact_wave_intro_neu} to deduce that the associated controlled wave equation is exactly controllable for $T \geq L$, where $L$ is the length appearing in Definition \ref{defi_GGCC}. From \cite[Theorem 3.4]{Mil06}, we then deduce that \eqref{eq_free_heat_translat} is small-time null-controllable. By coming back to the variable $u = e^{t(\|q\|_{\infty} + 1)} w$, we finally obtain that \eqref{eq_free_heat} is small-time null-controllable.
\end{proof}

As a remark, let us notice that both proofs of Theorem \ref{prop:nullcontrolheat_intro}, the one from \cite[Theorem 1.4]{AB} and the second one written just above, lead to an estimate of the control cost, that is there exists $C=C(\Gc , \omega)>0$ such that for every $T>0$, for every $u_0 \in L^2(\Gc)$, there exists $h \in L^2((0,T)\times \omega)$ satisfying 
\begin{equation}
    \label{eq:controlcostheat}
    \|h\|_{L^2((0,T)\times\omega)} \leq C e^{C/T} \|u_0\|_{L^2(\Gc )},
\end{equation}
such that the solution $u \in \Cc([0,T],L^2(\Gc))$ of \eqref{eq_free_heat} satisfies $u(t=T) = 0$.
Also notice that \cite[Theorem 1.4]{AB} consider lower-order time and spatial dependent perturbations of the Laplace operator as 
\begin{equation}\label{eq_free_heat_perturbed}
\left\{\begin{array}{ll} \partial_{t} u = \Delta_\Gc u+ \partial_x(b(t,x) u) + a(t,x) u + h 1_{\omega}, & t>0\\
u(t=0)=u_0\in L^2(\Gc), & 
\end{array}\right. 
\end{equation}
where $b=b(t,x), a=a(t,x) \in L^{\infty}((0,T)\times \Gc)$. 

\subsection{Study of a graph with at most polynomially approximable lengths}
We just show that (GGCC) is a sufficient condition for the null-controllability of \eqref{eq_free_heat}. As already mentioned in \cite[Remarks 1.1 and 1.2]{AB}, it turns out that (GGCC) could also be a necessary condition for the null-controllability of \eqref{eq_free_heat}, for instance when the ratios of the lengths of the uncontrolled edges of graph are rationnally dependent. Indeed, in such a configuration, one can construct explicit eigenfunctions supported outside the control zone, disproving the associated observability inequality to heat equation that is equivalent to the null-controllability of \eqref{eq_free_heat}, see for instance \cite[Theorem 2.44]{Cor07}. However, in the remaining part of this section, we show that, in general, (GGCC) is not a necessary condition for the null-controllability of \eqref{eq_free_heat}. This fact will obviously follow once we prove Theorem \ref{prop:ggccnotnecessaryheat_intro}. 

\begin{theorem}[{\cite[Corollary 8.6]{DZ06}}]
\label{lem:boundarycontrolheat}
Let $N \geq 3$. Let $\Gc $ be a star-graph with $N$ edges $(e_i)_{i=1,\dots, N}$, $N$ exterior vertices $(v_i)_{i=1,\dots, N}$, with respective lengths $(\ell_i)_{i=1,\dots, N}$. Assume that $\ell_j/\ell_k$ with $j\neq k \in \{2, \dots, n\}$ are at most polynomially approximable numbers. Then, the boundary control heat equation
\begin{equation}\label{eq_free_heat_boundary}
\left\{\begin{array}{ll} \partial_{t} u = \Delta_\Gc u ,& t>0,\\
u(t,v_1) = h(t), & t >0,\\
u(t,v) = 0,& t >0,\ v \in \mathcal{V}_{\mathrm{ext}} \setminus \{v_1\},\\
u(t=0)=u_0\in L^2(\Gc), & 
\end{array}\right. 
\end{equation}
is small-time null-controllable, i.e. for every $T>0$, for every $u_0 \in L^2(\Gc)$, there exists $h \in L^2(0,T)$ such that the solution $u \in \Cc([0,T],H^{-1}(\Gc)) \cap L^2(0,T;L^2(\Gc))$ of \eqref{eq_free_heat_boundary} satisfies $u(t=T) = 0$.
\end{theorem}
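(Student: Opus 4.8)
Since Theorem~\ref{lem:boundarycontrolheat} is quoted verbatim from \cite[Corollary 8.6]{DZ06}, I will only outline the strategy behind its proof. The plan is to dualise and run a parabolic moment method; the whole difficulty is concentrated in a \emph{quantitative} spectral analysis of the Dirichlet Laplacian on the star-graph, which is exactly what the Diophantine hypothesis on the ratios $\ell_j/\ell_k$ makes possible. First I would reduce the boundary null-controllability of \eqref{eq_free_heat_boundary} to an observability inequality: by the Hilbert Uniqueness Method (see \cite[Theorem~2.44]{Cor07}), for a fixed $T>0$ the problem \eqref{eq_free_heat_boundary} is small-time null-controllable if and only if there is $C>0$ such that every solution $\varphi$ of $\partial_t\varphi=\Delta_\Gc\varphi$ with homogeneous Dirichlet conditions at all exterior vertices satisfies $\|\varphi(T,\cdot)\|_{L^2(\Gc)}^2\le C\int_0^T|\partial_n\varphi(t,v_1)|^2\,\dd t$, where $\partial_n\varphi(\cdot,v_1)$ is the exterior normal derivative at the controlled vertex; by density it suffices to treat smooth data.

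Next I would expand on the eigenbasis. Let $(\phi_k)_k$ be an orthonormal basis of $L^2(\Gc)$ of Dirichlet eigenfunctions, $-\Delta_\Gc\phi_k=\lambda_k\phi_k$ with $0<\lambda_1\le\lambda_2\le\cdots$. Parametrising each edge $e_j$ so that its exterior vertex is at $0$ and the central vertex $v$ at $\ell_j$, a Dirichlet eigenfunction with $\lambda=\mu^2$ has the form $\phi_j(x)=a_j\sin(\mu x)$; continuity at $v$ forces $a_j\sin(\mu\ell_j)$ to be independent of $j$, with common value $\gamma$, and the Kirchhoff condition forces $\sum_{j=1}^{N}a_j\cos(\mu\ell_j)=0$. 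If $\gamma\ne0$ then all $a_j\ne0$ and $\mu$ solves $\sum_{j=1}^{N}\cot(\mu\ell_j)=0$, so the eigenfunction charges every edge; if $\gamma=0$ it is supported exactly on those edges with $\mu\ell_j\in\pi\ZZ$, and the Kirchhoff condition excludes a single-edge support, so at least two edges must be charged. Here the hypothesis enters qualitatively: since the ratios $\ell_j/\ell_k$ for $j,k\in\{2,\dots,N\}$ are at-most-polynomially approximable, hence irrational, no $\gamma=0$ mode can charge two of the edges $e_2,\dots,e_N$; therefore \emph{every} eigenfunction has a non-trivial component on $e_1$, and in particular the observation coefficients $b_k:=\partial_n\phi_k(v_1)=\pm a_1\mu$ are nonzero.

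The heart of the matter — and the step I expect to be the real obstacle — is to upgrade this qualitative fact to \emph{polynomial} quantitative estimates, namely a spectral gap of the form $\sqrt{\lambda_{k+1}}-\sqrt{\lambda_k}\ge c\,\lambda_k^{-\sigma'}$ and a lower bound $|b_k|\ge c\,\lambda_k^{-\sigma''}$, with $c,\sigma',\sigma''>0$. This is precisely the content of the computations in \cite[Section~8]{DZ06} (see also \cite[Chapter~VII]{AI95}): the near-resonances of the characteristic equation $\sum_j\cot(\mu\ell_j)=0$ occur exactly when the numbers $\mu\ell_j/\pi$ are simultaneously close to integers, and how closely this can happen for a given size of $\mu$ is governed by the simultaneous Diophantine approximation of the ratios $\ell_j/\ell_k$ — whence the polynomial bounds under Definition~\ref{defi_sigma_approx_intro}; the relevant approximation lemmas are recalled in Appendix~\ref{diophantine}. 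Once these two bounds are available, the observability inequality follows from the classical parabolic moment method (Fattorini--Russell, in the spirit of the Lebeau--Robbiano construction of biorthogonal families to $(e^{-\lambda_k t})_k$ in $L^2(0,T)$ with at most exponentially growing norms): for the heat flow an observation coefficient may be as small as $e^{-C\sqrt{\lambda_k}}$, so a merely polynomial lower bound on $|b_k|$ is far more than enough. Since the argument works for every $T>0$, this yields the small-time null-controllability asserted in Theorem~\ref{lem:boundarycontrolheat}.
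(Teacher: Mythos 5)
The paper does not actually prove this statement: it is imported verbatim as a citation of \cite[Corollary 8.6]{DZ06}, so there is no internal proof to compare against. Your outline — duality to a normal-derivative observability inequality, the explicit spectral analysis on the star showing that every Dirichlet eigenfunction charges $e_1$ once the ratios $\ell_j/\ell_k$ for $j,k\ge 2$ are irrational, and the moment method fed by polynomial lower bounds on the spectral gap and on the observation coefficients supplied by the Diophantine hypothesis — is a faithful account of how the cited reference proceeds, and you correctly isolate the quantitative gap/coefficient estimates as the only genuinely hard step, which you reasonably defer to \cite{DZ06} and \cite{AI95} given that the theorem is itself a quotation.
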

Theorem \ref{prop:ggccnotnecessaryheat_intro} follows from the above result by using a general strategy passing from the boundary control result to an interior control result. Such a procedure has been widely used, see for instance \cite[Theorem 2.2]{AK11}.

\begin{proof}[Proof of Theorem \ref{prop:ggccnotnecessaryheat_intro}]
Let us take $\hat{\omega} \subset \subset \omega$, and we consider $\hat{\Gc} = \Gc \setminus \overline{\hat{\omega}}$. Clearly, $\hat{\Gc}$ is composed of two connected graphs, $\hat{\Gc}_1$ is a graph with the same structure as $\Gc$ and $\hat{\Gc}_2$ is a graph composed with only one edge. We also take $\theta \in \Cc^{\infty}(\Rr)$ and $\eta \in \Cc^{\infty}([0,T])$ such that
\begin{equation}
    \theta \equiv 1\ \text{in}\ \Gc \setminus \omega,\ \theta = 0 \ \text{in}\ \overline{\hat{\omega}},\ \eta=1\ \text{in}\ [0,T/4]\ \text{and}\ \eta \equiv 0\ \text{in}\ [3T/4,T].
\end{equation}
Let $U$ be the solution to \eqref{eq_free_heat} associated to $u_0 \in L^2(\Gc )$ and $h=0$. We solve now the two boundary control problems
\begin{equation}\label{eq_free_heat_boundary1}
\left\{\begin{array}{ll} \partial_{t} \hat{u}_1 = \Delta_{\hat{\Gc}_1} \hat{u}_1 ,& t>0,\\
\hat{u}_1(t,\hat{v_1}) = h_1(t), & t >0,\\
\hat{u}_1(t,v) = 0,& t >0,\ v \in \mathcal{V}_{\mathrm{ext}} \setminus \{\hat{v_1}\},\\
\hat{u}_1(t=0)=u_0 1_{\hat{\Gc}_1},\ \hat{u}_1(t=T)=0. & 
\end{array}\right. 
\end{equation}
and 
\begin{equation}\label{eq_free_heat_boundary2}
\left\{\begin{array}{ll} \partial_{t} \hat{u}_2 = \Delta_{\hat{\Gc}_2} \hat{u}_2 ,& t>0,\\
\hat{u}_2(t,\hat{v_2}) = h_2(t), & t >0,\\
\hat{u}_2(t,v) = 0,& t >0,\ v \in \mathcal{V}_{\mathrm{ext}} \setminus \{\hat{v_2}\},\\
\hat{u}_2(t=0)=u_0 1_{\hat{\Gc}_2},\ \hat{u}_2(t=T)=0. & 
\end{array}\right. 
\end{equation}
To solve \eqref{eq_free_heat_boundary1}, we use Theorem \ref{lem:boundarycontrolheat}, while to solve \eqref{eq_free_heat_boundary2}, we use for instance \cite[Theorem 2.4]{AK11}, i.e. the null-controllability of the heat equation with one boundary control, a result that goes back to \cite{FR71}. Let us set
\begin{equation}
   \hat{u} = \hat{u}_1 + \hat{u}_2,\  u = \theta(x) \hat{u} + (1- \theta(x)) \eta(t) U.
\end{equation}
Then $u$ is a solution to \eqref{eq_free_heat} with $h$ given by
\begin{equation}
\label{eq:defhcontrolheat}
    h = (1- \theta) \eta'(t) U + 2 \theta'(x) (\hat{u}_x - \eta(t) U_x) + \theta''(x) (\hat{u} - \eta(t) U),
\end{equation}
satisfying furthermore
$$u(t=T)=0.$$
By using the properties of $\eta$ and $\theta$, we have that the support of $h$ is contained in $[0,T]\times\omega$. Moreover, the following regularity property holds.

\medskip

Let us check now that $h \in L^{2}((0,T)\times \omega)$. 
The first term in \eqref{eq:defhcontrolheat} belongs 
to $L^{2}((0,T)\times \omega)$ because the Laplacian $\Delta_{\Gc}$ on the graph $\Gc$ equipped with Dirichlet boundary conditions generates a strongly continuous semigroup in $L^2(\Gc)$ and $u_0 \in L^2(\Gc)$. The last two terms of \eqref{eq:defhcontrolheat} have also the desired regularity property due to the well-known local regularizing effect of the heat equation set in an interval, stated here as a lemma.
\begin{lemma}
\label{lem:regheat}
Let $y \in \Cc([0,T],H^{-1}(0,\ell)) \cap L^2(0,T;L^2(0,\ell))$ and $f \in L^2((0,T)\times (0,\ell))$ be such that
\begin{align}\label{parab}\partial_t y - \partial_{x}^2 y = f\ \text{in}\ \mathcal{D}'((0,T)\times (0,\ell)),\qquad y(0,\cdot) = 0\qquad \text{in}\ (0,\ell).\end{align}
Then for any nonempty $\mathcal O \subset \subset (0,\ell)$, we have $y \in L^2(0,T;H^1(\mathcal O))$.
\end{lemma}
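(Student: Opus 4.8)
The plan is to prove Lemma~\ref{lem:regheat} by a standard interior regularity argument for the heat equation, localizing with a cutoff function and using energy estimates. First I would fix a nonempty open set $\mathcal O \subset\subset (0,\ell)$ and choose an intermediate open set $\mathcal O'$ with $\mathcal O \subset\subset \mathcal O' \subset\subset (0,\ell)$, together with a cutoff function $\chi \in \Cc^\infty_c((0,\ell))$ such that $\chi \equiv 1$ on $\mathcal O'$ and $\mathrm{supp}\,\chi \subset (0,\ell)$. The idea is that $w := \chi y$ solves a heat equation on the whole interval (or, equivalently, on $\RR$ after extension by zero) with a right-hand side controlled by $f$ and by lower-order terms in $y$; since $\chi$ vanishes near the endpoints, no boundary terms appear and the equation for $w$ decouples from whatever boundary conditions $y$ itself might satisfy.

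Concretely, from \eqref{parab} one computes that $w = \chi y$ satisfies, in $\mathcal D'((0,T)\times(0,\ell))$,
\begin{equation*}
\partial_t w - \partial_x^2 w = \chi f - 2\chi' \partial_x y - \chi'' y =: g, \qquad w(0,\cdot)=0.
\end{equation*}
The delicate point is that the term $\chi' \partial_x y$ involves a derivative of $y$, and a priori we only know $y \in L^2(0,T;L^2(0,\ell))$, so $g$ need not be in $L^2$. To handle this I would argue in two stages, or equivalently use the following trick: rewrite $-2\chi'\partial_x y = -2\partial_x(\chi' y) + 2\chi'' y$, so that
\begin{equation*}
\partial_t w - \partial_x^2 w = \chi f + \partial_x(-2\chi' y) + 3\chi'' y - \chi'' y = \chi f - \partial_x(2\chi' y) + 2\chi'' y.
\end{equation*}
Wait — more cleanly, $g$ is the sum of a term in $L^2((0,T)\times(0,\ell))$, namely $\chi f - \chi'' y$ (using $y\in L^2L^2$), and a term of the form $\partial_x(\text{something in } L^2L^2)$, namely $-\partial_x(2\chi' y)$ after absorbing the extra $\chi''y$. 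Thus $g \in L^2(0,T;H^{-1}(0,\ell))$, or more precisely $g \in L^2(0,T;H^{-1}_0(\text{an interval})) $. By the standard $L^2$-maximal-regularity / energy estimate for the heat equation with homogeneous Dirichlet boundary conditions and zero initial data, the solution $w$ of $\partial_t w - \partial_x^2 w = g$ with a right-hand side in $L^2(0,T;H^{-1})$ belongs to $\Cc([0,T];L^2) \cap L^2(0,T;H^1_0)$, with $\|w\|_{L^2(0,T;H^1_0)} \lesssim \|g\|_{L^2(0,T;H^{-1})} \lesssim \|f\|_{L^2L^2} + \|y\|_{L^2L^2}$. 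Since $w$ agrees with $y$ on $(0,T)\times\mathcal O'$ and $\mathcal O \subset\subset \mathcal O'$, we get $y = w \in L^2(0,T;H^1(\mathcal O))$, which is the claim.

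The main obstacle, as indicated above, is the low regularity of the hypotheses: because $y$ is only assumed to be in $L^2(0,T;L^2)$ (and $\Cc([0,T];H^{-1})$), one cannot directly multiply the PDE by $w$ and integrate by parts — the products would not make sense classically. The correct way around this is either (i) the variational/duality formulation of the heat equation with $H^{-1}$-valued source just invoked, which is classical (see e.g. Lions–Magenes or Evans), or (ii) a regularization argument: one mollifies in time, or replaces $y$ by $J_\varepsilon y$ for a spatial mollifier $J_\varepsilon$, derives the energy estimate for the smooth approximations with constants independent of $\varepsilon$, and passes to the limit. Either route is routine and I would simply cite the standard interior regularity theorem for parabolic equations, noting that the endpoint contributions vanish thanks to the compact support of the cutoff, so that the one-dimensional interval behaves exactly like $\RR$ for this local statement. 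This is precisely why the lemma is stated as ``well-known local regularizing effect'' in the text, and a one-paragraph proof along these lines suffices.
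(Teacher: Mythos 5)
Your proof is correct and follows essentially the same route as the paper: localize with a cutoff $\chi$, observe that $\chi y$ solves a Dirichlet heat equation with source $\chi f - 2\chi'\partial_x y - \chi'' y \in L^2(0,T;H^{-1}(0,\ell))$ (after rewriting $\chi'\partial_x y = \partial_x(\chi' y)-\chi'' y$), and invoke the standard energy/maximal-regularity theorem for $H^{-1}$-valued sources (the paper cites \cite[Theorem 10.42]{luc}). The only blemish is a harmless arithmetic slip in your intermediate display (the coefficient of $\chi'' y$ should come out as $\chi'' y$, not $2\chi'' y$), which does not affect the argument.
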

\begin{proof}
The proof is rather standard and consists in considering $\widetilde y=y \chi$ defined by $\chi\in \Cc^\infty([0,\ell],[0,1])$ such that $\chi(x)=1$ for $x\in \mathcal O$ and $\mathrm{supp}(\chi)\subset \subset (0,\ell)$. The function $\widetilde y$ solves a parabolic equation as in \eqref{parab} when Dirichlet boundary conditions are satisfied and with a source term that is not $f$ but rather $f-(\partial_{x}^2 \chi)y - 2 \partial_{x} \chi \partial_x y\in L^2(0,T ;H^{-1}(0,\ell))$. The result then follows from \cite[Theorem 10.42]{luc} which infers $$\widetilde y \in \Cc\big([0,T],L^2(0,\ell)\big)\cap L^2\big(0,T;H^{1}_0(0,\ell)\big)\cap H^1\big(0,T;H^{-1}(0,\ell)\big).$$
This ends the proof.
\end{proof}

\medskip

We come back to the proof of Theorem \ref{prop:ggccnotnecessaryheat_intro} and we apply Lemma \ref{lem:regheat} to $y=z_1 = \hat{u}_1 - \eta(t)U$ and to $y=z_2 = \hat{u}_2 - \eta(t) U$ satisfying respectively
$$\partial_t z_1 - \partial_{x}^2 z_1 =  -\eta'(t) U \ \text{in}\ \mathcal{D}'((0,T)\times (0,\ell_1')),\qquad z_1(0,\cdot) = 0\qquad \text{in}\ (0,\ell_1'),$$
and 
$$\partial_t z_2 - \partial_{x}^2 z_2 =  -\eta'(t) U \ \text{in}\ \mathcal{D}'((0,T)\times (0,\ell_2')),\qquad z_2(0,\cdot) = 0\qquad \text{in}\ (0,\ell_2'),$$
where $(0,\ell_1')$ denotes the controlled edge of $\hat{\Gc}_1$ and $(0,\ell_2')$ denotes the (unique) edge of $\hat{\Gc}_2$. By using the property of the support of $\theta$, we deduce that $\theta'(x)(\hat{u} - \eta(t) U)_{x}$ and $\theta''(x) (\hat{u} - \eta(t) U)$ belong to $L^2((0,T)\times \Gc)$. This concludes the proof.
\end{proof}


\section{Discussions}

In this final section, we provide a detailed discussion on several key topics related to the control and observation of partial differential equations (PDEs) on graphs. Specifically, we explore the optimality of the observation and control time for the wave equation, examining the conditions under which minimal time control can be achieved. Additionally, we address the question of the (potential) minimal control time for the Schrödinger equation and investigate the necessity of the Graph Geometric Control Condition (GGCC) for ensuring controllability on a generic graph. Furthermore, we analyze the polynomial stability of the wave equation, providing insights into its long-term behavior under various conditions. Finally, we compare and contrast boundary control and internal control of PDEs on graphs, highlighting their respective advantages, limitations, and implications for different types of networked structures.

\subsection{Optimality of the observation/control time for the wave equation}\label{optimal_time}

In this paper, we obtain the controllability of the wave equation as soon as (GGCC) holds and with any control time (strictly) larger than the length $L$ appearing in Definition \ref{defi_GGCC}. It is therefore natural to wonder if $L$ is actually the {\it optimal} control time, that is the infimum of all the times $T$ for which the control holds. We can answer shortly “no”. Indeed, let us consider a three edges star-graph equipped with Dirichlet boundary conditions. Assume we observe every edge only on the exterior side so that the unobserved part of the graph consists again in a three edges star-graph of lengths $\ell_1>\ell_2>\ell_3$ meeting at one vertex. This $\bot$-shaped graph is represented in Figure \hyperref[fig_3star]{5}. 
For this graph, the optimal time for the observation of the free wave equation is 
\begin{equation}\label{def_Tstar}
T^*=\max(2\ell_2,\ell_1+\ell_3).
\end{equation}

\begin{figure}[ht]
\begin{center}
\resizebox{0.35\textwidth}{!}{\input{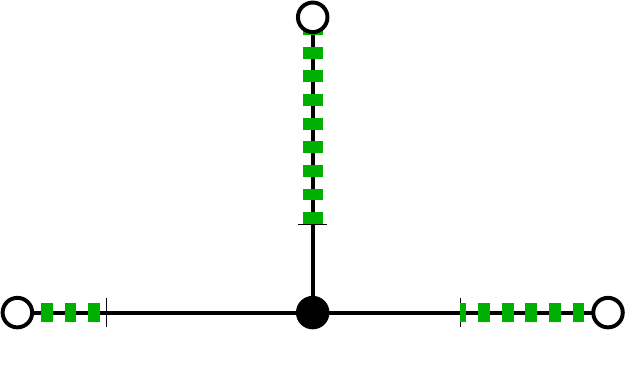_t}}
\end{center}
\caption{\it The unobserved part of the studied star-graph consists in three edges of lengths $\ell_1>\ell_2>\ell_3$ meeting at $x=y=0$.}\label{fig_3star}
\end{figure}

\noindent To enlight the discussion, the proof of this fact is postpone in Appendix \ref{sec_optimal_time}.
Here, we simply notice that the optimal time for observation is not always the length $\ell_1+\ell_2$ of the longest unobserved path, the sufficient time of Proposition \ref{prop_obs_wave}, neither the length $\ell_2+\ell_3$ of the shortest unobserved path, or the length $\ell_1+\ell_3$, the longest time needed for a wave to travel from one end of the unobserved part to another end. 

\medskip

Actually, this time $T^*$ of \eqref{def_Tstar} is the exactly the one introduced by Avdonin and Zhao in \cite{AZ21}: cover the uncontrolled part $\Gc\setminus\omega$ of the graph by a watershed and minimize the length of the longest river (see the geometric description of Definition \ref{defi_watershed}), the optimal time is twice this length. The optimal watershed for the graph of Figure \hyperref[fig_3star]{5} is reached by a river covering the unobserved part of $e_2$ and two rivers covering the unobserved part of $e_1\cup e_3$ starting at each end this segment and meeting at the middle. This watershed provides a time $T^*$ which is exactly the optimal time \eqref{def_Tstar}. In this sense, we can say that the method of Avdonin and Zhao in \cite{AZ21} is better than ours since it provides a better time. However, we notice that their method is purely based on d'Alembert's formula and is less robust with respect to changes in the PDE. It is also noteworthy that Avdonin and Zhao themselves argue in \cite{AZ21} that their method is not optimal either. First notice that they state that there exist graphs for which their time is optimal, but they only provide examples, not a general fact, see Part 3 of Theorem 1 of \cite{AZ21} and its proof. They are also able to give an example where their method does not provide the optimal time for the “shape control”, that is the problem of controlling the shape function $u$ only, no target being specified for $\partial_t u$, see Remark 1 of \cite{AZ21}. 

\medskip

To conclude this discussion, as far as we know, computing the optimal time for the observation/control of the wave equation on a general graph is still an open question. We know that our time coming from (GGCC) is not optimal in general and that the time provided by the watershed description of Avdonin and Zhao is always shorter. We also know that their description do not provide the optimal time for “shape control”. However, our experience on this topics leads us to the following conjecture.
\begin{claim}
The watershed description of Avdonin and Zhao in \cite{AZ21} provides the optimal time for the observation and control of the free wave equation on a graph.  
\end{claim}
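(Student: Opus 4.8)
The plan is to prove the missing lower bound: that if the observability inequality for the free wave equation on $\omega$ holds with observation time $T$, then $T\geq T_{\mathrm{AZ}}:=2\inf_{W}\max_{R\in W}\ell(R)$, the infimum being over all watersheds $W$ of $\Gc\setminus\omega$ (Definition \ref{defi_watershed}), $R$ over the rivers of $W$, and $\ell(R)$ the length of $R$. By Theorem \ref{th:obsexactwave} and its analogue for general boundary conditions this is equivalent to optimality of $T_{\mathrm{AZ}}$ for exact control; combined with the sufficiency $T>T_{\mathrm{AZ}}\Rightarrow$ controllability (the content of \cite{AZ21}, recalled in this section), it yields the conjecture.

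First I would recast the free wave dynamics on $\Gc$ through its Riemann invariants: on each edge $e_j\simeq(0,\ell_j)$, $\partial_t^2\varphi=\partial_x^2\varphi$ decomposes as $\varphi_j(x,t)=F_j^+(x+t)+F_j^-(x-t)$, the two pieces travelling at unit speed and being transformed, whenever a characteristic reaches a vertex, by an explicit finite scattering relation — reflection with sign $-1$ (Dirichlet) or $+1$ (Neumann) at an exterior vertex, and the Kirchhoff scattering matrix $S_v=\frac{2}{d_v}\mathbf{1}\mathbf{1}^{\top}-I$ at an interior vertex $v$ of degree $d_v$. In this picture $\int_0^T\!\!\int_\omega|\partial_t\varphi|^2$ controls exactly the Riemann invariants read off along the characteristics issued from $\omega\times(0,T)$, and the observability inequality becomes the statement that these data, propagated backward through the vertex scattering, reconstruct the full Cauchy data $(\varphi_0,\varphi_1)$ with a uniform bound. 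I would then reinterpret the watershed as the combinatorial object measuring the ``information horizon'' of this propagation problem: a watershed with all rivers of length $\leq\ell$ is precisely an organisation of the characteristics by which every Cauchy datum is reached within ``time'' $2\ell$, so that $\inf_W\max_R\ell(R)$ is the optimal sufficient half-time, and one wants to show it is also a lower half-time.

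The heart of the matter, and the step I expect to be the main obstacle, is to produce, for every $T<T_{\mathrm{AZ}}$, a normalised sequence of free-wave solutions $(\varphi_n)$ with $\int_0^T\!\!\int_\omega|\partial_t\varphi_n|^2\to 0$. When $\Gc\setminus\omega$ contains a closed uncontrolled path this is exactly Proposition \ref{prop_quasimode}; but here (GGCC) may hold, so the obstruction in time $T<T_{\mathrm{AZ}}$ is not a standing mode but a genuinely time-dependent family of high-frequency wave packets that evade $\omega$ over the whole interval $[0,T]$. A single Gaussian beam cannot do it: at an interior vertex of degree $d_v\geq 3$ the beam splits into outgoing pieces of amplitude $\frac{2}{d_v}$ on each other edge, so it leaks toward $\omega$ at every high-degree vertex. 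One must instead build a \emph{coherent superposition} of beams whose structure is read off from a watershed whose longest river exceeds $T/2$. A natural route is to dualise: start from a watershed-extremal ``river-mouth'' datum of depth $>T/2$ — equivalently an $\varepsilon$-approximate eigenvector of the unitary one-step scattering operator that is $\varepsilon$-concentrated away from $\omega$ — and propagate it backward through the scattering dynamics, aligning the relevant phases along the river by simultaneous Diophantine approximation of the edge lengths in the style of Proposition \ref{coroprop_Dirichlet}. One then has to verify that the reflected and transmitted pieces which do head toward $\omega$ carry only $o(1)$ of the energy over $[0,T]$ — this is where the hypothesis $T<T_{\mathrm{AZ}}$, i.e.\ ``no watershed has all rivers $\leq T/2$'', must be turned into the kinematic fact that the backward light cone of $\omega\times(0,T)$ never ``closes up'' on $\Gc$ — and that the resulting $(\varphi_n)$ is bounded and bounded away from zero in $H^1_\Delta(\Gc)\times L^2(\Gc)$.

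Two constraints should guide the argument. First, the construction must genuinely involve both components $(\varphi_0,\varphi_1)$: as the discussion recalls (Remark 1 of \cite{AZ21}), $T_{\mathrm{AZ}}$ is \emph{not} optimal for ``shape control'' of $u$ alone, so any argument tracking only $\varphi_0$ would produce a strictly smaller, hence wrong, threshold. Second, the interplay of the sign-reflections at exterior vertices with the Kirchhoff scattering at interior vertices is exactly what makes the sharp half-time $\inf_W\max_R\ell(R)$ rather than, say, the in-radius of $\Gc\setminus\omega$ or half the longest uncontrolled path; converting the watershed minimax into the matching sharp estimate on these backward cones, uniformly in the frequency, is the crux on which a complete proof rests.
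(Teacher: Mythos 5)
First, be aware that the statement you are proving is labelled as a \emph{Conjecture} in the paper: the authors explicitly write that computing the optimal observation/control time on a general graph is, to their knowledge, an open question, and they offer no proof. There is therefore no ``paper proof'' to compare against; the only evidence the paper supplies is the single worked example of Proposition \ref{prop_optimal} (the $\bot$-shaped graph, Appendix \ref{sec_optimal_time}), where the watershed time $\max(2\ell_2,\ell_1+\ell_3)$ is verified to be optimal by an ad hoc symmetry decomposition. Your text is consistent with this state of affairs: it is a plan, and you yourself flag that ``the heart of the matter'' is an obstacle you expect rather than a step you carry out. As it stands, the proposal does not prove the conjecture.

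The concrete gap is the lower bound: you must show that if every watershed of $\Gc\setminus\omega$ contains a river of length $>T/2$, then there is a normalised sequence of free-wave solutions with $\int_0^T\!\!\int_\omega|\partial_t\varphi_n|^2\to 0$. Nothing in your outline turns the combinatorial hypothesis into such a construction; the phrase ``the backward light cone of $\omega\times(0,T)$ never closes up'' is a restatement of what must be proved, not an argument, and the required minimax duality (evasion time of wave packets $=$ watershed minimax) is exactly the open combinatorial--analytic content of the conjecture. Two further points would need repair even at the level of strategy. (i) The Diophantine alignment via Proposition \ref{coroprop_Dirichlet} and the ``approximate eigenvector of the one-step scattering operator'' are tools for producing solutions invisible for \emph{all} time (as in Proposition \ref{prop_quasimode}, where (GGCC) fails); for a finite-time lower bound the natural objects are compactly supported d'Alembert bumps together with finite speed of propagation and the vertex scattering laws of Appendix \ref{section_waves_io} --- no phase alignment or frequency parameter is needed, as the constructions achieving $2\ell_2$ and $\ell_1+\ell_3$ in Appendix \ref{sec_optimal_time} show. (ii) Your worry that splitting at a degree-$d_v$ vertex ``leaks toward $\omega$'' is partly misplaced: for a lower bound one does not need the energy to stay coherent, only that no characteristic emanating from the support of the datum reaches $\omega$ before time $T$; the genuine difficulty is rather to choose the initial configuration of bumps (and their signs, exploiting Dirichlet reflections and odd symmetries) so that the evasion time attains the watershed minimax on an arbitrary tree, which is where the conjecture remains open.
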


\subsection{On the (possible) minimal time for the Schrödinger equation}
\label{sec:discussionschromin}

In Theorem \ref{non_ggcc_sch_intro}, we exhibit a specific configuration where the Schrödinger equation turns out to be exactly controllable for some $T>0$ despite (GGCC) is not satisfied. A natural question would consist in investigating if there is indeed a minimal time of control or not. Actually, based on our experience on the subject of the internal control of linear Schrödinger equation where as far as we know, minimal time never appears, we conjecture the following.

\begin{claim}\label{claim_schro}
Let us consider the X graph with four edges, two (say $e_1$ and $e_2$) of length $\ell_{\text{b}}>0$ and two ($e_3$ and $e_4$), of length $\ell_{\text{t}}>0$ and $\omega = e_1 \cup e_3$. Assume that the ratio $\ell_{\text{t}}/\ell_{\text{b}}$ is a badly approximable number then the Schr\"odinger equation 
\begin{equation}
\label{eq:schrodiscussminimal}
\left\{
\begin{array}{ll} i \partial_{t} u = - \Delta_\Gc u + h 1_{\omega} ,& \ \ \ \ t>0,\\
u(t=0)=u_0\in L^2(\Gc), & 
\end{array}\right. 
\end{equation}
is small-time exactly controllable in $L^2(\Gc)$.
\end{claim}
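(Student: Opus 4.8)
\medskip

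\noindent\textit{Proof proposal.} The plan is to promote the resolvent‑estimate argument behind Theorem~\ref{non_ggcc_sch_intro} to a full observability estimate holding in \emph{arbitrarily short} time, by exploiting the arithmetic of the spectrum rather than merely its uniform separation. Normalising $\ell_{\text{b}}=1$ and writing $\ell=\ell_{\text{t}}$, the first step is exactly the symmetry decomposition already used for Theorem~\ref{non_ggcc_sch_intro}: through the change of unknowns \eqref{eq:deffgschro}--\eqref{eq:defhschro}, equation \eqref{eq:schrodiscussminimal} is unitarily equivalent to the three decoupled Dirichlet Schr\"odinger equations for $f$ on $(-1,0)$, $g$ on $(0,\ell)$ and $h$ on $(-1,\ell)$, observed through the two \emph{coupled} quantities $(f+h)\mathbf 1_{I_1}$ and $(g+h)\mathbf 1_{I_2}$, where $I_1=\omega\cap e_1\subset(-1,0)$ and $I_2=\omega\cap e_3\subset(0,\ell)$. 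By the usual HUM duality, small‑time exact controllability is equivalent to: for every $T>0$ there is $C_T>0$ with $\|(f,g,h)(0)\|_{L^2}^2\le C_T\int_0^T\big(\|(f+h)(t)\|_{L^2(I_1)}^2+\|(g+h)(t)\|_{L^2(I_2)}^2\big)\dd t$ for every solution of the decoupled system.

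The core of the argument is a spectral analysis of $\Sigma:=\{\pi^2k^2\}_{k\ge1}\cup\{\pi^2m^2/\ell^2\}_{m\ge1}\cup\{\pi^2n^2/(1+\ell)^2\}_{n\ge1}$, the spectrum of the decoupled operator. Since $\ell$ is badly approximable, so is $1+\ell$, and I would first check that $\Sigma$ is \emph{uniformly separated}: there is $\gamma_0=\gamma_0(\ell)>0$ with $|\mu-\mu'|\ge\gamma_0$ for all distinct $\mu,\mu'\in\Sigma$. This follows from identities such as $\pi^2|k^2-m^2/\ell^2|=\tfrac{\pi^2}{\ell}\,|k\ell-m|\,(k+m/\ell)$ together with the lower bound $|k\ell-m|\ge C/k$ coming from bad approximability (and the analogous estimates for the $(f,h)$ and $(g,h)$ pairs; the finitely many low‑frequency pairs are treated by hand, and in particular all elements of $\Sigma$ are simple, as $\ell\notin\mathbb{Q}$). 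Next, since the counting function of each of the three families near $\lambda$ grows like $\sqrt\lambda$, in any interval of length $o(\sqrt\lambda)$ around $\lambda$ each family contributes at most one element; hence $\Sigma$ splits into consecutive ``packets'' $P_1,P_2,\dots$ of cardinality at most $3$ with $\operatorname{dist}(P_j,P_{j+1})\to+\infty$ (take, for instance, the connected components of the graph on $\Sigma$ joining $\mu$ and $\mu'$ whenever $|\mu-\mu'|\le\min(\mu,\mu')^{1/4}$).

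I would then establish a \emph{uniform within‑packet observability}. Expanding a solution on the eigenbasis, the contribution of a packet $P_j$ to the right‑hand side integral is a Hermitian quadratic form in its at most three coefficients, with matrix $M_j(T)$ a time average of Gram matrices of the relevant eigenfunctions restricted to $I_1$ and $I_2$; the off‑diagonal entries carry damping factors $\int_0^T e^{i\mu t}\dd t$ with $|\mu|\ge\gamma_0$, while the coefficient of the $h$‑mode appears on the diagonal through both $I_1$ and $I_2$. Using only the elementary fact that $|\sin x|<|x|$ for $x\ne0$, so $\sup_{|x|\ge a}\big|\tfrac{\sin x}{x}\big|<1$ for every $a>0$, a short linear‑algebra computation then gives $\lambda_{\min}(M_j(T))\ge c_T>0$ with $c_T$ independent of $j$ (the finitely many low packets being dealt with directly: every eigenfunction of the decoupled operator is observed, and the eigenvalues are simple). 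Finally, combining this uniform packet estimate with a block Ingham--Beurling inequality — valid for \emph{every} $T>0$ because the packets have bounded size and inter‑packet gaps tending to $+\infty$ (discard the finitely many low packets so that the remaining spectral gap exceeds $2\pi/T$, and treat the finite‑dimensional remainder separately) — yields the observability estimate above for all $T>0$, hence the announced small‑time exact controllability.

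The step I expect to be the main obstacle is the spectral one, and within it the passage from the easy ``at most one eigenvalue per family per $o(\sqrt\lambda)$‑window'' heuristic to a clean packet decomposition whose inter‑packet gaps genuinely tend to $+\infty$, together with the verification that the within‑packet constant $c_T$, although it necessarily degenerates as $T\to0$, stays strictly positive for each fixed $T$. A secondary technical point is that the block Ingham--Beurling theorem must be invoked in a vector‑valued, coupled form adapted to the observation operator $(f,g,h)\mapsto\big((f+h)\mathbf 1_{I_1},(g+h)\mathbf 1_{I_2}\big)$, rather than for a single scalar nonharmonic Fourier series.
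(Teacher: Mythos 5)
You should first be aware that the statement you are proving is Conjecture~\ref{claim_schro}: the paper offers \emph{no} proof of it. The authors explicitly present it as an open problem, and the only thing Section~\ref{sec:discussionschromin} establishes is a negative result — that the natural route via the strengthened resolvent estimate \eqref{eq:resolventschroImproved} cannot work, because the cross-family spectral gaps stay bounded along the continued-fraction convergents of $\ell_{\text{t}}/\ell_{\text{b}}$, so the resolvent estimate admits no $|\lambda|^{-\varepsilon}$ gain. There is therefore no proof in the paper to compare yours against; your proposal has to be judged on its own, and it is precisely the kind of direct time-domain observability analysis the authors say would be needed.

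On its merits, your strategy looks essentially sound, and the individual ingredients check out: the symmetry decomposition and the duality reduction are exactly those of Theorem~\ref{non_ggcc_sch_intro}; the uniform separation of $\Sigma$ does follow from bad approximability of $\ell$ and of $1+\ell$ (including the $(g,h)$ pair, via $m+(m-n)\ell=(m-n)(\ell-\tfrac{m}{n-m})$); the restricted norms $\|\sin(k\pi\cdot)\|_{L^2(I_1)}$, $\|\psi_n\|_{L^2(I_1)}$ are uniformly bounded below; and since $f$ and $g$ never appear in the same observation channel, the within-packet Gram analysis reduces to two $2\times 2$ blocks coupling $f$ with $h$ and $g$ with $h$, for which your $\sup_{|x|\ge a}|\sin x/x|<1$ argument gives a uniform lower bound on $\lambda_{\min}$ (you do not even need $\Sigma_f$ and $\Sigma_g$ to be mutually separated). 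The one step I would not accept as written is the final block Ingham--Beurling application: your packets, built with the threshold $\min(\mu,\mu')^{1/4}$, have inter-packet gaps tending to $+\infty$ but \emph{unbounded diameters}, whereas the standard block/clustered Ingham theorems require blocks of uniformly bounded diameter. Two fixes are available. Either make the clustering threshold a large constant $R=R(T)>2\pi/T$ (for frequencies above a threshold depending on $R$, components still contain at most one element per family, hence have diameter at most $2R$ and inter-packet gaps exceeding $2\pi/T$, and the finitely many low frequencies are reinstated by Haraux's theorem); or bypass packets entirely by invoking the Beurling--Kahane interpolation theorem in its vector-valued form: each of $\Sigma_f\cup\Sigma_h$ and $\Sigma_g\cup\Sigma_h$ is uniformly separated with upper Beurling density zero (quadratic growth), hence $\{e^{i\mu t}\}$ is a Riesz sequence in $L^2(0,T)$ for \emph{every} $T>0$, which immediately yields the channel-by-channel lower bounds $\int_0^T\|(f+h)(t)\|^2_{L^2(I_1)}\dd t\ge c_T(\|f_0\|^2+\|h_0\|^2)$ and its analogue for $(g,h)$. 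With that repair your argument appears to close the conjecture; given that the authors left it open, the spectral separation and density computations deserve a very careful write-up before you claim the result.
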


One could hope to prove the above statement by exploiting the following general result, see \cite[Remark 5.1]{Laurent}.
\begin{theorem}
\label{lem:resolventImproved}
Let $\varepsilon >0$. If there exists $C_1, C_2 >0$ such that
\begin{equation}
\label{eq:resolventschroImproved}
    \|u\|_{L^2(\Gc)} \leq C_1 (|\lambda|^{-\varepsilon} \|\Delta_{\Gc} u + \lambda u\|_{L^2(\Gc)} + \|u\|_{L^2(\omega)}) \qquad \forall \lambda \geq  C_2,\ \forall u \in D(\Delta_{\Gc}).
\end{equation}
Then, \eqref{eq:schrodiscussminimal} is exactly controllable in $L^2(\Gc)$ for any time $T>0$.
\end{theorem}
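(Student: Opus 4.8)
The plan is to reduce the statement, via the Hilbert Uniqueness Method, to an observability inequality for the free Schrödinger flow, and then to prove this inequality for \emph{every} $T>0$ by coupling a Fourier transform in time with the resolvent estimate \eqref{eq:resolventschroImproved} — the gain $|\lambda|^{-\varepsilon}$ being exactly what turns the usual ``bad term'' into a \emph{compact} remainder, so that the argument is no longer restricted to large times. The remainder is then removed by a compactness–uniqueness argument of the same type as the one used for the wave equation in Section \ref{suff}.

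First I would recall that, just as behind Theorem \ref{lem:resolvent} (see \cite{Mil05}), exact controllability of \eqref{eq:schrodiscussminimal} at time $T$ is equivalent to the existence of $C_T>0$ such that every solution $\varphi$ of $i\partial_t\varphi=\Delta_\Gc\varphi$ satisfies $\|\varphi(0)\|_{L^2(\Gc)}^2\le C_T\int_0^T\int_\omega|\varphi(t,x)|^2\,\dd x\,\dd t$; it thus suffices to establish this inequality for arbitrarily small $T$. Next, fixing $T>0$ and a nonnegative $\chi\in\Cc^\infty_c((0,T))$ with $\chi\not\equiv0$, I would set $w=\chi\varphi$, note that $i\partial_t w-\Delta_\Gc w=i\chi'\varphi$, and take the Fourier transform in $t$: writing $\varphi(0)=\sum_k c_k\phi_k$ in an orthonormal eigenbasis of $-\Delta_\Gc$ with eigenvalues $\lambda_k$, one gets $\widehat w(\lambda)=\sum_k c_k\widehat\chi(\lambda-\lambda_k)\phi_k\in D(\Delta_\Gc)$ and $(\Delta_\Gc+\lambda)\widehat w(\lambda)=-i\widehat{\chi'\varphi}(\lambda)$. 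Applying \eqref{eq:resolventschroImproved} at the frequencies $\lambda\ge C_2$, estimating the contribution of the remaining range $\lambda<C_2$ directly from the explicit form of $\widehat w(\lambda)$ and the rapid decay of $\widehat\chi$ (which yields only finite-rank and rapidly decaying terms), and finally integrating in $\lambda$ and using Plancherel together with the conservation $\|\varphi(t)\|_{L^2(\Gc)}=\|\varphi(0)\|_{L^2(\Gc)}$, I expect to obtain an estimate of the form
\begin{equation*}
\|\varphi(0)\|_{L^2(\Gc)}^2\le C_T\int_0^T\int_\omega|\varphi(t,x)|^2\,\dd x\,\dd t+C_T\sum_k r_k|c_k|^2,\qquad r_k=\begin{cases}\lambda_k^{-2\varepsilon},&\lambda_k\ge 2C_2,\\ 1,&\lambda_k<2C_2,\end{cases}
\end{equation*}
where the constant $C_T$ may blow up as $T\to0$. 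The key point is that the factor $|\lambda|^{-\varepsilon}$ is precisely what produces the decaying weights $\lambda_k^{-2\varepsilon}$: with $\varepsilon=0$ the remainder would be a fixed multiple of $\|\varphi(0)\|_{L^2(\Gc)}^2$, absorbable into the left-hand side only for $T$ large, which is exactly the content of Theorem \ref{lem:resolvent}.

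Since $r_k\to0$ as $k\to\infty$, the seminorm $\varphi(0)\mapsto\big(\sum_k r_k|c_k|^2\big)^{1/2}$ is induced by a compact operator on $L^2(\Gc)$. I would therefore conclude by the classical compactness–uniqueness scheme, copying Step 6 of Section \ref{suff} and Lemma \ref{lemma_invisible}: if the observability inequality failed for some $T$, a normalised sequence of counterexamples would, after extraction, converge strongly in $L^2(\Gc)$ to a nonzero ``invisible'' datum $\varphi(0)$, i.e. $e^{-it\Delta_\Gc}\varphi(0)$ vanishing on $\omega$ for $t\in[0,T]$; the space of such data is finite-dimensional and, via time difference quotients as in Lemma \ref{lemma_invisible}, stable under $\Delta_\Gc$, hence it contains an eigenfunction $\phi$ of $-\Delta_\Gc$ with $\phi|_\omega\equiv0$. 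But \eqref{eq:resolventschroImproved} with $u=\phi$ at the corresponding eigenvalue forces $\|\phi\|_{L^2(\Gc)}\le C_1\|\phi\|_{L^2(\omega)}=0$, a contradiction (the finitely many eigenvalues below $C_2$ being handled identically, using that their eigenfunctions do not vanish identically on $\omega$, which holds in the situations considered here).

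The hard part will be the Fourier-in-time step: carrying out the computation cleanly — in particular controlling the frequency band $\lambda<C_2$, where \eqref{eq:resolventschroImproved} is not available, by the decay of $\widehat\chi$ alone — and checking that the remainder genuinely carries the weights $\lambda_k^{-2\varepsilon}$, so that the absorption into the left-hand side is uniform in $T$ and the leftover term is truly compact. The compactness–uniqueness part, although routine here, again uses the hypothesis \eqref{eq:resolventschroImproved} through the non-vanishing of eigenfunctions on $\omega$.
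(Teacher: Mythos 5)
The paper does not actually prove this statement: it is quoted from \cite[Remark 5.1]{Laurent} (``One could hope to prove the above statement by exploiting the following general result''), so there is no in-paper argument to compare yours against. What you propose is essentially the standard proof behind that citation and behind Theorem \ref{lem:resolvent} itself: reduce to observability of the free flow by duality, cut off in time, Fourier-transform, apply the resolvent estimate frequency by frequency, note that the gain $|\lambda|^{-\varepsilon}$ turns the source term into a remainder $\sum_k r_k|c_k|^2$ with $r_k\to 0$ (hence a compact perturbation), and remove it by compactness--uniqueness. The main steps check out: the identity $(\Delta_\Gc+\lambda)\widehat w(\lambda)=\pm i\,\widehat{\chi'\varphi}(\lambda)$, the bound $\int_{\lambda\ge C_2}\lambda^{-2\varepsilon}|\widehat{\chi'}(\lambda-\lambda_k)|^2\dd\lambda\lesssim \lambda_k^{-2\varepsilon}$ obtained by splitting near and far from $\lambda_k$, the control of the band $\lambda<C_2$ by the rapid decay of $\widehat\chi$, Plancherel plus conservation of the $L^2$ norm, and the finite-dimensionality and $\Delta_\Gc$-invariance of the space of invisible data (the difference-quotient argument of Lemma \ref{lemma_invisible} adapts with no difficulty).

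The one substantive gap is the parenthetical at the end of your compactness--uniqueness step. The hypothesis \eqref{eq:resolventschroImproved} is assumed only for $\lambda\ge C_2$, so when the argument produces an eigenfunction $\phi$ of $-\Delta_\Gc$ vanishing identically on $\omega$, you only reach a contradiction if its eigenvalue is $\ge C_2$. For the finitely many eigenvalues in $(0,C_2)$ you assert that the eigenfunctions ``do not vanish identically on $\omega$ \dots in the situations considered here'', but this is neither implied by the hypothesis nor proved; the estimate is automatic for $\lambda\le 0$ because $-\Delta_\Gc$ is positive with Dirichlet conditions, but nothing in \eqref{eq:resolventschroImproved} rules out a low-frequency eigenfunction supported in $\Gc\setminus\omega$, in which case the conclusion would be false. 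This defect is arguably inherited from the statement as transcribed rather than introduced by you, but a complete proof must close it: either add a low-frequency unique-continuation hypothesis (equivalently, the unimproved estimate \eqref{eq:resolventschro} on $(0,C_2)$), or verify non-vanishing in the intended application --- on the X graph an eigenfunction vanishing on $e_1\cup e_3$ forces $\sin(\sqrt{\lambda}\,\ell_{\text{b}})=\sin(\sqrt{\lambda}\,\ell_{\text{t}})=0$, hence $\ell_{\text{t}}/\ell_{\text{b}}\in\QQ$, which is excluded when the ratio is badly approximable.
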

However, the arguments in the proof of Theorem \ref{non_ggcc_sch_intro} shows that \eqref{eq:resolventschroImproved} should never hold, even if $\ell_{\text{t}}/\ell_{\text{b}}$ is a badly approximable number. Indeed, we can see there that \eqref{eq:resolventschroImproved} with $\varepsilon=0$ is the best that we can show. Thus, to prove Conjecture \ref{claim_schro}, one needs to carefully investigate the observability of the time evolution Schrödinger equation with the observation set $\omega$ instead of the stationary resolvent estimate \eqref{eq:resolventschroImproved}.

\subsection{On the necessity of (GGCC) for a generic graph}
\label{sec:discussionGGCCSchroAlmost}
In this article, we have seen that (GGCC) is not always necessary to the control of PDEs, except for the wave equation with Dirichlet boundary conditions. Our examples are very specific and we may wonder whether they are representative of the general situation. In the following discussion, we consider that picking a “generic” graph consists of having a fixed graph structure and randomly choosing the lengths of the edges with respect to the Lebesgue measure, which is certainly equivalent to choosing both the lengths and the graph structure by any reasonable random process.

\medskip

First, we consider the case of the wave equation with mixed boundary conditions. We know that (GGCC) is not necessary in general due to the specific example of Theorem \ref{th_neumann}. This example is based on the pointwise observation result stated in \cite{AHT}. Here, we can point a general intuition that controlling a graph without (GGCC) is very related to controlling at a single point the same PDE on a segment: the pointwise control of the wave equation in $(0,1)$ fails for Dirichlet boundary condition (\cite{JTZ}) but may be possible for mixed boundary conditions (\cite{AHT}). This is due to the fact that the modes $\cos\big(\frac{2n+1}2\pi\cdot\big)$ of the later case always have value $1$ at $x=p/q$ with odd $p$. In \cite{AHT}, it is shown that, if the pointwise control is acting at any other place, then the exact control fails. So, we expect that the situation of Theorem \ref{th_neumann} can only occur for specific length ratios. 
\begin{claim}\label{conj_neu}
Let $\Gc$ be a graph with $N$ edges $(e_i)_{i=1,\dots, N}$ and $\omega$ be an open subset of $\Gc$. Then, generically with respect to the lengths $(\ell_i)_{i=1,\dots, N}$, (GGCC) is a necessary and sufficient condition for the internal control of the wave equation, whatever the boundary conditions are.
\end{claim}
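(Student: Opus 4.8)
\medskip

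\noindent Here is the approach I would take. The sufficiency of (GGCC) is unconditional — it is precisely Theorem \ref{th_exact_wave_intro_neu} and uses no assumption on the lengths — so the entire content of Conjecture \ref{conj_neu} is the \emph{generic necessity}. Since (GGCC) is a combinatorial property of the pair $(\Gc,\omega)$, when it fails it fails for every choice of lengths, and one must prove: for Lebesgue-almost every $(\ell_1,\dots,\ell_N)$ the controlled wave equation \eqref{intro_wave} is \emph{not} exactly controllable, whatever the boundary conditions. As in Subsection \ref{sec_quasimode} I would deduce this from the failure of the resolvent (Hautus) estimate \eqref{eq:resolventwave}, and for that it suffices to build, for a.e.\ lengths, a sequence $u_n\in D(\Delta_\Gc)$ and $\mu_n\to+\infty$ with $u_n\equiv 0$ on $\omega$, $\|\grad u_n\|_{L^2(\Gc)}=1$, $\|u_n\|_{L^2(\Gc)}\to 0$ and $\|(\Delta_\Gc+\mu_n^2)u_n\|_{L^2(\Gc)}\to 0$; the potential $q$ is harmless since $\|qu_n\|_{L^2}\le\|q\|_\infty\|u_n\|_{L^2}\to 0$, and these properties contradict \eqref{eq:resolventwave} because $\|\mu_n u_n\|_{L^2}$ stays bounded below.

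By Description~3 of Theorem \ref{prop_GGCC}, the failure of (GGCC) means that the uncontrolled subgraph $\Ec\setminus\Ec_\omega$ either contains a cycle or contains a tree carrying two distinct exterior vertices of $\Gc$. In the first case I would construct the quasimode exactly as in Proposition \ref{prop_quasimode}: after identifying the degree-two vertices along the cycle (where Kirchhoff just means $\Cc^1$-matching) one gets a cycle whose vertices all have degree $\ge 3$; one puts $\sin(\mu_{j,n}\,\cdot)$-profiles with $\mu_{j,n}\ell_j\in\pi\ZZ$ on its edges, normalises the amplitudes so that Kirchhoff holds, and picks the $\mu_{j,n}$ close to a common $\mu_n$ via Proposition \ref{coroprop_Dirichlet}; this works for \emph{every} choice of lengths, so here (GGCC) is necessary unconditionally, in accordance with Corollary \ref{coro_obs_free_wave}. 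The genuinely generic case is the second one: fix a tree $\mathcal{T}$ of uncontrolled edges with two exterior vertices $a,b$ of $\Gc$, let $P=e_{i_1}\cup\dots\cup e_{i_d}$ be the simple path from $a$ to $b$ inside $\mathcal{T}$, with internal vertices $w_1,\dots,w_{d-1}$ and boundary conditions $B_a,B_b$ at $a,b$, and look for a quasimode supported on $P$ and identically zero off $P$ (hence on $\omega$).

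Since each internal vertex $w_j$ of $P$ also carries an edge outside $P$ on which $u_n\equiv 0$, continuity forces $u_n$ to vanish at every $w_j$; so on each $e_{i_k}$ the profile is a half-wave $u_{i_k,n}=\tfrac{c_k}{\mu_{k,n}}\,s_k(\mu_{k,n}\,\cdot)$ with $s_k\in\{\sin,\cos\}$ dictated by the Dirichlet/Neumann type of the relevant endpoint, and the endpoint conditions on $e_{i_k}$ read $\mu_{k,n}\ell_{i_k}\in\theta_k+\pi\ZZ$ for a fixed $\theta_k\in\{0,\pi/2\}$ ($\theta_k=0$ for interior edges of $P$ and Dirichlet-type ends, $\theta_k=\pi/2$ at a Neumann-type end). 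Imposing these frequency conditions \emph{exactly}, $u_{i_k,n}$ vanishes at $w_k$ and its derivative there has modulus $|c_k|$, so the remaining relations (continuity and Kirchhoff at $w_1,\dots,w_{d-1}$, and $B_a,B_b$ at $a,b$) collapse to $|c_1|=\dots=|c_d|$ with a consistent choice of signs — always solvable. Then $u_n\in D(\Delta_\Gc)$ \emph{exactly}, $(\Delta_\Gc+\mu_n^2)u_n$ equals $(\mu_n^2-\mu_{k,n}^2)u_{i_k,n}$ on $e_{i_k}$ with $L^2$-norm $O(|\mu_n-\mu_{k,n}|)$ (the amplitude $\|u_{i_k,n}\|_{L^2}\sim\mu_n^{-1}$ absorbing the factor $\mu_n$ in $\mu_n^2-\mu_{k,n}^2$), and $\|\grad u_n\|_{L^2}=1$ is reached by a bounded rescaling. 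It remains to find $\mu_n\to+\infty$ and $\mu_{k,n}\in(\theta_k+\pi\ZZ)/\ell_{i_k}$ with $\max_k|\mu_n-\mu_{k,n}|\to 0$: pinning $\mu_n:=(\pi m_n+\theta_1)/\ell_{i_1}$ with $m_n\in\NN$ turns this into $\|m_n\,\ell_{i_k}/\ell_{i_1}+\beta_k\|\to 0$ for $k=2,\dots,d$ and fixed reals $\beta_k$ ($\|\cdot\|$ denoting the distance to $\ZZ$), which by Weyl's equidistribution theorem holds along a subsequence $m_n\to+\infty$ as soon as $1,\ell_{i_2}/\ell_{i_1},\dots,\ell_{i_d}/\ell_{i_1}$ — equivalently $\ell_{i_1},\dots,\ell_{i_d}$ — are $\QQ$-linearly independent; for $d=1$ no arithmetic condition is needed. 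Since $\QQ$-linear independence of these families (over the finitely many uncontrolled trees) fails only on a countable union of hyperplanes, the construction works for a.e.\ $(\ell_1,\dots,\ell_N)$, which would finish the proof.

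I expect the main obstacle to be twofold. First, one must genuinely guarantee that the quasimode \emph{defect} $\|(\Delta_\Gc+\mu_n^2)u_n\|_{L^2}$ tends to $0$: this is why the frequency conditions on each edge must be imposed \emph{exactly}, à la Proposition \ref{prop_quasimode}, rather than by rounding (which would leave an $O(1)$ error), and why all the profiles are kept with amplitude $\sim\mu_n^{-1}$. Second, and more delicate, is to pin down exactly the null set of ``resonant'' lengths on which this localized construction breaks down — namely the lengths for which the target $(\theta_1/\pi,\dots,\theta_d/\pi)\in\{0,\tfrac12\}^{d}$ does not lie in the closure of the orbit $\{t(\ell_{i_1},\dots,\ell_{i_d})\bmod\ZZ^{d}:t\in\RR\}$ in $\TT^{d}$ — and to understand why genuine controllability never survives a generic perturbation of the lengths there: Theorem \ref{th_neumann}, where $\ell_{\text{t}}=\ell_{\text{b}}$ makes $(0,\tfrac12)$ miss the diagonal circle and the equation becomes controllable, is exactly such an exceptional point. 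Beyond this, the remaining work — bookkeeping of all boundary-condition configurations and of the degenerate structures (multiple edges, loops, degree-two vertices) — should be routine.
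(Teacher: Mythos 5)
You should first be aware that the statement you were asked to prove is not a theorem of the paper: it is Conjecture \ref{conj_neu}, which the authors explicitly leave to future work, so there is no proof in the paper to compare yours against. Judged on its own merits, your strategy is the natural one and I believe it is essentially sound. Sufficiency is indeed Theorem \ref{th_exact_wave_intro_neu} with no condition on the lengths, and for generic necessity you extend the localized quasimode construction of Proposition \ref{prop_quasimode} from the all-Dirichlet case to mixed boundary conditions: the only structural change is that a Neumann end of the bad path forces a phase shift $\pi/2$ in the frequency condition on the extremal edge, which turns the homogeneous simultaneous approximation of Proposition \ref{coroprop_Dirichlet} into an inhomogeneous (Kronecker-type) one; this is solvable along a subsequence precisely when the lengths along the path are $\QQ$-linearly independent, a condition that fails only on a countable union of hyperplanes --- whence the genericity. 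Two simplifications are worth noting: (i) the claim that every internal vertex of $P$ carries an edge outside $P$ is false when such a vertex has degree two, but it is also unnecessary --- you may simply impose $u_n(w_j)=0$ there by fiat (or merge the two edges), and the Kirchhoff recursion $c_{j+1}=\pm c_j$ always closes because a tree path has no monodromy; (ii) once the four bullet properties of Proposition \ref{prop_quasimode} are secured, the time-domain argument of Corollary \ref{coro_obs_free_wave} applies verbatim (its proof uses only those four properties, never the Dirichlet condition), so the detour through the Hautus test \eqref{eq:resolventwave} --- whose equivalence with observability is cleanest when $\Delta_\Gc+q$ is negative --- can be avoided entirely.

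Your second ``obstacle'' is not actually an obstacle for the statement as formulated. The conjecture asks only that necessity hold generically; you do not need to characterize the exceptional null set of lengths on which the localized construction breaks down, nor to decide whether controllability genuinely survives there (as it does for the X graph of Theorem \ref{th_neumann}). Exhibiting one full-measure set of length vectors on which the quasimodes exist --- $\QQ$-linear independence along each uncontrolled path joining two exterior vertices, the cycle case being unconditional --- already finishes the argument. What a complete write-up would still owe the reader is the routine bookkeeping you mention (all boundary-condition patterns at the two ends of the path, loops and parallel edges in the cycle case, the choice of a single bad path or cycle when several exist) together with a precise statement and proof of the inhomogeneous approximation lemma replacing Proposition \ref{coroprop_Dirichlet}; none of this looks problematic.
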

As with all the conjectures in this section, we leave the study to future works.

\medskip

Concerning the Schrödinger equation on graphs, Theorem \ref{non_ggcc_sch_intro} shows that (GGCC) is not necessary for the internal control, the X graph with (very) specific lengths providing indeed a counterexample. But this situation also seems to be very particular due to the following result.
\begin{theorem}
\label{th:ggccnecessarysuffschro}
Let $\Gc$ be a star-graph with $N$ edges $(e_i)_{i=1,\dots, N}$, with respective lengths $(\ell_i)_{i=1,\dots, N}$. Suppose that the lengths $(\ell_i)_{i=1,\dots, N}$ are linearly independent over $\mathbb Q$. Assume that $\Gc$ and $\omega$ do not satisfy (GGCC). Then, for every $T>0$, the Schrödinger equation with Dirichlet boundary conditions
\begin{equation*}
\left\{
\begin{array}{ll} i \partial_{t} u = - \Delta_\Gc u + h 1_{\omega} ,& \ \ \ \ t>0,\\
u(t=0)=u_0\in L^2(\Gc), & 
\end{array}\right. 
\end{equation*}
is not exactly controllable in $L^2(\Gc)$ at time $T$.
\end{theorem}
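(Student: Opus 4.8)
The plan is to work with the Hautus-type criterion of Theorem~\ref{lem:resolvent}: exact controllability of the Schr\"odinger equation for \emph{some} $T>0$ is equivalent to the resolvent estimate \eqref{eq:resolventschro}. Since the Schr\"odinger flow is a group and controlling on a longer time interval is never harder (control on $[0,T_1]$ and let the free dynamics act afterwards), controllability at some time is equivalent to controllability at all sufficiently large times; hence, if \eqref{eq:resolventschro} fails, the equation is not exactly controllable at \emph{any} $T>0$. So the whole statement reduces to disproving \eqref{eq:resolventschro}, i.e. to producing $\lambda_n\in\RR$ and $u_n\in D(\Delta_\Gc)$ with $\|u_n\|_{L^2(\Gc)}=1$, $\|(\Delta_\Gc+\lambda_n)u_n\|_{L^2(\Gc)}\to0$ and $\|u_n\|_{L^2(\omega)}\to0$.

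\textbf{Spectral description of the star-graph.} Next I would recall that on a star-graph with Dirichlet conditions at the exterior vertices, parametrising each $e_k$ as $(0,\ell_k)$ with $0$ the exterior end and $\ell_k$ the central vertex $v$, a number $\mu^2>0$ is an eigenvalue precisely when $\sum_{k=1}^N\cot(\mu\ell_k)=0$, the associated eigenfunction being $\phi_\mu|_{e_k}(x)=\sin(\mu x)/\sin(\mu\ell_k)$ (normalised so that $\phi_\mu(v)=1$). Because $(\Gc,\omega)$ fails (GGCC), on a star-graph there are at least two uncontrolled edges, say $e_1,\dots,e_m$ with $m\ge2$; and because the lengths are linearly independent over $\QQ$ one cannot have $\sin(\mu\ell_j)=0$ for two distinct $j$ simultaneously, so \emph{no} eigenfunction is supported on a proper sub-forest. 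This is exactly the obstruction that prevents the naive ``exact unobserved mode'' argument and forces the use of genuine quasimodes. The quasimodes will be sought supported on the uncontrolled edges: writing $u_n|_{e_j}=A_{j}\sin(\pi p_{j,n}x/\ell_j)$ on each $e_j$, $1\le j\le m$, and $u_n\equiv0$ on the controlled edges, one gets functions vanishing at both ends of every edge (hence $u_n\in H^1(\Gc)$ with $u_n(v)=0$), the Kirchhoff condition reducing to the single linear relation $\sum_{j=1}^m A_j(\pi p_{j,n}/\ell_j)(-1)^{p_{j,n}}=0$, which for $m\ge2$ always admits a nontrivial solution; moreover $\|u_n\|_{L^2(\omega)}=0$ and $\|u_n\|^2_{L^2(\Gc)}=\sum_j A_j^2\ell_j/2$, to be normalised to $1$.

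\textbf{The Diophantine input.} With such a $u_n$ one has $(\Delta_\Gc+\lambda_n)u_n=\sum_j(\lambda_n-\pi^2p_{j,n}^2/\ell_j^2)A_j\sin(\pi p_{j,n}\,\cdot/\ell_j)$, so the construction works provided the Dirichlet eigenvalues $\pi^2p_{j,n}^2/\ell_j^2$ of the chosen uncontrolled edges can be taken pairwise closer and closer together (for then $\lambda_n$ can be taken between them and the residual is controlled by $\max_j|\lambda_n-\pi^2p_{j,n}^2/\ell_j^2|$ times a bounded amplitude). Here is where the hypothesis that \emph{all} $N$ lengths are $\QQ$-linearly independent enters: by Weyl equidistribution of the orbit $t\mapsto(t\ell_1/\pi,\dots,t\ell_N/\pi)$ on $\TT^N$ (whose closure is a subtorus through the origin, using the $\QQ$-linear independence), together with the simultaneous Dirichlet approximation theorem (the version recalled as Theorem~\ref{th_Dirichlet} / Proposition~\ref{coroprop_Dirichlet}), one produces integers $p_{j,n}\to\infty$ and a real $\mu_n\to\infty$ with $|\mu_n-\pi p_{j,n}/\ell_j|\to0$ for all $j$, the approximation being chosen fine enough that $|\mu_n^2-\pi^2 p_{j,n}^2/\ell_j^2|\to0$; one then sets $\lambda_n=\mu_n^2$. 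Normalising $\|u_n\|_{L^2(\Gc)}=1$ and checking $\|(\Delta_\Gc+\lambda_n)u_n\|_{L^2(\Gc)}\to0$, $\|u_n\|_{L^2(\omega)}=0$ contradicts \eqref{eq:resolventschro} and finishes the proof.

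\textbf{Main obstacle.} The delicate point is precisely the last step: a crude use of Dirichlet's simultaneous approximation only gives $\|\mu_n\ell_j/\pi\|_\TT\lesssim\mu_n^{-1/(m-1)}$, which does not make $|\mu_n^2-\pi^2p_{j,n}^2/\ell_j^2|$ tend to $0$ when $m\ge3$. The argument therefore has to be run with a carefully chosen pair (or small sub-collection) of uncontrolled edges, exploiting the $\QQ$-linear independence of the whole family through an equidistribution / pigeonhole argument on the eigenvalue sequence $\{\mu:\sum_k\cot(\mu\ell_k)=0\}$ to force two of the relevant $\cot(\mu_n\ell_k)$ to blow up with cancelling signs while all controlled-edge contributions remain bounded; and one must verify that the inevitable small correction needed to restore the Kirchhoff condition exactly (a coherent-state type bump near the vertex) produces an $L^2$ residual that still tends to zero. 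Handling this correction term, and the associated book-keeping of the Diophantine rates, is the crux of the proof.
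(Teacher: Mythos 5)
Your reduction to disproving the resolvent estimate \eqref{eq:resolventschro} via Theorem \ref{lem:resolvent} is exactly the paper's starting point, but the construction of the test functions has a genuine gap, and your own ``main obstacle'' paragraph is where the proof actually lives. The primary ansatz you propose --- $u_n$ supported on the uncontrolled edges, equal to $A_j\sin(\pi p_{j,n}x/\ell_j)$ there and vanishing at the central vertex --- requires the Dirichlet frequencies $\pi p_{j,n}/\ell_j$ of two distinct uncontrolled edges to agree up to $o(1/\mu_n)$, i.e. $|\ell_i/\ell_j - p_{j,n}/p_{i,n}| = o(1/p_{i,n}^2)$. This is precisely the statement that $\ell_i/\ell_j$ is \emph{not} badly approximable, and it can perfectly well fail under the hypotheses of the theorem: the lengths $(1,\sqrt2,\sqrt3)$ are linearly independent over $\QQ$ while $\sqrt2$ is badly approximable. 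Indeed the ``if'' part of the proof of Theorem \ref{non_ggcc_sch_intro} shows that for badly approximable ratios no sequence of the type you are trying to build exists. So the vertex-vanishing quasimode route cannot prove the theorem as stated.

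The fallback you sketch --- exploiting the secular equation $\sum_k\cot(\mu\ell_k)=0$ and equidistribution on the torus to force two of the $\cot(\mu_n\ell_k)$ to blow up --- is the right mechanism, but it is left entirely unexecuted, and note that the resulting localized states do \emph{not} vanish at the vertex (they are of the form $\phi_\mu|_{e_k}=\sin(\mu x)/\sin(\mu\ell_k)$ with $\phi_\mu(v)=1$, localizing on the edges where $|\sin(\mu\ell_k)|$ is small), so the ``correction bump near the vertex'' you worry about is an artefact of the wrong ansatz. The paper bypasses all of this by quoting the non-quantum-ergodicity theorem of Berkolaiko, Keating and Winn \cite[Theorem 1.4]{BK04}: under $\QQ$-linear independence of the lengths there is a subsequence of \emph{exact} eigenfunctions $\phi_{n_k}$ whose mass equidistributes on any two prescribed edges; choosing two uncontrolled edges gives $\|\phi_{n_k}\|_{L^2(\omega)}\to0$ with exactly zero residual $(\Delta_\Gc+\lambda_{n_k})\phi_{n_k}=0$, which kills \eqref{eq:resolventschro} immediately. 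Carrying out the equidistribution argument you allude to would essentially amount to reproving that result, which is a substantial piece of analysis and not a book-keeping step.
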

From Theorem \ref{th:ggccnecessarysuffschro} and Theorem \ref{prop:exactcontrolschro_intro}, we have that given a generic star-graph, (GGCC) is a necessary and sufficient condition for the internal control of the Schrödinger equation. Hence, the counterexample exhibited in Theorem \ref{non_ggcc_sch_intro} seems to be very specific. The proof of Theorem \ref{th:ggccnecessarysuffschro} is based on the following result telling that, for the above configuration, there are eigenfunctions that localise on two edges. We denote here by $(-\lambda_n,\phi_n)$ the sequence of eigenvalues and $L^2-$normalized eigenfunctions of the Laplacian operator $\Delta_\Gc$.
\begin{theorem}[{\cite[Theorem 1.4]{BK04}}]
Let $\Gc$ be a star-graph with $N$ edges $(e_i)_{i=1,\dots, N}$, with respective lengths $(\ell_i)_{i=1,\dots, N}$. Suppose that the lengths $(\ell_i)_{i=1,\dots, N}$ are linearly independent over $\mathbb Q$. Then given two edges $e_i$ and $e_j$, there exists a subsequence $(\phi_{n_k})$ such that for every $f \in L^2(\Gc)$, the following limit holds
\begin{equation}
\label{eq:limiteigen}
    \lim_{k \to +\infty} \int_{\Gc} |\phi_{n_k}|^2(x) f(x) \dd x= \frac{1}{\ell_i + \ell_j} \left(\int_0^{\ell_i} f(x) \dd x + \int_0^{\ell_j} f(x) \dd x \right).
\end{equation}
\end{theorem}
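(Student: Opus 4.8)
The plan is to work from the explicit form of the Dirichlet eigenfunctions on a star graph and to select, by an equidistribution argument, a subsequence whose eigenfunctions concentrate uniformly on $e_i\cup e_j$. First I would write the eigenfunctions explicitly: parametrizing each edge $e_m$ as $(0,\ell_m)$ with the central vertex at $0$ and the exterior vertex at $\ell_m$, an eigenfunction associated with $-\lambda=-k^2$ has, on every edge with $\sin(k\ell_m)\neq0$, the form $\phi_m(x)=c\,\sin(k(\ell_m-x))/\sin(k\ell_m)$, where $c=\phi_m(0)$ is the common value at the central vertex (continuity) and where Kirchhoff's condition reduces to the secular equation $\sum_m\cot(k\ell_m)=0$. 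A direct computation gives $\|\phi_m\|_{L^2(e_m)}^2=\frac{c^2}{\sin^2(k\ell_m)}\big(\frac{\ell_m}{2}-\frac{\sin(2k\ell_m)}{4k}\big)$, so the mass on an edge blows up precisely when $\sin(k\ell_m)$ is small. The whole statement therefore reduces to producing eigenvalues $k_n^2$ with $k_n\to+\infty$ for which $\sin(k_n\ell_i)$ and $\sin(k_n\ell_j)$ tend to $0$ \emph{at the same rate}, while $\sin(k_n\ell_m)$ stays bounded away from $0$ for every $m\neq i,j$.

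Next I would locate such eigenvalues using the secular function $F(k)=\sum_m\cot(k\ell_m)$, whose zeros are the eigenvalues with $c\neq0$. This is where the hypothesis that $\ell_1,\dots,\ell_N$ are linearly independent over $\QQ$ is used: by the Kronecker--Weyl equidistribution theorem the line $\{k(\ell_1,\dots,\ell_N):k\in\RR\}$ is equidistributed in the torus $(\RR/\pi\ZZ)^N$, so I can find arbitrarily large $k^*$ with $k^*\ell_i$ and $k^*\ell_j$ within $\varepsilon$ of $\pi\ZZ$ and with $k^*\ell_m$ within $\varepsilon$ of $\tfrac{\pi}{2}+\pi\ZZ$ for all $m\neq i,j$. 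The simultaneous phase condition at a single $k^*$ forces the nearest poles $k_i^*=p\pi/\ell_i$ and $k_j^*=q\pi/\ell_j$ of $F$ to be within $O(\varepsilon)$ of each other. On the short interval between these two poles $\cot(k\ell_i)$ runs from $-\infty$ to a finite value and $\cot(k\ell_j)$ from a finite value to $+\infty$, while the remaining terms stay uniformly small (their arguments are near $\tfrac{\pi}{2}\bmod\pi$); by the intermediate value theorem $F$ has a zero $k_n$ in this interval, giving a genuine eigenvalue with $k_n\ell_i=p\pi+\delta_i$ and $k_n\ell_j=q\pi-\delta_j$, where $\delta_i,\delta_j\to0$.

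The crux, and the step I expect to require the most care, is extracting the balance $\delta_i/\delta_j\to1$. Writing the secular equation at $k_n$ as $\frac1{\delta_i}(1+o(1))-\frac1{\delta_j}(1+o(1))=-S_n$ with $S_n=\sum_{m\neq i,j}\cot(k_n\ell_m)\to0$, one gets that $(\delta_j-\delta_i)/(\delta_i\delta_j)$ is bounded, and since $\delta_i,\delta_j\to0$ this forces $\delta_j-\delta_i=o(\delta_i)$, hence $\delta_i/\delta_j\to1$. Everything after this is routine: I would choose $c=c_n$ so that $\|\phi_n\|_{L^2(\Gc)}=1$, note that $M_n:=\|\phi_n\|^2$ is dominated by the two terms $c_n^2\ell_i/(2\delta_i^2)$ and $c_n^2\ell_j/(2\delta_j^2)$ (the other edges contribute $O(c_n^2)$), so that $c_n^2\sim 2\delta_i^2/(\ell_i+\ell_j)$ and the mass escaping $e_i\cup e_j$ has relative weight $\to0$.

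Finally I would pass to the limit of $|\phi_n|^2$. On $e_i$ and $e_j$ the factor $\sin^2(k_n(\ell_m-x))$ averages weakly to $\tfrac12$ as $k_n\to+\infty$, so for continuous $f$ the density $|\phi_n|^2$ converges to the constant $c_n^2/(2\sin^2(k_n\ell_m))$ on each of the two edges; using $c_n^2/\delta_i^2\to 2/(\ell_i+\ell_j)$ and $\delta_i/\delta_j\to1$ these two constants both tend to $\frac{1}{\ell_i+\ell_j}$, which yields exactly \eqref{eq:limiteigen} for continuous $f$. To reach all $f\in L^2(\Gc)$ I would observe that $|\phi_n|^2$ is uniformly bounded in $L^\infty(\Gc)$ (indeed $\max_{e_i}|\phi_n|^2\sim c_n^2/\delta_i^2$ is bounded, and the densities on the other edges tend to $0$), hence bounded in $L^2(\Gc)$; convergence on the dense set of continuous functions then upgrades to weak $L^2$ convergence, giving \eqref{eq:limiteigen} for every $f\in L^2(\Gc)$ and completing the proof.
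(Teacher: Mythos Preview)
The paper does not give its own proof of this statement: it is quoted verbatim from \cite{BK04} and used as a black box to deduce Theorem \ref{th:ggccnecessarysuffschro}. So there is nothing in the present paper to compare your argument against.

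That said, your sketch is essentially the argument of \cite{BK04} and is sound. A couple of small points worth tightening. First, the signs in your intermediate-value step are reversed: on the interval $(k_i^*,k_j^*)$ one has $\cot(k\ell_i)\to+\infty$ at $k_i^{*+}$ and $\cot(k\ell_j)\to-\infty$ at $k_j^{*-}$, so $F$ runs from $+\infty$ to $-\infty$ (not the other way); the conclusion is of course unchanged. Second, you should remark that under the $\QQ$-independence hypothesis there are no eigenfunctions with $c=0$ (such an eigenfunction would force $k\ell_m\in\pi\ZZ$ on its support and then Kirchhoff kills it), so the $(\phi_{n_k})$ you construct really belong to the spectral family of $\Delta_\Gc$. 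With these cosmetic fixes the proof is complete.
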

Then, the proof of Theorem \ref{th:ggccnecessarysuffschro} relies on the equivalence between the exact controllability of the Schrödinger equation and the resolvent estimate, as stated in Theorem \ref{lem:resolvent}. Indeed, assume that $\Gc$ and $\omega$ do not satisfy (GGCC) then there exist two uncontrolled edges $e_i$ and $e_j$. We apply \eqref{eq:limiteigen} with $f = 1_{\omega}$ and we deduce
$$ \lim_{k \to +\infty} \|\phi_{n_k}\|_{L^2(\omega)} = 0.$$
This limit disproves any resolvent estimate of the type \eqref{eq:resolventschro} if we set $u=\phi_{n_k}$ and $\lambda=\lambda_{n_k}$, where $k$ is chosen large enough.

Actually, we conjecture that (GGCC) is necessary and sufficient for the control of the Schrödinger equation for almost all graphs.
\begin{claim}\label{conj_sch}
Let $\Gc$ be a graph with $N$ edges $(e_i)_{i=1,\dots, N}$ and $\omega$ be an open subset of $\Gc$. Then generically with respect to the lengths $(\ell_i)_{i=1,\dots, N}$, (GGCC) is a necessary and sufficient condition for the internal control of the Schrödinger equation.
\end{claim}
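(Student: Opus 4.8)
The sufficiency half of Conjecture \ref{conj_sch} is not actually conjectural: by Theorem \ref{prop:exactcontrolschro_intro}, (GGCC) implies small-time exact controllability for \emph{every} graph. Hence the whole content is the \emph{generic necessity}, and the natural genericity hypothesis is the one already used in Theorem \ref{th:ggccnecessarysuffschro}, namely that the lengths $(\ell_i)_{i=1,\dots,N}$ are linearly independent over $\QQ$ -- a condition satisfied on a residual set of full Lebesgue measure. Under it, by the resolvent characterisation of Theorem \ref{lem:resolvent}, it suffices, whenever $\Gc$ and $\omega$ fail (GGCC), to build $(u_n)\subset D(\Delta_\Gc)$ and $(\lambda_n)\subset\RR$ with $\|u_n\|_{L^2(\Gc)}=1$, $\|(\Delta_\Gc+\lambda_n)u_n\|_{L^2(\Gc)}\to 0$ and $\|u_n\|_{L^2(\omega)}\to 0$, that is a sequence of approximate eigenfunctions of $\Delta_\Gc$ concentrating away from $\omega$.

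The first step is to reduce the geometry via Theorem \ref{prop_GGCC}: the failure of (GGCC) yields a connected uncontrolled subgraph $\Gamma_0\subseteq\Gc\setminus\omega$ that either contains a cycle or joins two exterior vertices. Several sub-cases are settled at once by an \emph{exact} uncontrolled mode -- a single uncontrolled edge between two Dirichlet exterior vertices, a single uncontrolled loop (even modes), or any configuration in which the uncontrolled lengths inside $\Gamma_0$ satisfy a rational relation -- since in all of these one gets an eigenfunction of $\Delta_\Gc$ supported in $\Gamma_0$ and vanishing on $\omega$, contradicting \eqref{eq:resolventschro} outright (this is the rational-ratio observation of \cite[Remarks 1.1 and 1.2]{AB}). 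The remaining, genuinely hard, case is that $\Gamma_0$ carries at least two uncontrolled edges, the uncontrolled lengths are rationally independent, and -- as for the wave equation in Proposition \ref{prop_quasimode} -- a direct quasimode on $\Gamma_0$ has an error of the wrong order for the Schrödinger resolvent estimate. There one must instead produce \emph{genuine} eigenfunctions of $\Delta_\Gc$ concentrating on $\Gamma_0$ (or on a well-chosen pair of its edges), generalising the star-graph mechanism behind Theorem \ref{th:ggccnecessarysuffschro}.

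For that I would use the secular-equation description of quantum-graph spectra (Kottos--Smilansky, von Below): the positive square roots $k$ of the eigenvalues are the zeros of $\det(I-e^{ik\mathbf L}\mathbf S)$, where $\mathbf S$ is the length-independent bond-scattering matrix encoding the Kirchhoff, Dirichlet and Neumann conditions and $\mathbf L=\mathrm{diag}(\ell_1,\dots,\ell_E)$. Linear independence of the $\ell_j$ over $\QQ$ makes the Barra--Gaspard flow $k\mapsto(k\ell_1,\dots,k\ell_E)\bmod\pi$ equidistribute on the torus $\TT^E$, and, along subsequences, the normalised per-edge $L^2$-mass of the $n$-th eigenfunction converges to the weight prescribed by the corresponding point of the secular manifold. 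The decisive step is to exhibit a point of that manifold whose frozen null eigenvector concentrates on the uncontrolled part: one tunes the phases of the uncontrolled edges near a common Dirichlet resonance, so that the singular contributions of those edges dominate the secular determinant -- their relative rates being fixed by the vanishing of the dominant terms, which in the star case is exactly the equation $\cot(k\ell_i)+\cot(k\ell_j)\to 0$ underlying Theorem \ref{th:ggccnecessarysuffschro} -- while keeping the phases of the controlled edges off resonance so that they carry vanishing relative mass. Equidistribution then yields a subsequence $(\phi_{n_k})$ of genuine $L^2$-normalised eigenfunctions with $\|\phi_{n_k}\|_{L^2(\omega)}\to 0$, and substituting $u=\phi_{n_k}$, $\lambda=\lambda_{n_k}$ into \eqref{eq:resolventschro} gives the contradiction.

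The hard part is this last step in full generality. For the star-graph it reduces to a single cotangent-sum identity, but for an uncontrolled path or cycle embedded in a larger graph one has to analyse a full determinantal secular condition: expand the determinant in minors along the resonant edges, show that it still vanishes near the multi-resonance with a null vector localised on those edges, and quantify the mass leaking onto the adjacent controlled edges -- none of which follows softly from Theorem \ref{th:ggccnecessarysuffschro}, and which may well force a careful choice of the pair of concentrating edges depending on the local vertex structure. A secondary issue is whether rational independence of the lengths is the correct genericity condition for \emph{all} graph topologies, i.e. ruling out accidental cancellations on a positive-measure set of lengths produced by parallel edges or symmetries; if it is not, one should instead state the result on the residual set where the pertinent minors of $\mathbf S$ and of the secular matrix are non-degenerate. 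Finally, as in Section \ref{sec:discussionschromin}, the resolvent approach yields non-controllability at \emph{some} time, hence, by Theorem \ref{lem:resolvent}, at every time, which is the notion of controllability appearing in the conjecture.
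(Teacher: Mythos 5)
The statement you are proving is labelled as a \emph{Conjecture} in the paper: the authors do not prove it, they only establish the star-graph case (Theorem \ref{th:ggccnecessarysuffschro}, via the eigenfunction-localisation result of \cite{BK04}) and remark that ``a natural way to tackle this conjecture would consist in exploiting \cite[Theorem 4.1]{Col}'', i.e.\ the description of semiclassical measures on generic metric graphs, whose minimal supports are exactly the cycles and the paths joining two exterior vertices. Your proposal is, in substance, that same programme: sufficiency is correctly dispatched by Theorem \ref{prop:exactcontrolschro_intro}; necessity is reduced, via the resolvent criterion of Theorem \ref{lem:resolvent}, to producing normalised eigenfunctions (or strong quasimodes) concentrating on the uncontrolled subgraph $\Gamma_0$; and you correctly observe that the wave-type quasimodes of Proposition \ref{prop_quasimode} are too weak for the Schr\"odinger resolvent estimate, so genuine spectral concentration is needed, which you propose to extract from Barra--Gaspard equidistribution on the secular manifold.

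The genuine gap is the one you yourself flag: the ``decisive step'' of exhibiting, for an \emph{arbitrary} graph topology with rationally independent lengths and an uncontrolled cycle or uncontrolled path between two exterior vertices, a point of the secular manifold whose null eigenvector is supported on the uncontrolled edges, together with the quantitative control of the mass leaking onto adjacent controlled edges, is not carried out --- and it is precisely the entire mathematical content of the conjecture. For star graphs this is the cotangent identity behind \cite{BK04}; for a general vertex structure the minor expansion of the secular determinant you describe is a nontrivial open problem, and you also correctly note that rational independence may not even be the right genericity class for all topologies (parallel edges, symmetries). So what you have written is a credible research plan consistent with the authors' own suggested route, not a proof; the conjecture remains open after your argument exactly where it was open before it.
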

A natural way to tackle this conjecture would consist in exploiting \cite[Theorem 4.1]{Col} telling us that the minimal supports of the semi-classical measures for a generic graph are the cycles and the path connecting two exterior vertices. Then, roughly speaking, $\omega$ has to intersect these supports, i.e. every cycle and every path connecting two exterior vertices. Hence, $\Gc$ and $\omega$ have to satisfy (GGCC).\\

On the contrary, by Theorem \ref{prop:ggccnotnecessaryheat_intro} and Appendix \ref{diophantine}, the internal control of the heat equation turns out to be completely different in the following sense.
\begin{theorem}
\label{th:ggccnecessarysuffheat}
Let $\Gc$ be a star-graph with $N$ edges $(e_i)_{i=1,\dots, N}$ and $\omega$ be an open subset of $\Gc$. Then generically with respect to the lengths $(\ell_i)_{i=1,\dots, N}$, for every $T>0$, the heat equation with Dirichlet boundary conditions
\begin{equation*}
\left\{
\begin{array}{ll}  \partial_{t} u = - \Delta_\Gc u + h 1_{\omega} ,& \ \ \ \ t>0,\\
u(t=0)=u_0\in L^2(\Gc), & 
\end{array}\right. 
\end{equation*}
is small-time null-controllable in $L^2(\Gc)$.
\end{theorem}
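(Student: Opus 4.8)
The plan is to recombine three facts already established above: that (GGCC) always suffices for small-time null-controllability (Theorem \ref{prop:nullcontrolheat_intro}); that, when (GGCC) fails but $\omega$ meets one edge of the star, Theorem \ref{prop:ggccnotnecessaryheat_intro} still gives small-time null-controllability under a Diophantine condition on the edge-length ratios; and that, by Khinchin's theorem (recalled in Appendix \ref{diophantine}), this Diophantine condition is satisfied for Lebesgue-almost every length vector. Here ``varying the lengths while keeping $\omega$ fixed'' is to be read in the natural combinatorial way: the set of edges met by $\omega$ is held fixed and, relabelling if necessary so that $e_1$ is met by $\omega$, one fixes once and for all a nonempty open interval $\hat\omega\subset\subset\omega\cap e_1$.

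First I would dispose of the degenerate cases: if $N\le 2$ then $\Gc$ is an interval and null-controllability is classical, so assume $N\ge 3$, and split according to the number of edges met by $\omega$. If $\omega$ meets at least $N-1$ of them, the uncontrolled part is either empty or a single edge $e_j$, which, as a subgraph, is a tree containing exactly one exterior vertex of $\Gc$ (its leaf $v_j$; the central vertex is interior); by item~3 of Theorem \ref{prop_GGCC} this is precisely (GGCC), and Theorem \ref{prop:nullcontrolheat_intro} settles this case for \emph{every} admissible length vector. In the complementary case $\omega$ meets at most $N-2$ edges. Since $\hat\omega\subset\omega$, extending by zero a control supported in $(0,T)\times\hat\omega$ yields a control in $(0,T)\times\omega$ producing the same trajectory; hence it suffices to null-control with the smaller set $\hat\omega$. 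But $\hat\omega$ is contained in the single edge $e_1$, so Theorem \ref{prop:ggccnotnecessaryheat_intro} applies directly to the pair $(\Gc,\hat\omega)$ and yields, for every $T>0$, small-time null-controllability provided all ratios $\ell_j/\ell_k$ with $j\ne k\in\{2,\dots,N\}$ are at most polynomially approximable (Definition \ref{defi_sigma_approx_intro}).

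It then remains to check that this Diophantine condition is generic. I would set
\[ G := \{(\ell_1,\dots,\ell_N)\in(0,\infty)^N :\ \ell_i/\ell_j \text{ is at most polynomially approximable for all } i\ne j\}, \]
and show that $G$ has full Lebesgue measure: for fixed $i\ne j$ and fixed $(\ell_k)_{k\ne i}$, the map $\ell_i\mapsto\ell_i/\ell_j$ is an affine bijection of $(0,\infty)$ onto itself, so by Khinchin's theorem the set of exceptional $\ell_i$ is Lebesgue-null; Fubini then shows that the set where $\ell_i/\ell_j$ fails to be at most polynomially approximable is null in $(0,\infty)^N$, and $G$, being the complement of a finite union of such sets, is of full measure. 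For every $(\ell_i)\in G$ the previous paragraph applies, which proves the theorem.

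I expect no new analysis to be needed: the argument is essentially bookkeeping around Theorems \ref{prop:nullcontrolheat_intro} and \ref{prop:ggccnotnecessaryheat_intro}, the monotonicity remark being precisely what lets one avoid re-entering the cutoff-and-glue proof of the latter. The one point deserving care is the measure-theoretic step, where one must use that \emph{at most polynomially approximable} — the exponent $\sigma$ being allowed to depend on the number — is a full-measure property, which is exactly Khinchin's theorem from 1926; by contrast the notion of \emph{badly approximable} number relevant to the Schrödinger case (Theorem \ref{non_ggcc_sch_intro}) defines a set of measure zero, and this is precisely the source of the dichotomy between the heat and Schrödinger equations discussed in Section \ref{sec:discussionGGCCSchroAlmost}.
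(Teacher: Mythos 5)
Your proposal is correct and follows essentially the same route as the paper, which only sketches this result as a direct combination of Theorem \ref{prop:ggccnotnecessaryheat_intro} with the full-measure statement (Khinchin) recalled in Appendix \ref{diophantine}. Your additional bookkeeping — the case split according to how many edges $\omega$ meets, the monotonicity reduction to a subinterval $\hat\omega\subset e_1$, and the Fubini argument showing the set of admissible length vectors has full measure — correctly fills in the routine details the paper leaves implicit.
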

We naturally expect this last result to extend to more general types of graph.
\begin{claim}\label{conj_heat}
Let $\Gc$ be a graph with $N$ edges $(e_i)_{i=1,\dots, N}$ and $\omega$ be an open subset of $\Gc$. Then generically with respect to the lengths $(\ell_i)_{i=1,\dots, N}$, the heat equation is small-time null-controllable.
\end{claim}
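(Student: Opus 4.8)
The heat equation is far more forgiving than the wave or Schrödinger equations in this setting: the obstruction to null-controllability is not the existence of \emph{quasimodes} concentrated away from $\omega$ (Proposition \ref{prop_quasimode}) but only the existence of genuine \emph{eigenmodes} of $\Delta_\Gc$ vanishing identically on $\omega$ (the Fattorini--Hautus test), together with a quantitative, Lebeau--Robbiano-type, lower bound on the boundary spectral data. The plan is to show that, away from a Lebesgue-null set of lengths, no such eigenmode exists and the quantitative bound holds, and to deduce \eqref{eq:controlcostheat} by a moment-problem argument. The overall architecture is that of the proof of Theorem \ref{prop:ggccnotnecessaryheat_intro}: fix $\hat\omega\subset\subset\omega$, cut $\Gc$ along $\hat\omega$, control each connected component of $\Gc\setminus\overline{\hat\omega}$ by boundary controls placed at the newly created (``cut'') vertices while keeping the original Dirichlet or Neumann conditions of $\Gc$ elsewhere, and patch the controls back with the cut-offs $\theta,\eta$ of Section \ref{sec_parabolic}. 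The components lying entirely inside $\omega$ are trivially controllable; each of the others contains exactly one component $\mathcal{T}$ of the uncontrolled part $\Gc\setminus\omega$, together with short pendant (controlled) edges carrying the cut vertices. Everything thus reduces to: for almost every choice of lengths, the heat equation on such a $\mathcal{T}$ is null-controllable, in small time and with cost $\le Ce^{C/T}$, by boundary controls at the cut vertices.

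On $\mathcal{T}$ this is a boundary-control problem of exactly the type of Theorem \ref{lem:boundarycontrolheat}, only with a general tree in place of a star and with the controls distributed over a prescribed set $S$ of boundary vertices (those leaves of $\mathcal{T}$ that are not exterior vertices of $\Gc$, i.e. that bordered $\omega$). I would attack it by the standard moment method: writing $(-\lambda_n,\phi_n)$ for the eigenelements of the Laplacian on $\mathcal{T}$ with the above mixed boundary conditions, null-controllability with cost $Ce^{C/T}$ follows once one has (i) a spectral-gap/summability estimate on $(\lambda_n)$ and (ii) a lower bound of the form $\sum_{p\in S}|\partial_n\phi_n(p)|\ge c\,e^{-\epsilon\sqrt{\lambda_n}}$ (or even just a polynomial lower bound), which together furnish a suitably bounded biorthogonal family to $\{e^{-\lambda_n t}\}$. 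Both (i) and (ii) are governed by the same geometric quantity: the proximity of $\lambda_n$ to the set $\Sigma=\bigcup_{\mathcal{T}'}\mathrm{spec}(\Delta_{\mathcal{T}',D})$, where $\mathcal{T}'$ ranges over the proper subtrees of $\mathcal{T}$ and $\Delta_{\mathcal{T}',D}$ is the Laplacian on $\mathcal{T}'$ with Dirichlet conditions at \emph{all} of its boundary vertices — because a near-resonance $\mathrm{dist}(\lambda_n,\Sigma)\ll1$ is exactly what lets an eigenfunction concentrate in a subtree and become nearly invisible from $S$, while a genuine resonance produces an exact eigenfunction supported in $\mathcal{T}'$ (hence vanishing on $\omega$) provided its normal fluxes at $\partial\mathcal{T}'$ all vanish.

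The arithmetic heart is then to show that, for almost every $(\ell_i)_{i=1,\dots,N}$: (a) no Dirichlet-everywhere eigenfunction of a subtree $\mathcal{T}'$ has vanishing normal derivative at each of its boundary vertices — a non-resonance condition that fails only on a null set of lengths, since it amounts to a non-trivial real-analytic equation in the $\ell_i$; and (b) $\mathrm{dist}(\sqrt{\lambda_n},\sqrt{\Sigma})\ge c\,\lambda_n^{-\sigma}$ for some $\sigma>0$ and all $n$. The points of $\Sigma$ are the squared roots of finitely many secular trigonometric equations in the $\sqrt{\lambda}\,\ell_i$, and $\#(\Sigma\cap[0,\mu])=O(\mu)$; a resultant/quantitative implicit-function argument reduces (b) to a \emph{simultaneous} Diophantine condition on the ratios of the $\ell_i$, which for the star-graph is precisely the ``at most polynomially approximable'' hypothesis of Theorem \ref{lem:boundarycontrolheat}. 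That such a condition holds on a full-measure set is the content of Khinchin's theorem (Appendix \ref{diophantine}) combined with a Borel--Cantelli argument summing over the $O(\mu)$ resonances below each level $\mu$. Since the moment method only loses polynomially, the resulting cost is of the form \eqref{eq:controlcostheat}, and patching back the cut-offs concludes.

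The main obstacle is two-fold. First a \emph{topological} one, showing that the statement must be read with a restriction on $\Gc\setminus\omega$: if $\Gc\setminus\omega$ contains a cycle $C$ of total length $L$ attached to the rest of $\Gc$ at a single vertex, then the function that equals $\sin(2\pi n s/L)$ on $C$ (for an arc-length parameter $s$) and vanishes elsewhere is, for \emph{every} choice of lengths, an eigenfunction of $\Delta_\Gc$ vanishing on $\omega$ — the continuity and Kirchhoff conditions at the attachment vertex hold automatically — so the Fattorini--Hautus test fails and the heat equation is \emph{never} null-controllable. The conjecture should therefore be understood (as is already the case for the star-graphs of Theorem \ref{th:ggccnecessarysuffheat}) under the hypothesis that $\Gc\setminus\omega$ is a forest, or at least contains no such ``pendant cycle'', and the plan above is carried out in that case. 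Second a genuine \emph{analytic/arithmetic} one: step (b), i.e. establishing the multidimensional simultaneous-Diophantine lower bound uniformly over the resonances of all subtrees of all uncontrolled components, is considerably more involved than the one-ratio estimate used for stars, and is the step most likely to require a new idea — all the more so if one attempts to allow $\Gc\setminus\omega$ to contain cycles attached at two or more vertices, where $\Sigma$ acquires contributions whose arithmetic structure is harder to pin down.
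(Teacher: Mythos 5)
This statement is Conjecture~\ref{conj_heat}: the paper does not prove it, and only establishes the star-graph case (Theorem~\ref{prop:ggccnotnecessaryheat_intro} via Theorem~\ref{lem:boundarycontrolheat}, and its genericity consequence Theorem~\ref{th:ggccnecessarysuffheat}). There is therefore no proof to compare yours against, and I assess the proposal on its own terms. Your single most valuable contribution is the pendant-cycle observation, and it is correct: if $\Gc\setminus\omega$ contains a cycle $C$ of total length $L$ attached to the rest of $\Gc$ at a single vertex $v$, then the function equal to $\sin(2\pi n s/L)$ in arc-length along $C$ (with $s=0$ at $v$) and to $0$ elsewhere lies in $D(\Delta_\Gc)$ — it vanishes at $v$, the two outward fluxes $-\tfrac{2\pi n}{L}$ and $+\tfrac{2\pi n}{L}$ cancel in the Kirchhoff condition, and the interior vertices of $C$ have degree two so continuity of the value and of the arc-length derivative suffices — and is an exact eigenfunction vanishing on $\omega$ for \emph{every} choice of lengths. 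By the Fattorini--Hautus test this kills approximate, hence null, controllability identically, not merely on a null set of lengths; this is strictly stronger than the paper's remark about rationally dependent lengths, and it shows the conjecture as literally stated is false. It must be restricted, e.g.\ to the case where $\Gc\setminus\omega$ is a forest (or at least contains no cycle meeting the rest of the graph in a single vertex). Since loops and pendant cycles are explicitly allowed in the paper's setting, this is a genuine correction, not a pedantic one.

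For the restricted (forest) case, your architecture — cut along $\hat\omega\subset\subset\omega$, solve boundary-control problems on the components of $\Gc\setminus\overline{\hat\omega}$, patch with the cut-offs $\theta,\eta$ and recover the $L^2$ regularity of the control via Lemma~\ref{lem:regheat} — is exactly the mechanism of the paper's proof of Theorem~\ref{prop:ggccnotnecessaryheat_intro}, and the reduction to a boundary-control problem on each uncontrolled tree is sound. The genuine gap is the core analytic step that this reduction demands: a generalization of Theorem~\ref{lem:boundarycontrolheat} from a star with one controlled vertex to an arbitrary tree with a prescribed set of controlled leaves. Your reformulation via the moment method — a summability/gap estimate on $(\lambda_n)$ together with a lower bound on the boundary fluxes $\sum_{p\in S}|\partial_n\phi_n(p)|$, both governed by $\mathrm{dist}(\lambda_n,\Sigma)$ where $\Sigma$ collects the Dirichlet spectra of subtrees — is the right frame, but neither the identification of the genericity hypothesis with an explicit simultaneous Diophantine condition on the length ratios, nor the full-measure statement for that condition, is actually carried out: the Borel--Cantelli argument requires lower bounds that are uniform over the $O(\mu)$ resonances of \emph{all} subtrees below level $\mu$, with constants controlled in the number of edges, and this multidimensional estimate is precisely the step you yourself flag as needing a new idea. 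Your item (a) also needs an argument that the resonance equation is not identically zero on the length simplex — true for forests, but this is the very point at which the pendant-cycle case breaks, so it cannot be waved through. As it stands, the proposal is a credible research program with one correct corrective insight, not a proof.
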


\subsection{Polynomial stability of the wave equation}

Assume that the exact controllability of the Schr\"odinger equation \eqref{eq_schro_control_intro} holds (as in Definition \ref{defcontsch}), while the geometric control condition (GGCC) is not satisfied (as in Subsection \ref{subsect5.2}). Under these conditions, and according to the general implication \cite[Theorem 2.3]{AL14}, we have a polynomial stability result for the following damped wave equation:
\begin{equation}\label{intro_wave_damp_poly} 
\left\{ 
\begin{array}{ll} 
\partial_{t}^2 u - \Delta_\Gc u + a(x) \partial_t u = 0, \qquad t>0, \\
(u,\partial_t u)|_{t=0}=(u_0,u_1) \in D(\Delta_{\mathcal{G}}) \times H^1_\Delta(\Gc), 
\end{array}
\right. 
\end{equation}
where the damping coefficient $a \geq 0$ belongs to $L^\infty (\Gc)$ and $\omega \subset \mathrm{supp}(a)$. More precisely, we obtain the existence of $C>0$ such that, for all $(u_0,u_1) \in D(\Delta_{\mathcal{G}}) \times H^1_\Delta(\Gc)$,
\begin{equation}\label{dec-poly}
\forall t\geq 0,\
\left\|(u(t),\partial_t u(t))\right\|_{H^1_\Delta(\Gc)\times L^2(\Gc)} \leq \frac{C}{\sqrt{t}}
\left\|(u_0,u_1)\right\|_{D(\Delta_{\mathcal{G}}) \times H^1_\Delta(\Gc)}.
\end{equation}
 In the general case where (GGCC) fails, the damped wave equation with Dirichlet boundary condition is not uniformly stable but we can still expect to obtain a weak polynomial decay of the type \eqref{dec-poly} as soon as the lengths of the undamped edges have irrational ratio. As a guiding example, we recall the work \cite{JTZ} where the wave equation on $(0,1)$ is damped at a single place with a Dirac damping $a(x)=\delta_{x=x_0}$. It is shown that the rate of decay depends on the Diophantine properties of $x_0$. Following the above discussion, we claim that the following results hold.
\begin{prop}
Consider the damped wave equation on the X graph of Figure \hyperref[fig_ex]{2} with Dirichlet boundary conditions. If $\ell_{\text{t}}/\ell_{\text{b}}$ is rational, then some solutions of \eqref{intro_wave_damp_poly} do not converge to zero. If $\ell_{\text{t}}/\ell_{\text{b}}$ is a badly approximable number, then every solutions goes to zero and we have the estimate \eqref{dec-poly}.
\end{prop}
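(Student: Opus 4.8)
The plan is to split the statement into its two parts and handle them by the now-familiar dichotomy between constructing uncontrolled/undamped modes (rational case) and reducing to the one-dimensional damped wave equation via the symmetry of the X graph (badly approximable case).

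For the rational case, I would argue exactly as in the "only if" part of the proof of Theorem \ref{non_ggcc_sch_intro}: if $\ell_{\text{t}}/\ell_{\text{b}}\in\mathbb{Q}$, write $\ell_{\text{t}}/\ell_{\text{b}}=p/q$ with $p,q\in\mathbb{N}^*$ and exhibit an explicit eigenfunction $\phi$ of $\Delta_\Gc$ supported on the undamped edges $e_2\cup e_4$ (for instance $\phi(x)=\sin(q\pi x/\ell_{\text{b}})$ on $e_2$ parametrised as $(0,\ell_{\text{b}})$ and a suitably scaled $\sin$ on $e_4$, chosen so that the values and Kirchhoff conditions match at the central vertex while the Dirichlet conditions hold at the exterior vertices, and $\phi\equiv 0$ on $e_1\cup e_3$). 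Since $\omega\subset\operatorname{supp}(a)\subset e_1\cup e_3$ (up to the geometry of Figure \hyperref[fig_ex]{2}), we have $a\phi\equiv 0$, so $u(x,t)=e^{i\sqrt{\lambda}\,t}\phi(x)$ (with $-\lambda$ the eigenvalue) solves \eqref{intro_wave_damp_poly} without any decay; taking real and imaginary parts gives a real solution of constant energy. This proves the first assertion.

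For the badly approximable case, I would invoke the characterisation of polynomial stability in terms of a resolvent estimate on the imaginary axis: by the Borichev--Tomilov theorem (and \cite[Theorem 2.3]{AL14}, as already cited just above the proposition), the decay rate $1/\sqrt{t}$ for data in $D(\Delta_\Gc)\times H^1_\Delta(\Gc)$ is equivalent to a bound of the form $\|(\Delta_\Gc+\lambda^2)u\|^2_{L^2(\Gc)}+\|\lambda\, 1_{\operatorname{supp}(a)}u\|^2_{L^2(\Gc)}\gtrsim \|\lambda u\|^2_{L^2(\Gc)}$ uniformly for $\lambda\in\mathbb{R}$, $u\in D(\Delta_\Gc)$ — precisely a resolvent estimate of type \eqref{eq:resolventwave}. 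But the proof of the "if" implication in Theorem \ref{non_ggcc_sch_intro} establishes exactly such an estimate when $\ell_{\text{t}}/\ell_{\text{b}}$ is badly approximable: after the symmetry decomposition \eqref{eq:deffgschro}--\eqref{eq:defhschro} the quasimode analysis (Steps 1--3 there, together with the resolvent estimate \eqref{eq:resolventwave} failing only along sequences $\lambda_n$ with $\ell_{\text{t}}/\ell_{\text{b}}$ well approximated by $q_n/p_n$) forces $\|u_n\|_{L^2(\Gc)}\to 0$, contradicting normalisation. Concretely, I would redo that argument with $\omega$ replaced by $\operatorname{supp}(a)$ (which still contains the left edges $e_1\cup e_3$) and with the damping term $\lambda\, 1_{\operatorname{supp}(a)}u$ in place of $1_\omega u$; since the decomposition decouples the three intervals $(-\ell_{\text{b}},0)$, $(0,\ell_{\text{t}})$ and $(-\ell_{\text{b}},\ell_{\text{t}})$ and on each of them the one-dimensional observability of the wave/Schrödinger operator holds for any open observation set (Proposition \ref{prop_obs_wave} on an interval, or \cite[Theorem 4.2]{Laurent}), the only obstruction is the comparison of the two quasimodes at the central node, which the badly-approximable hypothesis rules out via \eqref{eq_approx_lambda_3}. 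This yields \eqref{dec-poly}.

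The main obstacle is to make the resolvent/polynomial-stability equivalence apply cleanly: one must check that the abstract criterion of \cite{AL14} indeed reduces the polynomial decay to the stationary estimate of the form treated in Theorem \ref{non_ggcc_sch_intro}, and in particular that the loss in $\lambda$ is exactly of order one (no better is true, as the proof of Theorem \ref{non_ggcc_sch_intro} shows that \eqref{eq:resolventschroImproved} with $\varepsilon>0$ fails), matching the exponent $1/\sqrt{t}=1/t^{1/(2\cdot 1)}$ in the Borichev--Tomilov scale. A secondary technical point is to verify that the symmetry decomposition is compatible with the damping term $a(x)\partial_t u$ — this requires $a$ to be symmetric under the left-right exchange of the X graph, or at least supported in $e_1\cup e_3$, so that one works with the \emph{undamped} wave operator on the symmetric component $h$ and treats $a$ only on the antisymmetric components $f,g$; I would simply state this mild assumption (already implicit in the figure) explicitly.
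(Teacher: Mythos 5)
Your overall route is the same as the paper's: the paper does not write out a detailed proof of this proposition but justifies it exactly as you do, by (i) exhibiting in the rational case an eigenfunction of $\Delta_\Gc$ supported on the undamped edges $e_2\cup e_4$ (the construction from the ``only if'' part of Theorem \ref{non_ggcc_sch_intro}), which yields a periodic, undamped solution of \eqref{intro_wave_damp_poly}, and (ii) in the badly approximable case combining the Schr\"odinger exact controllability of Theorem \ref{non_ggcc_sch_intro} with the general implication of \cite[Theorem 2.3]{AL14} to get the decay \eqref{dec-poly}. Your first half and the overall architecture of the second half are therefore correct and match the intended argument.

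One sentence in your second half is wrong as stated and should be repaired: you assert that the rate $1/\sqrt{t}$ on $D(\Delta_\Gc)\times H^1_\Delta(\Gc)$ is ``equivalent to a bound of the form $\|(\Delta_\Gc+\lambda^2)u\|^2+\|\lambda 1_{\operatorname{supp}(a)}u\|^2\gtrsim\|\lambda u\|^2$ uniformly in $\lambda$ --- precisely a resolvent estimate of type \eqref{eq:resolventwave}.'' The estimate \eqref{eq:resolventwave} is the Hautus test for \emph{exact observability of the wave equation}, i.e.\ for \emph{exponential} stability of the damped equation; on the X graph with Dirichlet conditions it fails whenever (GGCC) fails (Theorem \ref{th_exact_wave_intro_dir} and the quasimodes of Proposition \ref{prop_quasimode}), so if your proof really needed it you would be proving something false. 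The correct criterion for the $1/\sqrt{t}$ rate, via Borichev--Tomilov, is a resolvent bound with a loss of one power of $\lambda$, which after the substitution $\mu=\lambda^2$ is exactly the \emph{Schr\"odinger} resolvent estimate \eqref{eq:resolventschro} --- equivalently, observability of the Schr\"odinger group from $\omega$, which is the hypothesis of \cite[Theorem 2.3]{AL14} and is what Theorem \ref{non_ggcc_sch_intro} provides when $\ell_{\text{t}}/\ell_{\text{b}}$ is badly approximable. Since you do ultimately invoke Theorem \ref{non_ggcc_sch_intro} and \cite{AL14}, the argument closes once you replace the mislabelled equivalence by this one; your closing remark that \eqref{eq:resolventschroImproved} fails for every $\varepsilon>0$ (so the loss is exactly of order one and the rate cannot be improved by this method) is consistent with this correction. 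Your final point about the damping needing to be supported in $e_1\cup e_3$ is also the setting the paper intends ($\omega\subset\mathrm{supp}(a)$ with $\omega$ the left edges).
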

\begin{claim}
Consider the damped wave equation on the X graph of Figure \hyperref[fig_ex]{2}. If $\ell_{\text{t}}/\ell_{\text{b}}$ is a at most $\sigma-$approximable number, then every solutions of \eqref{intro_wave_damp_poly} goes to zero and we have the estimate
$$\forall t\geq 0,\
\left\|(u(t),\partial_t u(t))\right\|_{H^1_\Delta(\Gc)\times L^2(\Gc)} \leq \frac{C}{t^{\frac 12(\sigma-1)}}
\left\|(u_0,u_1)\right\|_{D(\Delta_{\mathcal{G}}) \times H^1_\Delta(\Gc)}.$$
\end{claim}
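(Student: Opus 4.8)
The plan is to recast \eqref{intro_wave_damp_poly} as an abstract Cauchy problem $\dot U=\mathcal{A}U$ on the energy space $\mathcal{H}:=H^1_\Delta(\Gc)\times L^2(\Gc)$, with $\mathcal{A}(u,v)=(v,\Delta_\Gc u-av)$ and $D(\mathcal{A})=D(\Delta_\Gc)\times H^1_\Delta(\Gc)$, and to invoke the Borichev--Tomilov characterization of polynomial decay together with the reduction of \cite[Theorem 2.3]{AL14}: the estimate in the statement is equivalent to $i\RR\subset\rho(\mathcal{A})$ together with a resolvent bound $\|(is-\mathcal{A})^{-1}\|_{\mathcal{L}(\mathcal{H})}=O(|s|^{\gamma})$ as $|s|\to+\infty$, for the exponent $\gamma=\gamma(\sigma)$ corresponding to the claimed rate. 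The first point is easy and uses only that $\ell_{\text{t}}/\ell_{\text{b}}$ is irrational, which a $\sigma$--approximable number always is: since $\mathcal{A}$ has compact resolvent it suffices to exclude eigenvalues on $i\RR$, and taking the imaginary part of the eigenvalue relation tested against the eigenfunction kills $u$ on $\mathrm{supp}(a)\supset e_1\cup e_3$; solving the resulting Dirichlet ODEs on $e_2$ and $e_4$ and using the Kirchhoff condition at the central vertex then forces $\sin(s\ell_{\text{b}})=\sin(s\ell_{\text{t}})=0$, i.e. $\ell_{\text{t}}/\ell_{\text{b}}\in\QQ$, a contradiction.

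For the resolvent bound I would pass to a stationary estimate: writing $(is-\mathcal{A})(u,v)=(f,g)$ gives $v=isu-f$ and $-(\Delta_\Gc+s^2)u+is\,a\,u=F$ with $F=g+(is+a)f$, so the bound is equivalent to a resolvent estimate with polynomial loss for the damped Helmholtz operator $\Delta_\Gc+s^2+is\,a$. The imaginary part of $\langle F,u\rangle_{L^2(\Gc)}$ gives $\int_\Gc a|u|^2\lesssim|s|^{-1}\|F\|_{L^2(\Gc)}\|u\|_{L^2(\Gc)}$, which already controls $u$ on the damped edges $e_1,e_3$, and the real part controls $\|\nabla u\|_{L^2(\Gc)}^2$ by $s^2\|u\|_{L^2(\Gc)}^2$ and $\|F\|_{L^2(\Gc)}\|u\|_{L^2(\Gc)}$; it remains to bound $\|u\|_{L^2(\Gc)}$. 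Here I would exploit the reflection symmetry of the X graph exactly as in the proof of Theorem~\ref{non_ggcc_sch_intro}, decomposing $u$ into the parts odd and even under the swap $e_1\leftrightarrow e_2$, $e_3\leftrightarrow e_4$. This produces three functions on genuine intervals --- the odd parts $f$ on $(-\ell_{\text{b}},0)$ and $g$ on $(0,\ell_{\text{t}})$ with Dirichlet conditions at both ends, and the even part $h$ on $(-\ell_{\text{b}},\ell_{\text{t}})$ with Dirichlet conditions at the ends --- coupled only through the damping term, which is already under control. The even part $h$ and one of the odd parts are in effect observed on an open subinterval, hence are controlled with \emph{no} loss by the one--dimensional resolvent estimate on an interval (the Schrödinger group being observable from any open subset in any time, cf.\ Theorem~\ref{lem:resolvent} and \cite[Theorem 4.2]{Laurent}). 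The loss will come only from the remaining odd component, living on an undamped edge.

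This last point is where $\sigma$--approximability enters and is the heart of the argument. Matching that component against $h$ on the observed subinterval and propagating by unique continuation for the second--order ODE forces $h$ to coincide, up to an error governed by $\|F\|_{L^2(\Gc)}$ and the (controlled) damping term, with a pure sinusoid $\sin(s\,\cdot)$ on the whole interval $(-\ell_{\text{b}},\ell_{\text{t}})$; the two Dirichlet conditions at the exterior vertices then force $s$ to be simultaneously close to $\pi k/\ell_{\text{b}}$ and $\pi m/\ell_{\text{t}}$ for integers $k,m$ of size $\sim|s|$. By Definition~\ref{defi_sigma_approx_intro}, $|\ell_{\text{t}}/\ell_{\text{b}}-m/k|\ge C/k^{\sigma}$, hence $|\pi k/\ell_{\text{b}}-\pi m/\ell_{\text{t}}|\gtrsim|s|^{1-\sigma}$, so the relevant one--dimensional Dirichlet Laplacian has a spectral gap $\mathrm{dist}(s^2,\mathrm{spec})\gtrsim|s|^{2-\sigma}$ at height $s^2$ and therefore a loss of at most $|s|^{\sigma-2}$ in its resolvent estimate. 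One thus obtains a Schrödinger--type estimate on the graph of the form $\|u\|_{L^2(\Gc)}\lesssim|s|^{\sigma-2}\big(\|(\Delta_\Gc+s^2)u\|_{L^2(\Gc)}+\|u\|_{L^2(\omega)}\big)$; feeding this into the damped Helmholtz as above yields $\|(\Delta_\Gc+s^2+is\,a)^{-1}\|_{L^2\to L^2}\lesssim|s|^{2\sigma-3}$, and the passage to the semigroup via \cite{AL14} and the Borichev--Tomilov theorem produces the polynomial decay at the rate stated in the conjecture (with $\gamma(\sigma)=2(\sigma-1)$). The lower--order terms appearing along the way are removed by the usual compactness and unique continuation argument, propagating ``$u\equiv 0$'' through the tree of undamped edges as in Lemma~\ref{lemma_invisible}.

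The main obstacle I anticipate is the bookkeeping in this last step. One must organize a case distinction according to which of the three one--dimensional subproblems is close to resonance at a given frequency $s$ --- checking, using that $\ell_{\text{t}}/\ell_{\text{b}}$ (hence also $1+\ell_{\text{t}}/\ell_{\text{b}}$) is $\sigma$--approximable, that at most one of them can be resonant at once --- and control the back-reaction of the coupling: the even part $h$ is controlled \emph{by} an odd part through unique continuation, and conversely through the damping term, so the three estimates must be closed simultaneously rather than sequentially, with careful tracking of the non-homogeneous values at the central vertex. Verifying that in every case the accumulated loss does not exceed the single gap factor $|s|^{\sigma-2}$, and that the exponent obtained is sharp rather than merely polynomial, is the delicate part. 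A matching lower bound establishing optimality of the rate would require adapting the explicit quasimodes of Section~\ref{sec_quasimode} and of the proof of Theorem~\ref{non_ggcc_sch_intro}, but this is not needed for the upper bound asserted here.
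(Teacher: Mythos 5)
First, a contextual point: the paper does not prove this statement. The \texttt{claim} environment is declared as ``Conjecture'', and Section 6.4 explicitly presents it as an open assertion motivated by the analogy with the pointwise damping results of \cite{JTZ}; there is therefore no proof of the authors' to compare yours against, and what you have written is (as you acknowledge) a plan rather than a proof. The plan itself is the natural one and matches the surrounding discussion: Borichev--Tomilov together with the reduction of \cite[Theorem 2.3]{AL14}, passage to the damped Helmholtz operator $\Delta_\Gc+s^2+is\,a$, the odd/even splitting from the proof of Theorem \ref{non_ggcc_sch_intro}, and a spectral-gap estimate $\mathrm{dist}(s^2,\mathrm{spec})\gtrsim|s|^{2-\sigma}$ extracted from Definition \ref{defi_sigma_approx_intro}; the arithmetic of that gap is correct. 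The genuine gap is the step you describe as ``the even part $h$ and one of the odd parts are in effect observed on an open subinterval, hence are controlled with no loss''. The observation only ever sees the sums $f+h$ on $\omega\cap e_1$ and $g+h$ on $\omega\cap e_3$, never $f$, $g$ or $h$ separately. In the proof of Theorem \ref{non_ggcc_sch_intro} this separation (Step 2, the lower bounds $|\alpha_n|,|\beta_n|\geq c$) is obtained by a qualitative contradiction/compactness argument aimed at the \emph{lossless} resolvent estimate, which is exactly what fails when $\ell_{\text{t}}/\ell_{\text{b}}$ is not badly approximable. For $\sigma>2$ you need a direct, quantitative version of that step producing the explicit loss $|s|^{\sigma-2}$ uniformly in $s$, with the three coupled interval estimates closed simultaneously; a compactness argument yields no rate, so this — the actual content of the conjecture — is missing, not merely ``bookkeeping''.

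Second, the exponent. Your own computation gives $\|(is-\mathcal{A})^{-1}\|=O(|s|^{2(\sigma-1)})$ and hence, via Borichev--Tomilov, a decay $t^{-1/(2(\sigma-1))}$, which is \emph{not} the rate $t^{-\frac 12(\sigma-1)}$ written in the statement; the two agree only at $\sigma=2$. In fact the literal exponent cannot be right for $\sigma>2$: an at most $2$-approximable number is a fortiori at most $\sigma$-approximable for every $\sigma>2$, so the guaranteed rate must be non-increasing in $\sigma$, whereas $t^{-(\sigma-1)/2}$ improves with $\sigma$. Either the conjectured exponent should be read as $\frac{1}{2(\sigma-1)}$ (consistent with your computation and with the $\sigma=2$ case of the preceding Proposition), or the statement as printed is false; you must say which rate you are actually proving. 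Two smaller issues to track if you carry this out: the coupling term $is\,a\,u$ between the symmetric and antisymmetric components is only bounded in $L^2$ by $|s|^{1/2}\|F\|^{1/2}\|u\|^{1/2}$ (via the dissipation identity), so absorbing it by Young's inequality contributes additional powers of $|s|$ that must be reconciled with the claimed $\gamma=2(\sigma-1)$; and the resonance analysis must also rule out simultaneous near-resonance of $h$ (whose Dirichlet spectrum involves $\ell_{\text{b}}+\ell_{\text{t}}$) with $f$ or $g$, which you mention but do not carry out.
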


\subsection{Comparison between boundary/internal control of PDEs on graphs}
\label{sec:boundaryinternalpdes}

The goal of this part consists in comparing internal control results to boundary control results for the wave equation, the Schrödinger equation and the heat equation. In the context of boundary control, let us recall that (GGCC) means that $\Gc$ is a tree, i.e. $\Gc$ does not contain cycles, and that the control is efficient at all the exterior vertices except maybe one. In what follows, we use the notation $\mathcal{C}$ for the controlled exterior vertices.

We begin by the wave equation. Let us recall the following result, due to Schmidt \cite{Schmidt}, see also for instance \cite[Theorem 2.7]{DZ06}. 
\begin{theorem}[Schmidt, 1992]
\label{th:schmidt}
If $\Gc$ is a tree and the set $\mathcal{C}$ contains all the exterior vertices, except at most one, then the wave equation is exactly controllable in the state space $L^2(\Gc) \times H^{-1}(\Gc)$ in any time $T \geq T^*$ with boundary controls in $L^2(0,T)$, where $T^*$ is twice the length of the largest simplest past connecting the uncontrolled vertices with the controlled ones.
\end{theorem}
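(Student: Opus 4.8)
The plan is to mirror the strategy used above for internal control and to invoke the Hilbert Uniqueness Method: I would recast the exact controllability of the wave equation with boundary controls placed at the vertices of $\mathcal{C}$ as an observability inequality for the free adjoint wave equation. Precisely, let $\varphi$ solve $\partial_t^2\varphi=\Delta_\Gc\varphi$ on $\Gc$ with Dirichlet conditions at all exterior vertices, Kirchhoff conditions at the interior ones, and initial data $(\varphi_0,\varphi_1)\in H^1_0(\Gc)\times L^2(\Gc)$; then exact controllability in $L^2(\Gc)\times H^{-1}(\Gc)$ at time $T$ with $L^2(0,T)$ boundary controls is equivalent to the existence of a constant $C>0$ with
\begin{equation*}
\|(\varphi_0,\varphi_1)\|_{H^1_0(\Gc)\times L^2(\Gc)}^2 \leq C \sum_{v\in\mathcal{C}} \int_0^T \Big|\tfrac{\dd\varphi}{\dd n}(v,t)\Big|^2 \dd t .
\end{equation*}
I would note at the outset that on a tree carrying at least one Dirichlet exterior vertex the operator $\Delta_\Gc$ is negative definite, hence has trivial kernel; consequently no compactness or unique continuation argument is needed, and I would aim to establish the displayed inequality directly by a finite propagation argument.

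Second, I would prove this observability inequality by induction over the tree, propagating the knowledge of the full Cauchy data of $\varphi$ edge by edge. Rooting $\Gc$ at the uncontrolled exterior vertex (or, if all exterior vertices are controlled, at an arbitrary vertex) and orienting every edge towards the root, the base case is a leaf edge ending at a vertex $v\in\mathcal{C}$: there one knows $\varphi(v,\cdot)=0$ (Dirichlet) together with $\tfrac{\dd\varphi}{\dd n}(v,\cdot)$ (the observation), and the one-dimensional wave equation on that edge has a well-posed lateral Cauchy problem, so on a time window shortened by the length of the edge one recovers $\varphi$ on the whole edge and, in particular, its trace and outer flux at the other endpoint. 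At an interior vertex where every incoming edge has already been resolved, continuity of $\varphi$ and the Kirchhoff relation $\sum_j \tfrac{\dd\varphi_j}{\dd n_j}(v)=0$ furnish the Cauchy data on the unique remaining outgoing edge, on which one propagates again. Iterating from the leaves down to the root reconstructs $\varphi$ on all of $\Gc$, and adding up the quantitative, energy-type estimates produces the observability inequality as soon as $T\geq T^*$, the number $T^*$ being exactly the total length by which the admissible time window shrinks along the propagation through the tree. An alternative, closer to Schmidt's original proof, is to use the time-independent multiplier $m(x)\partial_x\varphi$ with $m$ built from the distance-to-root function on the tree, obtaining the inequality up to lower-order terms that are then absorbed thanks to $\ker(\Delta_\Gc)=\{0\}$.

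The hard part will be to make the inductive propagation fully quantitative with constants independent of $(\varphi_0,\varphi_1)$: I would need to set up a sideways energy estimate for the $1$D wave equation on an edge — most naturally through d'Alembert's formula combined with an energy identity on a space-time triangle, as in \cite{Schmidt,DZ06,AZ22} — in order to control exactly how the admissible time interval shrinks at each propagation step, and to check that summing the finitely many resulting estimates over the edges of the tree yields a finite constant and the sharp time $T^*$. A secondary difficulty is the bookkeeping at high-degree interior vertices: one must verify that continuity together with Kirchhoff's law really do determine the outgoing Cauchy data from the incoming ones, which is precisely where the tree hypothesis (each vertex has a single edge towards the root) and the assumption of at most one uncontrolled exterior vertex (so that the induction can be initialized at every leaf but possibly the root) enter decisively.
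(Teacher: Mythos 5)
The paper does not actually prove Theorem~\ref{th:schmidt}: it is quoted as a known result and attributed to Schmidt \cite{Schmidt} and to \cite[Theorem 2.7]{DZ06}, so there is no in-paper argument to compare against. That said, your outline is correct and is essentially the proof found in those references and echoed in Section~\ref{sec_control_wave} of the paper for the internal-control analogue: HUM duality reducing boundary controllability in $L^2(\Gc)\times H^{-1}(\Gc)$ to the observability inequality for the normal derivatives at the vertices of $\mathcal{C}$, followed by a leaf-to-root propagation of lateral Cauchy data through the tree, with continuity plus Kirchhoff's law converting the resolved incoming edges at an interior vertex into Cauchy data for the single outgoing edge. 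Two points deserve care if you carry this out. First, to reach the sharp time $T\geq T^*$ (rather than $T>T^*$) you must use the exact sideways d'Alembert representation on each edge, whose domain of determinacy is closed; the time-dependent multiplier route, as in the paper's Proposition~\ref{prop_obs_wave}, inherently loses an $\varepsilon$ at each step and only yields $T>T^*$, and the time-independent multiplier variant you mention as ``closer to Schmidt'' does not track the optimal time at all (the paper itself makes this remark). Second, the duality step silently uses the hidden regularity $\frac{\dd\varphi}{\dd n}(v,\cdot)\in L^2(0,T)$ for $H^1_0\times L^2$ data and the admissibility of the corresponding control operator; in this one-dimensional setting it follows from an elementary multiplier identity on the edge adjacent to $v$, but it should be stated.
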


The previous result, i.e. Theorem \ref{th:schmidt} is optimal in the following sense, see \cite[Section 6.3]{DZ06}.
\begin{theorem}
If $\Gc$ is a tree and there exist at least two uncontrolled vertices, then the wave equation is not exactly controllable in the state space $L^2(\Gc) \times H^{-1}(\Gc)$ whatever the time $T>0$ is, with boundary controls in $L^2(0,T)$.
\end{theorem}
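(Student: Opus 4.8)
The plan is to argue by duality. I would first recall that, by the classical HUM/transposition method (the boundary-control counterpart of Theorem~\ref{th:obsexactwave}, see e.g.\ \cite{DZ06}), exact controllability of the wave equation on the tree $\Gc$ in the space $L^2(\Gc)\times H^{-1}(\Gc)$ at time $T$, with $L^2(0,T)$ boundary controls acting at the vertices of $\mathcal{C}$, is equivalent to the existence of a constant $C>0$ such that every solution $\varphi$ of the free wave equation with homogeneous Dirichlet conditions at \emph{all} exterior vertices satisfies the boundary observability inequality
\[
\|(\varphi_0,\varphi_1)\|_{H^1_0(\Gc)\times L^2(\Gc)}^2 \;\le\; C\sum_{v\in\mathcal{C}}\int_0^T \Big|\tfrac{\dd\varphi}{\dd n}(v,t)\Big|^2\,\dd t .
\]
Hence it is enough to produce a sequence of free-wave solutions with energy bounded below but with right-hand side tending to $0$.

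Since $\Gc$ has at least two uncontrolled exterior vertices, fix two of them, $v_a$ and $v_b$, and let $P=e_1\cup\dots\cup e_d$ be the unique simple path of the tree joining $v_a$ to $v_b$, with lengths $\ell_1,\dots,\ell_d$ and $L=\sum_j\ell_j$. Because each controlled vertex is a leaf, its incident edge cannot belong to $P$ (otherwise $P$ would dead-end there, forcing that leaf to be $v_a$ or $v_b$); hence $P$ is disjoint from a neighbourhood of every vertex of $\mathcal{C}$. Then I would reproduce, essentially verbatim, the quasimode construction in the proof of Proposition~\ref{prop_quasimode} applied to the path $P$, using Proposition~\ref{coroprop_Dirichlet} to approximate the ratios $\ell_j/L$ by rationals $p_{j,n}/q_n$ with $q_n\to+\infty$: one obtains $u_n\in D(\Delta_\Gc)$, vanishing identically outside $P$, equal on each $e_j\subset P$ to $\sqrt{2/L}\,\mu_{j,n}^{-1}\sin(\mu_{j,n}\,\cdot)$ for suitable frequencies $\mu_{j,n}$ close to $\mu_n:=2\pi q_n/L$, and satisfying $\|\grad u_n\|_{L^2(\Gc)}=1$, $\|u_n\|_{L^2(\Gc)}\to0$, $\|(\Delta_\Gc+\mu_n^2)u_n\|_{L^2(\Gc)}\to0$. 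Continuity and the Kirchhoff conditions are checked exactly as there, the branching edges of $P$ carrying a vanishing function and hence not affecting the flux balance; moreover $u_n$ vanishes at $v_a$, $v_b$ and at every interior vertex of $P$, so the homogeneous Dirichlet conditions hold at all exterior vertices, and $u_n\equiv0$ near each $v\in\mathcal{C}$.

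Finally I would run the energy argument of Corollary~\ref{coro_obs_free_wave}: letting $\varphi_n$ be the free-wave solution with Cauchy data $(u_n,0)$ and $\psi_n(x,t)=\cos(\mu_n t)u_n(x)$, the difference $w_n:=\varphi_n-\psi_n$ solves the wave equation with zero Cauchy data and source $h_n=(\Delta_\Gc+\mu_n^2)\psi_n$, whose $L^\infty(0,T;L^2(\Gc))$-norm tends to $0$. By the standard one-dimensional hidden-regularity trace estimate for the wave equation (Duhamel combined with a multiplier identity on the edge $e_v$), one has $\int_0^T|\tfrac{\dd w_n}{\dd n}(v,t)|^2\,\dd t\lesssim_T \|h_n\|_{L^1(0,T;L^2(\Gc))}^2\to0$ for each $v\in\mathcal{C}$; since $\psi_n\equiv0$ near $v$ we also have $\tfrac{\dd\psi_n}{\dd n}(v,\cdot)\equiv0$, whence $\sum_{v\in\mathcal{C}}\int_0^T|\tfrac{\dd\varphi_n}{\dd n}(v,t)|^2\,\dd t\to0$, whereas $\|(\varphi_n,\partial_t\varphi_n)(0)\|_{H^1_0(\Gc)\times L^2(\Gc)}^2=\|\grad u_n\|^2+\|u_n\|^2\to1$. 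This contradicts the observability inequality for every $T>0$, proving the claim.

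The main obstacle is precisely the control of the boundary trace $\tfrac{\dd w_n}{\dd n}(v,\cdot)$ by the source term $h_n$, i.e.\ the hidden-regularity estimate for the inhomogeneous one-dimensional wave equation with zero Cauchy data; this is the only point going beyond a direct import of the quasimode construction of Subsection~\ref{sec_quasimode} and of bookkeeping on the tree structure. When all the ratios $\ell_j/\ell_k$ are rational the construction degenerates into a genuine eigenfunction of $\Delta_\Gc$ supported on $P$, and the contradiction is immediate without any Diophantine input.
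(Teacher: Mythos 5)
Your proof is correct. Note first that the paper does not prove this statement at all: it is quoted as a known result with a pointer to \cite[Section 6.3]{DZ06} (see also \cite[Chapter VII]{AI95}), so there is no internal proof to compare against. Your argument supplies one, and it does so in the most natural way given the rest of the paper: you transplant the quasimode construction of Proposition \ref{prop_quasimode} onto the unique simple path $P$ joining two uncontrolled leaves (your observation that $P$ cannot contain the edge incident to any controlled leaf is the right combinatorial point, and it is exactly where the tree hypothesis and the existence of \emph{two} uncontrolled exterior vertices are used), and then you rerun the perturbation argument of Corollary \ref{coro_obs_free_wave} with the internal observation $\int_0^T\int_\omega|\partial_t\varphi|^2$ replaced by the boundary observation $\sum_{v\in\mathcal C}\int_0^T|\frac{\dd\varphi}{\dd n}(v,t)|^2\,\dd t$. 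The single genuinely new ingredient, which you correctly isolate, is the hidden-regularity (direct) inequality bounding $\int_0^T|\frac{\dd w_n}{\dd n}(v,t)|^2\,\dd t$ by the energy of $w_n$ and $\|h_n\|_{L^1(0,T;L^2(\Gc))}^2$; in this one-dimensional setting it follows from the multiplier identity \eqref{eq_multiplier} with a time-independent cutoff $m(x)\partial_x w$ localized on the edge incident to $v$, combined with the Gr\"onwall energy bound already established in the proof of Corollary \ref{coro_obs_free_wave}, so it is not a gap. This is also in substance the argument of \cite{DZ06}; the only cosmetic point left implicit (there and here) is that the signs of the sines on consecutive edges of $P$ must be chosen so that the Kirchhoff flux balance holds regardless of the arbitrary orientation of the edges, which is a harmless adjustment.
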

As a consequence of the two above results, we observe that as in the case of internal controls, (GGCC) is a necessary and sufficient condition for the exact controllability of the wave equation with boundary controls. \\

Let us now consider the Schrödinger equation. By using standard results that allow to pass from an exact controllability result for the wave equation to a small-time exact controllability result for the Schrödinger equation (see \cite{Mil05}), we have the following theorem.
\begin{theorem}
\label{th:boundarycontrolSchropos}
If $\Gc$ is a tree and the set $\mathcal{C}$ contains all the exterior vertices, except at most one, then for every $T>0$ the Schrödinger equation is exactly controllable at time $T$ in the state space $H^{-1}(\Gc)$ with boundary controls in $L^2(0,T)$.
\end{theorem}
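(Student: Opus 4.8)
The plan is to derive this statement from the already-recalled exact controllability of the \emph{wave} equation with boundary controls, Theorem \ref{th:schmidt}, by means of the control transmutation principle of Miller \cite{Mil05}. Since $\Gc$ is a tree and $\mathcal{C}$ contains all the exterior vertices except at most one, Theorem \ref{th:schmidt} provides, for every $T\geq T^*$, boundary controls in $L^2(0,T)$ driving the wave equation to rest, the exact controllability holding in the space $L^2(\Gc)\times H^{-1}(\Gc)$. So the only work left is to transfer this property to the Schrödinger equation while keeping track of the functional spaces and of the control time.

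First I would recast both equations in the abstract first-order form of \cite{Mil05}. Set $A=-\Delta_\Gc$, the Dirichlet Laplacian of the tree (homogeneous Dirichlet conditions at all exterior vertices when $h=0$), which is a positive self-adjoint operator with compact resolvent and, being a tree with at least one Dirichlet end, has trivial kernel. Writing the boundary action at the vertices of $\mathcal{C}$ through the control operator $B$, the wave equation becomes $\ddot y + Ay = Bh$ posed on $L^2(\Gc)\times D(A^{1/2})' = L^2(\Gc)\times H^{-1}(\Gc)$, and the Schrödinger equation becomes $i\dot z = Az + Bh$ posed on $D(A^{1/2})' = H^{-1}(\Gc)$. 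The admissibility of $B$ for the wave group — equivalently the hidden-regularity estimate placing the transposition solution in $\Cc^0([0,T],L^2(\Gc))\cap\Cc^1([0,T],H^{-1}(\Gc))$ — follows edgewise from the one-dimensional multiplier identities already exploited in Section \ref{sec_control_wave}, and it is in any case implicit in the statement of Theorem \ref{th:schmidt}. Then \cite[Theorem 3.1]{Mil05}, invoked exactly as in the proof of Theorem \ref{prop:exactcontrolschro_intro} but now with this unbounded admissible $B$, turns the exact controllability of $\ddot y + Ay = Bh$ in $L^2(\Gc)\times H^{-1}(\Gc)$ into the exact controllability, for every $T>0$, of $i\dot z = Az + Bh$ in $H^{-1}(\Gc)$ with controls in $L^2(0,T)$, which is precisely the claim.

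The only real point requiring care is the functional bookkeeping: that the transposition state space of the boundary-controlled wave equation on $\Gc$ is indeed $L^2(\Gc)\times H^{-1}(\Gc)$ (one derivative below the finite-energy space $H^1_0(\Gc)\times L^2(\Gc)$), that the matching space for Schrödinger is $H^{-1}(\Gc)$, and that $B$ is admissible in the sense demanded by \cite{Mil05}. Each of these facts is classical on a single interval and is carried over to the graph through the continuity and Kirchhoff transmission conditions at the interior vertices, exactly as in Section \ref{preliminaries}; I do not anticipate a genuine obstacle there, since once the wave controllability of Theorem \ref{th:schmidt} is granted the transmutation mechanism is insensitive to the geometry of $\Gc$. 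A self-contained alternative would be to establish directly the dual observability inequality for $i\partial_t\varphi = -\Delta_\Gc\varphi$ with observation given by the outer normal derivatives of $\varphi$ at the vertices of $\mathcal{C}$, using moment/Ingham-type techniques as in \cite{DZ06}; but the transmutation route is shorter and recycles results already at our disposal.
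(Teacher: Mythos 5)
Your proposal is correct and follows exactly the route the paper takes: Theorem \ref{th:schmidt} gives boundary exact controllability of the wave equation on the tree, and Miller's transmutation result \cite{Mil05} transfers this to small-time exact controllability of the Schrödinger equation in $H^{-1}(\Gc)$. The paper states this derivation in one line; your additional bookkeeping of the state spaces and of the admissibility of the boundary control operator is consistent with it and fills in the standard details.
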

However, as in the context of internal controls, (GGCC) is not a necessary condition for the exact controllability of the Schrödinger equation with boundary controls. Indeed, we consider again the X graph: a star-graph with four edges, two (say $e_1=(0,v_1)$ and $e_2=(0,v_2)$) of length $\ell_{\text{b}}>0$ and two ($e_3=(0,v_3)$ and $e_4=(0,v_4)$), of length $\ell_{\text{t}}>0$. The boundary controls act on the exterior vertices $v_1$ and $v_3$. This geometry does not satisfy (GGCC) because the control is efficient at $2$ exterior vertices while there are $4$ exterior vertices.
\begin{theorem}
\label{th:boundarycontrolSchro}
Consider the above geometric setting of the X graph $\Gc$. The Schr\"odinger equation 
\begin{equation*}
\left\{
\begin{array}{ll} i \partial_{t} u = - \Delta_\Gc u ,& \ \ \ \ t>0,\\
u(t,v_1)=h_1(t),\ u(t,v_3) = h_3(t),\ u(t,v_2)=u(t,v_4) = 0,\\
u(t=0)=u_0\in H^{-1}(\Gc), & 
\end{array}\right. 
\end{equation*}
is exactly controllable in $H^{-1}(\Gc)$ for some time $T>0$ if and only if the ratio $\ell_{\text{t}}/\ell_{\text{b}}$ is a badly approximable number.
\end{theorem}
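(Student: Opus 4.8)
The plan is to mimic the proof of Theorem \ref{non_ggcc_sch_intro}: pass to a Hautus-type resolvent estimate, split it by the symmetry of the X graph into problems on intervals, and read off the Diophantine condition. Normalize $\ell_{\text b}=1$, set $\ell:=\ell_{\text t}$, and parameterize $e_1=e_2=(-1,0)$ with $v_1,v_2$ at $x=-1$ and $e_3=e_4=(0,\ell)$ with $v_3,v_4$ at $x=\ell$, the central vertex being $x=0$. First I would record that, by duality, boundary exact controllability in $H^{-1}(\Gc)$ of the Schrödinger equation of Theorem \ref{th:boundarycontrolSchro} with Dirichlet controls at $v_1,v_3$ is equivalent to observability of the free equation $i\partial_t\psi=-\Delta_\Gc\psi$ (with $\Delta_\Gc$ the Dirichlet Laplacian) through the normal derivatives at $v_1,v_3$, and — by the resolvent criterion for boundary control, the analogue of Theorem \ref{lem:resolvent} (see \cite{Mil05,BZ,RTTM05}) — to the existence of $C>0$ with
\begin{equation}\label{eq:resolventboundaryschro}
\|u\|_{L^2(\Gc)}^2\leq C\Big(\|(\Delta_\Gc+\lambda)u\|_{L^2(\Gc)}^2+\tfrac1\lambda|\partial_\nu u(v_1)|^2+\tfrac1\lambda|\partial_\nu u(v_3)|^2\Big),\qquad\forall\lambda\geq1,\ \forall u\in D(\Delta_\Gc),
\end{equation}
the range $\lambda<1$ being harmless since here $\Delta_\Gc$ is negative definite.

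Next I would carry out the symmetry decomposition of \eqref{eq:deffgschro}--\eqref{eq:defhschro}: to $u\in D(\Delta_\Gc)$ attach $f=(u^1-u^2)/2$ on $(-1,0)$, $g=(u^3-u^4)/2$ on $(0,\ell)$, and the symmetric part $h$, equal to $(u^1+u^2)/2$ on $(-1,0)$ and to $(u^3+u^4)/2$ on $(0,\ell)$, now living on $(-1,\ell)$. The vertex conditions become $f\in H^2\cap H^1_0(-1,0)$, $g\in H^2\cap H^1_0(0,\ell)$, $h\in H^2\cap H^1_0(-1,\ell)$ (the $H^2$-regularity of $h$ across $x=0$ being precisely Kirchhoff's condition), the $L^2$ and resolvent norms split over the three intervals, and
\[\partial_\nu u(v_1)=-\partial_x f(-1)-\partial_x h(-1),\qquad\partial_\nu u(v_3)=\partial_x g(\ell)+\partial_x h(\ell).\]
Thus \eqref{eq:resolventboundaryschro} couples three interval problems, with the crucial feature — absent in the internal case — that $h$ is observed only through the two \emph{sums} above, never through $\partial_x h$ alone.

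For the ``if'' implication I would argue by contradiction from a sequence $(u_n,\lambda_n)$ defeating \eqref{eq:resolventboundaryschro}; as in Theorem \ref{non_ggcc_sch_intro} one gets $\lambda_n\to+\infty$ and, solving the perturbed ODEs by variation of constants,
\[f_n(x)=\alpha_n\sin(\sqrt{\lambda_n}\,x)+o(\lambda_n^{-1/2}),\quad g_n(x)=\beta_n\sin(\sqrt{\lambda_n}\,x)+o(\lambda_n^{-1/2}),\quad h_n(x)=\gamma_n\sin\!\big(\sqrt{\lambda_n}(x+1)\big)+o(\lambda_n^{-1/2}),\]
uniformly with first derivatives, with $\alpha_n^2+\beta_n^2+\gamma_n^2$ bounded below. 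If $|\alpha_n|$ is bounded below along a subsequence, then $f_n(-1)=0$ forces $\sqrt{\lambda_n}=\pi p_n+o(\lambda_n^{-1/2})$, so $\cos\sqrt{\lambda_n}$ is bounded below and the observation relation $\alpha_n\cos\sqrt{\lambda_n}+\gamma_n\to0$ makes $|\gamma_n|$ bounded below too; then $h_n(\ell)=0$ forces $\sqrt{\lambda_n}(\ell+1)=\pi r_n+o(\lambda_n^{-1/2})$, whence $\sqrt{\lambda_n}\ell=\pi(r_n-p_n)+o(\lambda_n^{-1/2})$, and with $p_n\sim\sqrt{\lambda_n}/\pi\to+\infty$ one obtains $|\ell-(r_n-p_n)/p_n|=o(1/p_n^2)$, contradicting the bad approximability of $\ell$; the case $|\beta_n|$ bounded below is symmetric. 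If instead $\alpha_n,\beta_n\to0$, then $\partial_x f_n(-1)$ and $\partial_x g_n(\ell)$ are negligible in the $\lambda_n^{-1/2}$-weighted sense, so the two observation relations make $\partial_x h_n(-1)$ and $\partial_x h_n(\ell)$ negligible as well; then $h_n$ is a quasimode of the Dirichlet Laplacian on $(-1,\ell)$ with vanishing normal derivative at both ends, and the one-dimensional resolvent estimate, equivalent to boundary controllability of the Schrödinger equation on an interval (see \cite{Laurent}), forces $\|h_n\|\to0$, contradicting the lower bound on $\alpha_n^2+\beta_n^2+\gamma_n^2$. The main obstacle is precisely this step: one must organize the case analysis so that the summed observation functionals leave no possibility open except a near-resonance between the two incommensurable frequencies $\sqrt{\lambda_n}$ and $\sqrt{\lambda_n}\ell$, which is then killed by the Diophantine hypothesis.

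For the ``only if'' implication, assume $\ell$ is not badly approximable, so there are integers $(p_n),(q_n)$ with $q_n|q_n\ell-p_n|\to0$ and $q_n\to+\infty$ (if $\ell\in\QQ$, take $p_n=np$, $q_n=nq$). I would reuse the counterexample from the ``only if'' part of Theorem \ref{non_ggcc_sch_intro}: $u_n\equiv0$ on $e_1$ and $e_3$, $u_n(x)=\sin(q_n\pi x)$ on $e_2$, $u_n(x)=\tfrac{\ell q_n}{p_n}\sin(p_n\pi x/\ell)$ on $e_4$; one checks $u_n\in D(\Delta_\Gc)$, $\|u_n\|_{L^2}^2\geq\tfrac12$, $\|(\Delta_\Gc+q_n^2\pi^2)u_n\|_{L^2(\Gc)}\to0$, and, since $u_n$ vanishes identically on the controlled edges $e_1$ and $e_3$, $\partial_\nu u_n(v_1)=\partial_\nu u_n(v_3)=0$. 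Hence \eqref{eq:resolventboundaryschro} fails with $\lambda_n=q_n^2\pi^2$, and by the characterization above the equation is not exactly controllable in $H^{-1}(\Gc)$ at any time; when $\ell$ is rational one may even take genuine eigenfunctions supported on $e_2\cup e_4$.
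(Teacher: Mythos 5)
Your overall strategy is the right one and coincides with the paper's: reduce to a Hautus-type resolvent estimate, decompose by the symmetry of the X graph into $f$, $g$, $h$ on intervals, and extract the Diophantine obstruction; your ``only if'' counterexample (the modes supported on $e_2\cup e_4$ with vanishing normal derivatives at $v_1,v_3$) is exactly what is needed. The genuine gap is the resolvent criterion you start from. You work in $L^2(\Gc)$ with $u\in D(\Delta_\Gc)$ and a $\tfrac1\lambda$-weighted boundary observation, citing it as ``the analogue of Theorem \ref{lem:resolvent}''. But the boundary observation $u\mapsto\partial_\nu u(v_j)$ is \emph{not} an admissible observation operator for the Schrödinger group on $L^2(\Gc)$ (already on an interval, $\int_0^T|\partial_x u(t,0)|^2\dd t$ controls the $H^1$-norm of the data, not the $L^2$-norm), so the Hautus tests of \cite{Mil05,BZ,RTTM05} do not apply in that setting and your estimate \eqref{eq:resolventboundaryschro} is not a citation but a claim requiring proof. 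The correct framework — the one the paper uses — is the state space $X=H^1_0(\Gc)$, $D(\mathcal A)=D(|\Delta_\Gc|^{3/2})$, and the unweighted estimate $\|u\|_{H^1_0}^2\leq C(\|(\Delta_\Gc+\lambda)u\|_{H^1_0}^2+|\partial_x u(v_1)|^2+|\partial_x u(v_3)|^2)$. This changes the mechanics of your quasimode analysis: one runs the whole argument on $\partial_x f_n$, $\partial_x g_n$, $\partial_x h_n$ (cosine profiles), and the frequency quantization in your Step 3 must come from the \emph{second-derivative} boundary conditions $\partial_{xx}f_n(-1)=\partial_{xx}g_n(\ell)=0$ encoded in $D(|\Delta_\Gc|^{3/2})$, not from the Dirichlet conditions $f_n(-1)=0$, $h_n(\ell)=0$ that you invoke. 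A second, smaller gap: dismissing $\lambda<1$ ``since $\Delta_\Gc$ is negative definite'' only handles $\lambda\leq 0$; the spectrum of $-\Delta_\Gc$ may meet $(0,1)$, so low frequencies require a unique continuation argument for eigenfunctions with $\partial_\nu u(v_1)=\partial_\nu u(v_3)=0$ (the paper does this via Cauchy--Lipschitz and \cite[Proposition 6.6.4]{TW09}, restricting to $\lambda\geq M$).

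Modulo transporting everything to the derivative level, your Diophantine analysis is sound and in fact organized a bit differently from the paper's, in an interesting way: the paper first proves that \emph{both} $|\alpha_n|$ and $|\beta_n|$ are bounded below (your Case C argument, done twice) and then gets the two quantizations $\sqrt{\lambda_n}\approx\pi p_n$ and $\sqrt{\lambda_n}\approx\pi q_n/\ell$ from $f_n$ and $g_n$; you instead run a trichotomy and, in your Case A, pair $f_n$ with the symmetric part $h_n$, deducing $|\gamma_n|\geq c'$ from the observation relation $\alpha_n\cos\sqrt{\lambda_n}+\gamma_n\to0$ and quantizing $\sqrt{\lambda_n}(\ell+1)$. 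Both routes land on $|\ell-p/q|=o(1/q^2)$, since $\ell$ is badly approximable iff $\ell+1$ is; your variant is a legitimate alternative. One last caveat of precision: the error in the variation-of-constants formula is $o(\lambda_n^{-1/2})$ in $\Cc^0$ but only $o(1)$ for the first derivative, so ``uniformly with first derivatives'' is literally false; the quantities you actually use, namely $\lambda_n^{-1/2}\partial_x(\cdot)$ at the endpoints, do carry an $o(\lambda_n^{-1/2})$ error, which is what saves the argument — but this should be stated correctly.
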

The proof of Theorem \ref{th:boundarycontrolSchro} is an adaptation of the one of Theorem \ref{non_ggcc_sch_intro}, but for the sake of completeness we provide it in Appendix \ref{sec:boundaryschroproofNoGGCC}.\\

We finally go back to the heat equation where the following result holds, as a consequence of Theorem \ref{th:schmidt} and \cite{Mil06}, that allows to pass from an exact controllability result for the wave equation to a small-time null-controllability result for the heat equation.
\begin{theorem}
\label{th:boundarycontrolheatpos}
If $\Gc$ is a tree and the set $\mathcal{C}$ contains all the exterior vertices, except at most one, then for every $T>0$ the heat equation is null-controllable at time $T$ in the state space $H^{-1}(\Gc)$ with boundary controls in $L^2(0,T)$.
\end{theorem}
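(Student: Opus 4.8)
The plan is to obtain Theorem \ref{th:boundarycontrolheatpos} as a corollary of Schmidt's controllability result for the wave equation (Theorem \ref{th:schmidt}) combined with a control transmutation argument, exactly parallel to the way Theorem \ref{th:boundarycontrolSchropos} was deduced for the Schr\"odinger equation. The first remark is that, in the boundary control framework recalled at the beginning of Section \ref{sec:boundaryinternalpdes}, the hypothesis that $\Gc$ is a tree and that $\mathcal{C}$ contains all exterior vertices but at most one is precisely the statement of (GGCC) for boundary control.

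\emph{Step 1.} I would first invoke Theorem \ref{th:schmidt}: under this geometric hypothesis there is a finite time $T_0 > 0$ such that the boundary-controlled wave equation on $\Gc$, with Dirichlet condition imposed at the (at most one) uncontrolled exterior vertex and inputs $h_v \in L^2(0,T_0)$ prescribed at the vertices $v \in \mathcal{C}$, is exactly controllable in the energy-type space $L^2(\Gc) \times H^{-1}(\Gc)$. Equivalently, the conservative group generated by $\Delta_\Gc$ (Dirichlet at the uncontrolled vertex, the vertices of $\mathcal{C}$ acting as boundary inputs) is exactly controllable at the finite time $T_0$.

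\emph{Step 2.} I would then apply the abstract control transmutation principle of Miller \cite{Mil06} --- the very same device that converts exact controllability of the wave equation into small-time null-controllability of the heat equation, and that was already used in the second proof of Theorem \ref{prop:nullcontrolheat_intro}. It asserts that once a conservative system with an admissible (possibly unbounded) control operator is exactly controllable at some finite time, the parabolic system obtained by replacing $\partial_t^2$ by $\partial_t$ is null-controllable in arbitrarily small time, with a control cost behaving like $e^{C/T}$ as $T \to 0^+$. Applied to the boundary-controlled wave equation of Step 1, this yields that for every $T > 0$ and every $u_0 \in H^{-1}(\Gc)$ there are boundary controls $(h_v)_{v \in \mathcal{C}}$ in $L^2(0,T)$ steering the corresponding heat solution to $0$ at time $T$, which is the claim; the correct state space $H^{-1}(\Gc)$ --- the same one appearing in Theorem \ref{lem:boundarycontrolheat} --- is the one in which the velocity component of the wave state lived, as is natural for $L^2$ boundary data.

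The geometry being entirely taken care of by Schmidt's theorem, the only point that genuinely requires attention is the functional-analytic bookkeeping: one must recast the boundary-controlled wave and heat problems on $\Gc$ in the abstract form demanded by \cite{Mil06}, verify the admissibility of the boundary input operator for both semigroups, and check that the transmuted state space is exactly $H^{-1}(\Gc)$, consistently with Theorem \ref{lem:boundarycontrolheat}. Should one prefer to avoid the abstract machinery, an equivalent route is to establish directly the observability inequality dual to the null-controllability of the adjoint heat equation, by combining Schmidt's wave observability with a Lebeau--Robbiano spectral estimate; but the transmutation argument is both shorter and keeps the exposition in step with the Schr\"odinger case of Theorem \ref{th:boundarycontrolSchropos}.
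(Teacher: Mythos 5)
Your proposal is correct and is exactly the argument the paper gives: Theorem \ref{th:boundarycontrolheatpos} is stated there as a direct consequence of Schmidt's boundary exact controllability of the wave equation (Theorem \ref{th:schmidt}) combined with Miller's transmutation method from \cite{Mil06}. The paper also mentions, as you do implicitly via Theorem \ref{prop:nullcontrolheat_intro}, the alternative route through internal control plus the standard internal-to-boundary passage of \cite[Theorem 2.2]{AK11}.
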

Note also that one can deduce Theorem \ref{th:boundarycontrolheatpos} by using Theorem \ref{prop:nullcontrolheat_intro} and the general strategy passing from the internal control result to a boundary control result, see for instance \cite[Theorem 2.2]{AK11}. By recalling Theorem \ref{lem:boundarycontrolheat}, i.e. \cite[Corollary 8.6]{DZ06} that establishes the possibility of controlling the heat equation in arbitrary time in some specific star-shaped graph with only one boundary control, we have that (GGCC) is a sufficient but not necessary condition for the null-controllability of the heat equation with boundary controls. Therefore, the situation is analogue to the case of internal controls, see Theorem \ref{prop:nullcontrolheat_intro} and Theorem \ref{prop:ggccnotnecessaryheat_intro}.

\medskip


\appendixtitleon
\appendixtitletocon

\begin{appendices}

\section{Diophantine approximation}\label{diophantine}

As we can see in this article, the speed with which a real number can be approximated by rational numbers can have a great influence in mathematical studies, particularly in control theory. Studying the speed of convergence of the rational approximation is the purpose of the theory of Diophantine approximations.
Based on the pigeonhole principle, Dirichlet proves in 1842 the following result.
\begin{theorem}[Dirichlet's approximation theorem]\label{th_Dirichlet_hist}
For any real number $\alpha$ and any $N\in\NN^*$, there exist integers $p$ and $q$ with $1\leq q\leq N$, such that 
$$ \left|\alpha - \frac {p}q\right| < \frac 1{qN}.$$
\end{theorem}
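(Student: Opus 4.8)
The plan is to deploy the pigeonhole principle on the fractional parts of the first $N+1$ integer multiples of $\alpha$. Write $\{x\}\in[0,1)$ for the fractional part of $x\in\RR$ and $\lfloor x\rfloor$ for its integer part, so that $x=\lfloor x\rfloor+\{x\}$. First I would consider the $N+1$ real numbers $\{0\cdot\alpha\},\{1\cdot\alpha\},\dots,\{N\cdot\alpha\}$, all lying in the interval $[0,1)$, and I would partition $[0,1)$ into the $N$ half-open boxes $I_j:=\bigl[\tfrac jN,\tfrac{j+1}N\bigr)$, $j=0,\dots,N-1$.

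Since $N+1$ numbers are distributed among $N$ boxes, two of them must fall into the same box: there exist integers $0\le a<b\le N$ and an index $j$ with $\{a\alpha\}\in I_j$ and $\{b\alpha\}\in I_j$. As $I_j$ has length $\tfrac1N$ and is half-open, this forces $\bigl|\{b\alpha\}-\{a\alpha\}\bigr|<\tfrac1N$. I would then set $q:=b-a$, so that $1\le q\le N$, and $p:=\lfloor b\alpha\rfloor-\lfloor a\alpha\rfloor\in\ZZ$. From $k\alpha=\lfloor k\alpha\rfloor+\{k\alpha\}$ one gets $q\alpha-p=\{b\alpha\}-\{a\alpha\}$, hence $|q\alpha-p|<\tfrac1N$; dividing by $q\ge1$ yields $\bigl|\alpha-\tfrac pq\bigr|<\tfrac1{qN}$, which is the desired inequality.

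There is honestly no serious obstacle here — the argument is elementary and self-contained. The only points that deserve a line of care are the use of \emph{half-open} boxes (so that the conclusion is the strict inequality $<$ appearing in the statement, rather than the weaker $\le$ that closed boxes would give) and the harmless degenerate situations, e.g. when $\{a\alpha\}=\{b\alpha\}$ exactly, in which case the difference is $0<\tfrac1N$ and the conclusion still holds, or when $\alpha\in\ZZ$, where one may simply take $q=1$, $p=\alpha$. I would also remark in passing that running the same counting with $d$ reals $\alpha_1,\dots,\alpha_d$ and the cube $[0,1)^d$ cut into $N^d$ sub-cubes of side $\tfrac1N$ produces the simultaneous form of the statement, which is the version invoked in Proposition~\ref{coroprop_Dirichlet}.
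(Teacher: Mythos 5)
Your proof is correct and complete: the pigeonhole argument on the fractional parts $\{0\cdot\alpha\},\dots,\{N\cdot\alpha\}$ distributed among the $N$ half-open boxes $\bigl[\tfrac jN,\tfrac{j+1}N\bigr)$ is exactly the classical argument the paper alludes to when it states the theorem (without proof) as Dirichlet's 1842 result ``based on the pigeonhole principle.'' Your closing remark about cutting $[0,1)^d$ into $N^d$ sub-cubes is also the right way to obtain the simultaneous version (Theorem \ref{th_Dirichlet}), which is what the paper actually uses via Proposition \ref{coroprop_Dirichlet}.
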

In particular, there exist sequences $(p_n)$ and $(q_n)$ of integers such that
\begin{equation}\label{eq_approx1}
q_n\xrightarrow[~n\longrightarrow+\infty~]{}+\infty~~~\text{ and }~~~\left|\alpha - \frac {p_n}{q_n}\right| < \frac 1{q_n^2}.
\end{equation}
Indeed, if $\alpha=p/q$, we can choose $p_n=np$ and $q_n=nq$ and if $\alpha\not\in\QQ$, we apply Theorem \ref{th_Dirichlet_hist} with increasing $N$ and notice that $1/N\leq 1/q$.
We recall that, if $\alpha$ is irrational, the “best approximations" $p_n/q_n$ are provided by the expansion in {\it continuous fraction}
\begin{equation}\label{eq_continuousfraction}
\alpha=a_0+\frac 1{a_1+\frac 1{a_2+\frac 1{a_3+\frac 1{a_4+\dots}}}}=:[a_0;a_1,a_2,a_3,a_4,\dots],
\end{equation}
see for example \cite{HW}. The coefficients $(a_n)$ are called the {\it partial quotients}.

\medskip

In the present article, we need the “simultaneous version” of Theorem \ref{th_Dirichlet_hist}. This very classical result can be found in many textbooks, see for example \cite{Bugeaud} or \cite[Theorem VII]{Cassels}.
\begin{theorem}\label{th_Dirichlet}
For any real numbers $\alpha_1$,\ldots, $\alpha_d$ and any $N\in\NN^*$, there exist integers $p_j$ and $q$ with $1\leq q\leq N^d$, such that 
\begin{equation}\label{eq_th_Dirichlet}
\forall j=1,\ldots, d~,~~\left|\alpha_j - \frac {p_j}q\right| \leq \frac 1{qN}.
\end{equation}
\end{theorem}
Notice that \eqref{eq_th_Dirichlet} provides a simultaneous approximation of $d$ numbers with an error of order $1/q_n^{1+1/d}$, generalizing the bound $1/q_n^2$ of \eqref{eq_approx1}. We use in this article the slightly different version stated in Proposition \ref{coroprop_Dirichlet}. This last result is an immediate corollary of the classical theorem.
\begin{proof}[Proof of Proposition \ref{coroprop_Dirichlet}]
We apply Theorem \ref{th_Dirichlet} with the numbers $(\alpha_i)$. If the resulting $q$ is larger than $\lfloor \sqrt{N}\rfloor$, Proposition \ref{coroprop_Dirichlet} is proved. If not, we set $k=\lfloor \sqrt{N}\rfloor$, $q'=kq$ and $p'_j=kp_j$. Obviously, $\frac{p'_j}{q'}=\frac{p_j}q$, $q'\geq \lfloor \sqrt{N}\rfloor$ and $q'= \lfloor \sqrt{N}\rfloor q \leq \lfloor \sqrt{N}\rfloor^2\leq N^d$. It remains to notice that the bound in \eqref{eq_th_Dirichlet} becomes $\frac 1{qN}=\frac {\lfloor \sqrt{N}\rfloor}{q'N}\leq \frac 1{q'\sqrt{N}}$.
\end{proof}

\medskip

In the view of the bound \eqref{eq_approx1} provided by Dirichlet's approximation theorem, it is natural to study the speed of convergence of the rational approximations. Actually, in the present paper, we are rather interested in numbers with a bad approximation rate, which leads to the following definition.
\begin{defi}\label{defi_sigma_approx}
Let $\sigma>0$. We say that a number $\alpha\in\RR$ is {\bf at most $\sigma-$approximable} if there exists $C=C(\sigma)>0$ such that, for all $(p,q)\in\ZZ\times\NN^*$, 
$$\left|\alpha-\frac pq\right|\geq \frac C{q^{\sigma}}.$$
If $\alpha$ is at most $\sigma-$approximable for some $\sigma > 0$, we say that it is {\bf at most polynomially approximable}.
\end{defi}
The above definitions are not commonly accepted ones but they are convenient in our context. However, the special case of numbers that are “as far from rationals as possible" has a well-accepted denomination.
\begin{defi}\label{defi_badly_approx}
A real number $\alpha\in\RR$ is said to be {\bf badly approximable} if there exists $C>0$ such that, for all $(p,q)\in\ZZ\times\NN^*$, 
$$\left|\alpha-\frac pq\right|\geq \frac C{q^2}.$$
\end{defi}
In other words, badly approximable numbers are numbers that are at most $2$-appro\-xi\-mable and due to the bound \eqref{eq_approx1}, this is the worst possible rate. In 1844, Liouville proves one of his most famous theorems in \cite{Liouville}. 
\begin{theorem}[Liouville's theorem on diophantine approximation]\label{th_Liouville}
If $\alpha$ is an algebraic number of degree $d\geq 2$, that is that $\alpha$ is an irrationnal number which is a root of a polynomial of $\ZZ[X]$ of degree $d$, then there exists $C(\alpha)>0$ such that 
\begin{equation}\label{eq_Liouville}
\forall p\in\ZZ~,~~\forall q\in\NN^*~,~~\left|\alpha-\frac pq \right|\geq \frac {C(\alpha)}{q^d}.
\end{equation}
\end{theorem}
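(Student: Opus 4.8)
The plan is to combine the mean value theorem with the elementary fact that a polynomial with integer coefficients, evaluated at a rational number $p/q$, is either zero or has absolute value at least $q^{-d}$.

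First, I would not work with the given degree-$d$ polynomial directly, but rather with the minimal polynomial $P\in\ZZ[X]$ of $\alpha$, taken primitive; it has some degree $d'$ with $2\le d'\le d$ (it is $\ge 2$ because $\alpha$ is irrational) and satisfies $P(\alpha)=0$. Since $\alpha$ has degree $\ge 2$, this $P$ is irreducible over $\QQ$, hence by Gauss's lemma irreducible over $\ZZ$, and in particular has no rational root. Write $P(X)=\sum_{k=0}^{d'}a_kX^k$ with $a_k\in\ZZ$ and $a_{d'}\neq 0$.

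Second, for any $p\in\ZZ$ and $q\in\NN^*$, the quantity $q^{d'}P(p/q)=\sum_{k=0}^{d'}a_k\,p^k q^{d'-k}$ is an integer, and it is nonzero because $P$ has no rational root; hence $|P(p/q)|\ge q^{-d'}\ge q^{-d}$. Third, I would distinguish two cases. If $|\alpha-p/q|\ge 1$, the desired inequality holds for any $C(\alpha)\le 1$, since $q^d\ge 1$. Otherwise $p/q$ lies in the compact interval $I=[\alpha-1,\alpha+1]$; set $M:=\max_{x\in I}|P'(x)|$, which is finite and strictly positive (the polynomial $P'$ is nonzero of degree $d'-1\ge 1$, so it has finitely many zeros and cannot vanish on all of $I$). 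By the mean value theorem there is $\xi$ lying strictly between $\alpha$ and $p/q$, in particular $\xi\in I$, with
\[
|P(p/q)|=\bigl|P(p/q)-P(\alpha)\bigr|=|P'(\xi)|\,\Bigl|\alpha-\tfrac pq\Bigr|\le M\,\Bigl|\alpha-\tfrac pq\Bigr|.
\]
Combined with the lower bound $|P(p/q)|\ge q^{-d}$, this yields $\bigl|\alpha-\tfrac pq\bigr|\ge (Mq^d)^{-1}$.

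Finally, taking $C(\alpha):=\min(1,1/M)$ proves the estimate \eqref{eq_Liouville}. The only delicate point — and the single place where the hypothesis $d\ge 2$ is genuinely used — is the assertion that the minimal polynomial $P$ has no rational root, which is immediate from its irreducibility over $\QQ$ together with $\deg P\ge 2$; everything else is elementary calculus and bookkeeping. One could alternatively bypass the minimal polynomial by starting from the given degree-$d$ polynomial and dividing out its (finitely many) rational roots, but invoking irreducibility of the minimal polynomial is the cleanest route.
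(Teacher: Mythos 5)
Your proof is correct: it is the standard classical argument for Liouville's theorem (pass to the minimal polynomial, note that $q^{d'}P(p/q)$ is a nonzero integer so $|P(p/q)|\geq q^{-d'}\geq q^{-d}$, and compare with the mean value theorem bound $|P(p/q)|\leq M\,|\alpha-p/q|$ on the interval $[\alpha-1,\alpha+1]$). The paper does not actually prove this statement --- it only cites Liouville's original 1844 note --- so there is no in-paper argument to compare against; your write-up correctly supplies the missing proof, including the two points that are sometimes glossed over (why $M>0$, and why the hypothesis $d\geq 2$ guarantees that $P$ has no rational root).
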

In particular, for $d=2$, this result implies that any algebraic number of degree two, as $\sqrt{2}$, is a badly approximable number. It also implies that any irrational algebraic number is at most polynomially approximable. Liouville uses this fact to construct the first example of transcendental number, the famous Liouville's constant $\sum_{n\geq 1} {10^{-n!}}$. Later, the works of Thue, Siegel, Dyson and Roth show that the lower bound $C(\alpha)/q^d$ of \eqref{eq_Liouville} can be replaced by $C(\alpha,\varepsilon)/q^{2+\varepsilon}$ for any $\varepsilon>0$, see \cite{Roth}. However, it is still  unknown in general if algebraic numbers of degrees other than $d=2$ may be badly approximable numbers or not, even if numerical experiments suggest that this is not the case.
At least, it is known that the class of badly approximable numbers is larger than the set of algebraic numbers of degree $2$: it is actually the set of irrational numbers $\alpha$ whose sequence of partial quotients $(a_n)$, coming from the expansion in continued fraction \eqref{eq_continuousfraction}, is bounded, see \cite[Theorem 6, p.24]{Lang} or \cite{HW} for example.

\medskip

To conclude this brief and incomplete review, let us summarise some interesting facts to bear in mind when considering our results.
\begin{itemize}
\item {\it The algebraic numbers of degree $2$ are examples of badly approximable numbers but there exist many other badly approximable numbers}. For example, it is sufficient to consider a sequence $(a_n)$ which is quasi-periodic but not periodic. Indeed, Lagrange has shown that algebraic numbers of degree $2$ are exactly the irrational numbers such that the sequence $(a_n)$ is ultimately periodic, see for example \cite[Chapter 10]{HW}. Surprisingly, there even exist numbers that are both transcendental and badly approximable, as $\sum_{n\in\NN} 3^{-2^n}$, see \cite{Shallit}.

\item {\it There exist examples of non-badly approximable numbers that are less artificial than Liouville's constant}. Indeed, as shown by Euler himself in 1737, the expansion of “his" number in continued fractions is 
$$e=[2;1,2,1,1,4,1,1,6,1,1,8,1,1,10,1,1,12,\dots].$$
Actually, the best rate of approximation of $e$ is of order $\frac{\ln(\ln q)}{q^2\ln q}$ as shown in \cite{Davis}. In particular, notice that $e$ is at most $\sigma-$approximable for any $\sigma>2$.
The case of $\pi$, on the other hand, seems still open. 
\item {\it Almost all numbers are not badly approximable but almost all are at most polynomially approximable}. Indeed, Borel and Bernstein have shown that the set of badly approximable numbers is a set of Lebesgue measure zero, see \cite[Chapter 11]{HW}. On the contrary, Khinchin proves in 1926 that, as soon as $\sigma>2$, the set of numbers that are at most $\sigma-$approximable is of full Lebesgue measure, see \cite[p. 120]{Cassels}.
\end{itemize}

\section{Reflexion and transmissions of waves at an interior vertex}\label{section_waves_io}

The one-dimensional wave equation has well-known solutions, which can be expressed using d’Alembert’s formula. In particular, traveling waves of the form $\varphi(x\pm t)$ represent valid solutions. In this appendix, we examine the behavior of such traveling wave solutions as they approach a vertex of the graph. Our goal is to analyze how the wave interacts with the vertex and to understand how the wave’s energy is distributed or split among the edges connected to the vertex. This investigation provides deeper insight into the dynamics at graph vertices, which is crucial for studying wave propagation and controllability on graphs. 

\medskip

We consider the following toy model: let $I_i=(0,+\infty)$, for $i=1$, $2$ and $3$, be three infinite edges, connected at their endpoint $x=0$. Let $\varphi\in\Cc^2(\RR,\RR)$. We consider $u=(u_1,u_2,u_3)$ defined on the three edges infinite star-graph by 
$$\forall x\in I_i~,~~\forall t\in\RR~,~~ u_i(x,t)=\alpha_i \varphi(x+t) + \beta_i \varphi(-x+t),$$
where $\alpha_i\in\RR$ is the amplitude of the wave arriving to the vertex $x=0$ and $\beta_i\in\RR$ is the amplitude of the outgoing wave. To ensure that $u\in\Cc^2(\Gc,\RR)$ is a solution of 
$$\left\{\begin{array}{l}
\partial_{t}^2 u_i(x,t)=\partial_{x}^2 u_i(x,t) ~~~ ~~~~~ ~~\forall x\in I_i,\ i\in\{1,2,3\},\\
u_1(0,t)=u_2(0,t)=u_3(0,t),\\
\sum_{i=1}^3 \partial_{x} u_i(0,t)=0,\end{array}\right.
$$
we simply have to require
$$\alpha_1+\beta_1=\alpha_2+\beta_2=\alpha_3+\beta_3~~~\text{ and }~~~\sum_{i=1}^3 \alpha_i = \sum_{i=1}^3 \beta_i.$$
Obviously, for any choice of the ingoing coefficients $\alpha_i$, the outgoing coefficients $\beta_i$ are determined uniquely. Several choices of the coefficients provide interesting particular solutions that may be relevant to understand the wave equation on graphs, see Figure \hyperref[fig_waves_io]{6}.

\begin{figure}[htp]
\begin{center}
\resizebox{\textwidth}{!}{\input{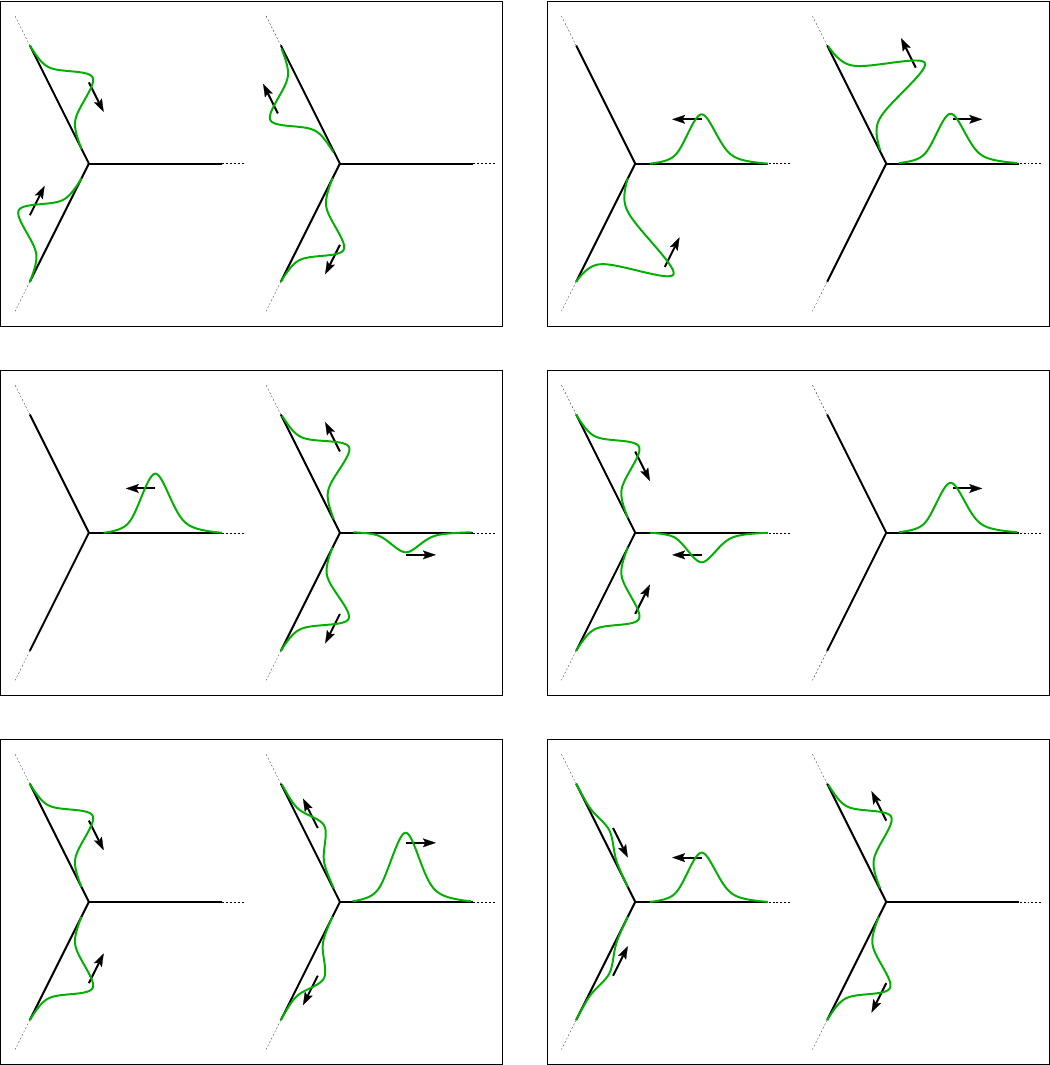_t}}
\end{center}
\caption{\it Waves are arriving at a vertex and generate outgoing waves. In each box, the numbers indicate the amplitudes of ingoing waves (left) and the amplitudes of the outgoing waves (right). Second and third lines: the right cases are the same as the left ones, using the reversibility of the time in the wave equation. All the situations can be generated by combining these cases, using rotations or other symmetries.}\label{fig_waves_io}
\end{figure}

\medskip 

Choosing $\varphi$ as a very localised bump, we can construct exact solutions that stay for some times in a unobserved area, outside the control set $\omega$. As an elementary example, consider an unobserved part of the graph being a star-shaped graph with three edges $e_1$, $e_2$ and $e_3$ of lengths $\ell_1>\ell_2>\ell_3$. Figure \hyperref[fig_waves_io_2]{7} represents unobserved solutions of the wave equation in this part of the graph, using the above simple construction. We can see that the time needed to observe the solutions of the wave equation should be greater than $2\ell_2$ and than $(\ell_1+\ell_3)$. Notice that none of these times correspond to the length of the shortest path, neither to the length of the longest one.

\begin{figure}[htp]
\begin{center}
\resizebox{\textwidth}{!}{\input{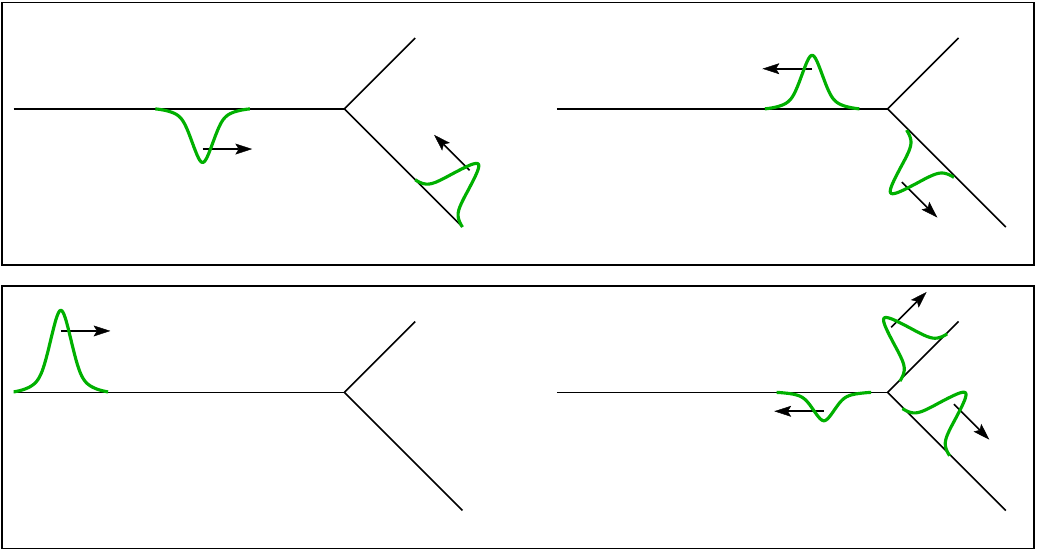_t}}
\end{center}
\caption{\it Particular solutions of the wave equation in a part of a graph where three edges meet. The first solution use the odd symmetry with respect to the vertex. By considering bumps as localized as needed, the solution remains in this part of the graph during a time as close to $2\ell_2$ as wanted. The second solution is created by sending a wave from the longest edge  and remains in this part of the graph during a time as close to $(\ell_1+\ell_3)$ as wanted.}\label{fig_waves_io_2}
\end{figure}


\section{Example of optimal time for the observation of the free waves}\label{sec_optimal_time}

We consider here the particular case of Figure \hyperref[fig_3star]{5}. The start-graph $\Gc$ has a $\bot$-shape, with an horizontal basis being the segment $(-1,1)$ and a vertical edge of length $1$ being attached at $x=0$. A solution of the wave equation on $\Gc$ is decomposed in a function $x\mapsto u(x)$ defined on the horizontal basis $(-1,1)$ and a function $y\mapsto v(y)$ in the vertical edge. The free wave equation is then given by
\begin{equation}\label{eq_Tshape}
\left\{\begin{array}{ll} \partial_{t}^2 u(x,t)=\partial_{x}^2 u(x,t)~~~&x\in (-1,0)\cup(0,1),\, t>0,\\
\partial_{t}^2 v(y,t)=\partial_{x}^2 v(y,t)~~~&y\in (0,1),\, t>,0\\
u(-1,t)=u(1,t)=v(1,t)=0 & t>0,\\
u(0,t)=v(0,t) & t>0,\\
\partial_x u(0^+,t) - \partial_x u(0^-,t) + \partial_y v(0^+,t)=0 & t>0.
\end{array}\right. 
\end{equation}
The observation of the solution is made in the set 
$$\omega:= \{x\in (-1,-\ell_1)\cup (\ell_2,1)\}\,\cup\,\{y\in (\ell_3,1)\}$$
so that the unobserved part consists in three edges of lengths $\ell_1$, $\ell_2$ and $\ell_3$ meeting at one vertex. To fix the notation, we also assume that $\ell_1>\ell_2>\ell_3$. 
We underline that the specific shape of $\Gc$, three edges of the same length, will not be important in the analysis. The important shape is the unobserved part of the star-graph, so our discussion can be adapted to any graph having for unobserved part a forest of trees with three branches.

\medskip

In this setting, we have a first result showing that the time 
$$T^*:=\max(2\ell_2,\ell_1+\ell_3)$$
is necessary for the observation.
\begin{prop}
Let $T$ be a time strictly less than $T^*$. Then, there exists a non-zero solution $\varphi:=(u,v)\in\Cc^1([0,T],L^2(\Gc))\cap \Cc^0([0,T],H^1_0(\Gc))$ of \eqref{eq_Tshape} with $$\int_0^T \int_{\omega} |\partial_t \varphi(x,t)|^2 \dd x \dd t=0.$$
\end{prop}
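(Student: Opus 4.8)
The plan is to exhibit the required non-zero solution \emph{explicitly}, as a superposition of a finite number of very localized travelling bumps propagating along the edges at speed $1$ and interacting at the central vertex of the $\bot$-graph according to the reflection/transmission rules recalled in Appendix \ref{section_waves_io} (and illustrated in Figure \hyperref[fig_waves_io_2]{7}). By finite speed of propagation, as long as all the bumps stay strictly inside the unobserved part $\{x\in(-\ell_1,0)\cup(0,\ell_2)\}\cup\{y\in(0,\ell_3)\}$, the solution $\varphi=(u,v)$ vanishes identically on $\omega\times[0,T]$, hence in particular $\partial_t\varphi\equiv 0$ there. Since $T^*=\max(2\ell_2,\ell_1+\ell_3)$, it suffices to treat two cases: $T<2\ell_2$, and $2\ell_2\le T<\ell_1+\ell_3$ (in which case $T^*=\ell_1+\ell_3$). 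Throughout, fix a non-zero bump $\psi\in\Cc^\infty_c(\RR)$ supported in $(-\tfrac{\varepsilon}{4},\tfrac{\varepsilon}{4})$, with $\varepsilon>0$ chosen small below.

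\emph{Case $T<2\ell_2$ (odd-symmetric construction).} Choose $\varepsilon>0$ with $2\ell_2-\varepsilon>T$. At $t=0$, place on the two longest edges $e_1$ and $e_2$ two opposite copies of $\psi$, both at distance $\ell_2-\varepsilon$ from the central vertex and both moving toward it, and take zero data on $e_3$ and on $\omega$. By the transmission analysis of Appendix \ref{section_waves_io} with incoming amplitudes $\alpha_1=1,\ \alpha_2=-1,\ \alpha_3=0$ (whence $\beta_1=-1,\ \beta_2=1,\ \beta_3=0$), the two bumps reach the vertex simultaneously at time $\ell_2-\varepsilon$, cancel there, reflect with a sign flip, and leave nothing on $e_3$; for $t>\ell_2-\varepsilon$ they travel back outward. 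The bump on $e_2$ reaches the interface $\{x=\ell_2\}$ with $\omega$ only at time $2\ell_2-\varepsilon>T$, and the bump on $e_1$ reaches its interface $\{x=-\ell_1\}$ only at time $\ell_1+\ell_2-\varepsilon>T$ (as $\ell_1\ge\ell_2$). Hence both bumps remain strictly inside the unobserved part on $[0,T]$.

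\emph{Case $2\ell_2\le T<\ell_1+\ell_3$ (wave from the longest edge).} Choose $\varepsilon>0$ with $\ell_1+\ell_3-\varepsilon>T$. At $t=0$, place a single copy of $\psi$ on the longest edge $e_1$, at distance $\ell_1-\varepsilon$ from the central vertex (i.e. near its interface with $\omega$), moving toward the vertex, and zero data elsewhere. By Appendix \ref{section_waves_io} with $\alpha_1=1,\ \alpha_2=\alpha_3=0$ (whence $\beta_1=-\tfrac13,\ \beta_2=\beta_3=\tfrac23$), at time $\ell_1-\varepsilon$ the bump splits into a reflected bump on $e_1$ and transmitted bumps on $e_2$ and $e_3$. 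The transmitted bump on $e_3$ reaches the interface $\{y=\ell_3\}$ at time $\ell_1-\varepsilon+\ell_3>T$; the one on $e_2$ reaches $\{x=\ell_2\}$ at time $\ell_1-\varepsilon+\ell_2>\ell_1+\ell_3-\varepsilon>T$; and the reflected bump on $e_1$ reaches $\{x=-\ell_1\}$ at time $2\ell_1-\varepsilon>\ell_1+\ell_3-\varepsilon>T$. So again all bumps remain strictly inside the unobserved part on $[0,T]$.

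In both cases the resulting $\varphi$ is smooth (because $\psi$ is), solves \eqref{eq_Tshape} — the Dirichlet conditions at the exterior vertices $x=\pm1,\ y=1$ holding trivially since $\varphi$ vanishes near them on $[0,T]$, and the continuity and Kirchhoff conditions at the central vertex holding by the very choice of the transmission coefficients — belongs to $\Cc^1([0,T],L^2(\Gc))\cap\Cc^0([0,T],H^1_0(\Gc))$, is non-zero, and satisfies $\partial_t\varphi\equiv 0$ on $\omega\times[0,T]$. The only delicate point is checking that the piecewise-in-time travelling-wave ansatz is a genuine solution \emph{across} the instants when a bump meets the central vertex, i.e. that continuity and Kirchhoff's condition hold there for all $t$ and not merely formally; this is precisely the content of the elementary computation carried out in Appendix \ref{section_waves_io}, which we invoke. (The same argument applies verbatim whenever the unobserved part of the graph is a disjoint union of three-branch stars.)
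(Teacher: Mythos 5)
Your proof is correct and takes essentially the same route as the paper: both cases are settled by the explicit travelling-bump solutions of Appendix \ref{section_waves_io} (the odd two-bump reflection at the vertex for $T<2\ell_2$, and the single bump launched from the longest edge, split with coefficients $(-\tfrac13,\tfrac23,\tfrac23)$, for $T<\ell_1+\ell_3$), with the same arrival-time bookkeeping. The only differences are cosmetic, namely where the margin $\varepsilon$ is placed in the initial localization of the bumps.
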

\begin{proof}
It is sufficient to use the laws of transmission of Appendix \ref{section_waves_io} and
the solutions of Figure \hyperref[fig_waves_io_2]{7}. 

\medskip

If $T$ is strictly less than $2\ell_2$, we can consider a function $v\equiv 0$ and $u(x,t)=h(x+t)-h(-x+t)$ with $h$ a small bump localised in $(\ell_2-\varepsilon,\ell_2)$ with small $\varepsilon>0$. This solution $\varphi$ presents two symmetric bumps that travel horizontally. They will not enter the part $(-1,-\ell_1)$ nor $(\ell_2,1)$ before the time $2\ell_2-\varepsilon$.

\medskip

Now, assume that $T$ is strictly less than $\ell_1+\ell_3$. For small times $t$, we can consider a function $v\equiv 0$ and $u(x,t)=h(x-t)$ with $h$ a small bump localised in $(-\ell_1,-\ell_1+\varepsilon)$ with small $\varepsilon>0$. This solution $\varphi$ presents a bump travelling to the right until it reaches the vertex, at a time as close to $\ell_1$ as wanted. Then, it splits according to the laws of Appendix \ref{section_waves_io} and, due to the finite speed of propagation, it remains unobserved until the time $\ell_1+\ell_3-\varepsilon$.
\end{proof}

The second result shows the optimality of $T^*$.
\begin{prop}\label{prop_optimal}
Let $T$ be a time (strictly) larger than $T^*$. There exists a constant $C>0$ such that, for all solution $\varphi=:(u,v)\in\Cc^1([0,T],L^2(\Gc))\cap \Cc^0([0,T],H^1_0(\Gc))$ of \eqref{eq_Tshape}, we have
\begin{equation}\label{eq_prop_3star}
\|(\varphi_0,\varphi_1)\|_{H^1_0(\Gc)\times L^2(\Gc)}^2 \leq C \int_0^T \int_{\omega} |\partial_t \varphi(x,t)|^2 \dd x \dd t.
\end{equation}
\end{prop}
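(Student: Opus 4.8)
The plan is to establish the observability inequality \eqref{eq_prop_3star} by a multiplier argument tuned to the precise geometry of the $\bot$-graph, followed by the usual compactness-uniqueness procedure to absorb lower order terms. Since the general scheme of Proposition \ref{prop_obs_wave} already gives observability for any time larger than the length $\ell_1+\ell_2$ of the longest uncontrolled path, the point here is that on this specific graph one can do strictly better, namely reach down to $T^* = \max(2\ell_2, \ell_1+\ell_3)$. First I would reduce, exactly as in Step 1 of the proof of Proposition \ref{prop_obs_wave}, to smooth solutions $\varphi = (u,v)$ and it suffices to prove the weak observation
\begin{equation*}
\|(\varphi_0,\varphi_1)\|_{H^1_0(\Gc)\times L^2(\Gc)}^2 \leq C\left(\int_0^T\int_\omega |\partial_t\varphi|^2\,\dd x\,\dd t + \int_0^T\|\varphi(t)\|_{L^2(\Gc)}^2\,\dd t\right),
\end{equation*}
since the $L^2$ term is then removed by the compactness argument of Step 6 (the relevant invisible-solutions space is again finite dimensional and killed by propagation through the single interior vertex, using that $\Delta_\Gc$ is invertible — or, if $q=0$, by direct inspection that no nonzero Dirichlet eigenfunction of the $\bot$-graph vanishes on $\omega$, because such an eigenfunction restricted to the unobserved star would have to vanish together with its derivative at all three outer points).

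The heart of the matter is the weak observation on the unobserved star $e_1\cup e_2\cup e_3$, and here I would exploit the transmission laws described in Appendix \ref{section_waves_io} rather than the crude front-multiplier of Step 2. The key geometric fact is that an uncontrolled solution on a three-edge star decomposes, via d'Alembert, into left- and right-going waves on each edge, and the Kirchhoff conditions at the central vertex give the explicit scattering relations $\alpha_1+\beta_1=\alpha_2+\beta_2=\alpha_3+\beta_3$, $\sum\alpha_i=\sum\beta_i$. Using these, I would track the characteristics: any wave entering the star from the controlled side of $e_1$ (at distance $\ell_1$ from the vertex), of $e_2$ (distance $\ell_2$) or of $e_3$ (distance $\ell_3$) must, within time $T^*$, either exit through a controlled boundary or reappear in $\omega$. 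Concretely, a packet on $e_2$ making a symmetric U-turn at the vertex returns to $\omega$ after time $2\ell_2$; a packet travelling $e_1\to$ vertex $\to e_3$ (or $e_3\to$ vertex $\to e_1$) exits after time $\ell_1+\ell_3$; and the remaining combinations (e.g. entering on $e_1$, leaving on $e_1$ or $e_2$; or on $e_3$ leaving on $e_2$ or $e_3$) all take time $\leq \ell_1+\ell_2 \le$ … — wait, one must check these are all $\le T^*$, which they are since $\ell_1+\ell_3\ge\ell_1\ge\ell_2$ and $2\ell_2\ge\ell_2+\ell_3$. The clean way to turn this characteristic bookkeeping into an estimate is the sidewise energy / d'Alembert identity on each edge: from the observation of $|\partial_t\varphi|^2$ on the controlled portion of each edge over the time window $[\,-T/2,\tau_2+T/2\,]$ one recovers the two d'Alembert profiles on that edge on a slightly shorter window, then propagates them to the vertex, applies the scattering matrix (which is invertible: the outgoing $\beta_i$ are uniquely determined by the incoming $\alpha_i$ and conversely by time reversibility), and thereby reconstructs the full profiles on the unobserved edges. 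Adding the contributions of all three edges and using conservation of energy $E(t_1)=E(t_2)$ yields the weak observation with the constant $C$ depending only on $T-T^*$, $\ell_1,\ell_2,\ell_3$.

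The main obstacle I anticipate is making the characteristic analysis at the vertex genuinely quantitative: a packet reflects and transmits with amplitudes $\tfrac13, -\tfrac13, \tfrac23$ etc. (Figure \ref{fig_waves_io}), and after many bounces the energy gets redistributed among the three edges in a way that must be controlled uniformly. The resolution is that the scattering operator at a three-edge Kirchhoff vertex is unitary on the (left/right)-moving data, so no energy is created or lost; combined with the finite-speed-of-propagation bound $T>T^*$, only finitely many vertex interactions occur within $[0,T]$ on the relevant time intervals, and invertibility of each interaction lets one solve backwards for the data on the unobserved edges in terms of the observed traces. An alternative, perhaps cleaner, route that avoids counting bounces is to use the symmetry/odd-reflection trick as in Lemma \ref{lemma-X-wave}: decompose $\varphi$ on the star into the component that is odd under swapping the two "long'' directions of the basis and the rest, reducing to pointwise observation problems on intervals of lengths $2\ell_2$ and $\ell_1+\ell_3$ for which sharp observability at the stated times is classical; I would present this as the backbone of the argument and relegate the d'Alembert computations to a short lemma. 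Either way, once the weak observation on the star is in hand, the passage to \eqref{eq_prop_3star} is verbatim Steps 5 and 6 of Section \ref{suff}.
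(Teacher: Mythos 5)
Your second route --- the even/odd decomposition of $u$ with respect to $x\mapsto -x$ on the basis --- is exactly the paper's proof: the odd part $u_{\text{o}}$ decouples from $v$ and solves a Dirichlet wave equation on $(0,1)$ observed from $(\ell_2,1)$ in time $2\ell_2$, while $w=(u_{\text{e}},v)$ solves a wave equation on $(-1,1)$ with a factor-$2$ Kirchhoff condition at the origin (so not quite a ``classical'' interval problem, but the multiplier/compactness machinery of Proposition \ref{prop_obs_wave} reruns harmlessly) observed from $(-1,-\ell_1)\cup(\ell_3,1)$ in time $\ell_1+\ell_3$. The one step you gloss over is the recombination: writing $u_{\text{o}}(x)=u(x)-w(-x)$ on $(\ell_2,1)$ forces you to control $\partial_t w$ on $(-1,-\ell_2)$, which is strictly larger than the observed set $(-1,-\ell_1)$, and the paper absorbs this extra term by the conservation of the weighted energy of $w$, itself bounded by the $w$-observation --- so the final passage to \eqref{eq_prop_3star} is not quite ``verbatim Steps 5 and 6.'' Your first route (quantitative scattering bookkeeping at the vertex) is not what the paper does and carries exactly the uniformity issues you yourself flag.
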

\begin{proof}
The main trick is to split $u$ into the sum $u=u_{\text{e}}+u_{\text{o}}$ of its even and odd part by setting
$$\forall x\in [-1,1]~,~~u_{\text{e}}(x):=\frac{u(x)+u(-x)}2 ~~~\text{ and }~~~   u_{\text{o}}(x):=\frac{u(x)-u(-x)}2.$$
We notice that $u_{\text{o}}$ does not interact with $v$ and that both $u_{\text{e}}$ and $u_{\text{o}}$ are determined by their values on $[-1,0]$ or $[0,1]$ only. We can split the problem in two equations $$\left\{\begin{array}{ll}  \partial_{t}^2  u_{\text{o}} = \partial_{x}^2 u_{\text{o}}, &\text{ in }(0,1),\\
u_{\text{o}}(0,t)=u_{\text{o}}(1,t)=0,\end{array}\right.$$
and 
$$\left\{\begin{array}{ll}   
\partial_{t}^2  w = \partial_{x}^2 w, ~~ & \text{ in }(-1,0)\cup (0,1),\\
w(-1,t)=w(1,t)=0, & t>0,\\
w(0^-,t)=w(0^+,t), & t>0,\\
2 \partial_x w(0^-,t)=\partial_x w(0^+,t), & t>0,
\end{array}\right.$$
with
$$w(x):=\left\{\begin{array}{ll} u_{\text{e}}(x),~~&\text{ if }x\in (-1,0),\\ v(x), & \text{ if }x\in (0,1).\end{array}\right.$$
Next, we reproduce the strategy of the proof of Proposition \ref{prop_obs_wave} on each equation. 
The only point where we have to be careful is the coefficient $2$ in the transmission condition at $0$ for $w$. We will explain why it is harmless. The other details of the proof will be skipped since they are the same as the ones of the proof of Proposition \ref{prop_obs_wave}.

\medskip

First notice that, obviously, the energies 
$\|(\varphi,\partial_t \varphi)(t)\|_{H^1_0\times L^2}^2$ and 
$\|(u_{\text{o}},\partial_t u_{\text{o}})(t)\|_{H^1_0\times L^2}^2$ are independent of the time because we are considering solutions of classical free waves equation. So this is also the case of the energy
\begin{equation}\label{eq_3star1}
2 \int_{-1}^0 ( |\grad w|^2 +|\partial_t w|^2) \dd x + \int_0^1 ( |\grad w|^2 +|\partial_t w|^2) \dd x.
\end{equation}
This can be checked directly or by using the relations between $\varphi=(u,v)$, $u_{\text{o}}$ and $w$.

\medskip

Arguing as in the proof of Proposition \ref{prop_obs_wave}, we obtain an estimate 
\begin{equation}\label{eq_3star2}
\|(w,\partial_t w)(t=0)\|_{H^1_0\times L^2}^2 ~\leq~  C \int_{0}^{T} \int_{(-1,-\ell_1)\cup (\ell_3,1)} |\partial_t w(x,t)|^2 \dd x \dd t
\end{equation}
for any $T>\ell_1+\ell_3$. Indeed, $w$ solves a wave equation and the observation in $(-1,-\ell_1)$ and in $(\ell_3,1)$ spreads from both side at speed one inside the segment, as shown in Step 2 of the proof of Proposition \ref{prop_obs_wave}. This step also shows that the value of $w$ at $0^+$ is also controlled. Arguing as in Step 4, we can pass through the vertex, since the factor $2$ in the transmission condition at $x=0$ is harmless in the argument. It remains to follow mutatis mutandis the last steps of the proof of Proposition \ref{prop_obs_wave}, where the conservation of energy has to be understood as noticed above, which is again harmless during the estimations, simply changing some constants $C$, which is not our point here.

\medskip

We argue in the same way to obtain an observation of $u_{\text{o}}$ in $(\ell_2,1)$. We obtain the estimate
$$\|(u_{\text{o}},\partial_t u_{\text{o}})(t=0)\|_{H^1_0\times L^2}^2  ~\leq~  C \int_{0}^{T} \int_{(\ell_2,1)} |\partial_t u_{\text{o}}(x,t)|^2 \dd x \dd t$$
for any $T>2\ell_2$. 

\medskip

Now, we go back to the original solution $\varphi=(u,v)$ by using the fact $v(y)=w(y)$ for $y\in (0,1)$, $u(x)=w(-x)+u_{\text{o}}(x)$ for all $x\in (0,1)$ and $u(x)=w(x)-u_{\text{o}}(-x)$ for all $x\in (-1,0)$. Thus, we get that, for any $T>T^*$, there exists $C>0$ such that  
\begin{align*}
\|(\varphi,\partial_t \varphi)(t=0)\|_{H^1_0\times L^2}^2  ~\leq~&  C \int_0^T \int_{(\ell_2,1)} |\partial_t u_{\text{o}}(x,t)|^2 \dd x \dd t\\ &+C  \int_0^T \int_{(-1,-\ell_1)\cup (\ell_3,1)} |\partial_t w(x,t)|^2 \dd x \dd t.
\end{align*} 
For $x\in (\ell_2,1)$, we have $u_{\text{o}}(x)=u(x)-u_{\text{e}}(x)=u(x)-w(-x)$. Thus, we have
\begin{align*}
\|(\varphi,\partial_t \varphi)(t=0)\|_{H^1_0\times L^2}^2  ~\leq~&  C \int_0^T \int_{(\ell_2,1)} |\partial_t u(x,t)|^2 \dd x \dd t + C \int_0^T \int_{(-1,-\ell_2)} |\partial_t w(x,t)|^2 \dd x \dd t\\ &+C  \int_0^T \int_{(-1,-\ell_1)\cup (\ell_3,1)} |\partial_t w(x,t)|^2 \dd x \dd t.
\end{align*} 
The observation in $(-1,-\ell_2)$ seems wrong because the interval is not contained in $\omega$. But we simply bound it by the conserved energy \eqref{eq_3star1}, which is itself bounded by the observation \eqref{eq_3star2}. Thus, we obtain the bound
\begin{align*}
\|(\varphi,\partial_t \varphi)(t=0)\|_{H^1_0\times L^2}^2  ~\leq~&  C \int_0^T \int_{(\ell_2,1)} |\partial_t u(x,t)|^2 \dd x \dd t\\ &+C  \int_0^T \int_{(-1,-\ell_1)\cup (\ell_3,1)} |\partial_t w(x,t)|^2 \dd x \dd t.
\end{align*} 
In $(\ell_3,1)$, $\partial_t w=\partial_t v$. In $(-1,-\ell_1)$, $\partial_t w(x)=\frac 12 (\partial_t u(x)+\partial_t u(-x))$ and the term $\partial_t u(-x)$ can be observed because $-x$ belongs to $(\ell_1,1)$, which is included in $(\ell_2,1)$. Thus, we obtain the estimate
\begin{align*}
\|(\varphi,\partial_t \varphi)(t=0)\|_{H^1_0\times L^2}^2  ~\leq~&  C \int_0^T \int_{(-1,-\ell_1)\cup (\ell_2,1)} |\partial_t u(x,t)|^2 \dd x \dd t\\ &+C  \int_0^T \int_{(\ell_3,1)} |\partial_t v(x,t)|^2 \dd x \dd t,
\end{align*}
which is exactly \eqref{eq_prop_3star}.
\end{proof}

\section{Boundary control of the Schrödinger equation on the X graph}
\label{sec:boundaryschroproofNoGGCC}

In this part, we consider the boundary control system
\begin{equation}
\label{eq_schro_control_bound}
\left\{
\begin{array}{ll} i \partial_{t} u = - \Delta_\Gc u ,& \ \ \ \ t>0,\\
u(t,v_1)=h_1(t),\ u(t,v_3) = h_3(t),\ u(t,v_2)=u(t,v_4) = 0,& \ \ \ \ t>0,\\
u(t=0)=u_0\in H^{-1}(\Gc), & 
\end{array}\right. 
\end{equation}
and we prove Theorem \ref{th:boundarycontrolSchro}. First, let us recall the well-known equivalence between the exact controllability of \eqref{eq_schro_control_bound} in the state space $H^{-1}(\Gc)$ and the observability of the adjoint system in $H_0^1(\Gc)$, see for instance \cite[Chapters 10 and 11]{TW09}, in particular \cite[Theorem 11.2.1]{TW09}.
\begin{theorem}
\label{th:boundcontrolschroduality}
Let $T>0$. The controlled Schrödinger equation \eqref{eq_schro_control_bound} is exactly controllable in $H^{-1}(\Gc)$ with boundary controls in $L^2(0,T)$ if and only if there exists $C>0$ such that for every $\varphi_0 \in H_0^1(\Gc)$, the solution $\varphi \in \Cc([0,T];H_0^1(\Gc))$ of 
\begin{equation}
\label{eq_schro_control_bound_adjoint}
\left\{
\begin{array}{ll} i \partial_{t} \varphi = - \Delta_\Gc \varphi ,& \ \ \ \ t>0,\\
\varphi(t,v_1)= \varphi(t,v_2)= \varphi(t,v_3)=\varphi(t,v_4) = 0,& \ \ \ \ t>0,\\
\varphi(t=0)=\varphi_0\in H_0^{1}(\Gc), & 
\end{array}\right. 
\end{equation}
satisfies
\begin{equation}
    \|\varphi_0\|_{H_0^{1}(\Gc)}^2 \leq C \int_0^T \left(|\partial_x u(t,v_1)|^2 +  |\partial_x u(t,v_3)|^2\right)\dd t.
\end{equation}
\end{theorem}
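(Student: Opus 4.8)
The plan is to read the stated equivalence as the standard Hilbert Uniqueness Method duality between exact controllability and exact observability for a boundary-controlled system whose generator is skew-adjoint (the Schrödinger generator), and to reduce it to the abstract result quoted as \cite[Theorem 11.2.1]{TW09} by identifying the relevant operators and checking admissibility. First I would write \eqref{eq_schro_control_bound} in the abstract form $\dot u = Au + Bh$, where $A = i\Delta_\Gc$ is the generator associated to the Dirichlet Laplacian on $L^2(\Gc)$. Since $\Delta_\Gc$ is self-adjoint and non-positive, $A$ is skew-adjoint ($A^* = -A$) and generates a unitary $C_0$-group; by duality with pivot space $L^2(\Gc)$ it extends to a group on the state space $H^{-1}(\Gc) = H_0^1(\Gc)'$. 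The operator $B$ is the unbounded control operator that lifts the boundary data $h = (h_1,h_3)$ prescribed at the vertices $v_1$ and $v_3$ (with homogeneous Dirichlet data imposed at $v_2,v_4$); as is typical for boundary control, $B$ maps into an extrapolation space rather than into $H^{-1}(\Gc)$ itself.

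The core of the argument is the identification of the adjoint $B^*$ through Green's formula on the graph. For regular data, differentiating $\langle u(t),\varphi(t)\rangle_{L^2(\Gc)}$ in time — where $u$ solves the controlled equation \eqref{eq_schro_control_bound} and $\varphi$ solves the adjoint system \eqref{eq_schro_control_bound_adjoint} — and integrating by parts edge by edge gives
\begin{equation*}
\frac{\dd}{\dd t}\langle u,\varphi\rangle_{L^2(\Gc)} = i\big(\langle \Delta_\Gc u,\varphi\rangle - \langle u,\Delta_\Gc \varphi\rangle\big) = -i\sum_{v \in \Vc_{\rm ext}}\big(u(v)\,\overline{\partial_n\varphi(v)} - \partial_n u(v)\,\overline{\varphi(v)}\big),
\end{equation*}
where $\partial_n$ denotes the outgoing normal derivative at the vertex. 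The contributions at interior vertices cancel because $u$ and $\varphi$ both satisfy continuity and the Kirchhoff condition there, while at exterior vertices the terms $\partial_n u\,\overline{\varphi}$ vanish since $\varphi$ is Dirichlet at every exterior vertex. Using $u(v_2)=u(v_4)=0$, $u(v_1)=h_1$, $u(v_3)=h_3$ and integrating in time yields the fundamental identity
\begin{equation*}
\langle u(T),\varphi(T)\rangle_{L^2(\Gc)} - \langle u_0,\varphi_0\rangle_{L^2(\Gc)} = -i\int_0^T\big(h_1(t)\,\overline{\partial_n\varphi(t,v_1)} + h_3(t)\,\overline{\partial_n\varphi(t,v_3)}\big)\dd t,
\end{equation*}
extended to general data by density. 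This shows that the observation operator dual to $B$ is the boundary-derivative trace $\varphi \mapsto (\partial_n\varphi(v_1),\partial_n\varphi(v_3))$ and that the input-to-state map is precisely the adjoint of this observation map.

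It then remains to verify \emph{admissibility}, namely that $\varphi_0 \mapsto (\partial_n\varphi(\cdot,v_1),\partial_n\varphi(\cdot,v_3))$ is bounded from $H_0^1(\Gc)$ into $L^2(0,T)^2$: this is the hidden-regularity trace estimate for the Schrödinger group on $\Gc$, obtainable by the multiplier method of Section \ref{suff} applied edge by edge, the coupling being transferred across the interior vertex via the transmission conditions. With admissibility established and $A$ skew-adjoint (so that the dual group is again the unitary Schrödinger group, the backward-in-time equation being equivalent to \eqref{eq_schro_control_bound_adjoint} by time reversal), the abstract duality \cite[Theorem 11.2.1]{TW09} applies verbatim: exact controllability of $\dot u = Au + Bh$ in $H^{-1}(\Gc)$ in time $T$ is equivalent, via the closed range theorem, to the coercivity (bounded-below) estimate for the observation operator, which is exactly the stated observability inequality.

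The main obstacle I anticipate is the admissibility estimate: establishing that the normal-derivative traces at $v_1,v_3$ genuinely lie in $L^2(0,T)$ with norm controlled by $\|\varphi_0\|_{H_0^1(\Gc)}$. On a single segment this trace regularity is classical, but here one must propagate the bound through the interior vertex using the Kirchhoff conditions, which is the only truly graph-specific difficulty. A secondary point requiring care is the rigorous identification of $B^*$ in the Green's identity, where all vertex terms — interior and exterior — must be tracked in the correct function spaces so that the interior cancellation and the Dirichlet vanishing are fully justified.
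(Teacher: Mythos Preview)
Your proposal is correct and aligns with the paper's approach: the paper does not give an independent proof of this theorem but simply cites it as a well-known instance of the abstract duality in \cite[Theorem 11.2.1]{TW09}, which is exactly the framework you invoke. Your sketch goes further than the paper by spelling out the identification of $B^*$ via Green's formula and flagging the admissibility check; both are the right ingredients, though for the hidden-regularity estimate you might cite the classical interval result (e.g.\ \cite{Mac94}) rather than the wave-equation multipliers of Section~\ref{suff}.
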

We then deduce the following result from \cite{Mil05}.
\begin{theorem}[{\cite[Theorem 5.1]{Mil05} or \cite[Proposition 6.6.1]{TW09}}]
\label{lem:resolventboundary}
The equation \eqref{eq_schro_control_bound} is exactly controllable in $H^{-1}(\Gc)$ with boundary controls in $L^2(0,T)$ at some $T>0$ if and only if there exists $C>0$ such that
\begin{equation}
\label{eq:resolventschroboundary}
    \|u\|_{H_0^1(\Gc)}^2 \leq C (\|\Delta u + \lambda u\|_{H_0^1(\Gc)}^2 + |\partial_x u(v_1)|^2 +  |\partial_x u(v_3)|^2) \quad \forall \lambda \in \mathbb{R},\ \forall u \in D(|\Delta_{\Gc}|^{3/2}).
\end{equation}
\end{theorem}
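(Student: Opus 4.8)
The plan is to obtain \eqref{eq:resolventschroboundary} as the Hautus/resolvent characterization of observability for the conservative adjoint system, combining the duality already recorded in Theorem \ref{th:boundcontrolschroduality} with the spectral approach of \cite{RTTM05} (see also \cite{Mil05,TW09}). By Theorem \ref{th:boundcontrolschroduality}, the exact controllability of \eqref{eq_schro_control_bound} in $H^{-1}(\Gc)$ for some $T>0$ is equivalent to the existence of $T>0$ and $c>0$ such that every solution $\varphi$ of the free Dirichlet Schrödinger equation \eqref{eq_schro_control_bound_adjoint} satisfies $\|\varphi_0\|_{H_0^1(\Gc)}^2\le c\int_0^T(|\partial_x\varphi(t,v_1)|^2+|\partial_x\varphi(t,v_3)|^2)\dd t$. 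It therefore suffices to prove that this observability inequality holds for some $T>0$ if and only if \eqref{eq:resolventschroboundary} holds. I set $A_0=-\Delta_\Gc$, a positive self-adjoint operator with compact resolvent, and $C\varphi=(\partial_x\varphi(v_1),\partial_x\varphi(v_3))$; the state space is the energy space $H_0^1(\Gc)=D(|\Delta_\Gc|^{1/2})$, on which $e^{-itA_0}$ is a unitary group, and $D(|\Delta_\Gc|^{3/2})$ is exactly the set of $u\in H_0^1(\Gc)$ with $\Delta_\Gc u\in H_0^1(\Gc)$, so that all norms in \eqref{eq:resolventschroboundary} are well defined. A preliminary point is the \emph{admissibility} of the boundary observation $C$ in this setting, i.e. the hidden-regularity bound $\int_0^T\|C\varphi(t)\|^2\dd t\le K\|\varphi_0\|_{H_0^1(\Gc)}^2$; it follows from the standard multiplier/trace estimate for the Schrödinger equation on each edge.

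\textbf{Observability $\Rightarrow$ resolvent estimate.} This is the elementary direction. Fix $u\in D(|\Delta_\Gc|^{3/2})$ and $\lambda\in\RR$, set $v=(\Delta_\Gc+\lambda)u\in H_0^1(\Gc)$, and let $\varphi(t)=e^{-itA_0}u$ solve \eqref{eq_schro_control_bound_adjoint} with $\varphi_0=u$. Differentiating $e^{i\lambda t}\varphi(t)$ and integrating gives the Duhamel representation $\varphi(t)=e^{-i\lambda t}u+ie^{-i\lambda t}\int_0^t e^{i\lambda s}e^{-isA_0}v\,\dd s$, whence $C\varphi(t)=e^{-i\lambda t}Cu+\psi(t)$ with the error $\psi$ bounded in $L^2(0,T)$ by $K'\|v\|_{H_0^1(\Gc)}$ (admissibility of $C$ applied to $v$, together with boundedness of the truncated integral operator on $[0,T]$). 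Inserting this into the observability inequality and using $\int_0^T|e^{-i\lambda t}Cu|^2\dd t=T|Cu|^2$ yields, after a Cauchy--Schwarz splitting, $\|u\|_{H_0^1(\Gc)}^2\le C(|Cu|^2+\|v\|_{H_0^1(\Gc)}^2)$, which is precisely \eqref{eq:resolventschroboundary}.

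\textbf{Resolvent estimate $\Rightarrow$ observability.} This is the substantive direction, and I would invoke (or reproduce) the spectral criterion of \cite{RTTM05}: for a skew-adjoint generator with compact resolvent and an admissible observation operator, the frequency-uniform resolvent estimate \eqref{eq:resolventschroboundary} implies exact observability in some time $T>0$. To reproduce the argument one proceeds by contradiction: if observability failed for every $T$, there would be a sequence of normalized data in $H_0^1(\Gc)$ whose observation tends to $0$; a frequency-localization argument in the eigenbasis of $A_0$ shows that such a sequence must concentrate on a shrinking spectral window around some $\lambda_n$, producing an approximate eigenfunction on which \eqref{eq:resolventschroboundary} with $\lambda=\lambda_n$ is violated. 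Equivalently, one first derives from \eqref{eq:resolventschroboundary} an observability estimate for wave packets spectrally supported in a single unit frequency band, and then sums these band estimates by an almost-orthogonality argument to recover the full inequality for a suitable $T$.

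The main obstacle is this last direction: upgrading the \emph{frequency-uniform} resolvent bound to a \emph{time-uniform} observability estimate, which requires controlling the cross-interactions between distinct spectral packets, together with verifying that the unbounded boundary observation $C$ (normal derivatives at the vertices $v_1,v_3$) is admissible in the $H_0^1(\Gc)$ energy topology. The first point is exactly the content of the abstract theorem of \cite{RTTM05,Mil05}, which I would cite rather than re-derive in full; the second is a routine trace/hidden-regularity estimate carried out edge by edge.
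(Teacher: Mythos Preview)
Your proposal is correct and follows essentially the same route as the paper: both reduce to the abstract Hautus/resolvent criterion of \cite{Mil05} (or \cite{TW09}) by setting up the operator $\mathcal{A}=\Delta_\Gc$ on $X=H_0^1(\Gc)$ with $D(\mathcal{A})=D(|\Delta_\Gc|^{3/2})$ and observation $\mathcal{C}u=(\partial_x u(v_1),\partial_x u(v_3))$, after invoking the duality of Theorem~\ref{th:boundcontrolschroduality}. The paper simply identifies this abstract setting and cites Miller, whereas you go further and sketch both directions of the abstract equivalence (Duhamel for observability $\Rightarrow$ resolvent, the spectral-packet argument of \cite{RTTM05} for the converse); this extra detail is fine but not needed here.
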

This resolvent estimate of Theorem \ref{lem:resolventboundary} follows from Theorem \ref{th:boundcontrolschroduality} and \cite[Theorem 5.1]{Mil05}. Indeed, using the notations of \cite{Mil05}, we take the operator $\mathcal{A}=\Delta_{\Gc}$ acting on $X=H_0^1(\Gc)$, whose domain is $D(\mathcal{A})=D(|\Delta_{\Gc}|^{3/2})$. We have the following description of this space
\begin{align*}
 D(\mathcal{A})=D(|\Delta_{\Gc}|^{3/2}) = &\Big\{u=(u_j)_{j\in J} \in \prod_{j\in J} H^3(e_j)~:\\
 &\ u_j(v_k)=u_{j'}(v_k),\ \partial^2_{x}u_j(v_k)=\partial^2_{x}u_{j'}(v_k),\  k\in K_{\rm int},\ j,j'\in \Ec_k;\\
&\ \sum_{j\in\Ec_k} \frac{\dd u_j}{\dd n_j}(v_k)=0,\ k\in K_{\rm int};\\
&\ u_j(v_k)=\partial^2_{x}u_j(v_k)=0,\ k \in K_{\rm ext}\Big\}.
\end{align*}
The admissible observation operator $\mathcal{C} \in \mathcal{L}(D(\mathcal{A});\mathbb C^2)$ is defined by $\mathcal{C}u = (\partial_{x}u (v_1), \partial_{x} u(v_3))$.
We are now ready to prove Theorem \ref{th:boundarycontrolSchro} by adapting the proof of Theorem \ref{non_ggcc_sch_intro}. We only prove the  “if part” and we keep the same notations of the proof of Theorem \ref{non_ggcc_sch_intro}.

\begin{proof}[Proof of Theorem \ref{th:boundarycontrolSchro}]
Due to Theorem \ref{lem:resolventboundary}, we only have to show that \eqref{eq:resolventschroboundary} holds true if $\ell$ is a badly approximable number. First notice that \eqref{eq:resolventschroboundary} trivially holds for $\lambda \leq 0$ since the operator $\Delta_{\Gc}$ is non-negative. Moreover, we have the following unique continuation property for the eigenfunctions of $\mathcal A$ by Cauchy-Lipschitz theorem. For $\lambda \in \mathbb R,\ u \in D(\mathcal{A})$,
$$ (\mathcal A u + \lambda u = 0\ \text{and}\ \mathcal C u = 0) \Rightarrow u \equiv 0.$$
So by using \cite[Proposition 6.6.4]{TW09}, we can only restrict to prove \eqref{eq:resolventschroboundary} for $\lambda \geq M$, where $M>0$ is a constant that would be specified later.

\medskip

We argue by contradiction.  Assume that $\ell$ is a badly approximable number and that there exist $(u_n)_{n \in \mathbb{N}} \in D(|\Delta_{\Gc}|^{3/2})^{\mathbb{N}}$ and $(\lambda_n)_{n \in  \mathbb{N}} \in ([M, +\infty))^{\mathbb{N}}$ such that
\begin{equation}\label{absboundary}
    \|u_n\|_{H_0^1(\Gc)} = 1,\ \|(\Delta_{\Gc} + \lambda_n) u_n\|_{H_0^1(\Gc)} \to 0~\text{ and }~ |\partial_x u_n(v_1)| +|\partial_x u_n(v_3)| \to 0,
\end{equation}
as $n \to +\infty$.

\medskip

{\noindent \bf Step 1: Symmetry decomposition.} For any $u\in D(|\Delta_{\Gc}|^{3/2})$, we set $f$, $g$ and $h$ as in \eqref{eq:deffgschro} and \eqref{eq:defhschro}. One can check that
\begin{equation*}
    u \in D(|\Delta_{\Gc}|^{3/2}) \Leftrightarrow \left\{\begin{array}{l} f \in \{\psi \in H^3(-1,0)\ ;\ \psi(-1)=\psi(0)=\partial_{xx}\psi(-1)=\partial_{xx}\psi(0)=0\},\\ g\in \{\psi \in H^3(0,\ell)\ ;\ \psi(0)=\psi(\ell)=\partial_{xx}\psi(0)=\partial_{xx}\psi(\ell)=0\},\\ h \in \{\psi \in H^3(-1,\ell)\ ;\ \psi(-1)=\psi(\ell)=\partial_{xx}\psi(-1)=\partial_{xx}\psi(\ell)=0\},\end{array}\right.
\end{equation*}
 and a direct computation gives
\begin{align*}
    \|\partial_{x} u&\|_{L^2}^2  = \sum_{j=1}^{4} \|\partial_{x} u^i\|_{L^2(e_j)}^2  = 2\big(\|\partial_{x} f\|_{L^2(-1,0)}^2 + \|\partial_{x} g\|_{L^2(0,\ell)}^2 + \|\partial_{x} h\|_{L^2(-1,\ell)}^2\big),\\
    \|(\Delta_{\Gc} + \lambda )\partial_{x}u&\|_{L^2}^2  = \sum_{j=1}^{4} \|(\Delta + \lambda ) \partial_{x}u^i\|_{L^2(e_j)}^2  \\
    & ~ = 2\left(\|(\Delta + \lambda )\partial_{x}f\|_{L^2(-1,0)}^2+ \|(\Delta + \lambda )\partial_{x}g\|_{L^2(0,\ell)}^2 + \|(\Delta + \lambda )\partial_{x}h\|_{L^2(-1,\ell)}^2\right).
\end{align*}
We define now $(f_n)_{n \in \mathbb{N}}$, $(g_n)_{n \in \mathbb{N}}$ and $(h_n)_{n \in \mathbb{N}}$ from the sequence $(u_n)_{n \in \mathbb{N}}$ as above. We get
\begin{equation}\label{eq_normB}  \|\partial_{x}f_n\|_{L^2(-1,0)}^2 +  \|\partial_{x}g_n\|_{L^2(0,\ell)}^2 +  \|\partial_{x}h_n\|_{L^2(-1,\ell)}^2 = 1/2,\end{equation}
\begin{equation}\label{eq_convB}   \Big\|\big(\Delta + \lambda_n \big)\partial_{x}f_n\Big\|_{L^2(-1,0)}+\Big\|\big(\Delta + \lambda_n \big)\partial_{x}g_n\Big\|_{L^2(0,\ell)}+\Big\|\big(\Delta + \lambda_n \big)\partial_{x}h_n\Big\|_{L^2(-1,\ell)} \to 0,\end{equation}
\begin{equation}\label{eq_limB}    |\partial_{x}(f_n + h_n)(-1)|^2  +  |\partial_{x}(g_n + h_n)(\ell)|^2 \to 0,\end{equation}
$\text{as}\ n \to +\infty$. Here we have used that the $H_0^1$-norms of $\psi$ are equivalent to the $L^2$-norm of $\partial_{x} \psi$. We then use \eqref{eq_convB} to provide estimations for $f_n$ and $g_n$ 
at the central node. For instance, the first convergence infers
$$\partial^2_{x}(\partial_x f_n)=-\lambda_n \partial_{x}f_n+ r_n,\ \ \ \ \text{in}\ (-1,0), \ \ \text{ where}\ \ \ \ \|r_n\|_{L^2(-1,0)}\rightarrow 0\ \text{as}\ n \to +\infty.$$
We impose the boundary condition $\partial^2_{x} f_n(0)=0$ and, solving the ODE by the variation of the constant formula, we get that
\begin{equation}
    \label{eq:explicitf_ndx}
    \partial_{x}f_n(x)= {\alpha_n}  \cos \left(\sqrt{\lambda_n} x \right) + \frac{1}{\sqrt{\lambda_n}}\int_0^x\sin \left(\sqrt{\lambda_n}(x-y) \right)r_n(y)\dd y. 
\end{equation}
The same type of identity holds for $\partial_x g_n$, i.e. 
\begin{equation}
    \label{eq:explicitg_ndx}
    \partial_{x}g_n(x)= {\beta_n}  \cos \left(\sqrt{\lambda_n} x \right) + \frac{1}{\sqrt{\lambda_n}}\int_0^x\sin \left(\sqrt{\lambda_n}(x-y) \right)s_n(y)\dd y,
\end{equation}
where $\|s_n\|_{L^2(-1,0)}\rightarrow 0\ \text{as}\ n \to +\infty$. We repeat this explicit resolution to the other identity of \eqref{eq_convB} and we deduce that, when $n\rightarrow +\infty$,
\begin{equation}\begin{split}\label{eq_expB}
\partial_{x}f_n(x) &= \alpha_n \cos \left(\sqrt{\lambda_n} x \right) + o_{\Cc([-1,0])} \Big(\frac{1}{\sqrt{\lambda_n}}\Big),\qquad x\in[-1,0],\\
\partial_{x} g_n(x) &= \beta_n \cos \left(\sqrt{\lambda_n} x \right) +  o_{\Cc([0,\ell])}\Big(\frac{1}{\sqrt{\lambda_n}}\Big),\qquad x\in[0,\ell],\\
\partial_{x} h_n(x) &= \gamma_n \cos \left(\sqrt{\lambda_n} x \right) +  o_{\Cc([-1,\ell])}\Big(\frac{1}{\sqrt{\lambda_n}}\Big),\qquad x\in[-1,\ell].
\end{split}\end{equation}

\medskip

{\noindent \bf Step 2: Reduction to the resolvent estimate on an interval.}
Now, we claim that there exists $c >0$ such that 
\begin{equation}\label{low_boundB}
|\alpha_n | \geq c~~\text{ and }~~  |\beta_n | \geq c.
\end{equation}
Indeed, if \eqref{low_boundB} is not true, then either $(\alpha_n)$ or $(\beta_n)$  
goes to $0$, up to the extraction of a subsequence. Consider, for instance, $\alpha_n \to 0$ (the other case is similar). Then $\|\partial_{x} f_n\|_{\Cc([-1,0])} \to 0$ and, from the first limit of \eqref{eq_limB} and from  \eqref{eq_convB}, we would have
$$ \Big\|\big(\Delta + \lambda_n \big)\partial_{x} h_n\Big\|_{L^2(-1,\ell)}~~\text{ and }~~\ |\partial_{x} h_n(-1)| \to 0.$$
Recall that Schrödinger equation is controllable in $H^{-1}(\Gc)$ in the interval $(-1,\ell)$ with one Dirichlet boundary control at the point $-1$ in $L^2(0,T)$, see for instance \cite{Mac94}. This provides a resolvent estimate for the Schrödinger equation in the interval $(-1,\ell)$ with the observation in $-1$ and we necessarily have $\|\partial_x h_n\|_{L^2( -1,\ell)} \to 0$. This $L^2$-convergence can be transformed into a $\Cc^0$-convergence. Indeed, from the third equation of \eqref{eq_expB}, we have that $\|\gamma_n \cos (\sqrt{\lambda_n} x)\|_{L^2(-1,\ell)} \to 0$ so 
$$ \gamma_n^2 \left(\frac{\ell+1}{2} + \frac{\sin(\sqrt{\lambda_n} \ell) - \sin(-\sqrt{\lambda_n})}{2 \sqrt{\lambda_n}}\right) \to 0.$$
Then recalling $\lambda_n \geq M$ and choosing $M \geq \frac{16}{(\ell+1)^2}$, we deduce that $\gamma_n \to 0$ hence $$\|\partial_{x} h_n\|_{\Cc([-1,\ell])} \to 0.$$
By using the second limit of \eqref{eq_limB}, we then have $|\partial_x g_n(\ell)| \to 0$. Then by conjugating with the second limit of \eqref{eq_convB} we have
$$ \Big\|\big(\Delta + \lambda_n \big)\partial_{x} g_n\Big\|_{L^2(0,\ell)}~~\text{ and }~~\ |\partial_{x} g_n(\ell)| \to 0.$$
So from the resolvent estimate of the Schrödinger equation in the interval $(0,\ell)$ with the observation in $\ell$ and we necessarily have $\|\partial_{x} g_n\|_{L^2(0,\ell)} \to 0$.
The combination of $\|\partial_x f_n\|_{L^2(-1,0)} \to 0$, $\|\partial_x h_n\|_{L^2( -1,\ell)} \to 0$ and $\|\partial_x g_n\|_{L^2(0,\ell)} \to 0$ is then a contradiction with respect to \eqref{eq_normB}. This shows \eqref{low_boundB}.

\medskip

{\noindent \bf Step 3: Comparison of the two quasimodes.} We now conclude as in the Step 3 of the proof of Theorem \ref{non_ggcc_sch_intro} by using \eqref{eq:explicitf_ndx} and \eqref{eq:explicitg_ndx} together with Dirichlet boundary conditions on the second order derivatives. For instance, we have
$$\partial^2_{x}f_n(x)= - {\alpha_n} \sqrt{\lambda_n} \sin \left(\sqrt{\lambda_n} x \right) + \int_0^x\cos \left(\sqrt{\lambda_n}(x-y) \right)r_n(y)\dd y,$$
So by using $\partial_{xx} f_n(-1)=0$, we get
$\sin \left(\sqrt{\lambda_n} \right) = o\left(\frac{1}{\sqrt{\lambda_n}}\right)$. We finally  obtain a contradiction with the fact that $\ell$ is a badly approximable number.
\end{proof}

\end{appendices}


\end{document}